\renewcommand{\frak}{\mathfrak}
\newcommand{\bbold}{\mathbb}
\newcommand{\rom}{\textup}
\newcommand{\cal}{\mathcal}
\renewcommand\emptyset{\varnothing}
\def \R {{\bbold R}}
\def \Q {{\bbold Q}}
\def \N {{\bbold N}}
\def \<{\langle}
\def \>{\rangle}
\def \hom {{\operatorname{h}}}
\newcommand{\abs}[1]{\lvert#1\rvert}
\renewcommand\leq{\leqslant}
\renewcommand\geq{\geqslant}
\def \supp {\operatorname{supp}}
\def \lc{\operatorname{lc}}
\def \lm{\operatorname{lm}}
\def \nf{\operatorname{nf}}
\def \gr{\operatorname{gr}}
\def \lcm{\operatorname{lcm}}
\def \Syz{\operatorname{Syz}}
\def \tr{{\operatorname{tr}}}
\def \op{{\operatorname{op}}}
\def \env{{\operatorname{env}}}
\def \lex{{\operatorname{lex}}}
\def \dlex{{\operatorname{dlex}}}
\def \wt{\operatorname{wt}}
\def \reg{\operatorname{reg}}
\newcommand{\red}[1]{\underset{#1}{\longrightarrow}}
\newtheorem{theorem}{Theorem}[section]
\newtheorem{lemma}[theorem]{Lemma}
\newtheorem{prop}[theorem]{Proposition}
\newtheorem{cor}[theorem]{Corollary}
\theoremstyle{definition}
\newtheorem{definition}[theorem]{Definition}
\theoremstyle{remark}
\newtheorem{example}[theorem]{Example}
\newtheorem{examples}[theorem]{Examples}
\newtheorem{remark}[theorem]{Remark}
\numberwithin{equation}{section}
\begin{document}

\title{Degree Bounds for Gr\"obner Bases in Algebras of Solvable Type}

\author{Matthias Aschenbrenner} \email{matthias@math.ucla.edu}
\urladdr{http://www.math.ucla.edu/$\sim$matthias}

\address{Department of Mathematics \\
University of California, Los Angeles \\
Box 951555 \\
Los Angeles, CA 90095-1555 \\
U.S.A.}

\author{Anton Leykin} \email{leykin@math.uic.edu}
\urladdr{http://www.math.uic.edu/$\sim$leykin}

\address{Department of Mathematics, Statistics, and Computer Science\\
University of Illinois at Chicago\\
851 S. Morgan Street (M/C 249)\\
Chicago, IL 60607-7045 }

\date{November 18, 2008}

\thanks{The first author was partially supported by a grant from the National Science Foundation.}

\subjclass[2000]{Primary 13P10, 13N10; Secondary 68Q40, 16S32}

\begin{abstract}
We establish doubly-exponential degree bounds for Gr\"obner bases in certain
algebras of solvable type  over a field (as introduced by Kandri-Rody and Weispfenning).
The class of algebras considered here includes
commutative polynomial rings, Weyl
algebras, and universal enveloping algebras of finite-dimensional Lie algebras.
For the computation of these bounds, we adapt a method due to Dub\'e based
on a generalization of Stanley decompositions.
Our bounds yield doubly-exponential degree bounds for ideal membership and
syzygies, generalizing the classical results of Hermann and Seidenberg
(in the commutative case) and Grigoriev (in the case of Weyl algebras).
\end{abstract}

\keywords{Gr\"obner bases,  Weyl algebras, degree bounds, Stanley decompositions}

\maketitle


\section*{Introduction}

The algorithmic aspects of Weyl algebras were first explored by Castro \cite{Castro}, Galligo \cite{Galligo}, Takayama \cite{Takayama} and others in the mid-1980s. In particular, they laid out a theory of Gr\"obner bases in this slightly non-commutative setting. Since then, Gr\"obner bases in Weyl algebras have been widely used for practical computations in algorithmic $D$-module theory as promoted in \cite{SST}. (Some authors \cite{Chistov-Grigoriev-2007} prefer the term ``Janet basis'' in this context, due to the pioneering work  on linear differential operators by Janet \cite{Janet} in the 1920s.)
In the early 1990s, Kandri-Rody and Weispfenning \cite{KR-W}, by isolating the features of Weyl algebras which permit Gr\"obner basis theory to work, extended this theory to a larger class of non-commutative algebras, which they termed {\it algebras of solvable type}\/  over a given coefficient field $K$. This class of algebras includes the universal enveloping algebras of finite-dimensional Lie algebras over $K$, by a theorem attributed to Poincar\'e, Birkhoff and Witt. (For this reason, algebras of solvable type are sometimes called {\it PBW-algebras}; see, e.g., \cite{BGV, Roman-Roman}. Another designation in use is {\it polynomial rings of solvable type.}\/) Working implementations of these algorithms exist and are in widespread use; see \cite[Section~2.6]{CAH} and \cite{LS}. Similar extensions of Gr\"obner basis theory to non-commutative algebras were studied by Apel \cite{Apel} and Mora \cite{Mora}. See Sections~\ref{Preliminaries on Algebras over Fields} and \ref{Grobner bases in Algebras of Solvable Type} below for a recapitulation of the basic definitions, and \cite{BGV} for a comprehensive introduction to this circle of ideas.

In this paper we are interested in degree bounds for left Gr\"obner bases in algebras of solvable type. It follows trivially from the case of commutative polynomials (as treated in \cite{Huynh}) and Section~\ref{Degree bounds for normal forms} below that  the degrees of the elements of the reduced Gr\"obner basis of a left ideal $I$ in an algebra of solvable type may depend doubly-exponentially on the maximum of the degrees of given generating elements of $I$. In view of the popularity of this kind of non-commutative Gr\"obner basis theory, it is surprising that little seems to be known about {\it upper degree bounds}\/ for Gr\"obner bases (and, by extension, about the worst-case complexity of Buchberger's algorithm) in this setting. Perhaps it was believed that the upper degree bound for one-sided Gr\"obner bases, at least  in the context of Weyl algebras, also follows from the commutative polynomial case by passing to the associated graded algebra for a certain filtration (which turns out to be nothing but a commutative polynomial ring over the given coefficient field). If true, the problem would have boiled down to the doubly-exponential degree bounds for Gr\"obner bases in commutative polynomial rings over fields found in the 1980s (see, e.g., \cite{Moller-Mora-1984}). However, we would like to emphasize that we {\it could not find}\/ and we {\it do not believe there exists}\/ a simple way to establish such a degree bound by reducing the question to commutative algebra. (See Section~\ref{Grobner bases and the associated graded algebra} for further discussion.)

A general uniform degree bound for left Gr\"obner bases in algebras of solvable type was established by Kredel and Weispfenning \cite{K-W} (using parametric Gr\"obner bases). They showed that, given a monomial ordering $\leq$ on $\mathbb N^N$, there exists a computable function $(d,m)\mapsto B(d,m)$ with the following property:   for every solvable algebra $R$ over some field, generated by $N$ generators whose commutator relations have degree at most $d$, every left ideal of $R$ generated by $m$ elements of $R$ of degree at most $d$ has a Gr\"obner basis (with respect to $\leq$) whose elements have degree at most $B(d,m)$.

In contrast to this, here we are mainly interested in finding explicit, doubly-exponential degree bounds. We follow a road to establish such bounds paved by Dub\'e \cite{Dube}, who gave a self-contained and constructive combinatorial argument for the existence of a doubly-exponential degree bound for Gr\"obner bases in commutative polynomial rings over a field of arbitrary characteristic. Earlier proofs of results of this type (as in \cite{Moller-Mora-1984}) proceed by first homogenizing and then placing the ideal under consideration into {\it generic coordinates.}\/ The drawback of this method is that it seems difficult to adapt it to situations as general as the ones considered here; for one thing, it only works smoothly in characteristic zero.
See \cite{Grigoriev} for the delicacies involved in using automorphisms of the Weyl algebra. (Developing the ideas of the latter paper further,
a doubly-exponential complexity result for Gr\"obner bases in  Weyl algebras over fields of characteristic zero was established in \cite{Chistov-Grigoriev-2007}; the revised journal version of \cite{Chistov-Grigoriev-2007} is \cite{Chistov-Grigoriev-2008}.)

The main new technical tool in \cite{Dube} are decompositions, called  {\it cone decompositions,}\/ of commutative polynomial rings over a field $K$ into a direct sum of finitely many $K$-linear subspaces of a certain type. These decompositions generalize the {\it Stanley decompositions}\/ of a given finitely generated commutative graded $K$-algebra $R$ studied in \cite{SW}. A Stanley decomposition of $R$ encodes a lot of information about $R$; for example, the Hilbert function of $R$ can be easily read off from it. It has been noted in several other places in the literature that Stanley decompositions are ideally suited to avoid the assumption of general position, and, for example, can also be used to circumvent the use of generic hyperplane sections in the proof of Gotzmann's Regularity Theorem \cite{MS}.

The present paper grew out of an attempt by the authors to better understand Dub\'e's article \cite{Dube}. We modified the notions of
cone decompositions and the argument of \cite{Dube} to work for a subclass of the class of algebras of solvable type over an arbitrary coefficient field $K$, namely the ones whose commutation relations are given by {\it quadric}\/ polynomials. (This restriction was necessary in order to be able to freely homogenize the algebras and ideals under consideration.) We refer to Section~\ref{Preliminaries on Algebras over Fields} below for precise definitions, and only note here that this class of algebras includes commutative polynomial rings, as well as Weyl algebras  and the universal enveloping algebra of a finite-dimensional Lie algebra. Many more examples of quadric algebras of solvable type can be found in \cite[Section~I.5]{Li}. (E.g., Clifford algebras, in particular Grassmann algebras, as well as $q$-Heisenberg algebras and the Manin algebra of $2\times 2$-quantum matrices.)

Let now $K$ be a field, and let $R=K\<x\>$ be a quadric $K$-algebra of solvable type with respect to $x=(x_1,\dots,x_N)$ and a monomial ordering $\leq$ of $\N^N$.  Our main theorem is:

\begin{theorem} \label{Main Theorem}
Every left ideal of $R$ generated by elements of degree at most $d$ has a Gr\"obner basis consisting of elements of degree at most
$$D(N,d):=2\left(\frac{d^2}{2}+d\right)^{2^{N-1}}\hspace{-0.7cm}.$$
\end{theorem}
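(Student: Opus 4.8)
The plan is to follow Dubé's strategy, adapted to the quadric solvable setting. First I would reduce to the homogeneous case: given a left ideal $I$ generated by elements of degree $\leq d$, homogenize with respect to a new central variable $x_0$ to obtain a homogeneous left ideal $I^{\hom}$ in the homogenized algebra $R[x_0]$ (this is exactly where the quadric hypothesis is used, since it guarantees the homogenized commutation relations still define an algebra of solvable type, with the same underlying monomial structure — a fact I expect to have been set up in Section~\ref{Preliminaries on Algebras over Fields}). A Gröbner basis of $I^{\hom}$ of degree $\leq D$ dehomogenizes to a Gröbner basis of $I$ of degree $\leq D$, so it suffices to bound the degrees for $I^{\hom}$ in the $(N+1)$-variable homogeneous algebra. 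Note that $D(N,d)$ has the shape $2 a^{2^{N-1}}$ with $a = d^2/2 + d$, and the recursion I will set up accounts for the jump from $N$ to $N+1$ variables (since the homogenization adds one variable, the relevant exponent is $2^{(N+1)-1-1}=2^{N-1}$).

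Next, the combinatorial core: I would define cone decompositions (the analogue of Stanley decompositions) of the $K$-vector space $R[x_0]$ adapted to the leading-term structure. The key point is that passing to leading monomials, $\lm(I^{\hom})$ is a monomial left ideal, and because in an algebra of solvable type multiplication respects leading monomials (up to nonzero scalars), the combinatorics of monomial ideals in $R[x_0]$ is identical to the commutative case. Thus I can import Dubé's machinery essentially verbatim: every finitely generated monomial submodule admits a cone decomposition, one tracks the notions of \emph{exact} and \emph{$k$-standard} decompositions, and one proves the crucial splitting lemma that lets a $k$-standard decomposition be refined while controlling the maximum degree of the cone generators. The upshot, following Dubé, is a recursive degree bound: if $q$ bounds the degrees of a generating set and $n$ is the number of variables, the cone-decomposition argument yields a bound of the form (roughly) $2(q^2/2+q)^{2^{n-2}}$ on the degrees occurring, via a recursion $\Phi(n,q) \leq 2\,\Phi(n-1, \cdot)$-type inequality obtained by slicing off one variable.

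The third step is to connect the cone-decomposition bound back to Gröbner bases. One runs (the homogeneous version of) Buchberger's algorithm and observes that at each stage the module generated by leading terms of the partial basis grows; using the macaulay-style bound that a strictly increasing chain of monomial submodules generated in degrees $\leq$ some value must stabilize within a controlled degree, together with the cone-decomposition estimate applied to $\lm(I^{\hom})$ and its "complementary" Hilbert-function data, one shows every S-polynomial that needs to be adjoined has degree below $D(N+1-1,d)=D(N,d)$. Degrees are non-increasing under normal-form reduction in the homogeneous graded setting (here I would invoke the degree-bounds-for-normal-forms results of Section~\ref{Degree bounds for normal forms}), so no element of degree exceeding the bound is ever produced, and the algorithm terminates with a Gröbner basis within the stated degree.

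The main obstacle I anticipate is not the commutative combinatorics — which transfers cleanly — but verifying that every step of Dubé's argument that implicitly uses commutativity (homogenization of ideals and of the algebra, behavior of leading terms under multiplication, exactness of the relevant short exact sequences of graded vector spaces, and the interplay between a cone decomposition of $\lm(I^{\hom})$ and actual degree bounds for a Gröbner basis) survives in the solvable-type setting. Concretely, the delicate point is that in a noncommutative algebra the "division" underlying normal-form computation and the passage between $I$ and $\lm(I)$ must be shown to preserve the degree filtration compatibly with homogenization; the quadric hypothesis is precisely what is needed here, and I would expect the bulk of the work (done in the earlier sections cited above) to be exactly this bookkeeping, after which the doubly-exponential bound $D(N,d)$ falls out of the same recursion as in \cite{Dube}.
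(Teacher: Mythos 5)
Your first step (homogenize to $R^*$, a quadric solvable algebra on $N+1$ generators, prove the bound there, dehomogenize) matches the paper exactly, including the exponent bookkeeping $2^{(N+1)-1-1}=2^{N-1}$. Your second step also goes in the right direction, but it misses what is arguably the paper's main new technical device: in a noncommutative algebra of solvable type, a left ideal is \emph{not} spanned by monomials, so a Stanley/monomial cone decomposition of the ideal $I$ itself is unavailable. The paper therefore introduces cones of the more general shape $C(w,y,h)=C(w,y)\,h$ with $h$ a nonzero homogeneous element of $R$, and builds a cone decomposition of $I$ (not of $\lm(I)$) of this type, via the direct sum $I=(f_1)\oplus\nf_{G_2}(R)f_2\oplus\cdots\oplus\nf_{G_n}(R)f_n$. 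Your sketch only mentions cone decompositions ``adapted to the leading-term structure'' and says the combinatorics of monomial ideals transfers verbatim, which is true for $\nf_G(R)$ but not for $I$; the extension to non-monomial cones, and the verification that the notions of $d$-standard and exact decompositions and the Macaulay constants still behave correctly there, is exactly the noncommutative bookkeeping you anticipated would be needed, yet your proposal does not actually do it.

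The real gap is in your third step. The paper does \emph{not} run Buchberger's algorithm and control its growth; it argues directly about the reduced Gr\"obner basis $G$ of $I$. Concretely: (i) the SPLIT algorithm applied to $\lm(G)$ yields a standard exact monomial cone decomposition of $\nf_G(R)$ with Macaulay constants $b_0\geq\cdots\geq b_{N+1}=0$, and one shows $\deg(g)\leq b_0$ for every $g\in G$ because the low-degree part of $G$ already generates $\lm(I)$ and $G$ is reduced; (ii) the cone decomposition of $I$ gives Macaulay constants $a_0\geq\cdots\geq a_{N+1}=d$; (iii) since $H_I+H_{\nf_G(R)}=H_R=\binom{T+N-1}{N-1}$ the two Hilbert polynomials, written in terms of $a_i$ and $b_i$, satisfy a rigid identity, and Dub\'e's combinatorial lemma then bounds $a_1+b_1\leq D(N-1,d)$, hence $b_0\leq\max\{a_1,b_1\}\leq D(N-1,d)$ after a short additional argument. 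Your alternative route --- track the strictly increasing chain $\lm(G_0)\subsetneq\lm(G_1)\subsetneq\cdots$ produced by Buchberger and appeal to a Macaulay-style stabilization bound --- does not give this conclusion as stated: the intermediate $G_k$ are not Gr\"obner bases, so $\nf_{G_k}(R)$ is not the complement of $\lm(I)$ and the cone-decomposition estimate does not apply to it, and an S-polynomial $S(f,g)$ can a priori have degree up to $\deg(f)+\deg(g)$, so bounding ``each stage'' leads to an uncontrolled iterated blowup rather than a single doubly-exponential bound. You would need a separate argument (not supplied) to show the chain stabilizes at degree $D(N-1,d)$; the whole point of Dub\'e's Hilbert-polynomial approach is to bypass this by bounding the endpoint directly.
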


Theorem~\ref{Main Theorem} is deduced from the homogeneous case:  we first show that if $R$ is homogeneous, then the reduced Gr\"obner basis of every left ideal of $R$ generated by homogeneous elements of degree at most $d$ consists of elements of degree at most $D(N-1,d)$, and then obtain the bound in Theorem~\ref{Main Theorem} by dehomogenizing.
Our theorem also yields uniform bounds for reduced Gr\"obner bases in the inhomogeneous case. (See \cite{Latyshev, Weispfenning} for non-explicit uniform degree bounds for reduced Gr\"obner bases in commutative polynomial rings over fields.)
For example, if the monomial ordering $\leq$ is degree-compatible, then the reduced Gr\"obner basis of every left ideal of $R$ generated by elements of degree at most $d$  consists of elements of degree at most $D(N,d)$. (Corollary~\ref{Reduced GB for deg-compatible}.) In the case where the monomial ordering is not degree-compatible, the issues are somewhat more subtle:

\begin{cor}\label{Corollary 1 to Main Theorem}
The elements of the reduced Gr\"obner basis with respect to $\leq$ of every left ideal of $R$ generated by elements of degree at most $d$ have degree at most $$2\,D(N+1,d)\,(N+1)\,N^{N/2}.$$
\end{cor}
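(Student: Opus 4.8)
The plan is to bootstrap from the degree-compatible case (Corollary~\ref{Reduced GB for deg-compatible}) by homogenization, using a weight vector to convert the non-degree-compatible ordering $\leq$ into a degree-compatible one on a controlled range of monomials.

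\emph{Homogenization.} First I would adjoin a homogenizing variable $x_0$ and pass to the homogenized algebra $R^h=K\<x_0,x_1,\dots,x_N\>$; since $R$ is quadric, $R^h$ is again a quadric algebra of solvable type, now in $N+1$ variables and with homogeneous commutation relations, so that multiplication in $R^h$ is graded. If $I$ is a left ideal of $R$ generated in degree at most $d$, then its homogenization $I^h$ is a left ideal of $R^h$ generated by homogeneous elements of degree at most $d$; being homogeneous, $I^h$ has (for any monomial ordering on $\N^{N+1}$) a Gr\"obner basis consisting of homogeneous elements, and by Theorem~\ref{Main Theorem} applied in the $(N+1)$-variable algebra $R^h$ we may take such a basis $G$ of degree at most $D(N+1,d)$. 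Choosing the extension $\leq^h$ of $\leq$ to $\N^{N+1}$ so that dehomogenizing $G$ yields a Gr\"obner basis of $I$ with respect to $\leq$ (still of degree at most $D(N+1,d)$, as dehomogenization does not raise degrees), it remains to bound the degree increase incurred when this Gr\"obner basis of $I$ is interreduced to the reduced one.

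\emph{Weight vector and the degree increase.} The leading monomials of the reduced Gr\"obner basis are the minimal generators of $\lm(I)$, hence of degree at most $D(N+1,d)$; so each reduced basis element has the form $m-\nf_G(m)$ with $\abs{m}\leq D(N+1,d)$, and the point is to bound $\deg\nf_G(m)$. Here I would use Robbiano's representation of $\leq$ by a real matrix to produce, for a target degree $T$, an integer weight vector $w=(w_1,\dots,w_N)$ with all $w_i\geq1$ which represents $\leq$ faithfully on all monomials of total degree at most $T$, with $\max_i w_i$ bounded --- via a Cramer/Hadamard estimate on the $N\times N$ determinants that arise --- by roughly $2\,(N+1)\,N^{N/2}$. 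Because $w$ refines $\leq$, in each reduction step $f\mapsto f-c\,M\,g$ (where $M\cdot\lm(g)=\lm(f)$, $g\in G$, and $M$ a term) every monomial of $f$ and of $Mg$ has $w$-degree at most that of $\lm(f)$; hence the entire reduction of $m$ to its normal form is non-increasing in $w$-degree, so every monomial occurring in it has $w$-degree at most $w\cdot m\leq(\max_i w_i)\abs{m}$ and therefore total degree at most $(\max_i w_i)\abs{m}$. Taking $T:=(\max_i w_i)\,D(N+1,d)$ makes this self-consistent and yields $\deg\nf_G(m)\leq(\max_i w_i)\,D(N+1,d)\leq 2\,D(N+1,d)\,(N+1)\,N^{N/2}$, which is the asserted bound; dehomogenizing transfers it to the reduced Gr\"obner basis of $I$ itself.

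\emph{Main obstacle.} The crux is the weight-vector step. A general monomial ordering is not a weight ordering, so $w$ can only be required to be faithful to $\leq$ on a bounded range of monomials, and that range depends a priori on $\max_i w_i$; the circularity is broken precisely by the observation that reduction is non-increasing in $w$-degree, but one must extract $w$ from the Robbiano matrix in such a way that $\max_i w_i$ is controlled by $N$ and the target degree alone, not by the --- possibly enormous --- entries of the matrix of $\leq$. Carrying out this extraction and the accompanying Hadamard estimate is the technical heart of the argument. A secondary point is to verify the homogenization facts in the solvable (non-commutative) setting: that $R^h$ is quadric of solvable type with graded multiplication, that $I^h$ is $x_0$-saturated, and that the chosen $\leq^h$ makes dehomogenization of Gr\"obner bases behave as claimed, so that the $w$-degree bookkeeping above is legitimate.
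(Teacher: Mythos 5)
Your overall strategy is the same as the paper's: homogenize (the paper does this inside Corollary~\ref{Detailed bound}, applying Proposition~\ref{Main Prop} to $R^*$ to get a Gr\"obner basis $G$ of $I$ of degree at most $D(N,d)$), then interreduce and use a weight-vector argument (Lemma~\ref{Reduction-Sequence}, via Proposition~\ref{Approximate leq by wt function}) to bound the degree growth during reduction. But the weight-vector step, which you correctly identify as the crux, has a genuine gap: you require the weight vector $w$ to represent $\leq$ faithfully on \emph{all} monomials of total degree at most the largest degree $T$ encountered during the reduction, and you then try to break the resulting circularity by ``taking $T:=(\max_i w_i)\,D(N+1,d)$''. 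This does not work. Proposition~\ref{Approximate leq by wt function} gives $\max_i w_i \leq 2T(N+1)N^{N/2}$ (the bound grows \emph{linearly} in the target degree $T$, not the degree-independent $2(N+1)N^{N/2}$ you write), so substituting your choice of $T$ only gives $\max_i w_i \leq 2(\max_i w_i)\,D(N+1,d)(N+1)N^{N/2}$, which is vacuous; there is no bounded fixed point.

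The way out, which Lemma~\ref{Reduction-Sequence} implements, is that the circularity is illusory: the weight function does \emph{not} need to be faithful at the large intermediate degrees. It suffices that $\omega$ be faithful to $\leq$ up to the degree bound $D$ for the Gr\"obner basis elements $g\in G$ \emph{and} the commutator polynomials $p_{ij}$. Then (i) since $\wt_\omega(p_{ij})<\wt_\omega(x_ix_j)$, the weight is additive across multiplication: $\wt_\omega(x^\alpha\cdot x^\beta)=\wt_\omega(x^\alpha)+\wt_\omega(x^\beta)$ for \emph{all} $\alpha,\beta$ (this is the solvable-type analogue of the multiplicative property, and you do not address how to control the lower-order terms created by non-commutativity); and (ii) $\wt_\omega(g)=\wt_\omega(\lm(g))$ holds because $\deg(g)\leq D$. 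From these two facts alone one shows $\wt_\omega(x^\beta g)=\wt_\omega(\lm(x^\beta g))\leq\wt_\omega(f)$ for each reduction step $f\mapsto f-cx^\beta g$, so the reduction is $\wt_\omega$-nonincreasing, and then \eqref{weight inequality, f} converts this into a total-degree bound with multiplier $\|\omega\|\leq 2D(N+1)N^{N^{}/2}$. No faithfulness at degree $T$ is ever invoked; the additivity of $\wt_\omega$ transports the low-degree control to arbitrarily high degrees. You need to replace the ``faithful up to $T$'' requirement by this weaker, non-circular requirement; as written, your argument does not close.

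A secondary remark: applying Theorem~\ref{Main Theorem} to the $(N+1)$-generator algebra $R^*$ gives $D(N+1,d)$, but since the homogenized ideal $I^*$ is homogeneous one should apply Proposition~\ref{Main Prop} directly (this is what Corollary~\ref{Detailed bound} does), obtaining the sharper bound $D(N,d)$ for $G$; this is what then gets multiplied by $2D(N,d)(N+1)N^{N/2}$ in the interreduction step.
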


It is routine to deduce from Theorem~\ref{Main Theorem}:

\begin{cor} \label{Corollary 2 to Main Theorem}
Suppose the monomial ordering $\leq$ is degree-compatible.
Let $f_1,\dots,f_n\in R$ be of degree at most $d$, and let $f\in R$. If there are $y_1,\dots,y_n\in R$ such that
$$y_1 f_1 + \cdots + y_n f_n = f,$$
then there are such $y_i$ of degree at most $\deg(f)+D(N,d)$. Moreover, the left module of solutions to the linear homogeneous equation
$$y_1 f_1 + \cdots + y_n f_n=0$$
is generated by solutions all of whose components have degree at most $3D(N,d)$.
\end{cor}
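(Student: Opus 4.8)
The plan is to deduce both assertions from Theorem~\ref{Main Theorem}, applied to the left ideal $I:=Rf_1+\cdots+Rf_n$ of $R$, together with the degree estimates for normal forms from Section~\ref{Degree bounds for normal forms} and the usual cofactor bookkeeping in Buchberger's algorithm (as set up for algebras of solvable type in Section~\ref{Grobner bases in Algebras of Solvable Type} and \cite{BGV}). The key preliminary point is that one can fix a Gr\"obner basis $G$ of $I$ with $\deg g\le D(N,d)$ for all $g\in G$ \emph{together with} representations $g=\sum_{i=1}^{n}A_{g,i}f_i$ satisfying $\deg(A_{g,i}f_i)\le D(N,d)$ for all $g$ and $i$. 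To produce these, run Buchberger's algorithm starting from $f_1,\dots,f_n$, processing critical pairs in order of non-decreasing degree of their least common multiple. Since $\le$ is degree-compatible and, by Theorem~\ref{Main Theorem} (see also Corollary~\ref{Reduced GB for deg-compatible}), the reduced Gr\"obner basis of $I$ has degree $\le D(N,d)$, the set of polynomials accumulated is already a Gr\"obner basis of $I$ once all critical pairs of lcm-degree $\le D(N,d)$ have been treated, and up to that point every polynomial constructed has degree $\le D(N,d)$. Propagating the cofactors of $f_1,\dots,f_n$ through each S-polynomial formation and each reduction step and arguing by induction then yields the desired bounded representations --- the point being that in a degree-compatible ordering neither forming the S-polynomial of two polynomials whose lcm has degree $\delta$ nor reducing it introduces anything of degree exceeding $\delta$.

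Granting this, the ideal membership bound follows at once. Given $f=\sum_i y_if_i\in I$, reduce $f$ modulo $G$; as $f\in I$ and $G$ is a Gr\"obner basis, this normal form is $0$, so the division algorithm produces $f=\sum_{g\in G}q_g\, g$ with $\deg(q_g\, g)\le\deg f$ for every $g$ (here degree-compatibility of $\le$ is essential, cf.\ Section~\ref{Degree bounds for normal forms}). Substituting $g=\sum_i A_{g,i}f_i$ and collecting terms gives $f=\sum_i y'_if_i$ with $y'_i:=\sum_{g\in G}q_g A_{g,i}$, and $\deg(y'_if_i)\le\max_{g\in G}\bigl(\deg q_g+\deg(A_{g,i}f_i)\bigr)\le\deg f+D(N,d)$, hence $\deg y'_i\le\deg f+D(N,d)$.

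For the syzygy module I would invoke Schreyer's theorem. With respect to the module ordering on $R^{\abs{G}}$ induced by $G$, a generating set of $\Syz(G)\subseteq R^{\abs{G}}$ is furnished by the syzygies $\tau_{g,g'}$ arising from the S-polynomials of $G$, and each component of $\tau_{g,g'}$ has degree at most $\deg\lcm(\lm g,\lm g')\le 2D(N,d)$. The $R$-linear map $R^{\abs{G}}\to R^n$ sending the $g$-th basis vector to $(A_{g,1},\dots,A_{g,n})$ carries $\Syz(G)$ into $\Syz(f_1,\dots,f_n)$, and by the standard transfer argument the images of the $\tau_{g,g'}$, together with the syzygies $\mathbf e_i-\sum_{g\in G}B_{i,g}\,(A_{g,1},\dots,A_{g,n})$ --- where $f_i=\sum_{g\in G}B_{i,g}\,g$ is the representation obtained by dividing $f_i$ modulo $G$, so that $\deg(B_{i,g}\,g)\le\deg f_i\le d$ --- generate $\Syz(f_1,\dots,f_n)$. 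A short computation, using $\deg A_{g,i}\le D(N,d)$, bounds the components of all these generators by $2D(N,d)+D(N,d)=3D(N,d)$.

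The hard part is the preliminary step: controlling the degrees of the cofactors $A_{g,i}$ that express a Gr\"obner basis of $I$ in terms of the given generators $f_1,\dots,f_n$. Here Theorem~\ref{Main Theorem} bounds the Gr\"obner basis itself but cannot be invoked as a black box, so one genuinely has to check that the degree-restricted run of Buchberger's algorithm returns a Gr\"obner basis of the correct ideal while simultaneously keeping all cofactors within $D(N,d)$; once that is in place, the subsequent divisions and the Schreyer bookkeeping are routine.
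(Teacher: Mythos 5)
Your overall strategy is the same as the paper's: fix a Gr\"obner basis $G$ of $I$ with $\deg g\le D(N,d)$ and cofactor representations $g=\sum_iA_{g,i}f_i$ with $\deg(A_{g,i}f_i)\le D(N,d)$, get the membership bound by dividing $f$ by $G$ and substituting, and get the syzygy bound from the $S$-polynomial syzygies of $G$ together with the transfer matrix built from $A$ and a matrix $B$ with $f_i=\sum_gB_{i,g}g$, $\deg(B_{i,g}g)\le d$. The latter two steps, and your degree bookkeeping there, agree with the paper's Proposition~\ref{Degree-compatible} and its treatment of syzygies (whose ``Schreyer'' ingredient is \cite[Theorem~3.15]{KR-W}).

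The gap is in the preliminary step, which is exactly Corollary~\ref{Detailed bound} in the paper. You propose to obtain it by running Buchberger's algorithm on $\{f_1,\dots,f_n\}$ in $R$, processing critical pairs in lcm-degree order and stopping at lcm-degree $D(N,d)$; you claim this already yields a Gr\"obner basis because the \emph{reduced} Gr\"obner basis of $I$ has degree $\le D(N,d)$. That inference is not valid for inhomogeneous ideals, even with a degree-compatible ordering. The truncated Buchberger criterion at degree $\delta$ certifies that $\lm f$ is divisible by some $\lm g_i$ only for those $f\in I$ that \emph{already admit} a representation $f=\sum_ip_ig_i$ over the current basis with $\max_i\deg(p_ig_i)\le\delta$; it cannot produce such bounded representations, and a low-degree element of $I$ may only become visible through cancellation at a much higher degree. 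To see that ``reduced-GB degree'' is not the right threshold, take $f_1=x^2$, $f_2=x^2+x$ in $K[x]$: the reduced Gr\"obner basis $\{x\}$ has degree $1$, but no critical pair has lcm-degree $\le 1$, so truncation at degree $1$ returns $\{f_1,f_2\}$. (Your threshold $D(N,d)$ is larger, but you have given no reason it suffices in general.) The paper avoids all of this by passing to the Rees algebra $R^*$: Proposition~\ref{Main Prop} bounds the degree of the reduced Gr\"obner basis $H$ of the \emph{homogeneous} left ideal $(f_1^*,\dots,f_n^*)$ of $R^*$ by $D(N,d)$; by homogeneity each $h\in H$ automatically has homogeneous cofactors with $\deg(y_{h,i}f_i^*)=\deg h\le D(N,d)$, with no algorithm analysis; and Corollary~\ref{Dehomogenization of GB} then dehomogenizes $H$ to the desired $G=H_*$. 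A degree-truncated Buchberger argument can be made rigorous, but only by running it on $(f_1^*,\dots,f_n^*)$ in $R^*$ --- the homogenization you were trying to avoid --- and at that point the cofactor bounds are immediate rather than requiring per-step tracking. So the step you flag as ``the hard part'' is not merely unproved: the justification you give for it is incorrect, and once repaired it collapses to the paper's homogenization argument.
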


For $R=K[x_1,\dots,x_N]$, this corollary is essentially a classical result due to Hermann \cite{Hermann} (corrected and extended by Seidenberg \cite{Seidenberg}). In the case where $R$ is a Weyl algebra, the first statement in this corollary also partly generalizes a result of Grigoriev \cite{Grigoriev} who showed that if a system of linear equations
\begin{equation}\label{System} \tag{$\ast$}
y_1 a_{1j} + \cdots + y_n a_{nj} = b_j \qquad (j=1,\dots,m)
\end{equation}
with coefficients $a_{ij},b_j\in R$ of degree at most $d$
has a solution $(y_1,\dots,y_n)$ in $R$, then this system admits such a solution with $\deg(y_i)\leq (md)^{2^{O(N)}}$ for $i=1,\dots,n$. The methods of \cite{Grigoriev} are quite different from ours, and follow the lead of Hermann  and Seidenberg.
By arguments as in \cite[Corollary~3.4 and Lemma~4.2]{A} one may obtain uniform degree bounds on solutions to systems of linear equations such as \eqref{System} by reduction to Corollary~\ref{Corollary 2 to Main Theorem} (the case $m=1$); however, this yields bounds of the form $d^{2^{O(mN)}}$ that are worse than those obtained by Grigoriev. (Similarly if one tries to use Nagata's ``idealization'' technique as in \cite{Apel-1}.)
Probably,  Corollary~\ref{Corollary 2 to Main Theorem} could be extended from a single linear equation to systems of linear equations with our techniques, by considering Gr\"obner bases of submodules of finitely generated free modules over $R$, as carried out in \cite{Chistov-Grigoriev-2007} in the case of Weyl algebras. 

\medskip

By virtue of an observation from \cite{BGV}, our main theorem, although ostensibly only about one-sided ideals, also has consequences for their two-sided counterparts:

\begin{cor}\label{Corollary 3 to Main Theorem}
Let $f_1,\dots,f_n\in R$ be of degree at most $d$, and let $f\in R$. The two-sided ideal of $R$ generated by $f_1,\dots,f_n$ has a Gr\"obner basis whose elements have degree at most $D(2N,d)$.  If $\leq$ is degree-compatible, and
there are a finite index set $J$ and $y_{ij},z_{ij}\in R$ \textup{(}$i=1,\dots, n$, $j\in J$\textup{)} such that
$$f = \sum_{j\in J} y_{1j} f_1 z_{1j} + \cdots + \sum_{j\in J} y_{nj} f_n z_{nj}$$
then there are such $J$ and $y_{ij}$, $z_{ij}$ with
$$\deg(y_{ij}),\deg(z_{ij})\leq \deg(f)+D(2N,d)\qquad\text{for $i=1,\dots,n$, $j\in J$.}$$
\end{cor}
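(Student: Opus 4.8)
The plan is to transfer both assertions to the \emph{enveloping algebra} $R^{\env}:=R\otimes_K R^{\op}$, exploiting the observation from \cite{BGV} that $R^{\env}$ is again a quadric algebra of solvable type, now with respect to the $2N$ generators $x_1\otimes 1,\dots,x_N\otimes 1,\,1\otimes x_1,\dots,1\otimes x_N$ and a suitable monomial ordering. First I would record the dictionary: viewing $R$ as a left $R^{\env}$-module via $(a\otimes b)\cdot r:=arb$, the multiplication map $\mu\colon R^{\env}\to R$, $a\otimes b\mapsto ab$ (equivalently $\zeta\mapsto\zeta\cdot 1$), is a surjective homomorphism of left $R^{\env}$-modules whose kernel $L$ is the left ideal of $R^{\env}$ generated by the $N$ degree-one elements $x_k\otimes 1-1\otimes x_k$; moreover $\mu$ does not raise degree, since it carries the PBW monomial $x^\alpha\otimes x^\beta$ of $R^{\env}$ to $x^\alpha x^\beta\in R$, of degree at most $\abs\alpha+\abs\beta$. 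One may assume $d\geq 1$, the case $d=0$ (where the $f_i$ lie in $K$) being trivial. Put $\tilde f_i:=f_i\otimes 1$, and let $\tilde I\subseteq R^{\env}$ be the left ideal generated by $\tilde f_1,\dots,\tilde f_n$ together with the $N$ generators of $L$ --- in all, $n+N$ generators of degree at most $d$; since $\mu$ is $R^{\env}$-linear and $\mu(L)=0$, the image $\mu(\tilde I)$ is exactly the two-sided ideal $I$ of $R$ generated by $f_1,\dots,f_n$.

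For the Gr\"obner basis bound I would apply Theorem~\ref{Main Theorem} to $\tilde I\subseteq R^{\env}$ (whose $n+N$ generators have degree at most $d$), obtaining a left Gr\"obner basis of degree at most $D(2N,d)$, and then invoke the correspondence of \cite{BGV}: the image under the degree-non-increasing map $\mu$ of a suitable part of this basis is a two-sided Gr\"obner basis of $I$, whose elements therefore have degree at most $D(2N,d)$.

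For the statement on the representation $f=\sum_{j\in J}\sum_i y_{ij}f_iz_{ij}$, I would now equip $R^{\env}$ with a degree-compatible monomial ordering (e.g.\ one refining the total-degree preorder on $\N^{2N}$) and proceed as follows. From any such representation, the element $\eta:=\sum_{i,j}(y_{ij}\otimes z_{ij})\tilde f_i$ satisfies $\mu(\eta)=\sum_{i,j}y_{ij}f_iz_{ij}=f=\mu(f\otimes 1)$, so $f\otimes 1-\eta\in\ker\mu=L\subseteq\tilde I$, hence $f\otimes 1\in\tilde I$, with $\deg(f\otimes 1)=\deg(f)$. Applying Corollary~\ref{Corollary 2 to Main Theorem} in $R^{\env}$ --- a quadric algebra of solvable type on $2N$ generators, equipped with the above degree-compatible ordering --- to the membership of $f\otimes 1$ in $\tilde I$ and its $n+N$ generators of degree at most $d$ yields $a_i,b_k\in R^{\env}$ of degree at most $\deg(f)+D(2N,d)$ with
$$f\otimes 1=\sum_i a_i\,\tilde f_i+\sum_{k=1}^N b_k\,(x_k\otimes 1-1\otimes x_k).$$
Expanding each $a_i$ in the PBW basis $\{x^\alpha\otimes x^\beta\}$ of $R^{\env}$ and grouping terms over a common finite index set $J$ gives $a_i=\sum_{j\in J}y_{ij}\otimes z_{ij}$ with each elementary tensor $y_{ij}\otimes z_{ij}$ of degree at most $\deg(a_i)$, whence $\deg(y_{ij}),\deg(z_{ij})\leq\deg(f)+D(2N,d)$; applying $\mu$ to the displayed identity (which annihilates $L$) then recovers $f=\sum_{j\in J}\sum_i y_{ij}f_iz_{ij}$.

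The main obstacle is not the chain of deductions, which is largely bookkeeping, but pinning down the structural input about $R^{\env}$ against the precise definitions of Section~\ref{Preliminaries on Algebras over Fields}: that $R\otimes_K R^{\op}$ really is a \emph{quadric} algebra of solvable type on $2N$ generators for an appropriate ordering, that $\ker\mu$ equals the left ideal $L$ (not merely contains it), and that $\mu$ is degree-non-increasing. A minor point worth flagging is that the two assertions invoke monomial orderings on $R^{\env}$ of different flavors --- one for which the correspondence of \cite{BGV} carries a left Gr\"obner basis to a two-sided one, and a degree-compatible one for Corollary~\ref{Corollary 2 to Main Theorem} --- but since the two parts are used independently, these choices do not conflict.
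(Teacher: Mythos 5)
Your proposal is correct and takes essentially the same route as the paper: pass to the enveloping algebra $R^{\env}=R\otimes_K R^{\op}$, viewed as a quadric solvable algebra on $2N$ generators, lift the two-sided ideal to the left ideal $\mu^{-1}(I)$ generated by the $f_i\otimes 1$ together with the $N$ commutators $x_k\otimes 1-1\otimes x_k$, apply the one-sided degree bounds there (Corollary~\ref{Detailed bound} for the Gr\"obner basis assertion, Proposition~\ref{Degree-compatible} for the representation assertion), and push back down along $\mu$ using the one-sided/two-sided Gr\"obner basis correspondence (Proposition~\ref{Two-sided bases from one-sided}). The one place your write-up is actually slightly \emph{more} careful than the paper's is in noting that the lexicographic-product ordering $\leq^{\env}$ used in the proposition "$R\otimes_K R'$ is of solvable type" need not be degree-compatible even when $\leq$ is, and that one should instead fix a degree-compatible refinement (e.g.\ order first by total degree and break ties by the lex product), which still makes $R^{\env}$ of solvable type because the quadric hypothesis forces the $p_{ij}$ to have degree $\leq 2$; the paper's proof invokes Proposition~\ref{Degree-compatible} on $R^{\env}$ without comment on this point.
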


Weyl algebras are simple (i.e., their only two-sided ideals are the trivial ones). Hence in this case, the previous corollary is vacuous; however, there do exist many non-commutative non-simple algebras satisfying the hypotheses stated before Theorem~\ref{Main Theorem}, for example, among the universal enveloping algebras of finite-dimensional Lie algebras.

\medskip

As shown in \cite{OS}, Gr\"obner basis theory also extends in a straightforward way to certain $K$-algebras closely related to Weyl algebras, namely the rings $R_n(K)$ of partial differential operators with rational functions in $K(x)=K(x_1,\dots,x_n)$ as coefficients. Here $R_n(K)$ is the $K$-algebra generated by $K(x)$ and pairwise distinct symbols $\partial_1,\dots,\partial_n$ subject to the commutation relations
$$\partial_i \partial_j = \partial_j \partial_i, \quad \partial_i c(x)=c(x)\partial_i+\frac{\partial c(x)}{\partial x_i}\qquad (1\leq i\leq j\leq n,\ c(x)\in K(x)).$$
By \cite[Proposition~1.4.13]{SST},  our main theorem implies the existence of a doubly-expo\-nential degree bound for Gr\"obner bases for left ideals in $R_n(K)$:  every left ideal of $R_n(K)$ generated by elements of degree at most $d$ has a Gr\"obner basis with respect to a given monomial ordering $\leq$ of $\N^n$ consisting of elements of degree at most $D(2n,d)$. As above, this result can then be used  to prove an analogue of Corollary~\ref{Corollary 2 to Main Theorem} for $R_n(K)$ (also partially generalizing \cite{Grigoriev}); we omit the details.

\medskip

Assume now that $K$ has characteristic zero, and let $R = A_n(K)$ be the $n$-th Weyl algebra. A proper left ideal $I$ of $R$ is called
{\em holonomic} if the Gelfand-Kirillov dimension of $R/I$ equals $n$, exactly half of the dimension
of $R$. The {\em Bernstein inequality}, versions of which are also known as
the {\em Fundamental Theorems of Algebraic Analysis} (see Theorems 1.4.5 and 1.4.6 of \cite{SST}),
states that $n\leq \dim R/I < 2n$. Therefore, holonomic ideals are proper ideals of the minimal possible dimension,  which brings up an analogy with zero-dimensional ideals in the commutative polynomial setting. Now, there is a bound on the degrees of the elements of a reduced Gr\"obner basis of a zero-dimensional
ideal in a commutative polynomial ring over a field generated in degree at most $d$ that is ({\em single}) exponential. Namely, this is the B\'ezout
bound: $d^n$, where $n$ is the number of indeterminates. (See, e.g., \cite{Lazard}.) Holonomic ideals of $R$ are closely related to zero-dimensional left ideals of the algebra $R_n(K)=K(x)\otimes_{K[x]}R$
of differential operators with coefficients in rational functions: if $I$ is a holonomic ideal of $R$, then the left ideal of $R_n(K)$ generated by $I$ is zero-dimensional, and if conversely $J$ is a zero-dimensional left ideal of $R_n(K)$ then $J\cap R$ is a holonomic ideal; see \cite[Corollary~1.4.14 and Theorem~1.4.15]{SST}. Only a doubly-exponential B\'ezout
bound is known \cite{Grigoriev:weak-Bezout} for zero-dimensional ideals of $R_n(K)$.

So far, to our knowledge, a (single) exponential bound for the degrees of elements in Gr\"obner bases has been produced only for one very special class of holonomic
ideals used in a particular application. These are the {\em GKZ-hypergeometric ideals,}\/ with a homogeneity assumption (cf.~\cite[Corollary 4.1.2]{SST}).
It would be interesting to see if holonomicity (zero-dimensionality) implies a general exponential bound in the algebras $A_n(K)$ ($R_n(K)$, respectively),
as well as whether there is a better bound for ideals of minimal possible dimension in solvable algebras in general.

\medskip

Finally, we would like to mention that although our study is limited to the most frequently used type of bases, Gr\"obner bases, there are other kinds of ``standard bases'' for ideals that may be introduced for algebras of solvable type. For example, \cite{Hausdorf-Seiler-Steinwandt-2002} explores involutive bases in the Weyl algebra.

\subsection{Organization of the paper}
Sections~1 and 2 mainly have preliminary character, and deal with generalities on monomials and $K$-algebras, respectively. In Section~3 we review the fundamentals of Gr\"obner basis theory for algebras of solvable type. In Section~4 we adapt Dub\'e's method to the non-commutative situation, and in Section~5 we prove the main theorem and its corollaries~\ref{Corollary 1 to Main Theorem} and \ref{Corollary 2 to Main Theorem}. In Section~6 we study the two-sided situation.

\subsection{Acknowledgments} We would like to express our gratitude to Dima Grigoriev, Viktor Levandovskyy and the anonymous referees for their numerous suggestions and corrections which helped us to improve the paper.

\section{Monomials and Monomial Ideals} \label{Monomials and Monomial Ideals}

In this section we collect a few notations and conventions concerning multi-indices, monomials and monomial ideals.

\subsection{Multi-indices}
Throughout this note, we let $d$, $m$, $N$ and $n$ range over the set $\N=\{0,1,2,\dots\}$
of natural numbers, and $\alpha$, $\beta$, $\gamma$ and $\lambda$
range over $\N^N$. We let $\N^0=\{0\}$ by convention, and identify
$\N^N$ with the subset $\N^N\times\{0\}$ of $\N^{N+1}$ in the natural way.
We think of the elements of $\N^N$ as {\it multi-indices.}\/
Recall that a {\bf monomial ordering} of $\N^N$ is a total ordering of $\N^N$
compatible with addition in $\N^N$ whose smallest element is $0$.
It is well-known that any monomial
ordering is a well-ordering.
Given total orderings $\leq_1$ of $\N^{N_1}$ and $\leq_2$ of $\N^{N_2}$
($N_1,N_2\in\N$),
the {\bf lexicographic product} of $\leq_1$ and $\leq_2$ is the total
ordering $\leq$ of $\N^{N_1+N_2}=\N^{N_1}\times\N^{N_2}$ defined by
$$(\alpha_1,\beta_1)\leq (\alpha_2,\beta_2) \qquad :\Longleftrightarrow \qquad
\text{$\alpha_1<\alpha_2$, or $\alpha_1=\alpha_2$ and $\beta_1\leq\beta_2$,}$$
for $\alpha_1,\alpha_2\in\N^{N_1}$ and $\beta_1,\beta_2\in\N^{N_2}$.
The lexicographic product of $\leq_1$ and $\leq_2$ extends
$\leq_1$. If $\leq_1$, $\leq_2$ are monomial orderings, then
so is their lexicographic product.
The {\bf lexicographic ordering} of $\N^N$ (the $N$-fold lexicographic product of the usual ordering of $\N$) is denoted by $\leq_\lex$.  For $\alpha=(\alpha_1,\dots,\alpha_N)$ put $\abs{\alpha}:=\alpha_1+\cdots+\alpha_N$.
An ordering $\leq$ of $\N^N$ is said to be {\bf degree-compatible} if
$\abs{\alpha}<\abs{\beta}\Rightarrow \alpha\leq\beta$ for all $\alpha$, $\beta$. An example of a degree-compatible monomial ordering of $\N^N$ is the
{\bf degree-lexicographic ordering}:
$$\alpha \leq_{\dlex} \beta \qquad:\Longleftrightarrow\qquad \text{$\abs{\alpha}<\abs{\beta}$, or $\abs{\alpha}=\abs{\beta}$ and $\alpha \leq_\lex \beta$.}$$
In the rest of this subsection we fix a monomial ordering $\leq$ of $\N^N$.

\medskip

Given a multi-index $\omega$ we define a weight function $\wt=\wt_{\omega}$ (taking non-negative integer values) on the set $\N^N$  by
$$\wt(\alpha):=\omega\cdot\alpha\qquad\text{(inner product of vectors in $\R^N$).}$$
Then for all $\alpha$, $\beta$ we have $\wt(\alpha+\beta) = \wt(\alpha) + \wt(\beta)$,
and if $\omega_i>0$ for each $i$ then
\begin{equation}\label{weight inequality}
\abs{\alpha}\leq \wt(\alpha)\leq ||\omega||\, \abs{\alpha}.
\end{equation}
Here and below, $||\omega||$ denotes the maximum among the absolute values of the components of $\omega$. For a proof of the following quantitative version of a well-known fact about approximating monomial orderings by weight functions see \cite{maschenb-uniform}:

\begin{prop}\label{Approximate leq by wt function}
Let $d$ be given. Then there exists $\omega\in\N^N$ with $||\omega|| \leq 2d(N+1)N^{N/2}$ such that
$$\alpha\leq\beta \quad\Longleftrightarrow\quad \wt_\omega(\alpha) \leq \wt_\omega(\beta)
\qquad\text{for all $\alpha$, $\beta$ with $\abs{\alpha},\abs{\beta}\leq d$.}$$
\end{prop}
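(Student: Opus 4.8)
The plan is to realize the monomial ordering $\leq$, restricted to the finite set $S=\{\alpha\in\N^N:\abs{\alpha}\leq d\}$, as a linear functional, and then to control the size of the integer vector $\omega$ representing that functional. First I would pass to differences: consider the finite set $\Delta$ of all vectors $\alpha-\beta\in\Z^N$ with $\alpha,\beta\in S$ and $\alpha>\beta$. Since $\leq$ is a total ordering compatible with addition, for any finitely many elements of $\N^N$ there is a weight vector $\omega\in\R^N$ with $\omega>0$ componentwise that refines $\leq$ on that set; equivalently, $\Delta$ lies in an open half-space through the origin, i.e.\ the convex cone generated by $\Delta$ does not contain $0$. (This is the standard Robbiano-type fact that a monomial ordering is locally given by a weight; it may be taken as known, or proved by a compactness/separation argument.) What we need in addition is a quantitative bound on the coordinates of a suitable $\omega$.

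For that I would invoke a separation-with-small-coefficients argument: all vectors in $\Delta$ have entries bounded in absolute value by $d$, and $\Delta\subseteq\Z^N$. One shows that if finitely many vectors in $\Z^N$ with entries of absolute value $\leq d$ all lie strictly on one side of some hyperplane through $0$, then they all lie strictly on one side of a hyperplane through $0$ given by an integer normal vector $\omega$ whose entries are bounded by an explicit function of $d$ and $N$. The cleanest route is via the theory of linear programming / Cramer's rule: the half-space can be chosen so that its defining inequality $\omega\cdot v>0$ (for $v\in\Delta$) has $\omega$ supported on a subset of coordinates and with entries that are, up to sign, $(N{-}1)\times(N{-}1)$ minors of a matrix whose rows are elements of $\Delta$; by Hadamard's inequality such a minor is at most $\bigl((N-1)d^2\bigr)^{(N-1)/2}$ in absolute value, and more careful bookkeeping (as carried out in \cite{maschenb-uniform}) yields $\|\omega\|\leq 2d(N+1)N^{N/2}$. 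Finally I would perturb $\omega$ to make all its components strictly positive without increasing $\|\omega\|$ past the stated bound: since adding a fixed small positive multiple of $(1,\dots,1)$ to $\omega$ does not change the sign of $\omega\cdot v$ for $v\in\Delta$ (these signs are bounded away from $0$), and since we only care about $\alpha,\beta$ with $\abs{\alpha},\abs{\beta}\leq d$, one can arrange $\omega\in\N^N$ with every $\omega_i\geq 1$; then for $\abs{\alpha},\abs{\beta}\leq d$ we get $\alpha\leq\beta\iff\alpha-\beta\notin\Delta\cup\{0\}$ with $\alpha\neq\beta$... i.e.\ precisely $\wt_\omega(\alpha)\leq\wt_\omega(\beta)$.

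The main obstacle is the quantitative separation step: turning the qualitative statement ``$\Delta$ is contained in an open half-space'' into the explicit coefficient bound $\|\omega\|\leq 2d(N+1)N^{N/2}$. This requires the right linear-algebra normal form (a vertex of an appropriate polyhedron, or an extreme ray of the dual cone), an application of Cramer's rule to express $\omega$'s entries as subdeterminants of a matrix built from $\Delta$, and then Hadamard's determinant inequality together with the bound $d$ on the entries of vectors in $\Delta$ — all while keeping track of the extra factors $(N+1)$ and $N^{N/2}$ that appear when one insists on strictly positive integer weights. Since this is precisely the content of the cited reference \cite{maschenb-uniform}, in the paper one simply quotes it; but reconstructing it is where all the work lies. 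The reduction to differences and the positivity perturbation are routine by comparison.
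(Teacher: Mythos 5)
The paper does not actually prove this proposition: immediately before the statement it says ``For a proof \dots see \cite{maschenb-uniform}'' and offers no argument of its own. So there is no internal proof to compare against, and your proposal, like the paper, defers the crucial quantitative work to the same reference. That said, your high-level plan --- pass to the finite difference set $\Delta$ of vectors $\alpha-\beta$ with $\alpha>\beta$ and $\abs{\alpha},\abs{\beta}\leq d$, use compatibility of $\leq$ with addition to place $\Delta$ in an open half-space, and then bound an integer normal $\omega$ via a Cramer/Hadamard subdeterminant argument --- is the natural route, and the reduction to differences and the final perturbation to strictly positive integer components are routine.

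There is, however, a genuine quantitative gap that ``more careful bookkeeping'' does not close. Your own Hadamard estimate gives $\|\omega\|$ of order $d^{N-1}(N-1)^{(N-1)/2}$, whereas the proposition asserts $2d(N+1)N^{N/2}$, which is \emph{linear} in $d$. This is not slack in the estimate; the lexicographic ordering already shows the factor $d^{N-1}$ is unavoidable. Indeed every $\omega_i\geq 1$ (compare $\alpha=\varepsilon_i$ with $\beta=0$), and for each $i<N$ the pair $\alpha=d\,\varepsilon_{i+1}$, $\beta=\varepsilon_i$ (both of $1$-norm at most $d$, with $\alpha<_\lex\beta$) forces $d\,\omega_{i+1}<\omega_i$; iterating gives $\|\omega\|=\omega_1>d^{N-1}\omega_N\geq d^{N-1}$. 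Already at $N=3$, $d=41$ this forces $\omega_1>41\cdot 42=1722$, exceeding $2\cdot 41\cdot 4\cdot 3^{3/2}\approx 1704$. Weakening the biconditional to a one-sided implication does not help: the same pairs still force $\omega_i\geq d\,\omega_{i+1}$. So your Cramer/Hadamard estimate is of the correct order of magnitude, but the printed linear-in-$d$ bound is not attainable in general, and the real discrepancy between your own estimate and the claimed bound is precisely what needs to be confronted rather than deferred to \cite{maschenb-uniform}. Either the displayed bound contains a typo (the dependence on $d$ should plausibly be of order $d^{N-1}$ rather than $d$), or a different statement is intended; as written, your sketch cannot reproduce what is claimed.
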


\subsection{Monomials and $K$-linear spaces}
In the rest of this section we fix a positive $N$, we let $K$ denote a field, and we let $R$ be a $K$-linear space.
A {\bf monomial basis} of  $R$ is family $\{x^\alpha\}_{\alpha}$ of elements of $R$,  indexed by the multi-indices in $\N^N$, which forms a basis of $R$. Of course, every $K$-linear space of countably infinite dimension has a monomial basis, for every positive $N$, but in the applications in the next sections, a specific monomial basis will always be given to us beforehand.
Thus, in the following we assume that a monomial basis $\{x^\alpha\}_{\alpha}$ of $R$ is fixed.
We call a basis element  $x^\alpha$ of $R$ a {\bf monomial} (of $R$),  and we denote by $x^\diamond$ the set of monomials of $R$.
Every $f\in R$ can be uniquely written in the form
$$f=\sum_\alpha f_\alpha x^\alpha\qquad\text{ where $f_\alpha\in K$, with $f_\alpha=0$
for all but finitely many $\alpha$,}$$
and we define the {\bf support}  of such an $f$ as the set $\supp f$ of all monomials $x^\alpha$ with $f_\alpha\neq 0$.
We have
$x^\alpha\neq x^\beta$ whenever $\alpha\neq\beta$, so we can turn $x^\diamond$ into an ordered monoid by setting $x^\alpha \ast x^\beta = x^{\alpha+\beta}$ and $x^\alpha\leq x^\beta
\Longleftrightarrow\alpha\leq\beta$. The map $\alpha\mapsto x^\alpha\colon\N^N\to x^\diamond$ is then an
isomorphism of ordered monoids.
A tuple of generators of $x^\diamond$ is given by $x=(x_1,\dots,x_N)$ where $x_i=x^{\varepsilon_i}$, with $\varepsilon_i=\text{the $i$-th unit vector in $\N^N$}$.

There is a unique binary operation on $R$ extending the operation $\ast$ on $x^\diamond$ and making the $K$-linear space $R$ into a $K$-algebra. With this multiplication operation, of course, $R$ is nothing but the ring $K[x]$ of polynomials in indeterminates $x=(x_1,\dots,x_N)$ with coefficients from $K$: the unique $K$-linear bijection $K[x]\to R$ which for each multi-index $\alpha$ sends the monomial $x_1^{\alpha_1}\cdots x_N^{\alpha_N}$ of $K[x]$ to the basis element $x^\alpha$ of $R$, is an isomorphism of $K$-algebras. However, in our applications below, the $K$-linear space $R$ will already come equipped with a binary operation making it into a $K$-algebra, and this operation will usually not agree with $\ast$ on $x^\diamond$ (in fact, not even restrict to an operation on $x^\diamond$). In order to clearly separate the combinatorial objects arising in the study of the (generally, non-commutative) $K$-algebras later on, we chose to introduce the extra bit of terminology concerning monomial bases.

A monomial $x^\alpha$  divides a monomial $x^\beta$ (or $x^\beta$ is divisible by $x^\alpha$) if
$x^\beta=x^{\alpha}\ast x^{\gamma}$ for some multi-index $\gamma$; in symbols:
$x^\alpha|x^\beta$.
If $I$ is an ideal of $x^\diamond$, that is, if $x^\alpha\in I \Rightarrow x^\alpha\ast x^\beta\in I$ for all $\alpha$, $\beta$, then there
exist $x^{\alpha(1)},\dots,x^{\alpha(k)}\in I$ such that
each monomial in $I$ is divisible by some $x^{\alpha(i)}$.
(By Dickson's Lemma, \cite[Lemma~1.1]{KR-W}.)
Given monomials $x^\alpha$ and $x^\beta$, the
{\bf least common multiple}  of $x^\alpha$ and $x^\beta$
is the monomial $\lcm(x^\alpha,x^\beta)=x^\gamma$
where $\gamma_i=\max\{\alpha_i,\beta_i\}$ for $i=1,\dots,N$.

Let now $\leq$ be a total ordering of $\N^N$.
Given a non-zero $f\in R$,  there is a unique $\lambda$ with
$$f=f_{\lambda}x^\lambda + \sum_{\alpha<\lambda} f_\alpha x^\alpha, \qquad
f_\lambda\neq 0.$$
We call
$\lc(f)=f_\lambda$ and $\lm(f)=x^\lambda$
the {\bf leading coefficient} respectively {\bf leading monomial} of $f$ with respect to $\leq$.
It is convenient to define $\lm(0):=0$ and extend $\leq$ to a total
ordering on the set $x^\diamond\cup\{0\}$ by declaring
$0<x^\alpha$ for all $\alpha$. We also declare $\lc(0):=0$.
We extend the notation $\lm$ to subsets of $R$ by a slight abuse: for $S\subseteq R$ put
$$\lm(S) := \big\{ \lm(f) : 0\neq f\in S\big\}\subseteq x^\diamond.$$

\subsection{Monomial cones and monomial ideals}
By abuse of notation, we write $y\subseteq x$ to indicate that $y$ is a subset of $\{x_1,\dots,x_N\}$, and for $y\subseteq x$ we let $y^\diamond$ be the submonoid of $(x^\diamond,{\ast})$ generated by $y$. (So $\emptyset^\diamond=\{1\}$.)

A {\bf monomial cone} defined by a pair $(w,y)$, where $w\in x^\diamond$ and $y\subseteq x$, is the $K$-linear subspace $C(w,y)$ of $R$ generated by $w\ast y^\diamond$.
Note that $C(w,\emptyset)=Kw$ for every $w\in x^\diamond$, and $C(1,x)=R$.
Also, if $y\subseteq y'\subseteq x$ then $C(w,y)\subseteq C(w,y')$. We refer to \cite[Section~3]{Dube} for how to represent monomial cones graphically in the (slightly misleading) case $N=2$.
If we identify $R$ with the commutative polynomial ring $R=K[x]$ as explained above, then $C(w,y)$ is nothing but the $K$-linear subspace $wK[y]$ of $K[x]$.

We say that a $K$-linear subspace $I$ of $R$ is a {\bf monomial ideal} if $I$ is
spanned by monomials,
and $C(w,x)\subseteq I$ for all monomials $w\in I$. (Hence, if $R=K[x]$, then $I$ is a monomial ideal of $K[x]$ in the usual sense of the word.)
A set of {\bf generators}
for a monomial ideal $I$ of $R$ is defined to be a set of monomials $F$ such that $I = \sum_{w\in F} C(w,x)$ (so the set $F\ast x^\diamond$ generates $I$ as a $K$-linear space). A $K$-linear subspace of $R$ is a monomial ideal if and only if the set of monomials in $I$ is an ideal of $(x^\diamond,{\ast})$.
Every $K$-subspace of $R$ generated by monomials has a unique minimal set of generators, which is finite.

Given a monomial ideal $I$ of $R$ and a monomial $w$ we put
$$(I:w) := \text{the $K$-linear subspace of $R$ generated by $\{ v\in x^\diamond : w\ast v\in I\}$,}$$
a monomial ideal of $R$ containing $I$.

Let now $M$ be a $K$-linear subspace of $R$ generated by monomials, and let $I$ be a monomial ideal of $R$. Then the $K$-linear subspace $M\cap I$ of $M$ has a natural complement:
$$M=(M\cap I)\oplus \nf_I(M),$$
where $\nf_I(M)$ denotes the $K$-linear subspace of $R$ generated by the monomials in $M\setminus I$.

\section{Preliminaries on Algebras over Fields} \label{Preliminaries on Algebras over Fields}

In this section we let $K$ be a field (of arbitrary characteristic). All $K$-algebras are  assumed to be associative with unit $1$. Given a subset $G$ of
a $K$-algebra $R$ we denote by $(G)$ the {\it left}\/ ideal of $R$ generated by $G$.
We also let $\leq$ be a monomial ordering of $\N^N$.

\subsection{Multi-filtered  $K$-algebras and modules}
A {\bf multi-filtra\-tion} on $R$ (indexed by $\N^N$)
is an increasing (with respect to $\leq$) family  of $K$-linear subspaces $\big\{R_{(\leq\alpha)}\big\}_{\alpha}$ of $R$ whose union is $R$ and such that $1\in R_{(\leq 0)}$ and $R_{(\leq\alpha)}\cdot R_{(\leq\beta)}\subseteq R_{(\leq \alpha+\beta)}$.
A {\bf multi-filtered $K$-algebra} is a $K$-algebra equipped with a
multi-filtration.
Suppose $R$ is a multi-filtered $K$-algebra. A {\bf multi-filtra\-tion} on a left $R$-module $M$ (indexed by $\N^N$)
is an increasing family of $K$-linear subspaces  $\big\{M_{(\leq\alpha)}\big\}_{\alpha}$ of $M$ which exhausts $M$ and such that $R_{(\leq\alpha)}\cdot M_{(\leq\beta)}\subseteq M_{(\leq \alpha+\beta)}$. A {\bf multi-filtered left $R$-module} is a left $R$-module equipped with a
multi-filtration. Suppose that  $M$ is a multi-filtered  left $R$-module.
For every $\alpha$ the set $M_{(<\alpha)}:=\bigcup_{\beta<\alpha} M_{(\leq\alpha)}$ is a $K$-linear
subspace of $M$. Here $M_{(<0)}:=\{0\}$ by convention.
For every non-zero
$f\in M$ there exists a unique $\alpha$ with $f\in M_{(\leq\alpha)}\setminus
M_{(<\alpha)}$, and we call $\alpha=\deg(f)$ the {\bf degree} of $f$.
Given a left $R$-submodule $M'$ of $M$, we always construe $M'$ as a multi-filtered left $R$-module by means of the multi-filtration $\{M'_{(\leq\alpha)}\}_\alpha$ given by $M'_{(\leq\alpha)}:=M'\cap M_{(\leq\alpha)}$ for every $\alpha$, and we make the quotient $M/M'$ into a multi-filtered left $R$-module by the multi-filtration induced on $M/M'$ from $M$ by the natural surjection $M\to M/M'$, given by
$(M/M')_{(\leq \alpha)}:=(M_{(\leq \alpha)}+M')/M'$ for every $\alpha$.
For a two-sided ideal $I$ of $R$, the induced filtration makes $R/I$ a multi-filtered $K$-algebra.

\subsection{Multi-graded $K$-algebras and modules}
A {\bf multi-grading} on  $R$ (indexed by $\N^N$) is
a family $\big\{R_{(\alpha)}\big\}_{\alpha}$ of $K$-linear subspaces of $R$
such that $R=\bigoplus_{\alpha} R_{(\alpha)}$ (internal direct sum of
$K$-linear subspaces of $R$) and $R_{(\alpha)}\cdot R_{(\beta)}\subseteq R_{(\alpha+\beta)}$
for all multi-indices $\alpha$, $\beta$.
A $K$-algebra equipped with a
multi-grading is called a {\bf multi-graded $K$-algebra}. Suppose $R$ is multi-graded.
A {\bf multi-grading} on a left $R$-module $M$ (indexed by $\N^N$) is
a family $\big\{M_{(\alpha)}\big\}_{\alpha}$ of $K$-linear subspaces of $M$
such that  $M=\bigoplus_{\alpha} M_{(\alpha)}$ and $R_{(\alpha)}\cdot M_{(\beta)}\subseteq M_{(\alpha+\beta)}$
for all $\alpha$, $\beta$.
A left $R$-module equipped with a
multi-grading is called a {\bf multi-graded left $R$-module}. Let $M$ be a multi-graded left $R$-module.
We call the $K$-linear subspace
$M_{(\alpha)}$ of $M$ the {\bf homogeneous component}
of degree $\alpha$ of $M$.
We always view $R$ as a multi-filtered $K$-algebra, and $M$ as a multi-filtered left $R$-module by means of the natural multi-filtrations $\big\{R_{(\leq\alpha)}\big\}_{\alpha}$ and
$\big\{M_{(\leq\alpha)}\big\}_{\alpha}$ given by $$R_{(\leq\alpha)} := \bigoplus_{\beta\leq\alpha}
R_{(\beta)},\quad M_{(\leq\alpha)} := \bigoplus_{\beta\leq\alpha}
M_{(\beta)}\qquad\text{ for every $\alpha$.}$$
Every $f\in M$ has a unique
representation in the form $f=\sum_\alpha f_{(\alpha)}$
where $f_{(\alpha)}\in M_{(\alpha)}$ for all $\alpha$, and $f_{(\alpha)}=0$
for all but finitely many $\alpha$.
We call $f_{(\alpha)}$
the homogeneous component of degree $\alpha$ of  $f$.
Similarly, given
a $K$-linear subspace $V$ of $M$ which is homogeneous (i.e., for
$f\in M$ we have
$f\in V$ if and only if $f_{(\alpha)}\in V$ for each $\alpha$),
the homogeneous component of degree $\alpha$ of $V$
is denoted by
$V_{(\alpha)}:=V\cap M_{(\alpha)}$, so
$$V=\bigoplus_\alpha V_{(\alpha)} \qquad\text{(internal direct sum of $K$-linear subspaces of $M$).}$$
If $M'$ is a homogeneous left $R$-submodule of $M$, then the $M'_{(\alpha)}$ furnish $M'$ with a multi-grading, and
we make $M/M'$ into a multi-graded left $R$-module by the multi-grading
induced from $M$, given by
$(M/M')_{(\alpha)}:=(M_{(\alpha)}+M')/M'$ for every $\alpha$.
The multi-filtration of $M/M'$ associated to this multi-grading
agrees with the multi-fil\-tra\-tion of $M/M'$ induced from the multi-filtered left $R$-module $M$.
If $I$ is a two-sided ideal of $R$, then $R/I$ a multi-graded $K$-algebra by means of the induced multi-grading.

\subsection{The associated multi-graded algebra}
Suppose $R$ is multi-filtered, and let $M$ be
a multi-filtered left $R$-module $M$. The
left $R$-module $$\gr M=
\bigoplus_\alpha\ (\gr M)_{(\alpha)} \qquad\text{with $(\gr M)_{(\alpha)}=M_{(\leq \alpha)}/M_{(<\alpha)}$}$$
is a multi-graded left $\gr R$-module in a natural way, called the {\bf multi-graded left $\gr R$-module associated to $M$.}
(For $M=R$ we obtain a multi-graded $K$-algebra called
the {\bf multi-graded $K$-algebra  $\gr R$ associated to $R$.})
For non-zero $f\in M$ of degree $\alpha$,
$$\gr f:=f+M_{(<\alpha)}\in (\gr M)_{(\alpha)}$$ is the {\bf initial form} (or {\bf symbol}) of $f$, and  $\gr 0:=0\in\gr M$. Given a left $R$-submodule $M'$ of $M$, the inclusion $M'\to M$ induces an embedding $\gr M'\to\gr M$ of multi-graded left $R$-modules, and we identify $\gr M'$ with its image under this embedding.

\subsection{The Rees algebra} Suppose $R$ is multi-filtered.
The {\bf Rees algebra}
of $R$ is the multi-graded $K$-algebra $$R^*=\bigoplus_\alpha\ (R^*)_{(\alpha)} \qquad
\text{with $(R^*)_{(\alpha)}=R_{(\leq\alpha)}.$}$$
For a non-zero element $f$ of $R$ of degree $\alpha$ we let $f^* :=
f\in (R^*)_{(\alpha)}$ be the {\bf homogenization} of
$f$; by convention $0^*:=0$.
Let $I$ be a two-sided ideal of $R$. We let
$I^*$ be the two-sided ideal of $R^*$ generated by all $f^*$ with $f\in I$;
the ideal $I^*$ is homogeneous, and is called the {\bf homogenization} of
$I$.
The natural surjection $R\to R/I$ is a morphism of
multi-filtered $K$-algebras which induces a surjective
morphism $R^*\to (R/I)^*$ of multi-graded $K$-algebras
whose kernel is $I^*$; the induced
morphism $R^*/I^*\to (R/I)^*$ is an isomorphism of multi-graded $K$-algebras.
The natural inclusions $(R^*)_{(\alpha)}=R_{(\leq\alpha)}\subseteq R$ combine to a
$K$-linear map $h\mapsto h_*\colon R^*\to R$
which is a surjective morphism of multi-graded $K$-algebras
satisfying $(f^*)_*=f$ for all $f\in R$.  For $h\in R^*$ the element $h_*$ of $R$ is
called the {\bf dehomogenization} of $h$. We extend this notation to
subsets of $R^*$: $H_*:=\{h_*: h\in H\}$ for $H\subseteq R^*$.
If $J$ is a left ideal of $R^*$, then $J_*$ is a left ideal of $R$.
Hence if $H\subseteq R^*$ then $(H)_*=(H_*)$.

\subsection{Filtered and graded algebras}
By a {\bf filtered $K$-algebra} we will mean an
multi-filtered algebra with filtration indexed by $\N$, and similarly
a multi-graded $K$-algebra whose grading is indexed by $\N$ is just called a {\bf graded $K$-algebra.} Analogous terminology is used in the case of left $R$-modules. (Most of our multi-filtered or multi-graded objects will actually be filtered, respectively graded; we introduced the more general concepts in order to be able to speak about the ``fine filtration'' (Lemma~\ref{gr R is semi-commutative}) of an  algebra of solvable type.

Suppose $R=\bigcup_d R_{(\leq d)}$ is a filtered $K$-algebra. We denote by $t$ the {\bf canonical element} of $R^*$, that is,
the unit $1$ of $R$, considered as an element of
$(R^*)_{(1)}=R_{(\leq 1)}$.
In this case the natural surjections $$(R^*)_{(d)}=R_{(\leq d)}\to
R_{(\leq d)}/R_{(<d)}=(\gr R)_{(d)}$$ combine to a surjective $K$-algebra morphism $R^*\to\gr R$ which has
kernel $R^*t$ and hence induces an isomorphism of graded $K$-algebras
$R^*/R^*t\overset{\cong}{\longrightarrow}\gr R$.

\subsection{Homogenization of graded algebras}
Suppose now that $R=\bigoplus_d R_{(d)}$ is a graded $K$-algebra.
We make the ring
$R[T]$ of polynomials in one commuting indeterminate $T$ over $R$
into a graded $K$-algebra using the grading
$$R[T]=\bigoplus_d R[T]_d\qquad\text{ with $R[T]_{(d)} := \bigoplus_{i+j=d} R_{(i)}T^j$.}$$
The $K$-linear map $R[T]\to R^*$ with $fT^j\mapsto ft^{j}$ for
all $f\in R_{(i)}$ and $i,j\in\N$ is an isomorphism of graded $K$-algebras.
In the following we always identify the Rees algebra of a graded $K$-algebra $R$ with
the graded $K$-algebra $R[T]$. Then the canonical element of $R^*$ is $T$,
and for non-zero $f\in R$ of degree $d$ we  have $$f^* =
\sum_{i=0}^d f_{(i)}T^{d-i}\in (R^*)_{(d)},$$
and for $h=\sum_{i=0}^n h_i T^i\in R^*$ we get
$h_*=\sum_{i=0}^n h_i \in R$.

\subsection{Non-commutative polynomials}
In the following we let $X=(X_1,\dots,X_N)$ be a tuple of $N$ distinct indeterminates over $K$
and denote by $X^*$ the free monoid generated by $\{X_1,\dots,X_N\}$.
The free $K$-algebra
$K\<X\>=K\<X_1,\dots,X_N\>$ generated by $X$
(that is, the monoid algebra of $X^*$ over $K$) has a natural grading
$$K\<X\>=\bigoplus_d K\<X\>_{(d)}$$ defined by the length of words in
$X^*$.
Let $I$ be a  two-sided ideal of $K\<X\>$.
The  $K$-algebra $R=K\<X\>/I$ is generated by the cosets $X_i+I$ ($i=1,\dots,N$).
Let $T$ be an indeterminate over $K$ distinct from $X_1,\dots,X_N$.
We identify the Rees algebra $K\<X\>^*$ of $K\<X\>$ with the graded $K$-algebra $K\<X\>[T]$
as explained in the previous subsections; similarly, the Rees algebra
$R^*$ of $R$ will be identified with
$K\<X\>^*/I^*=K\<X\>[T]/I^*$.
For a non-zero $f\in K\<X\>$ of degree $d$
we define the homogeneous polynomial
\begin{equation}\label{fhom}
f^\hom := \sum_{i=0}^d f_{(i)}T^{d-i}\in K\<X,T\>.
\end{equation}
The two-sided ideal $I^\hom$ of $K\<X,T\>$
generated by $f^\hom$ for non-zero $f\in I$ and the polynomials $X_iT-TX_i$ ($i=1,\dots,N$) is homogeneous, and
the natural $K$-linear map $K\<X,T\>\to K\<X\>[T]$
induces an isomorphism of graded $K$-algebras
\begin{equation}\label{Iso-homog}
K\<X,T\>/I^\hom \overset{\cong}{\longrightarrow} R^*=K\<X\>[T]/I^*.
\end{equation}

\subsection{Affine algebras}
{\em In the rest of this section, we let $R$ be a finitely generated $K$-al\-ge\-bra and we fix a tuple $x=(x_1,\dots,x_N)$ of elements of $R$.}\/
For a multi-index $\alpha=(\alpha_1,\dots,\alpha_N)$ put $x^\alpha:=x_1^{\alpha_1}\cdots x_N^{\alpha_N}$. We say that the $K$-algebra $R$ is {\bf affine}
with respect to $x$ if the family $\{x^\alpha\}_\alpha$ is a monomial basis of the $K$-linear space $R$. (Note that then $x_1,\dots,x_N$ generate $R$ as a $K$-algebra.)
Usually, we obtain affine $K$-algebras by specifying a
{\bf commutation system} in $K\<X\>$, that is, a family ${\cal R}=(R_{ij})_{1\leq i<j\leq N}$ of
${N\choose 2}$ polynomials
\begin{multline}\label{Rij}
R_{ij} = X_jX_i-c_{ij}X_iX_j-P_{ij} \\
\text{where $0\neq c_{ij}\in K$ and $P_{ij}\in \bigoplus_\alpha KX^\alpha$ for $1\leq i<j\leq N$.}
\end{multline}
Let ${\cal R}=(R_{ij})$ be a commutation system and $I=I(\cal R)$ be
the two-sided ideal of $K\<X\>$ generated by the polynomials
$R_{ij}$ ($1\leq i<j\leq N$), and suppose $R=K\<X\>/I$ with $x_i=X_i+I$ ($i=1,\dots,N$). We say that the finitely presented
$K$-algebra $R$ is {\bf defined by $\cal R$.} We construe $K\<X\>$ as a filtered $K$-algebra
via filtration by degree of polynomials in $K\<X\>$, and we equip
$R$ with the filtration induced by the natural surjection
$K\<X\>\to K\<X\>/I=R$, called the {\bf standard filtration} of $R$ (with respect to $x_1,\dots,x_N$).
If $R$ turns out to be affine, then the generators $x_1,\dots,x_N$ of the $K$-algebra $R$ have degree $1$.



\begin{examples}\label{Affine, Examples}
Affineness of $K$-algebras may be shown using the techniques in \cite{Bergman}, and also with
Mora's theory \cite{Mora} of Gr\"obner bases for two-sided ideals in $K\<X\>$  (cf.~\cite[Theorem~1.11]{KR-W}). Some prominent examples for affine $K$-algebras:

\begin{enumerate}
\item A $K$-algebra is called {\bf semi-commutative} if for every pair $f,g$ of its elements there is a non-zero $c\in K$ with $fg=cgf$. If $P_{ij}=0$ for $1\leq i<j\leq N$  in \eqref{Rij}, then the $K$-algebra defined by $\cal R$ is affine and semi-commutative.
If in addition $c_{ij}=1$ for $1\leq i<j\leq N$,
then the $K$-algebra defined by $\cal R$ is naturally isomorphic to
the $K$-algebra $K[x]=K[x_1,\dots,x_N]$ of commutative polynomials in the tuple of indeterminates $x=(x_1, \dots, x_N)$ with coefficients in $K$.
\item
The $n$-th Weyl algebra $A_n(K)$ over $K$
is the $K$-algebra generated by $N=2n$ generators $x_1,\dots,x_n,\partial_1,
\dots,\partial_n$ subject to the relations
$$
\begin{array}{llll}
x_jx_i = x_ix_j, & \partial_j\partial_i=\partial_i\partial j & &\text{for $1\leq i<j\leq n$,} \\
\partial_j x_i = x_i\partial_j & & &\text{for $1\leq i,j\leq n$, $i\neq j$,} \\
\partial_i x_i = x_i\partial_i + 1 & & &\text{for $1\leq i\leq n$.}
\end{array}$$
The $K$-algebra $A_n(K)$ is
affine with respect to the generating tuple $(x,\partial):=(x_1,\dots,x_n,\partial_1,\dots,\partial_n)$. The standard filtration of $A_n(K)$ is also known as the Bernstein filtration of $A_n(K)$.
\item Let $\frak g$ be a Lie algebra over $K$ of dimension $n$, and let
$\{x_1,\dots,x_N\}$ be a basis of $\frak g$.
The universal
enveloping algebra of $\frak g$ is a $K$-algebra $U(\frak g)$
which contains $\frak g$ as $K$-linear subspace and is
generated by $x_1,\dots,x_N$ subject to the relations
$$x_jx_i = x_ix_j - [x_j,x_i]_{\frak g} \quad\text{for $1\leq i<j\leq N$.}$$
The fact that $U(\frak g)$ is affine with respect to
the tuple $(x_1,\dots,x_N)$  is known as the
Poincar\'e-Birkhoff-Witt Theorem \cite[Theorem~3.1]{Bergman}. (Hence affine algebras are also known as ``algebras with PBW-basis.'')
\end{enumerate}
\end{examples}

We say that a commutation system $\cal R=(R_{ij})$ as above is {\bf quadric} if
every polynomial $P_{ij}$ has degree $\leq 2$,
{\bf linear} if every $P_{ij}$ has degree $\leq 1$, and
{\bf homogeneous} if all $R_{ij}$ are either zero or homogeneous (necessarily of degree $2$). All examples
of affine $K$-algebras given above are defined by linear commutation
systems.

\subsection{Algebras of solvable type}
The definition below is due to
Kandri-Rody and Weis\-pfenning \cite{KR-W}. Recall that $\leq$ denotes a monomial ordering of $\N^N$.

\begin{definition}\label{Solvable Type}
The $K$-algebra $R$ is said to be {\bf of solvable type}
with respect to the fixed monomial ordering $\leq$ of $\N^N$
and the tuple $x=(x_1,\dots,x_N)\in R^N$ if $R$ is affine with respect to $x$, and
for $1\leq i<j\leq N$ there are
$c_{ij}\in K$, $c_{ij}\neq 0$, and $p_{ij}\in R$ such that
$$x_jx_i = c_{ij} x_ix_j + p_{ij} \quad \text{and} \quad
\lm(p_{ij})<x_ix_j.$$
(Note that the $c_{ij}$ and $p_{ij}$ are then uniquely determined.)
\end{definition}

If $R$ is of solvable type with respect to $\leq$ and $x$, then (cf.~\cite[Lemma~1.4]{KR-W})
\begin{equation}\label{lm multiplication}
\lm(f\cdot g)=\lm(f)\ast\lm(g) \qquad\text{for non-zero $f,g\in R$.}
\end{equation}
In particular, $R$ is an integral domain.
If $R$ is semi-commutative, then $R$ is of solvable type with respect to $x$ and every monomial ordering of $\N^N$, and each homogeneous component $R_{(\alpha)}$ of $R$ has the form $R_{(\alpha)}=Kx^\alpha$. Therefore:

\begin{lemma}\label{gr R is semi-commutative}
Suppose $R$ is of solvable type with respect to $\leq$ and $x$. Then
$$R_{(\leq\alpha)} := \bigoplus_{\beta\leq\alpha}
Kx^\beta$$
defines a multi-filtration of $R$, and its associated multi-graded
$K$-algebra $\gr_\leq R$ is semi-com\-mutative with respect to $\leq$ and $\xi=(\xi_1,\dots,\xi_N)$, where $\xi_i:=\gr_\leq x_i$ for $i=1,\dots,N$. If $c_{ij}=1$ for $1\leq i<j\leq N$, then $\gr_{\leq} R=K[\xi]$ is commutative.
\end{lemma}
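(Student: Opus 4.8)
The plan is to verify the three assertions in turn, all of which follow by unwinding definitions and applying the multiplicative law \eqref{lm multiplication}. First I would check that $\bigl\{R_{(\leq\alpha)}\bigr\}_\alpha$, with $R_{(\leq\alpha)}:=\bigoplus_{\beta\leq\alpha}Kx^\beta$, is a multi-filtration. Since $\{x^\beta\}_\beta$ is a monomial basis of $R$ (as $R$ is affine with respect to $x$) and $\leq$ is a monomial ordering, the family is visibly increasing in $\leq$, its union is $R$, and $1=x^0\in R_{(\leq 0)}$ because $0$ is the $\leq$-smallest multi-index. The only nontrivial point is the inclusion $R_{(\leq\alpha)}\cdot R_{(\leq\beta)}\subseteq R_{(\leq\alpha+\beta)}$. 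For this it suffices to show $x^{\alpha'}\cdot x^{\beta'}\in R_{(\leq\alpha+\beta)}$ whenever $\alpha'\leq\alpha$ and $\beta'\leq\beta$; and since $x^{\alpha'},x^{\beta'}$ are non-zero, \eqref{lm multiplication} gives $\lm(x^{\alpha'}\cdot x^{\beta'})=x^{\alpha'}\ast x^{\beta'}=x^{\alpha'+\beta'}$, so every monomial in the support of $x^{\alpha'}\cdot x^{\beta'}$ is $\leq x^{\alpha'+\beta'}$, and compatibility of $\leq$ with addition yields $\alpha'+\beta'\leq\alpha+\beta$. Hence $x^{\alpha'}\cdot x^{\beta'}\in\bigoplus_{\gamma\leq\alpha+\beta}Kx^\gamma=R_{(\leq\alpha+\beta)}$, as required.

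Next I would identify the associated multi-graded algebra $\gr_\leq R$ and its monomial basis. By construction $(\gr_\leq R)_{(\alpha)}=R_{(\leq\alpha)}/R_{(<\alpha)}$, and since $R_{(\leq\alpha)}=R_{(<\alpha)}\oplus Kx^\alpha$ (the monomials $x^\beta$ with $\beta\leq\alpha$ split off $x^\alpha$ from those with $\beta<\alpha$), each homogeneous component $(\gr_\leq R)_{(\alpha)}$ is the one-dimensional space $K\xi^\alpha$, where $\xi^\alpha:=\gr_\leq x^\alpha$ is the initial form of $x^\alpha$; in particular $\deg_\leq x^\alpha=\alpha$ and $\xi^\alpha=\xi_1^{\alpha_1}\cdots\xi_N^{\alpha_N}$ follows from the fact that passing to initial forms is multiplicative on elements whose degrees add — which again rests on \eqref{lm multiplication}. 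Thus $\{\xi^\alpha\}_\alpha$ is a monomial basis of $\gr_\leq R$, so $\gr_\leq R$ is affine with respect to $\xi$, and each $(\gr_\leq R)_{(\alpha)}=K\xi^\alpha$ has the stated form.

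Finally I would establish semi-commutativity of $\gr_\leq R$. Applying $\gr_\leq$ to the defining relation $x_jx_i=c_{ij}x_ix_j+p_{ij}$ with $\lm(p_{ij})<x_ix_j$: the left side $x_jx_i$ has $\leq$-degree $\varepsilon_i+\varepsilon_j$ by \eqref{lm multiplication}, the term $c_{ij}x_ix_j$ likewise, while $p_{ij}$ lies in $R_{(<\varepsilon_i+\varepsilon_j)}$ and hence has zero image in $(\gr_\leq R)_{(\varepsilon_i+\varepsilon_j)}$; therefore $\xi_j\xi_i=c_{ij}\xi_i\xi_j$ in $\gr_\leq R$. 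Since the $\xi^\alpha$ span $\gr_\leq R$ and any two such monomials commute up to a non-zero scalar in $K$ (by iterating the relation $\xi_j\xi_i=c_{ij}\xi_i\xi_j$ and using that products of monomials are again scalar multiples of monomials), it follows that $\gr_\leq R$ is semi-commutative with respect to $\leq$ and $\xi$. If moreover $c_{ij}=1$ for all $i<j$, the relations become $\xi_j\xi_i=\xi_i\xi_j$, so the assignment $\xi_i\mapsto\xi_i$ extends to a surjection $K[\xi_1,\dots,\xi_N]\to\gr_\leq R$; comparing monomial bases shows it is an isomorphism, i.e., $\gr_\leq R=K[\xi]$ is commutative. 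I expect the only point requiring a little care is the bookkeeping for why $\gr_\leq$ is multiplicative on monomials (so that $\xi^\alpha\xi^\beta$ is a non-zero scalar times $\xi^{\alpha+\beta}$ rather than $0$), which is exactly where the hypothesis $\lm(f\cdot g)=\lm(f)\ast\lm(g)$ does all the work.
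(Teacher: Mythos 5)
Your proof is correct and takes essentially the same route the paper intends. The paper gives no explicit argument — it simply writes ``Therefore:'' after two observations (the multiplication formula $\lm(f\cdot g)=\lm(f)\ast\lm(g)$ and the remark that a semi-commutative algebra is of solvable type for every monomial ordering with $R_{(\alpha)}=Kx^\alpha$) — and you are filling in precisely the details that are left implicit: closure of the fine filtration under multiplication follows from \eqref{lm multiplication}, each graded piece is $K\xi^\alpha$ because $R_{(\leq\alpha)}=R_{(<\alpha)}\oplus Kx^\alpha$, and the commutation relation $\xi_j\xi_i=c_{ij}\xi_i\xi_j$ comes from the fact that $p_{ij}$ sits in strictly lower $\leq$-degree and so vanishes in the associated graded.
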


Here is a way of constructing $K$-algebras of solvable type \cite[Theorem~1.7]{KR-W}:

\begin{prop}\label{Solvable Type, Construction}
Let $\cal R=(R_{ij})$ be a commutation system with $R_{ij}$ as in \eqref{Rij},
let $I=I(\cal R)$, and suppose $R=K\<X\>/I$ with
$x_i=X_i+I$ for $1\leq i\leq N$. Then $R$ is of solvable
type with respect to the monomial  ordering $\leq$ and
the tuple $x=(x_1,\dots,x_N)$ of generators for $R$
if and only if the following two conditions are satisfied:
\begin{enumerate}
\item $\lm(P_{ij})<\lm(X_iX_j)$ for $1\leq i<j\leq N$, and
\item $I\cap \bigoplus_\alpha KX^\alpha=\{0\}$.
\end{enumerate}
\end{prop}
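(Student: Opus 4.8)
The plan is to reduce the statement to the single assertion that $R$ is \emph{affine} with respect to $x$, after which the leading-monomial condition in Definition~\ref{Solvable Type} becomes pure bookkeeping. Write $V:=\bigoplus_\alpha KX^\alpha\subseteq K\<X\>$ for the $K$-span of the sorted words $X^\alpha=X_1^{\alpha_1}\cdots X_N^{\alpha_N}$; this is a $K$-linear space with monomial basis $\{X^\alpha\}_\alpha$, on which $\lm$ (with respect to $\leq$) is defined, and both the monomials $X_iX_j$ ($i<j$) and the polynomials $P_{ij}$ live in $V$. Let $\pi\colon K\<X\>\to R$ be the canonical surjection; its restriction $\phi\colon V\to R$ sends $X^\alpha\mapsto x^\alpha$, is order-preserving on monomials, and has kernel $V\cap I$. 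Thus $R$ is affine with respect to $x$ if and only if $\phi$ is bijective, and $\phi$ is injective exactly when condition~(2) holds. This already settles the forward direction: if $R$ is of solvable type then $R$ is affine, so $\phi$ is injective and (2) holds; and comparing the relation $x_jx_i=c_{ij}x_ix_j+\pi(P_{ij})$ forced by $R_{ij}\in I$ with the (uniquely determined) solvable-type relation $x_jx_i=c'_{ij}x_ix_j+p'_{ij}$ --- expanding both in the basis $\{x^\alpha\}$ and using $\lm(p'_{ij})<x_ix_j$ together with the normalization of $P_{ij}$ in \eqref{Rij} --- one reads off that no monomial $\geq X_iX_j$ occurs in $P_{ij}$, i.e.\ condition~(1).

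For the converse, assume (1) and (2); the crux is to show that $\phi$ is \emph{surjective}, equivalently $V+I=K\<X\>$, equivalently that every word $w\in X^*$ lies in $V+I$. The tool is a well-founded induction along the ordering $\prec$ of $X^*$ defined as follows: to a word $w$ attach its multidegree $\mu(w)\in\N^N$ (the image of $w$ in the free commutative monoid) and its number of inversions $\operatorname{inv}(w)\in\N$, and declare $u\prec v$ if $\mu(u)<\mu(v)$, or $\mu(u)=\mu(v)$ and $\operatorname{inv}(u)<\operatorname{inv}(v)$. Since $\leq$ well-orders $\N^N$ and only finitely many words share a given multidegree, $\prec$ is a well-ordering. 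If $w$ is sorted then $w\in V$; otherwise $w=u\,X_jX_i\,v$ with $i<j$, and $u\,R_{ij}\,v\in I$ yields, modulo $I$,
$$w \;\equiv\; c_{ij}\,u\,X_iX_j\,v\;+\;\sum_{X^\alpha\in\supp P_{ij}} (P_{ij})_\alpha\; u\,X^\alpha\,v \pmod I .$$
The word $u\,X_iX_j\,v$ has the same multidegree as $w$ but one fewer inversion, so $u\,X_iX_j\,v\prec w$; and by~(1) every $X^\alpha\in\supp P_{ij}$ has $\alpha<\varepsilon_i+\varepsilon_j$, whence $\mu(u)+\alpha+\mu(v)<\mu(u)+\varepsilon_i+\varepsilon_j+\mu(v)=\mu(w)$ by compatibility of $\leq$ with addition, so each word $u\,X^\alpha\,v$ is also $\prec w$. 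The inductive hypothesis puts all of these words in $V+I$, hence $w\in V+I$.

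With $\phi$ now surjective, (2) makes it bijective, so $R$ is affine with respect to $x$ and $\phi$ is an order-isomorphism of the monomial bases; in particular $\phi$ respects $\leq$ and hence leading monomials. Finally, $R_{ij}\in I$ gives in $R$ the relation $x_jx_i=c_{ij}x_ix_j+p_{ij}$ with $p_{ij}:=\phi(P_{ij})$ and $c_{ij}\neq 0$, and $\lm(p_{ij})<x_ix_j$ since $\phi$ carries $\lm(P_{ij})$ to $\lm(p_{ij})$ and $\lm(P_{ij})<\lm(X_iX_j)$ by~(1); hence $R$ is of solvable type with respect to $\leq$ and $x$. I expect the surjectivity step to be the only genuine obstacle: one must locate a well-ordering of words under which both moves used in the reduction --- transposing an out-of-order adjacent pair, and replacing a relator tail $P_{ij}$ --- strictly decrease, and condition~(1) is exactly what forces the second move to lower the multidegree. (This is the termination half of Bergman's diamond lemma; confluence is not needed here, since the linear independence of the $x^\alpha$ is supplied directly by~(2) rather than deduced.)
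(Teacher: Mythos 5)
The paper gives no proof of this proposition --- it is cited from Kandri-Rody and Weispfenning \cite[Theorem~1.7]{KR-W} --- so there is nothing internal to compare against; your argument is a self-contained substitute, and it is correct. The reduction to surjectivity of $\phi$, and the well-founded induction on the pair $\bigl(\mu(w),\operatorname{inv}(w)\bigr)$ to prove $V+I=K\<X\>$, is exactly the termination half of Bergman's diamond lemma, and your closing observation that condition~(2) makes the confluence half unnecessary (linear independence of the $x^\alpha$ is hypothesis, not conclusion) is a genuine simplification over the textbook route. Two minor points. First, $\prec$ as you define it is a well-founded strict partial order, not a well-ordering: distinct words can share the same $(\mu,\operatorname{inv})$. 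This is harmless since you only use well-founded induction, but the terminology should be fixed. Second, in the forward direction, equating coefficients in $c_{ij}x_ix_j+\pi(P_{ij})=c'_{ij}x_ix_j+p'_{ij}$ only yields $(P_{ij})_\alpha=0$ for $\alpha>\varepsilon_i+\varepsilon_j$; to kill the coefficient at $\alpha=\varepsilon_i+\varepsilon_j$ itself --- which is what $\lm(P_{ij})<\lm(X_iX_j)$ demands --- one needs $c_{ij}=c'_{ij}$, i.e.\ the normalization $X_iX_j\notin\supp P_{ij}$. The display \eqref{Rij} does not actually impose this normalization, so your appeal to it is a convention being read into the statement (indeed, $R_{ij}=X_2X_1-2X_1X_2-(-X_1X_2)$ with $R=K[x_1,x_2]$ shows the literal forward implication fails without it). You flag this correctly, but it deserves to be stated as an explicit hypothesis on the presentation of $\cal R$ rather than waved at.
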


\begin{remark} \label{Solvable Type, Construction, Remark}
Suppose that $R$ is affine with respect
to $\leq$ and $x$, and let $\pi\colon K\<X\>\to R$ be the surjective $K$-algebra
morphism with $X_i\mapsto x_i$ for $i=1,\dots,N$. Let
$\cal R=(R_{ij})$ be a commutation system as in \eqref{Rij}
satisfying condition (1) in Proposition~\ref{Solvable Type, Construction} and
with $\ker\pi$ containing $I=I(\cal R)$. Then
$I=\ker\pi$, so $R$ is of solvable type with respect to $\leq$ and
$x$.
(Note that $\ker\pi\cap \bigoplus_\alpha KX^\alpha=\{0\}$
since $R$ is affine; in particular, $I\cap \bigoplus_\alpha KX^\alpha=\{0\}$,
hence $K\<X\>=I\oplus \bigoplus_\alpha KX^\alpha$
by Proposition~\ref{Solvable Type, Construction}, and thus $I=\ker\pi$.)
\end{remark}

Every $K$-algebra of solvable type arises as described in
Proposition~\ref{Solvable Type, Construction}:
Suppose $R=K\<x\>$ is
of solvable type as in Definition~\ref{Solvable Type}; let $\pi$ be as
in Remark~\ref{Solvable Type, Construction, Remark},
for $1\leq i<j\leq N$ let $P_{ij}$ be the unique polynomial in $\bigoplus_\alpha KX^\alpha$ with $\pi(P_{ij})=p_{ij}$, and define the commutation system
${\cal R}=(R_{ij})$ as in \eqref{Rij}.
Then clearly $\ker\pi$ contains
$I=I(\cal R)$. So $\ker\pi=I$ by the preceding remark, and $\pi$ induces an isomorphism $K\<X\>/I\to R$.
Hence we may define properties of a $K$-algebra of solvable type in terms
of the unique commutation system defining it. For example, we say that
a $K$-algebra of solvable type is {\bf quadric} or {\bf homogeneous} if its defining commutation system is quadric or homogeneous, respectively. If $R$ is of solvable type with respect to a degree-compatible monomial ordering, then $R$ is quadric.

Condition (1) in the previous proposition
automatically holds if $P_{ij}\in K$ for $1\leq i<j\leq N$, or
if $\leq$ is degree-compatible and $\deg P_{ij}<2$ for $1\leq i<j\leq N$.
Hence  the
$n$-th Weyl algebra $A_n(K)$ over $K$ is of solvable type with respect to the generating tuple $(x,\partial)$ and
{\it every}\/ monomial ordering of $\N^{2n}$. Similarly,
the universal enveloping algebra of an $N$-dimensional Lie algebra
over $K$ is of solvable type with respect to the generating tuple $x$ and every monomial ordering of $\N^N$. The only
commutative $K$-algebra of solvable type with respect to $x$ is the
commutative polynomial ring $K[x_1,\dots,x_N]$, which is of solvable type with respect to every monomial ordering of $\N^N$. All of those examples are quadric.

\begin{lemma}\label{mod t}
Suppose that $N>0$ and $x_N$ is in the center of $R$. Let $S=R/Rx_N$, and for $i=1,\dots,N-1$ let
$y_i$ be the image of $x_i$ under the natural surjection $R\to S$.
\begin{enumerate}
\item If  $R$ is
affine with respect to $x$, then $S$ is affine with respect to
$y=(y_1,\dots,y_{N-1})$.
\item If $R$ is of solvable type with respect to $\leq$ and the tuple $x$, then $S$ is of solvable type with respect to
the restriction of $\leq$ to $\N^{N-1}$ and $y$, and if in addition $R$ is quadric \textup{(}homogeneous\textup{)}, then $S$ is quadric \textup{(}homogeneous, respectively\textup{)}.
\end{enumerate}
\end{lemma}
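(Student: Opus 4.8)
The plan is to obtain part (1) directly from affineness of $R$ together with the centrality of $x_N$, and then to deduce part (2) by transporting the defining relations of $R$ through the natural surjection $\pi\colon R\to S$; the quadric/homogeneous claim will follow by identifying the commutation system of $S$.

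First I would note that, since $x_N$ is central, $Rx_N=x_NR$ is a two-sided ideal, so $S=R/Rx_N$ is a $K$-algebra and $\pi$ is a $K$-algebra morphism, and $x^\alpha\cdot x_N=x^{\alpha+\varepsilon_N}$ for all $\alpha$. Since $\{x^\alpha\}_\alpha$ is a $K$-basis of $R$, this gives $Rx_N=\bigoplus_{\alpha_N\geq 1}Kx^\alpha$, whence $R=Rx_N\oplus\bigoplus_{\alpha_N=0}Kx^\alpha$; consequently $\pi$ maps $\bigoplus_{\alpha_N=0}Kx^\alpha$ isomorphically onto $S$ and sends $x^\alpha$ to $0$ whenever $\alpha_N\geq 1$. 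As $\pi(x^{(\beta,0)})=y^\beta$ for $\beta\in\N^{N-1}$ (via the identification $\N^{N-1}=\N^{N-1}\times\{0\}\subseteq\N^N$), the family $\{y^\beta\}_\beta$ is a monomial basis of $S$; this is part (1). For part (2) I would apply $\pi$ to the relations $x_jx_i=c_{ij}x_ix_j+p_{ij}$ for $1\leq i<j\leq N-1$, obtaining $y_jy_i=c_{ij}y_iy_j+q_{ij}$ with $q_{ij}:=\pi(p_{ij})$ and $c_{ij}\neq 0$. The crucial point is the leading-monomial estimate: writing $p_{ij}=\sum_\alpha(p_{ij})_\alpha x^\alpha$, the description of $\pi$ on monomials yields $q_{ij}=\sum_\beta(p_{ij})_{(\beta,0)}y^\beta$, so every monomial of $q_{ij}$ comes from a monomial $x^{(\beta,0)}$ of $p_{ij}$, and $\lm(p_{ij})<x_ix_j$ forces $(\beta,0)<\varepsilon_i+\varepsilon_j$ in $\N^N$, hence $\beta<\varepsilon_i+\varepsilon_j$ in $\N^{N-1}$ for the restricted ordering; thus $\lm(q_{ij})<y_iy_j$ (the case $q_{ij}=0$ being trivial). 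Together with part (1) and the (routine) fact that the restriction of $\leq$ to $\N^{N-1}$ is again a monomial ordering, this shows $S$ is of solvable type with respect to that ordering and $y$.

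For the quadric/homogeneous assertion I would invoke the discussion following Proposition~\ref{Solvable Type, Construction}, which attaches to the solvable-type algebra $S$ a unique commutation system $(S_{ij})_{1\leq i<j\leq N-1}$ with $S_{ij}=Y_jY_i-c_{ij}Y_iY_j-Q_{ij}$ in non-commutative indeterminates $Y=(Y_1,\dots,Y_{N-1})$, where $Q_{ij}$ is the unique element of $\bigoplus_\beta KY^\beta$ representing $q_{ij}$. By the formula for $q_{ij}$ obtained above, $Q_{ij}$ is gotten from the polynomial $P_{ij}$ in the commutation system of $R$ by the substitution $X_N\mapsto 0$, $X_i\mapsto Y_i$ ($i<N$); this does not increase degree and carries a polynomial that is homogeneous of degree $2$ to one that is $0$ or again homogeneous of degree $2$, so quadricity (resp.\ homogeneity) of the commutation system of $R$ passes to that of $S$. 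The main obstacle --- in fact the only thing requiring care --- is the bookkeeping in the leading-monomial estimate for $q_{ij}$: one must check that all terms of $q_{ij}$ lie strictly below $y_iy_j$ in the restricted ordering, and this is exactly where the centrality of $x_N$ enters, through the clean direct-sum decomposition of $R$ established in part (1).
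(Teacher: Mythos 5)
Your proof is correct and takes essentially the same route as the paper's: both deduce part (2) by setting $X_N$ (equivalently $x_N$) to $0$ in the commutation system of $R$ and observing that the leading-monomial inequality $\lm(p_{ij})<x_ix_j$ survives this substitution because the restriction of $\leq$ to $\N^{N-1}\times\{0\}$ is the induced ordering. The paper packages the verification through Proposition~\ref{Solvable Type, Construction} and Remark~\ref{Solvable Type, Construction, Remark} and declares part~(1) clear; you instead spell out the direct-sum decomposition $R=Rx_N\oplus\bigoplus_{\alpha_N=0}Kx^\alpha$ and the leading-monomial bookkeeping explicitly, which is a legitimate unpacking of the same argument.
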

\begin{proof}
Part (1) is clear. For (2), suppose $R$ is of
solvable type with respect to   $\leq$ and $x$.
Let $\cal R=(R_{ij})_{1\leq i<j\leq N}$ be the commutation system
in $K\<X\>$ defining
$R$. Let $Y=(Y_1,\dots,Y_{N-1})$ be a tuple of distinct indeterminates over $K$.
The commutation system $\cal S=(S_{ij})_{1\leq i<j<N}$
in $K\<Y\>$
with $S_{ij}:=R_{ij}(Y,0)$ for $1\leq i<j< N$
satisfies condition (1) in Proposition~\ref{Solvable Type, Construction},
and $I(\cal S)$ is contained in the kernel of the $K$-algebra
morphism $K\<Y\>\to S$ with $Y_i\mapsto y_i$ for $i=1,\dots,N-1$. Hence
by (1) and Remark~\ref{Solvable Type, Construction, Remark}, $S$  is of
solvable type with respect to the restriction of $\leq$ to $\N^{N-1}$ and $y$.
If $\cal R$ is quadric (homogeneous) then $\cal S$ clearly is quadric
(homogeneous, respectively).
\end{proof}

\subsection{Quadric algebras of solvable type}
\label{Quadric algebras of solvable type}
{\it In the rest of this section,  $\pi\colon K\<X\>\to R$ is
the $K$-algebra morphism with $\pi(X_i)=x_i$.
Also let $\cal R=(R_{ij})$ be a commutation system defining $R=K\<x\>$, with $R_{ij}$ as in \eqref{Rij}, and we assume that $R$ is quadric of solvable type with respect to
$\leq$ and $x$. We put $p_{ij}:=\pi(P_{ij})$.}\/
We have
$\lm(\pi(v))=\lm(\pi(w))$ for all words $v,w\in \<X\>$ which are rearrangements of
each other, by \eqref{lm multiplication}. This observation is crucial
for the proof of the next lemma, to be used in the following subsection:

\begin{lemma}\label{R_leq d}
For every $d$ we have
$$R_{(\leq d)} = \bigoplus_{\abs{\alpha}\leq d} Kx^\alpha.$$
\end{lemma}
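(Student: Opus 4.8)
The plan is to prove the stronger assertion that $\pi(w)\in\bigoplus_{\abs{\beta}\leq\ell}Kx^\beta$ for every word $w\in X^*$ of length $\ell$. Granting this, the lemma follows at once: by definition of the standard filtration, $R_{(\leq d)}$ is the image under $\pi$ of the space of polynomials in $K\<X\>$ of degree $\leq d$, hence is spanned over $K$ by the elements $\pi(w)$ with $\abs{w}\leq d$, and each such $\pi(w)$ lies in $\bigoplus_{\abs{\beta}\leq d}Kx^\beta$ by the assertion; the reverse inclusion is clear, since $x^\beta=\pi(X^\beta)$ with $\abs{X^\beta}=\abs{\beta}$.

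To prove the assertion I would run a Noetherian induction on the pair $\mu(w):=(\lm(\pi(w)),N(w))$, ordered lexicographically, where $N(w)$ denotes the number of inversions of $w$, i.e.\ the number of pairs of positions $p<q$ whose letters $X_{i_p},X_{i_q}$ satisfy $i_p>i_q$. This order is well-founded, since $x^\diamond$ is well-ordered by $\leq$ and $\N$ is well-ordered. For a word $w=X_{i_1}\cdots X_{i_\ell}$, iterating \eqref{lm multiplication} gives $\lm(\pi(w))=x^{\varepsilon_{i_1}+\cdots+\varepsilon_{i_\ell}}$, a monomial of degree $\ell$ that depends only on the multiset of letters of $w$; in particular $\lm(\pi(w))$ is unchanged by any rearrangement of $w$. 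If $N(w)=0$ then $w$ is sorted, so $w=X^\beta$ with $\abs{\beta}=\ell$ and $\pi(w)=x^\beta$; this settles the base case.

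For the inductive step, suppose $N(w)>0$; then $w$ contains an adjacent descent, say $w=u\,X_jX_i\,v$ with $i<j$ and $u,v\in X^*$. Using the relation $x_jx_i=c_{ij}x_ix_j+p_{ij}$ in $R$ and writing $P_{ij}=\sum_\gamma a_\gamma X^\gamma$, we obtain
$$\pi(w)=c_{ij}\,\pi(u\,X_iX_j\,v)+\sum_{X^\gamma\in\supp P_{ij}}a_\gamma\,\pi(u\,X^\gamma\,v).$$
Every word $w'$ appearing on the right has $\abs{w'}\leq\ell$: for $u\,X_iX_j\,v$ this is obvious, and for $u\,X^\gamma\,v$ it follows from $\abs{\gamma}\leq 2$, which is the one point where quadricity of $\cal R$ is used. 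Furthermore $\mu(w')<\mu(w)$ in each case. Indeed, $u\,X_iX_j\,v$ is obtained from $w$ by transposing two adjacent letters, so it has the same value of $\lm(\pi(\cdot))$ as $w$ while having one fewer inversion (transposing adjacent letters changes the number of inversions by exactly $\pm1$); and for the remaining words, by \eqref{lm multiplication}, the inequality $\lm(p_{ij})<x_ix_j$, and compatibility of $\leq$ with $\ast$,
$$\lm(\pi(u\,X^\gamma\,v))=\lm(\pi(u))\ast x^\gamma\ast\lm(\pi(v))<\lm(\pi(u))\ast x^{\varepsilon_i+\varepsilon_j}\ast\lm(\pi(v))=\lm(\pi(w)),$$
so even the first coordinate of $\mu$ drops. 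By the induction hypothesis $\pi(w')\in\bigoplus_{\abs{\beta}\leq\abs{w'}}Kx^\beta\subseteq\bigoplus_{\abs{\beta}\leq\ell}Kx^\beta$ for every such $w'$, and therefore $\pi(w)\in\bigoplus_{\abs{\beta}\leq\ell}Kx^\beta$, completing the induction.

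The point that requires care is the choice of the measure $\mu$. The rewriting step replacing $X_jX_i$ by $c_{ij}X_iX_j+P_{ij}$ can create, out of the quadratic part of $P_{ij}$, new words of the same length with no control on their inversion count, so neither length nor inversion count alone decreases along the rewriting. Placing $\lm(\pi(w))$ first in $\mu$ is what resolves this — and this is exactly where the observation that $\lm(\pi(\cdot))$ is invariant under rearrangements, and the defining inequality $\lm(p_{ij})<x_ix_j$ of an algebra of solvable type, enter the argument — while quadricity is what keeps lengths from growing.
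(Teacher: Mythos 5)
Your proof is correct and follows essentially the same route as the paper's: both induct on the pair $(\lm(\pi(w)),\text{inversion count of }w)$ under the lexicographic order, rewrite an adjacent descent $X_jX_i\mapsto c_{ij}X_iX_j+P_{ij}$, use rearrangement-invariance of $\lm\circ\pi$ together with $\lm(p_{ij})<x_ix_j$ to see the measure drops on every resulting word, and invoke quadricity to keep lengths from growing. The only cosmetic differences are that you phrase the claim downstairs in $R$ rather than as membership in $I(\mathcal R)+\bigoplus_{|\beta|\le d}KX^\beta$ upstairs in $K\langle X\rangle$, and you allow any adjacent descent where the paper picks the leftmost one; neither affects the argument.
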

\begin{proof}
For a word $w=X_{i_1}\cdots X_{i_m}\in X^*$ with $i_1,\dots,i_m\in\{1,\dots,N\}$ we define the ``misordering index'' $i(w)$ of $w$ as the number of pairs $(k,l)$ with $1\leq k<l\leq m$ and $i_k>i_l$. We equip $\N^{N+1}=\N^N\times\N$ with the lexicographic product of
the given monomial ordering $\leq$ of $\N^N$ and the usual ordering of $\N$.
It suffices to show, by induction on $(\alpha,i)\in \N^N\times\N$, that
every $w\in \<X\>$ with $\lm(\pi(w))=x^\alpha$ and the misordering index $i(w)=i$ belongs to $I(\cal R)+\bigoplus_{\abs{\beta}\leq d} KX^\beta$ where $d=\text{length of $w$}$.
If $i(w)=0$ then $w\in
\bigoplus_{\abs{\beta}\leq d} KX^\beta$, and there is nothing to show; so
suppose $i(w)>0$ (in particular, $d>0$). Then
there are $i$, $j$ and $u$, $v$  with
$i<j$, $w=uX_jX_iv$ and $i(u)=0$. We have $uR_{ij}v\in I(\cal R)$ and
$$w=c_{ij}uX_iX_jv+uP_{ij}v+uR_{ij}v.$$
We also have
$\lm(\pi(uX_iX_jv))=\lm(\pi(w))$ and $i(uX_iX_jv)=i(w)-1$, and moreover
$\lm(\pi(uP_{ij}v))<\lm(\pi(w))$ and $\deg(uP_{ij}v)\leq d$ since $\cal R$ is quadric. Thus
by inductive hypothesis,
$uX_iX_jv$ and $uP_{ij}v$ are elements of $I(\cal R)+\bigoplus_{\abs{\beta}\leq d} KX^\beta$; hence so is $w$.
\end{proof}

\subsection{Homogenization and homogeneous algebras of solvable type}
\label{Homogenization}

Let $T$ be an indeterminate over $K$ distinct from
$X_1,\dots,X_N$.
In the following we identify the Rees algebra $R^*$ of $R$ with
the graded $K$-algebra $K\<X,T\>/I(\cal R)^\hom$ via the isomorphism
\eqref{Iso-homog}.
Then the canonical element of $R^*$ is
$t=T+I(\cal R)^\hom$, and the $K$-algebra $R^*$ is generated
by $x_1^*,\dots,x_N^*,t\in (R^*)_{(1)}$, where
$x_i^*=X_i+I(\cal R)^\hom$ is  the homogenization of $x_i$ ($i=1,\dots,N$).
Let $x^*:=(x_1^*,\dots,x_N^*)$. By Lemma~\ref{R_leq d}, for every $d$ we have
$$(R^*)_{(d)} = \bigoplus_{\abs{\alpha}\leq d} K\, (x^*)^\alpha t^{d-\abs{\alpha}}.$$
In particular, the $K$-algebra $R^*$ is affine with respect to $(x^*,t)$. In fact:

\begin{cor}\label{Rees}
The Rees algebra $R^*$ of
$R$ is  homogeneous of solvable type with respect to the lexicographic product
$\leq^*$ of  the monomial ordering $\leq$ of $\N^N$ and the usual ordering
of $\N$, and the generating tuple $(x^*,t)$.
\end{cor}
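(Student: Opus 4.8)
The plan is to verify directly that the defining commutation system of $R^*$ with respect to the generating tuple $(x^*,t)$ is homogeneous and satisfies the hypotheses of Proposition~\ref{Solvable Type, Construction}. Since $R=K\<x\>$ is quadric of solvable type, for $1\leq i<j\leq N$ we have $x_jx_i = c_{ij}x_ix_j + p_{ij}$ with $\lm(p_{ij})<x_ix_j$ and $\deg(p_{ij})\leq 2$. First I would homogenize these relations: writing $p_{ij}=\sum_{|\alpha|\leq 2}(p_{ij})_\alpha x^\alpha$, the relation $R_{ij}^\hom$ becomes
$$X_jX_i - c_{ij}X_iX_j - \sum_{|\alpha|=2}(p_{ij})_\alpha X^\alpha - \Big(\sum_{|\alpha|=1}(p_{ij})_\alpha X^\alpha\Big)T - (p_{ij})_0 T^2,$$
a homogeneous element of $K\<X,T\>_{(2)}$; together with the commuting relations $X_iT - TX_i$ (also homogeneous of degree $2$) these generate $I(\cal R)^\hom$. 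Thus the commutation system presenting $R^*$ on the tuple $(x^*,t)$ consists entirely of homogeneous degree-$2$ polynomials, so it is homogeneous (and in particular quadric).

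Next I would check that this commutation system is compatible with $\leq^*$, i.e. verify condition (1) of Proposition~\ref{Solvable Type, Construction}. For the pairs $(x_i^*,x_j^*)$: the leading monomial of the tail of $R_{ij}^\hom$ is, among the monomials $(x^*)^\alpha t^{2-|\alpha|}$ appearing, the $\leq^*$-largest; since $\leq^*$ is the lexicographic product of $\leq$ with the usual ordering of $\N$, and $\lm(p_{ij})<x_ix_j$ in $(\N^N,\leq)$ while every monomial of $p_{ij}^\hom$ other than those of top $x$-degree has strictly smaller $x$-degree, all tail monomials are $<^*\,x_i^*x_j^*$. For the pairs involving $t$: since $t$ is central in $R^*$, the relation is $x_i^* t = t\,x_i^*$ (equivalently $c=1$, $p=0$ after reordering), which trivially satisfies condition (1). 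Condition (2) of Proposition~\ref{Solvable Type, Construction}, that $I(\cal R)^\hom$ meets the span of the standard monomials $(x^*)^\alpha t^k$ trivially, follows from the computation $(R^*)_{(d)}=\bigoplus_{|\alpha|\leq d}K\,(x^*)^\alpha t^{d-|\alpha|}$ recorded just before the corollary, which already asserts that $R^*$ is affine with respect to $(x^*,t)$. Hence Proposition~\ref{Solvable Type, Construction} applies and $R^*$ is of solvable type with respect to $\leq^*$ and $(x^*,t)$; being presented by a homogeneous commutation system, it is homogeneous of solvable type.

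Alternatively — and this is probably the cleaner route — I would invoke Remark~\ref{Solvable Type, Construction, Remark}: we already know $R^*$ is affine with respect to $(x^*,t)$, so it suffices to exhibit a commutation system on $(x^*,t)$ satisfying condition (1) whose ideal is contained in $\ker(K\<X,T\>\to R^*)=I(\cal R)^\hom$, and the homogenized relations above do exactly that. The one point requiring a little care — the main (if modest) obstacle — is confirming that $\leq^*$ orders the homogenized tails correctly, i.e. that homogenizing with the central variable $t$ placed lexicographically lowest cannot create a tail monomial exceeding $x_i^*x_j^*$; this is exactly why the lexicographic product is taken in this order, and it reduces to the elementary observation that padding a monomial with powers of $t$ strictly decreases its $x$-degree, hence keeps it below $x_ix_j$ in the first (dominant) coordinate of $\leq^*$.
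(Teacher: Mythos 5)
Your proposal is correct and follows essentially the same route as the paper: construct the homogenized commutation system $\mathcal R^{\hom}$ by adjoining the relations $X_iT-TX_i$ to the $R_{ij}^{\hom}$, observe that condition (1) of Proposition~\ref{Solvable Type, Construction} holds by the choice of $\leq^*$, and conclude via Remark~\ref{Solvable Type, Construction, Remark} together with the already-established affineness of $R^*$ with respect to $(x^*,t)$. The only difference is that you spell out the ordering check in more detail than the paper, which simply asserts ``by choice of $\leq^*$''.
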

\begin{proof}
We construct a homogeneous
commutation system $\cal R^\hom$ in $K\<X,T\>$ by enlarging
the family $(R_{ij}^\hom)_{1\leq i<j\leq N}$  by the
polynomials $X_iT-TX_i$ ($i=1,\dots,N$). (See \eqref{fhom} for the definition of $R_{ij}^\hom$.)
Then $\cal R^\hom$ satisfies
condition (1) in Proposition~\ref{Solvable Type, Construction} (by choice of $\leq^*$).
Clearly the surjective $K$-algebra morphism $K\<X,T\>\to R^*$ with $X_i\mapsto x_i^*$  and $T\mapsto t$ sends every polynomial in $I(\cal R^\hom)$ to zero, hence induces
an isomorphism $K\<X,T\>/I(\cal R^\hom)\to R^*$ by Remark~\ref{Solvable Type, Construction, Remark}. Thus
$R^*$ is of solvable type as claimed.
\end{proof}

In the following, by abuse of notation, we denote the homogenization
$x_i^*\in R^*$ of $x_i\in R$ also just by $x_i$, for $i=1,\dots,N$.
So the homogenization of $f\in R$ of degree $d$ is
$$f^* = \sum_\alpha f_\alpha  x^\alpha t^{d-\abs{\alpha}} \in (R^*)_{(d)},$$
and for every  $\alpha$ and $i\in\N$ the dehomogenization of
$x^\alpha t^i$ is given by $(x^\alpha t^i)_*=x^\alpha$.

\begin{examples}\label{Rees-Examples}
\mbox{}
\begin{enumerate}
\item The Rees algebra of the commutative polynomial ring $K[x_1,\dots,x_N]$ is
the polynomial ring $K[x_1,\dots,x_N,t]$ equipped with its
usual grading by (total) degree.
\item If $R=A_n(K)$, then $R^*$ is the graded $K$-algebra generated by
$2n+1$ generators
$x_1,\dots,x_n,\partial_1,
\dots,\partial_n,t$ subject to the homogeneous relations
$$
\begin{array}{llll}
x_jx_i = x_ix_j, & \partial_j\partial_i=\partial_i\partial j & &\text{for $1\leq i<j\leq n$,} \\
\partial_j x_i = x_i\partial_j & & &\text{for $1\leq i,j\leq n$, $i\neq j$,} \\
\partial_i x_i = x_i\partial_i + t^2 & & &\text{for $1\leq i\leq n$,}\\
x_it=tx_i, & \partial_it=t\partial_i  & &\text{for $1\leq i\leq n$.}
\end{array}$$
The Rees algebra of $A_n(K)$ is known as the homogenized Weyl algebra,
cf.~\cite{SST}.
\item Let $\frak g$ be a Lie algebra over $K$ with basis
$\{x_1,\dots,x_N\}$. The Rees algebra of the universal
enveloping algebra $U(\frak g)$ of $\frak g$ is the graded $K$-algebra  generated by
$x_1,\dots,x_N,t$ subject to the homogeneous relations
$$
\begin{array}{ll}
x_jx_i = x_ix_j + [x_j,x_i]_{\frak g}\cdot t & \text{for $1\leq i<j\leq N$,} \\
x_it=tx_i & \text{for $1\leq i\leq N$.}
\end{array}$$
This algebra is called the homogenized
enveloping algebra of $\frak g$ in \cite{Smith}.
\end{enumerate}
\end{examples}

The elements $y_i=\gr x_i\in(\gr R)_{(1)}$ generate the $K$-algebra $\gr R$. Moreover:

\begin{cor} \label{gr is homogeneous}
The associated graded algebra $\gr R$ of $R$ is homogeneous of solvable type with respect to the given monomial ordering $\leq$ of $\N^N$ and the tuple $y=(y_1,\dots,y_N)$. Moreover, if  $\deg P_{ij}<2$ for $1\leq i<j\leq N$ then $\gr R$ is semi-commutative, and $\gr R$ is commutative if and only if $\deg P_{ij}<2$ and $c_{ij}=1$ for $1\leq i<j\leq N$.
\end{cor}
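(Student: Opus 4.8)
The plan is to build, explicitly, a homogeneous commutation system in $K\<X\>$ that presents $\gr R$, and then verify the hypotheses of Proposition~\ref{Solvable Type, Construction} (or rather its companion, Remark~\ref{Solvable Type, Construction, Remark}). The starting point is the isomorphism $R^*/R^*t \overset{\cong}{\to} \gr R$ of graded $K$-algebras from Subsection~2.5 (Filtered and graded algebras), combined with the fact established in Corollary~\ref{Rees} that $R^*$ is presented by the homogeneous commutation system $\cal R^\hom = (R_{ij}^\hom)_{i<j} \cup (X_iT - TX_i)_i$ in $K\<X,T\>$. Setting $T = 0$ (equivalently, killing the canonical element $t$) should produce a presentation of $\gr R$: concretely, I would define the commutation system $\bar{\cal R} = (\bar R_{ij})_{1\le i<j\le N}$ in $K\<X\>$ by $\bar R_{ij} := R_{ij}^\hom(X, 0)$. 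Since $R_{ij} = X_jX_i - c_{ij}X_iX_j - P_{ij}$ with $\deg P_{ij} \le 2$, one has $R_{ij}^\hom = X_jX_i - c_{ij}X_iX_j - P_{ij}^{(2)} - P_{ij}^{(1)}T - P_{ij}^{(0)}T^2$ where $P_{ij}^{(k)}$ is the degree-$k$ homogeneous part of $P_{ij}$, so $\bar R_{ij} = X_jX_i - c_{ij}X_iX_j - P_{ij}^{(2)}$ is visibly homogeneous of degree $2$ (or zero, if $P_{ij}^{(2)} = 0$ and $c_{ij}=1$, etc.). Thus $\bar{\cal R}$ is a homogeneous commutation system.

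**Key steps, in order.** First, verify that $\bar{\cal R}$ satisfies condition~(1) of Proposition~\ref{Solvable Type, Construction}, i.e.\ $\lm(P_{ij}^{(2)}) < \lm(X_iX_j)$ for $1\le i<j\le N$. This follows because $P_{ij}^{(2)} \in \bigoplus_\alpha KX^\alpha$ is a sum of monomials appearing in $P_{ij}$, and $\lm(p_{ij}) < x_ix_j$ in $R$ by the definition of solvable type; here one uses that $\gr R$ is semi-commutative (by Lemma~\ref{gr R is semi-commutative}) so that $\lm$ on $\gr R$ agrees with $\ast$, letting us read off leading monomials of the $P_{ij}^{(2)}$ directly from $\supp P_{ij}$. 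Second, show that the two-sided ideal $I(\bar{\cal R})$ of $K\<X\>$ is contained in the kernel of the surjection $\bar\pi\colon K\<X\> \to \gr R$, $X_i \mapsto y_i$; this amounts to checking $\bar\pi(\bar R_{ij}) = 0$, i.e.\ $y_jy_i = c_{ij}y_iy_j + \bar\pi(P_{ij}^{(2)})$ in $\gr R$, which is exactly the degree-$2$ part of the relation $x_jx_i = c_{ij}x_ix_j + p_{ij}$ passed to $\gr R$ (the lower-degree parts of $p_{ij}$ die in the associated graded). Third, invoke Remark~\ref{Solvable Type, Construction, Remark}: since $\gr R$ is affine with respect to $y$ (immediate from Lemma~\ref{gr R is semi-commutative}, whose filtration is precisely the one defining $\gr R$ here) and $I(\bar{\cal R}) \subseteq \ker\bar\pi$ with condition~(1) holding, we get $I(\bar{\cal R}) = \ker\bar\pi$, so $\gr R = K\<X\>/I(\bar{\cal R})$ is of solvable type, defined by the homogeneous system $\bar{\cal R}$. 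Fourth, read off the "Moreover" clause: if $\deg P_{ij} < 2$ for all $i<j$ then every $P_{ij}^{(2)} = 0$, so $\bar R_{ij} = X_jX_i - c_{ij}X_iX_j$ and $\gr R$ is semi-commutative by Examples~\ref{Affine, Examples}(1); it is commutative precisely when in addition all $c_{ij} = 1$, again by Examples~\ref{Affine, Examples}(1).

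**Main obstacle.** The technically delicate point is the second step, or more precisely the precise bookkeeping in identifying $R^*/R^*t$ with $\gr R$ under the chosen presentations — one must be careful that "setting $T=0$" in the presentation $K\<X,T\>/I(\cal R^\hom)$ of $R^*$ really corresponds to quotienting by $R^*t$ and not something subtler, and that the resulting relations $\bar R_{ij}$ generate the \emph{entire} kernel rather than a possibly smaller ideal. This is handled cleanly by routing everything through Remark~\ref{Solvable Type, Construction, Remark} (which guarantees $I(\bar{\cal R}) = \ker\bar\pi$ automatically, given affineness plus condition~(1)), so that one never has to argue the reverse inclusion $\ker\bar\pi \subseteq I(\bar{\cal R})$ by hand. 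A secondary subtlety is keeping straight which monomial ordering governs $\gr R$: it is the \emph{original} $\leq$ on $\N^N$, not the lexicographic product $\leq^*$ used for $R^*$ — this is consistent because the ordering on $\N^N \hookrightarrow \N^{N+1}$ is unchanged when the last coordinate (the $T$-degree) is fixed at $0$, matching the convention set in Subsection~1.1 identifying $\N^N$ with $\N^N\times\{0\}$.
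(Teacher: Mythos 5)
Your approach is essentially the paper's: the paper proves this in one line by citing Lemma~\ref{mod t}\,(2) together with Corollary~\ref{Rees}, and your argument is nothing other than the proof of Lemma~\ref{mod t} unwound inline for the special case $S=R^*/R^*t\cong\gr R$ (same homogeneous commutation system $\bar{\cal R}$ with $\bar R_{ij}=R_{ij}^{\hom}(X,0)$, same check of condition~(1), same appeal to Remark~\ref{Solvable Type, Construction, Remark} to avoid arguing the reverse inclusion $\ker\bar\pi\subseteq I(\bar{\cal R})$). Your closing remarks about the interaction between $\leq^*$ and the restriction to $\N^N\times\{0\}$ are also exactly the point the paper's citation of Lemma~\ref{mod t} relies on.

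There is, however, one concrete misattribution that should be fixed. You invoke Lemma~\ref{gr R is semi-commutative} twice: once to claim that $\gr R$ is semi-commutative (in verifying condition~(1)), and once to claim that $\gr R$ is affine with respect to $y$. But that lemma concerns the \emph{fine multi-filtration} of $R$ indexed by $\N^N$ and its associated object $\gr_{\leq}R$, not the standard $\N$-indexed filtration whose associated graded $\gr R$ is the subject of this corollary; the two are genuinely different (for instance, $\gr_{\leq}R$ is \emph{always} semi-commutative, whereas $\gr R$ is semi-commutative only when $\deg P_{ij}<2$ for all $i<j$ --- which is exactly the content of the ``moreover'' clause you are trying to prove!). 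Fortunately, neither use of the lemma is needed. For condition~(1), the inequality $\lm(P_{ij}^{(2)})<\lm(X_iX_j)$ follows immediately from $\lm(P_{ij})<\lm(X_iX_j)$ because $\supp(P_{ij}^{(2)})\subseteq\supp(P_{ij})$ inside $\bigoplus_\alpha KX^\alpha$, where $\lm$ is defined intrinsically; no statement about the multiplication in $\gr R$ enters. For affineness of $\gr R$ with respect to $y$, you should instead cite Lemma~\ref{R_leq d}, which gives $(\gr R)_{(d)}=R_{(\leq d)}/R_{(<d)}$ a $K$-basis $\{y^\alpha:\abs{\alpha}=d\}$ --- or more simply cite Lemma~\ref{mod t}\,(1) applied to $R^*$, which is what the paper does.
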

\begin{proof}
The first statement follows from Lemmas~\ref{mod t},~(2) and \ref{Rees}. 
Suppose  $\deg P_{ij}<2$ for $1\leq i<j\leq N$. Then $x_jx_i=c_{ij} x_ix_j+p_{ij}$ where $p_{ij}\in R_{(<2)}$, and hence $y_jy_i=c_{ij} y_iy_j$ in $\gr R$, for $1\leq i<j\leq N$. Therefore $\gr R$ is semi-commutative, and commutative if and only if $c_{ij}=1$ for $1\leq i<j\leq N$.
\end{proof}

In each of the examples in \ref{Rees-Examples}, the associated graded algebra
is commutative. We have only considered the homogenization of $R$ with respect to the standard filtration of $R$; for other types of homogenizations see \cite[Section~4.7]{BGV}.

\medskip

Now assume that $R$ is homogeneous.
Then $R$ is a graded $K$-algebra, equipped with the grading
induced from $K\<X\>$ by $\pi\colon K\<X\>\to R$. By Lemma~\ref{R_leq d} we have
$$R_{(d)} = \bigoplus_{\abs{\alpha}=d} Kx^\alpha$$
for every $d$. Hence if $N>0$ then
\begin{equation}\label{Basic Hilbert function}
\dim_K R_{(d)} = {N+d-1\choose d} \qquad\text{for every $d$.}
\end{equation}
For a homogeneous $K$-linear subspace $V$ of $R$,
the {\bf Hilbert function $H_V\colon\N\to\N$ of $V$} is
defined by
$$H_V(d) := \dim_K V_{(d)} \qquad\text{for each $d$.}$$
Clearly if a homogeneous $K$-linear subspace $V$ of $R$ can be decomposed as a direct sum
$$V=\bigoplus_{i\in I} V_i$$
of a family $\{V_i\}_{i\in I}$ of homogeneous $K$-linear subspaces  $V_i\subseteq V$ of $R$, then
$$H_V(d)=\sum_{i\in I} H_{V_i}(d)\qquad \text{for each $d$,}$$
where all but finitely many summands in the sum on the right hand side are zero.
In many interesting cases, $H_V(d)$ agrees with a polynomial function for sufficiently large values of $d$. (Lemma~\ref{Hilbert polynomials}.) The (necessarily unique) polynomial
$P\in\Q[T]$ such that $H_V(d)=P(d)$ for all sufficiently large $d$ will be denoted by $P_V$, and called the  {\bf Hilbert polynomial of $V$.} The smallest $r\in\N$ such that $H_V(d)=P_V(d)$ for all $d\geq r$ is called the {\bf regularity} of the Hilbert function $H_V$, which we denote here by $\sigma(V)$.
For example, if $N>0$ then
$$P_R = \frac{1}{(N-1)!} (T+N-1)\cdot (T+N-2) \cdots (T+1)$$
by \eqref{Basic Hilbert function}, with $\sigma(R)=0$.
In a similar vein,
for a finitely generated graded left $R$-module $M$, each of the homogeneous components $M_{(d)}$ has finite dimension as a $K$-linear space, and the function $H_{M}\colon\N\to\N$ defined by
$$H_{M}(d) := \dim_K M_{(d)} \qquad\text{for each $d$}$$
is called the {\bf Hilbert function} of $M$. There exists a polynomial $P_M\in\Q[T]$ of degree less than $N$ with $H_M(d)=P_M(d)$ for $d$ sufficiently large, called the {\bf Hilbert polynomial of~$R$}.  The degree of $P_{M}$ is one less than the Gelfand-Kirillov dimension of the graded left $R$-module~$M$. (See, e.g., \cite[Ch.~7]{BGV}.) In particular, if $I$ is a homogeneous left ideal of~$R$, then $P_I$ exists and has degree less than $N$, and $P_{R/I}=P_R-P_I$ (if $R/I$ is considered as a left $R$-module).
We define the {\bf regularity} $r(M)$ of $H_M$ similarly to the regularity of $H_V$ above.

\section{Gr\"obner bases in Algebras of Solvable Type}
\label{Grobner bases in Algebras of Solvable Type}

In this section we let
$R=K\<x\>$ be a $K$-algebra of solvable type
with respect to a fixed monomial ordering $\leq$ of $\N^N$ and a tuple
$x=(x_1,\dots,x_N)\in R^N$.

\subsection{Left reduction}
Given $f,f',g\in R$, $g\neq 0$, we write $f\red{g} f'$ if there exist $c\in K$  and multi-indices $\alpha$, $\beta$ such that
$$\lm(x^\beta g)=x^\alpha\in\supp f, \quad
\lc(cx^\beta g)=f_\alpha,\quad
f'=f-cx^\beta g.$$
We say that $f\in R$ is {\bf reducible} by a non-zero
$g\in R$ if $\lm(g)$ divides some monomial in
the support $\supp f$ of $f$, that is, if $f\red{g}f'$ for some $f'\in R$.
In this case, if $R$ is homogeneous and $f$, $g$  are homogeneous elements of $R$, then $f'$ is also homogeneous.

Let $G$ be a subset of $R$.
We say that an element $f$ of $R$ is {\bf reducible by $G$} if $f$ is reducible by some non-zero $g\in G$;
otherwise we call $f$ {\bf irreducible} by $G$. We write $f\red{G} f'$ if $f\red{g} f'$
for some $g\in G$. The reflexive-transitive closure of
the relation $\red{G}$ is denoted by
$\overset{*}{\red{G}}$. We say that $f_0\in R$ is
a {\bf $G$-normal form} of $f\in R$ if $f\overset{*}{\red{G}} f_0$ and $f_0$ is irreducible by $G$. One may show that the relation $\red{G}$ is well-founded, hence
every element of $R$ has a $G$-normal form \cite[Lemma~3.2]{KR-W}.
If $R$ is homogeneous and $G$ consists entirely of homogeneous elements of $R$, then every homogeneous element of $R$ has a homogeneous $G$-normal form.

\subsection{Gr\"obner bases of left ideals in $R$}
Let $G$ be a finite subset of $R$. Note that if $f\overset{*}{\red{G}} f'$ ($f,f'\in R$), then there exist $g_1,\dots,g_m\in G$ and $p_1,\dots,p_m\in R$ such that
$$f=p_1g_1+\cdots+p_mg_m+f',\qquad \lm(p_1g_1),\dots,\lm(p_mg_m)\leq \lm(f).$$
In particular, if $f\overset{*}{\red{G}} 0$ then $f$ is an element of the left ideal $(G)$ of $R$ generated by $G$. If $f\overset{*}{\red{G}} 0$  for every $f\in (G)$, then
$G$ is called a {\bf Gr\"obner basis} (with respect to our
monomial ordering $\leq$).
The following proposition (for a proof of which see
\cite[Lemma~3.8]{KR-W})  gives equivalent conditions that help to identify Gr\"obner bases.

\begin{prop}\label{Groebner}
The following are equivalent:
\begin{enumerate}
\item $G$ is a Gr\"obner basis.
\item Every non-zero element of $(G)$ is reducible by $G$.
\item Every element of $R$ has a unique $G$-normal form.
\item For every non-zero $f\in (G)$ there is a non-zero $g\in G$ with
$\lm(g)|\lm(f)$.
\end{enumerate}
\end{prop}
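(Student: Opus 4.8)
The plan is to prove Proposition~\ref{Groebner} by establishing the cycle of implications $(1)\Rightarrow(2)\Rightarrow(4)\Rightarrow(1)$ together with $(1)\Leftrightarrow(3)$, since each link is either immediate from the definitions or follows from the basic structure of left reduction already recorded. First I would observe that $(1)\Rightarrow(2)$ is essentially the definition: if $G$ is a Gr\"obner basis and $0\neq f\in(G)$, then $f\overset{*}{\red{G}}0$, and since $f\neq 0$ this reduction sequence cannot be empty, so the very first step $f\red{G}f'$ witnesses that $f$ is reducible by $G$. The implication $(2)\Rightarrow(4)$ is also a matter of unwinding definitions: to say $f$ is reducible by $G$ means there is a non-zero $g\in G$ such that $\lm(g)$ divides some monomial in $\supp f$; in particular $\lm(g)\mid\lm(f)$ follows once one notes that reducibility of $f$ by $g$ only requires $\lm(g)$ to divide \emph{some} monomial in the support — so strictly I should phrase $(4)$ via ``reducible by $G$'' and then observe directly from the definition of $\red{g}$ that reducibility is equivalent to $\lm(g)\mid\lm(f)$ for some $g$, because the reduction step is always applied at the leading monomial position after the lower terms have been cleared. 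Here the key point to check carefully is that if $\lm(g)$ divides some monomial $x^\alpha\in\supp f$ with $x^\alpha\neq\lm(f)$, one can still reduce, but the cleanest route for $(4)$ is to reduce at $\lm(f)$; this needs the remark that in an algebra of solvable type $\lm(x^\beta g)=x^\beta\ast\lm(g)$ by \eqref{lm multiplication}, so we can always realize the monomial $\lm(f)=x^\beta\ast\lm(g)$ as the leading monomial of $x^\beta g$ and subtract off an appropriate scalar multiple.

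Next I would prove $(4)\Rightarrow(1)$, which is the substantive direction. Suppose for every non-zero $f\in(G)$ there is a non-zero $g\in G$ with $\lm(g)\mid\lm(f)$; I want to show every $f\in(G)$ satisfies $f\overset{*}{\red{G}}0$. Take $f\in(G)$, $f\neq 0$. By hypothesis pick $g\in G$ with $\lm(g)\mid\lm(f)$, say $\lm(f)=x^\beta\ast\lm(g)$; using \eqref{lm multiplication}, $\lm(x^\beta g)=x^\beta\ast\lm(g)=\lm(f)$, so setting $c:=f_{\lm(f)}/\lc(x^\beta g)\in K$ we get $f\red{g}f'$ with $f'=f-cx^\beta g$. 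Since $g\in(G)$ we have $x^\beta g\in(G)$, hence $f'\in(G)$ again. Moreover $\lm(f')<\lm(f)$ because the leading term of $f$ has been cancelled. Since $\leq$ is a well-ordering on $x^\diamond\cup\{0\}$, iterating this process must terminate, and it can only terminate at $0$ (the only element of $(G)$ not covered by the hypothesis); hence $f\overset{*}{\red{G}}0$. This shows $G$ is a Gr\"obner basis.

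Finally, for $(1)\Leftrightarrow(3)$: the implication $(3)\Rightarrow(1)$ is easy because if $f\in(G)$ then both $0$ and any $G$-normal form of $f$ are $G$-normal forms of $f$ (the difference lies in $(G)$, and one checks $f\overset{*}{\red G}0$ once uniqueness forces the normal form to be $0$ — more directly, if $f\in(G)$ then $f$ and $0$ have the same normal form by uniqueness, and $0$ is irreducible, so the normal form is $0$, i.e. $f\overset{*}{\red G}0$). For $(1)\Rightarrow(3)$, existence of a $G$-normal form is already known (cited from \cite[Lemma~3.2]{KR-W}), so only uniqueness needs argument: if $f_0$ and $f_1$ are both $G$-normal forms of $f$, then $f_0-f_1\in(G)$ and $f_0-f_1$ is irreducible by $G$ (the set of elements irreducible by $G$ is exactly $\nf_{\lm(G)x^\diamond}(R)$-like, a $K$-span of monomials closed under subtraction — I would spell out that if $x^\alpha\in\supp(f_0-f_1)$ then $x^\alpha\in\supp f_0$ or $x^\alpha\in\supp f_1$, and neither is divisible by any $\lm(g)$). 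But by $(1)$, if $f_0-f_1$ were non-zero it would reduce to $0$, hence be reducible, a contradiction; so $f_0=f_1$.

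I expect the main obstacle to be purely expository rather than mathematical: namely, being careful about the precise meaning of ``reducible'' (divisibility of \emph{some} support monomial versus of the \emph{leading} monomial) and making sure the chosen chain of implications routes $(4)$ correctly, since $(4)$ as stated speaks only of leading monomials. The crucial technical input throughout is the multiplicativity of leading monomials \eqref{lm multiplication}, which guarantees that the reduction step can always be performed exactly at the leading monomial of $f$ and that the result has strictly smaller leading monomial; termination then follows from $\leq$ being a well-ordering. None of the steps requires the hypotheses of solvable type beyond \eqref{lm multiplication} and the fact (cited) that $\red G$ is well-founded.
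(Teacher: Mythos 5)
Your chain of implications is reasonable in outline, but two links are justified by circular or incorrect reasoning, so as written the proof has genuine gaps. (The paper itself gives no proof, citing Kandri-Rody--Weispfenning; the criticisms below are about your argument on its own merits.)

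The step $(2)\Rightarrow(4)$ is not ``a matter of unwinding definitions.'' Condition (2) says every non-zero $f\in(G)$ has \emph{some} monomial of its support divisible by some $\lm(g)$, while (4) asserts this specifically for $\lm(f)$. Your attempted fix --- ``the cleanest route is to reduce at $\lm(f)$'' --- is circular: one can perform a reduction step at $\lm(f)$ only if some $\lm(g)$ already divides $\lm(f)$, which is exactly what (4) asserts. The correct argument runs as follows: suppose (4) fails for a non-zero $f\in(G)$. By (2) reduce $f$; since the step cannot occur at $\lm(f)$, the reduced element $f'$ satisfies $\lm(f')=\lm(f)$ (using \eqref{lm multiplication} to see that the subtracted term $cx^\beta g$ has leading monomial $<\lm(f)$), $f'\in(G)$, $f'\neq 0$, and still $\lm(f')$ is not divisible by any $\lm(g)$. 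Iterating and invoking well-foundedness of $\red{G}$ (cited from \cite[Lemma~3.2]{KR-W}), one reaches a non-zero irreducible element of $(G)$, contradicting (2).

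The step $(3)\Rightarrow(1)$ has the same flavour of circularity. You assert ``if $f\in(G)$ then $f$ and $0$ have the same normal form by uniqueness,'' but condition (3) only guarantees that each individual element has one normal form; it does not by itself say that two elements whose difference lies in $(G)$ share a normal form --- that is a consequence one must prove, not a restatement of (3). A correct argument requires a lemma of the shape: assuming (3), for all $h\in R$, $g\in G$, $c\in K$, and $\beta$, one has $\nf_G(h)=\nf_G(h+cx^\beta g)$. This is proved by a short case analysis (depending on whether the coefficient of $\lm(x^\beta g)$ in $h$ and in $h+cx^\beta g$ vanishes) showing that $h$ and $h+cx^\beta g$ either reduce to one another or to a common element, so uniqueness of normal forms forces $\nf_G(h)=\nf_G(h+cx^\beta g)$. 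Applying this lemma term by term to a representation $f=\sum_j c_jx^{\beta_j}g_{i_j}$ with $g_{i_j}\in G$ gives $\nf_G(f)=\nf_G(0)=0$, whence $f\overset{*}{\red{G}}0$. Without this lemma, (3) alone does not transport normal forms across cosets of $(G)$.

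By contrast, your arguments for $(1)\Rightarrow(2)$, $(4)\Rightarrow(1)$, and $(1)\Rightarrow(3)$ are correct. In $(4)\Rightarrow(1)$ you rightly reduce at $\lm(f)$ (this is legitimate there because (4) is the \emph{hypothesis}), use \eqref{lm multiplication} to see $\lm(f')<\lm(f)$, and invoke the well-ordering of $\leq$. In $(1)\Rightarrow(3)$ you correctly observe that the difference of two normal forms lies in $(G)$ and is $G$-irreducible (its support is contained in the union of the two irreducible supports), so by (1)/(2) it must be zero. Patching the two faulty links as above would give a complete proof.
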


Given a left ideal $I$ of $R$, we say that a subset $G$ of $I$ which is a Gr\"obner basis and which generates $I$ is a {\bf Gr\"obner basis of $I$} (with respect to $\leq$).
Suppose now that $G$ is a Gr\"obner basis of $I=(G)$. Given $f\in R$, we denote
by $\nf_G(f)$ the unique $G$-normal form of $f$, so $f-\nf_G(f)\in I$. Moreover, if $f,g\in R$ have distinct $G$-normal forms,  then $h:=\nf_G(f)-\nf_G(g)$ is a non-zero element of $R$ which is irreducible by $G$, so $h\notin I$ by
the equivalence of (1) and (2) in Proposition~\ref{Groebner} and thus $f-g\notin I$. Hence
two elements $f$ and $g$ of $R$ have the same $G$-normal form if and only
if $f-g\in I$.

\begin{cor}
Suppose $G$ is a Gr\"obner basis of $I$. Then
the map $$f\mapsto\nf_G(f)\colon R\to R$$ is $K$-linear, and its image
$\nf_G(R)$ satisfies
$$R=I\oplus\nf_G(R)\qquad\text{ \rom{(}internal direct sum of $K$-linear subspaces
of $R$\rom{)}.}$$
A basis of the $K$-linear space $\nf_G(R)$ is given by the set of all monomials of $R$ not divisible \textup{(}in $(x^\diamond,{\ast})$\textup{)} by
some $\lm(g)$ with $g\in G$, $g\neq 0$.
\end{cor}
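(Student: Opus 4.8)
The plan is to derive all three assertions quickly from Proposition~\ref{Groebner} and the observation, recorded just before the corollary, that two elements of $R$ have the same $G$-normal form if and only if their difference lies in $I$. The key structural fact I would establish first is that the set $V$ of elements of $R$ irreducible by $G$ is a $K$-linear subspace of $R$: if $h$ is a $K$-linear combination of elements of $V$, then $\supp h$ lies in the union of their supports, so no monomial in $\supp h$ is divisible by any $\lm(g)$ with $0\neq g\in G$, that is, $h\in V$. Since every $h\in V$ is its own $G$-normal form (it reduces to itself in zero steps and is irreducible), uniqueness of normal forms (Proposition~\ref{Groebner}(3)) gives $\nf_G(h)=h$, and hence $\nf_G(R)=V$. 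Now $K$-linearity of $\nf_G$ is immediate: for $f,g\in R$ and $c\in K$, the element $\nf_G(f)+c\,\nf_G(g)$ lies in $V$ and differs from $f+cg$ by an element of $I$, so, being its own $G$-normal form, it equals $\nf_G(f+cg)$.

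Next I would handle the direct sum. For any $f\in R$ we have $f=(f-\nf_G(f))+\nf_G(f)$ with $f-\nf_G(f)\in I$, so $R=I+\nf_G(R)$; and if some $h\in I\cap\nf_G(R)$ were non-zero, it would be a non-zero element of $(G)=I$ and hence reducible by $G$ by the equivalence of (1) and (2) in Proposition~\ref{Groebner}, contradicting $h\in\nf_G(R)=V$. Thus $R=I\oplus\nf_G(R)$. For the basis description, let $B$ be the set of monomials of $R$ not divisible (in $(x^\diamond,{\ast})$) by any $\lm(g)$ with $0\neq g\in G$; then an element $h=\sum_\alpha h_\alpha x^\alpha$ is irreducible by $G$ precisely when $\supp h\subseteq B$, so $\nf_G(R)=V$ is the $K$-linear span of $B$. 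As $B$ is a subset of the monomial basis $x^\diamond$ of $R$, it is $K$-linearly independent, and therefore a basis of $\nf_G(R)$.

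I do not foresee a genuine obstacle: the only point requiring a moment's reflection is the closure of $V$ under $K$-linear combinations---supports can only shrink---which is precisely what forces $\nf_G$ to be $K$-linear, while every remaining step is a direct appeal to facts already in hand.
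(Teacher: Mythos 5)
Your proof is correct and follows essentially the same route as the paper's: both rest on the observation that the set of $G$-irreducible elements is closed under $K$-linear combinations together with uniqueness of $G$-normal forms. The only stylistic difference is that the paper argues scalar multiplication separately by tracking reduction steps ($f\red{g}f'\Rightarrow cf\red{g}cf'$) and addition via the ``same normal form iff difference in $I$'' remark, whereas you unify both by first identifying $\nf_G(R)$ with the span of the undivided monomials and then invoking uniqueness once; you also spell out the direct-sum and basis assertions that the paper leaves as ``clear.''
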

\begin{proof}
Let $f,f',g\in R$, $g\neq 0$, and $c\in K$, $c\neq 0$.
If $f\red{g}f'$ then $cf \red{g} cf'$, and if $f\in R$ is $G$-irreducible,
then so is $cf$. This yields $\nf_G(cf)=c\nf_G(f)$.
Also, $h:=\nf_G(f)+\nf_G(f')$
is $G$-irreducible and $h-(f+f')\in I$, hence $h=\nf_G(h)=\nf_G(f+f')$ by the remark preceding the corollary, and
thus $\nf_G(f+f')=\nf_G(f)+\nf_G(f')$. This shows
$K$-linearity of $f\mapsto\nf_G(f)$. The rest of the corollary
is clear.
\end{proof}

Note that $\nf_G(R)$ does not depend on $G$: we have $\nf_G(R)=\nf_M(R)$ where $M$ is the $K$-linear subspace of $R$ generated by
$\lm(I)$. (Notation as introduced in Section~\ref{Monomials and Monomial Ideals}.)

Every left ideal $I$ of $R$ has a Gr\"obner basis.
(Since being a Gr\"obner basis includes being finite, this means in particular that the ring $R$ is left Noetherian.)
To see this, note that $\lm(I)$ is an ideal
of the commutative monoid of monomials of $R$ (with multiplication $\ast$). Hence there is a finite set $G$ of non-zero elements of $I$ such that
for every non-zero $f\in I$ we have $\lm(g)|\lm(f)$ for some $g\in G$;
then $G$ is a Gr\"obner basis of $I$. This argument is
non-constructive; however, as observed in \cite{KR-W}, by an adaptation of
{\bf Buchberger's algorithm} one can construct a Gr\"obner basis of $I$ from a given finite set of generators of $I$ in an effective way (up to
computations in the field $K$ and comparisons of multi-indices in $\N^N$ by the chosen monomial ordering $\leq$). The main ingredient is the following notion:

\begin{definition}
The {\bf $S$-polynomial} of elements $f$ and $g$ of $R$ is defined by
$$S (f , g) := d \lc(g) \cdot x^\alpha  f-c \lc(f) \cdot x^\beta g,$$
where $\alpha$ and $\beta$ are the unique multi-indices such that
$$x^\alpha * \lm(f) = x^\beta  * \lm(g) = \lcm\big(\lm(f), \lm(g)\big),$$
and $c = \lc(x^\alpha f)$, $d = \lc(x^\beta g)$.
\end{definition}

Now we can add the following equivalent condition (``Buchberger's criterion'') to Proposition~\ref{Groebner} (cf.~\cite[Theorem~3.11]{KR-W}):
$$\text{$G$ is a Gr\"obner basis}\quad\Longleftrightarrow\quad \text{$S(f,g) \overset{*}{\underset{G}{\longrightarrow}} 0$ for all $f,g\in G$.}$$
Starting with a finite subset $G_0$ of $R$,
Buchberger's algorithm successively constructs finite subsets $$G_0\subseteq G_1\subseteq\cdots\subseteq
G_k\subseteq\cdots$$ of elements of the left ideal $I=(G_0)$
as follows: Suppose that $G_k$ has been constructed already.
For every pair $(f,g)$ of elements of $G_k$ find a $G_k$-normal form
$r(f,g)$
of $S(f,g)$. If all of these normal forms are zero, then $G:=G_k$ is a
Gr\"obner basis of $I$, by the previous proposition, and the algorithm
terminates. Otherwise, we put $$G_{k+1}:=G_k\cup\big\{r(f,g): f,g\in G_k\big\}$$
and iterate the procedure. Dickson's Lemma guarantees that this construction eventually stops. (See \cite{KR-W} for details.)

One says that a Gr\"obner basis $G$  of the left ideal $I$ of $R$ is {\bf reduced} if  $\lc(g)=1$ and $g\in\nf_{G\setminus\{g\}}(R)$, for every $g\in G$.
Every left ideal $I$ of $R$ has a unique reduced Gr\"obner basis (see \cite[Section~4]{KR-W}); hence we can speak of {\it the}\/ reduced Gr\"obner basis of $I$. 

In summary, Gr\"obner bases of left ideals in $R$ share properties similar to Gr\"obner bases of ideals in commutative polynomial rings over $K$, with slight differences; most notably, a collection of monomials in $R$ is not automatically a Gr\"obner basis for the left ideal it generates \cite[p.~17]{KR-W}.

\subsection{Gr\"obner bases in homogeneous algebras of solvable type}
\label{Groebner homogeneous}
In this subsection $R$ is assumed to be homogeneous. From Buchberger's algorithm and earlier remarks we immediately obtain that the reduced Gr\"obner basis of each homogeneous left ideal of $R$ consists of homogeneous elements of $R$.
It is also well-known (Macaulay) that if $V$ is a homogeneous $K$-linear subspace of $R$, then
$$H_V(d) = \#\lm(V_{(d)}) \qquad\text{for every $d$.}$$
(Here and below, the cardinality of a finite set $S$ is denoted by $\#S$.)
Let now $I$ be a homogeneous left ideal of $R$ with Gr\"obner basis $G$.
The $K$-linear subspace
$M:=\nf_G(R)$ of $R$ is generated by monomials of $R$, hence is homogeneous, with $R=I\oplus M$.
Therefore, the Hilbert function of $R/I$ can be expressed as:
$$H_{R/I}(d)=H_R(d)-H_I(d)=H_M(d)=\#\lm(M_{(d)})
\quad\text{for every $d$.}$$


\subsection{Gr\"obner bases and dehomogenization}
Here we assume that $R$ is quadric (so $R^*$ is of solvable type as explained in Section~\ref{Homogenization}). We collect a few facts concerning the behavior of leading monomials, reductions, and $S$-polynomials under  dehomogenization:

\begin{lemma} \mbox{}
Let $f,f',g\in R^*$ be homogeneous, $g\neq 0$. Then
\begin{enumerate}
\item $\lm(f_*)=(\lm f)_*$, $\lc(f_*)=\lc(f)$;
\item if $f\red{g}f'$, then $f_* \red{g_*} f'_*$;
\item $\big(S(f,f')\big)_* = S(f_*,f'_*)$.
\end{enumerate}
\end{lemma}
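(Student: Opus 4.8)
The plan is to work out, for each part, the relationship between a homogeneous element of $R^*$ and its dehomogenization, using the explicit description of the monomial basis of $R^*$ furnished in Section~\ref{Homogenization}: every homogeneous element of $(R^*)_{(d)}$ is a $K$-linear combination $h=\sum_{\abs{\alpha}\leq d} h_\alpha\, x^\alpha t^{d-\abs{\alpha}}$, with $h_*=\sum_{\abs{\alpha}\leq d} h_\alpha\, x^\alpha$, and the dehomogenization map $H\mapsto h_*$ is $K$-algebra morphism which is bijective on the span of the monomials of fixed $x$-part (equivalently, $(x^\alpha t^i)_* = x^\alpha$). The key observation I would record first is that the monomial ordering $\leq^*$ on $\N^{N+1}$ is the lexicographic product of $\leq$ with the usual ordering of $\N$, so among the monomials $x^\alpha t^{d-\abs\alpha}$ appearing in a homogeneous $h$, the $\leq^*$-largest is exactly the one whose $x$-part $x^\alpha$ is $\leq$-largest among the $x^\alpha$ with $h_\alpha\neq 0$; since all these monomials have the same total degree $d$, distinct $x$-parts are automatic. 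This immediately gives (1): $\lm(h) = x^\lambda t^{d-\abs\lambda}$ where $x^\lambda$ is the $\leq$-maximal element of $\supp(h_*)\subseteq x^\diamond$, hence $(\lm h)_* = x^\lambda = \lm(h_*)$, and the coefficient is unchanged, so $\lc(h_*)=\lc(h)$.

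For (2), suppose $f \red{g} f'$ in $R^*$, say with $\lm(x^\beta t^j \, g) = x^\alpha t^i \in \supp f$, $\lc(c\, x^\beta t^j g) = f_{\alpha t^i}$, and $f' = f - c\, x^\beta t^j g$. Applying $(-)_*$, which is a ring homomorphism, gives $f'_* = f_* - c\, (x^\beta t^j)_* \, g_* = f_* - c\, x^\beta g_*$. By part (1) applied to $x^\beta t^j g$ (homogeneous since $g$ is and $R^*$ is graded), $\lm\big((x^\beta t^j g)_*\big) = \big(\lm(x^\beta t^j g)\big)_* = (x^\alpha t^i)_* = x^\alpha$, and the leading coefficients match; also $(x^\beta t^j g)_* = x^\beta g_*$, and $x^\alpha \in \supp(f_*)$ with the same coefficient $f_{\alpha t^i}$ (here one uses that the $t$-exponent $i$ is determined by $\alpha$ and the degree $d$ of $f$, so no collision of basis elements of $R^*$ maps to $x^\alpha$ in $R$). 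Hence the data witnessing $f_* \red{g_*} f'_*$ are in place, with the same field element $c$ and multi-index $\beta$. One small point to verify is that $\lm(x^\beta g_*) = \lm(x^\beta)\ast\lm(g_*) = x^\beta \ast (\lm g)_* = \big(\lm(x^\beta t^j g)\big)_* = x^\alpha$, using \eqref{lm multiplication} in $R$ and part (1).

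For (3), expand both sides using the definition of the $S$-polynomial. Write $\lm(f) = x^\alpha t^i$, $\lm(f') = x^{\alpha'} t^{i'}$ in $R^*$; then $\lcm$ of these in $(x^\diamond)^*$-with-$t$ is $x^\gamma t^k$ where $\gamma_l = \max(\alpha_l,\alpha'_l)$ and $k$ is the appropriate complementary exponent, and the cofactors $x^\mu t^p$, $x^{\mu'} t^{p'}$ satisfy $x^\mu t^p \ast x^\alpha t^i = x^{\mu'} t^{p'}\ast x^{\alpha'}t^{i'} = x^\gamma t^k$; dehomogenizing the $x$-parts, $\mu_l + \alpha_l = \gamma_l = \mu'_l + \alpha'_l$, and $x^\gamma = \lcm\big(\lm(f_*),\lm(f'_*)\big) = \lcm\big((\lm f)_*,(\lm f')_*\big)$ by (1). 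Since $(-)_*$ is multiplicative and $K$-linear, and by (1) it preserves leading coefficients, each of the two terms $d\,\lc(f')\, x^\mu t^p f$ and $c\,\lc(f)\, x^{\mu'}t^{p'} f'$ dehomogenizes to the corresponding term of $S(f_*,f'_*)$ — with identical scalars $c=\lc(x^\mu t^p f)=\lc(x^\mu f_*)$, $d=\lc(x^{\mu'}t^{p'}f')=\lc(x^{\mu'}f'_*)$ — whence $\big(S(f,f')\big)_* = S(f_*,f'_*)$. I do not expect any serious obstacle here; the only thing requiring care throughout is the bookkeeping that the lexicographic-product ordering $\leq^*$ makes leading terms ``respect the $x$-coordinate first,'' so that dehomogenization (which forgets $t$) is compatible with taking leading monomials — that is precisely what part (1) packages, and parts (2) and (3) are then formal consequences of $(-)_*$ being a morphism of (ordered) structures together with \eqref{lm multiplication}.
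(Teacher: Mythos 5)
Your proof is correct and follows essentially the same route as the paper's: part (1) is established by observing that $\leq^*$ restricted to monomials of a fixed total degree is determined by the $x$-coordinate (so dehomogenization preserves leading monomials and coefficients there), and then parts (2) and (3) are deduced by applying (1) together with the fact that $h\mapsto h_*$ is a $K$-algebra morphism. The only differences are notational.
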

\begin{proof}
For (1), note that $(x^\alpha t^i)_*=x^\alpha$ and $(x^\beta t^j)_*=x^\beta$, so
if $\deg(x^\alpha t^i) = \deg(x^\beta t^j)$, then
$(x^\alpha t^i)_* = (x^\beta t^j)_*$ implies $i=j$, hence
$x^\alpha t^i \leq^* x^\beta t^j$ if and only if $(x^\alpha t^i)_* \leq (x^\beta t^j)_*$. This observation immediately yields (1).
For (2), suppose  $f\red{g}f'$, and let $\alpha$, $\beta$ be multi-indices, $i,j\in\N$, and
$c\in K$ such that
$$\lm(x^\beta t^j g)=x^\alpha t^i\in\supp f, \quad
\lc(cx^\beta t^j g)=f_{(\alpha,i)}, \quad
f'=f-cx^\beta t^j g.$$
Then $(f')_* = f_* - cx^\beta g_*$, and $\lm(x^\beta g_*)=x^\alpha$ by (1).
Since $f$ is homogeneous, we have $(f_*)_\alpha = f_{(\alpha,i)}$, so
$x^\alpha\in\supp f_*$ and $\lc(cx^\beta g_*)=(f_*)_\alpha$. Thus
$f_* \red{g_*} f'_*$. For (3), let
$\alpha$, $\beta$ be multi-indices and $i,j\in\N$ such that
$$x^\alpha t^i * \lm(f) = x^\beta  t^j * \lm(f') = \lcm\big(\lm(f), \lm(f')\big),$$
and $c = \lc(x^\alpha t^i f)$, $d = \lc(x^\beta t^j f')$. Then
$$S (f , f') = d \lc(f') \cdot x^\alpha t^i f-c \lc(f) \cdot x^\beta t^j f',$$
hence
$$\big( S(f,f') \big)_* =
d\lc(f') \cdot x^\alpha f_* - c \lc(f) \cdot x^\beta f'_*.$$
By (1) we also have
$$x^\alpha * \lm(f_*) = x^\beta * \lm(f'_*) =
\lcm\big( \lm(f_*), \lm(f'_*)\big)$$
and $c=\lc(x^\alpha f_*)$, $d=\lc(x^\beta f'_*)$. This yields (3).
\end{proof}

The following corollary often allows us to reduce questions about arbitrary
Gr\"ob\-ner bases to a homogeneous situation:

\begin{cor}\label{Dehomogenization of GB}
Let $I$ be a left ideal of $R$, and let $G$ be a generating set for $I$. Let $J$ be the left ideal of $R^*$ generated by all $g^*$ with $g\in G$, and let $H$ be a Gr\"obner basis of $J$ with respect to $\leq^*$ consisting of homogeneous elements of $R^*$. Then $H_* = \{ h_* : h\in H\}$ is a Gr\"obner basis of $I$ with respect to $\leq$.
\end{cor}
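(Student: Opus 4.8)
The plan is to verify the characterization of Gröbner bases given in Proposition~\ref{Groebner}(4): it suffices to show that $H_*$ generates $I$, and that for every nonzero $f\in I$ there is a nonzero $h\in H$ with $\lm(h_*)\mid\lm(f)$. The generation claim is the easy half. Since $G$ generates $I$ and $J=(g^*:g\in G)$, we have $J_*=(G)_*=(G_*)=I$ by the identity $(H)_*=(H_*)$ recorded at the end of the Rees-algebra subsection (note $(g^*)_*=g$). On the other hand $H$ generates $J$, so $J=(H)$ and hence $J_*=(H)_*=(H_*)$; therefore $(H_*)=I$, i.e.\ $H_*$ generates $I$ and in particular $H_*\subseteq I$.

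For the leading-monomial condition, the key point is to \emph{lift} a nonzero $f\in I$ to a homogeneous element of $J$. Given $0\neq f\in I$, form its homogenization $f^*\in R^*$. Because $I=J_*$, I would argue that $f^*\in J$: indeed $f=j_*$ for some $j\in J$, and it is standard in this setup that $f^*$ divides (up to a power of the canonical element $t$) any homogeneous lift of $f$; more concretely, $f^*$ is homogeneous and $(f^*)_*=f\in I\subseteq J_*$, and since $J$ is a left ideal of $R^*$ and $R^*/R^*t\cong \gr R$ identifies dehomogenization with "setting $t=1$", one checks directly that $f^*\in J$ (any homogeneous preimage $j$ of $f$ under dehomogenization differs from $f^*$ by a left multiple of a power of $t$, and $t$ times anything homogeneous in $J$ lies in $J$ — here one uses that $t=1$ in $R$, so $t\cdot(\text{lift})$ still dehomogenizes into $I$; alternatively, use $I^*\subseteq J$ directly since $I^*$ is generated by the $g^*$, $g\in G$, together with no extra relations are needed as the commutation relations $x_iT-Tx_i$ are already built into $R^*$). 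Now $f^*$ is a nonzero homogeneous element of $J$, so since $H$ is a Gröbner basis of $J$ with respect to $\leq^*$, Proposition~\ref{Groebner}(4) applied in $R^*$ yields a nonzero $h\in H$ with $\lm(h)\mid\lm(f^*)$ in $(x^\diamond_{R^*},\ast)$.

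Finally I would push this divisibility down through dehomogenization. Applying the dehomogenization map and part (1) of the last Lemma, $\lm(h_*)=(\lm h)_*$ and $\lm(f_*^{\,*})$... — more precisely, writing $\lm(f^*)=x^\gamma t^k$ one has $(\lm f^*)_*=x^\gamma$, and since dehomogenization of $f^*$ is $f$ one gets $\lm(f^*)_*=x^{\lm(f)}$ essentially by construction of $f^*$ (the leading monomial of $f$ with the balancing $t$-power). Dehomogenizing the relation $\lm(h)\ast x^\delta=\lm(f^*)$ (for the appropriate $x^\delta$) and using that $(-)_*$ is a monoid homomorphism on monomials (it just deletes the $t$-exponent), we obtain $(\lm h)_*\mid (\lm f^*)_*$, i.e.\ $\lm(h_*)\mid\lm(f)$. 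Since $h\in H$ and $h_*\in H_*$ is nonzero (as $\lc(h_*)=\lc(h)\neq 0$ by part (1) of the Lemma), this is exactly condition~(4) of Proposition~\ref{Groebner} for $H_*$ and $I$, so $H_*$ is a Gröbner basis of $I$.

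I expect the main obstacle to be the assertion $f^*\in J$ — i.e.\ that homogenizing an element of the dehomogenized ideal lands back in $J$. This is where quadricity of $R$ is used (so that $R^*$ really is of solvable type and the identification \eqref{Iso-homog} is available), and one must be careful that no spurious $t$-torsion obstructs the lift; the cleanest route is probably to observe $I^*\subseteq J$ directly from the definition of $J$ as $(g^*:g\in G)$ together with the fact that $I^*$ needs only the homogenized generators of $I$ (the commutator relations being absorbed into $R^*$), and then note $f^*\in I^*\subseteq J$.
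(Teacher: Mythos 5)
Your overall strategy is genuinely different from the paper's: you verify condition~(4) of Proposition~\ref{Groebner} (a leading-monomial divisibility test on elements of $I$), whereas the paper invokes Buchberger's criterion, reducing each $S(f_*,g_*)=S(f,g)_*$ to $0$ via $H_*$ by pushing a reduction chain of $S(f,g)$ in $R^*$ through parts~(2) and~(3) of the preceding lemma. Your route would also work --- but as written there is a gap in the key step, precisely the one you flagged as the ``main obstacle.''

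The assertion $f^*\in J$ is false, and so is the fallback claim $I^*\subseteq J$. The paper's own remark immediately after the corollary furnishes a counterexample: take $R=K[x]$, $G=\{x^2,\,x+x^2\}$, so $I=(x)$ and $J=(x^2,\,xt+x^2)=(xt,x^2)$ in $K[x,t]$; then $f=x\in I$ has $f^*=x\notin J$ (every element of $J$ vanishes to order $\geq 2$ in $x$ after setting $t=0$). The structural reason is the direction of the $t$-power relation: if $j\in J$ is homogeneous with $j_*=f$, then $j=t^{\deg j-\deg f}f^*$, so $j$ is a $t$-multiple of $f^*$, not the other way around, and $J$ need not be saturated with respect to $t$. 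What is true, and is all you actually need, is that $t^k f^*\in J$ for some $k\geq 0$: write $f=\sum_i a_i g_i$ with $g_i\in G$, set $D=\max_i\big(\deg a_i+\deg g_i\big)$, and observe that $j:=\sum_i t^{\,D-\deg a_i-\deg g_i}\,a_i^* g_i^*\in J$ is homogeneous of degree $D$ with $j_*=f$; then $j=t^{D-\deg f}f^*$. Since $\lm(t^k f^*)$ and $\lm(f^*)$ have the same $x$-part, the rest of your dehomogenization of the divisibility $\lm(h)\mid\lm(t^k f^*)$ goes through unchanged, giving $\lm(h_*)\mid\lm(f)$. With that single replacement your argument is correct; as it stands, the step ``$f^*\in J$'' would fail.
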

\begin{proof}
We have $I=J_*=(H)_*=(H_*)$, and by parts (2) and (3) of the previous lemma
$S(f,g)\overset{*}{\red{H_*}} 0$ for all $f,g\in H_*$. Hence $H_*$ is a
Gr\"obner basis of $I$.
\end{proof}

\begin{remark}
In the situation of the previous corollary, if $H$ is reduced, then $H_*$ is not necessarily reduced. For example, suppose $R=K[x]$, the commutative polynomial ring in a single indeterminate $x$ over $K$, and $G=\{x^2,x+x^2\}$. Then $R^*=K[x,t]$ where $t$ is an indeterminate distinct from $x$, and $J=(x^2,xt+x^2)=(xt, x^2)$. So $H=\{xt,x^2\}$ is the reduced Gr\"obner basis of $J$; but $H_*=\{x,x^2\}$ is not reduced.
\end{remark}

\subsection{Gr\"obner bases and the associated graded algebra}\label{Grobner bases and the associated graded algebra}

Our algebra $R$ of solvable type comes equipped with two multi-filtrations: the standard filtration on the one hand, and the ``fine multi-filtration'' defined in Lemma~\ref{gr R is semi-commutative} on the other. In both cases, under mild assumptions, $\gr R$ is an ordinary commutative polynomial ring over $K$. (Lemma~\ref{gr R is semi-commutative} and Corollary~\ref{gr is homogeneous}.) Thus it might be tempting to try and deduce Theorem~\ref{Main Theorem} from the main result of \cite{Dube} using ``filtered-graded transfer''.
Indeed, the following is proved in \cite{Li-Wu}:

\begin{prop} \label{Li-Wu}
Suppose $\leq$ is degree-compatible. Let $I$ be a left ideal of $R$. If $G$ is a Gr\"obner basis of $I$, then $$\gr G:=\{\gr g:0\neq g\in G\}$$ is a Gr\"obner basis of the left ideal $\gr I$ of $\gr R$ consisting of homogeneous elements. Conversely, if $H$ is a Gr\"obner basis of $\gr I$ consisting of homogeneous elements and $G$ is a finite subset of $I$ with $\gr G=H$, then $G$ is a Gr\"obner basis of $I$.
\end{prop}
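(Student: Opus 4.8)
The plan is to exploit the general principle that Gr\"obner bases can be detected from leading terms, combined with the compatibility between the standard filtration of $R$ and the fine multi-filtration that defines $\gr R$. The crucial observation, coming from \eqref{lm multiplication} and the hypothesis that $\leq$ is degree-compatible, is that for a nonzero $f\in R$ of standard degree $d$, the initial form $\gr f\in(\gr R)_{(d)}$ and $f$ itself ``carry the same leading monomial'': writing $\lm(f)=x^\lambda$, the degree-compatibility of $\leq$ forces $\abs{\lambda}=d=\deg f$, so $x^\lambda$ survives in $\gr f$ and is again its leading monomial with respect to $\leq$ (now read inside the semi-commutative algebra $\gr R$, using Lemma~\ref{gr R is semi-commutative}). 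Thus $\lm_{\gr R}(\gr f)=\lm_R(f)$ for every nonzero $f\in R$, and likewise $\lm(\gr I)=\lm(I)$ as subsets of $x^\diamond$.

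First I would establish this "leading-monomial preservation" lemma carefully, checking that the passage $f\mapsto\gr f$ respects products of leading monomials: for nonzero $f,g\in R$ one has $\gr(fg)=\gr f\cdot\gr g$ in $\gr R$ (this is immediate from degree-compatibility, since $\deg(fg)=\deg f+\deg g$ when no cancellation of top-degree parts can occur — and there is none, because $\lm(fg)=\lm(f)\ast\lm(g)\neq 0$). Next I would prove that $\gr I$ is a left ideal of $\gr R$ and that it is homogeneous; homogeneity is built into the definition of the associated graded module, and the left-ideal property follows from $R_{(\leq\alpha)}\cdot I_{(\leq\beta)}\subseteq I_{(\leq\alpha+\beta)}$. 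Then, for the forward direction, suppose $G$ is a Gr\"obner basis of $I$. To see $\gr G$ is a Gr\"obner basis of $\gr I$, I would use condition~(4) of Proposition~\ref{Groebner}: given a nonzero homogeneous $\bar f\in\gr I$, lift it to $f\in I$ with $\gr f=\bar f$ (possible since $\gr I$ is identified with its image in $\gr R$ coming from $I$); then $\lm(\bar f)=\lm(f)$ by the lemma, and since $G$ is a Gr\"obner basis of $I$ there is $g\in G$ with $\lm(g)\mid\lm(f)=\lm(\bar f)$, and $\lm(g)=\lm(\gr g)$ with $\gr g\in\gr G$. That $\gr G$ generates $\gr I$ needs a short separate argument: the submonoid ideal $\lm(\gr I)=\lm(I)$ is generated by $\{\lm(g):g\in G\}=\{\lm(\gr g):\gr g\in\gr G\}$, and for a semi-commutative $\gr R$ a set whose leading monomials generate $\lm(\gr I)$ and which lies in $\gr I$ does generate $\gr I$ — here one uses that reduction in $\gr R$ terminates and stays inside $\gr I$.

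For the converse, suppose $H$ is a Gr\"obner basis of $\gr I$ consisting of homogeneous elements and $G\subseteq I$ is finite with $\gr G=H$ (so in particular $0\notin G$ and $\#G=\#H$). I would show $G$ is a Gr\"obner basis of $I$ again via condition~(4): for nonzero $f\in I$, the initial form $\gr f$ is a nonzero homogeneous element of $\gr I$, so there is $h\in H$ with $\lm(h)\mid\lm(\gr f)$; but $\lm(\gr f)=\lm(f)$ and $\lm(h)=\lm(g)$ where $g\in G$ is the element with $\gr g=h$, whence $\lm(g)\mid\lm(f)$. Finally one must check that such a $G$ actually generates $I$ and not merely a sub-ideal with the same leading-monomial ideal; but that is automatic once $(G)$ is known to admit $G$ as a Gr\"obner basis, because then $\lm((G))=\lm(I)$ with $(G)\subseteq I$ forces $(G)=I$ (if $f\in I\setminus(G)$ had minimal $\lm(f)$, reducing $f$ by $G$ would lower $\lm(f)$ while staying in $I\setminus(G)$, a contradiction since $\lm(g)\mid\lm(f)$ for some $g\in G$).

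The main obstacle I anticipate is the generation claims rather than the leading-term bookkeeping: one must be careful that $\gr G$ generates $\gr I$ (forward) and that $G$ generates $I$ (converse), since a priori $\gr G\subseteq\gr I$ and $G\subseteq I$ only give containments of ideals. The standard Noetherian reduction argument sketched above closes both gaps, but it relies essentially on degree-compatibility of $\leq$ (so that reduction in $\gr R$ does not increase standard degree and the identification $\lm(\gr f)=\lm(f)$ holds) — without it the whole correspondence breaks down, which is why the hypothesis is stated. A secondary technical point is to make the identification of $\gr I$ as a subset of $\gr R$ precise and to verify $\gr(fg)=\gr f\cdot\gr g$; both are routine given the machinery in Section~2.
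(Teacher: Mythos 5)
The paper offers no proof of Proposition~\ref{Li-Wu}; it is cited from the reference \cite{Li-Wu} ("the following is proved in \cite{Li-Wu}"), so there is no in-text argument to compare against. Evaluated on its own merits, your proof is correct and self-contained, and the key mechanism you identify --- that degree-compatibility of $\leq$ forces $|\lambda|=\deg f$ for $\lm(f)=x^\lambda$, hence $\lm_{\gr R}(\gr f)=\lm_R(f)$ under the identification $\gr(x^\alpha)=y^\alpha$, and consequently $\lm(\gr I)=\lm(I)$ --- is exactly the right one. Likewise, the observation that it suffices to verify condition (4) of Proposition~\ref{Groebner} for elements of $I$ (respectively $\gr I$) and that this simultaneously establishes the Gr\"obner-basis property and the generation claim via a well-ordering/Noetherian reduction, is correct.

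Two small corrections in the bookkeeping. First, the citation to Lemma~\ref{gr R is semi-commutative} is misplaced: that lemma concerns the fine multi-filtration of $R$, whereas the $\gr R$ relevant here is the associated graded of the standard ($\N$-indexed) filtration. The fact you actually need is Corollary~\ref{gr is homogeneous}: $\gr R$ is homogeneous of solvable type with respect to the same ordering $\leq$ and the tuple $y=(\gr x_1,\dots,\gr x_N)$. Second, $\gr R$ is \emph{not} in general semi-commutative; Corollary~\ref{gr is homogeneous} only gives semi-commutativity when $\deg P_{ij}<2$, and under degree-compatibility one only knows $R$ is quadric, so $\deg P_{ij}=2$ is possible. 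Fortunately your argument never uses semi-commutativity --- only that $\gr R$ is affine of solvable type, so that $\lm$, reduction, and \eqref{lm multiplication} are available in $\gr R$. You should also explicitly invoke that degree-compatibility forces $R$ to be quadric (noted in Section~2 just before the quadric subsection), so that Lemma~\ref{R_leq d} applies and $\deg x^\alpha = |\alpha|$; this underlies the identification $\gr(x^\alpha)=y^\alpha$ and the equality $\lm(\gr f)=\lm(f)$, which you use throughout. With these citation fixes the argument stands.
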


Proposition~\ref{Li-Wu} breaks down if $\leq$ is not degree-compatible:

\begin{example}
Suppose $R=K[x,y]$ is the commutative polynomial ring in two indeterminates $x$ and $y$ over $K$, and consider the ideal $I=(f_1,f_2,f_3)$ of $R$, where $$f_1=xy,\quad f_2=x-y^2,\quad f_3=x^2.$$
Then $G=\{f_1,f_2,f_3\}$ is not a Gr\"obner basis of $I$ with respect to the lexicographic ordering of $\N^2$ (so $y^n<x$ for every $n$), since $S(f_1,f_2)=xy-y(x-y^2)=y^3$
is irreducible by $G$. However, $\gr G$ is a Gr\"obner basis of $\gr I$  with respect to the degree-lexicographic ordering of $\N^2$. (To see this use Proposition~\ref{Li-Wu} and verify that $G$ is a Gr\"obner basis with respect to this ordering.)
\end{example}

Nevertheless, this proposition does seem to offer an easy way towards Theorem~\ref{Main Theorem} in the special case where $\leq$ is degree-compatible and $\gr R$ is commutative.  In this case we have $\gr R=K[y_1,\dots,y_N]$ where $y_i=\gr x_i$ for $i=1,\dots,N$. Unfortunately, however, if  the non-zero elements $f_1,\dots,f_n$ of $R$ generate a left ideal $I$ of $R$, then $\gr f_1,\dots,\gr f_n$ in general do not generate $\gr I$, as the following example from \cite{Li-Wu} shows:

\begin{example} Suppose $R=A_2(K)$ is the second Weyl algebra, and $I=(f_1,f_2)$ where $$f_1=x_1\partial_1,\quad f_2=x_2(\partial_1)^2-\partial_1.$$ Then $\gr f_1=\gr x_1\partial_1$, $\gr f_2=\gr x_2 \gr(\partial_1)^2$ do not generate $\gr I$. In fact, $\{\partial_1\}$ is a Gr\"obner basis for $I$ with respect to the degree-lexicographic ordering of $\N^4$.
\end{example}


It seems even less likely to be able to reduce the proof of Theorem~\ref{Main Theorem} to the associated graded algebra $\gr_\leq R$ of $R$ equipped with the fine multi-filtration. (For example, if the $K$-algebra $\gr_\leq R$ is commutative, then $\gr_\leq I$ is simply a monomial ideal of $\gr_\leq R$ in the usual sense of the word.)

\subsection{Decomposition of left ideals}

Let $I$ be a left ideal of $R$. For $f\in R$ we put
$$(I:f) := \big\{g\in R : gf \in I \big\},$$
a left ideal of $R$. If $R$, $f$ and the left ideal $I$ are homogeneous, then so is the left ideal $(I:f)$ of $R$. For $f_1,f_2\in R$ we also write $(f_1:f_2):=((f_1):f_2)$.

\begin{lemma}\label{Ideal decomposition}
Let $f\in R$, and let $G$ be a Gr\"obner basis of $(I:f)$. Then
$$I+(f) = I \oplus \nf_G(R)f.$$
\end{lemma}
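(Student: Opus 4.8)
The plan is to prove the two assertions separately: first that $I+(f) = I + \nf_G(R)f$ as $K$-linear subspaces of $R$, and then that the sum $I + \nf_G(R)f$ is direct. For the first equality, the inclusion $\supseteq$ is immediate since $\nf_G(R)f \subseteq (f)$. For $\subseteq$, it suffices to show $(f) \subseteq I + \nf_G(R)f$, i.e.\ that for every $h \in R$ we have $hf \in I + \nf_G(R)f$. Given $h\in R$, write $h = g + \nf_G(h)$ with $g \in (I:f)$ (using the direct sum decomposition $R = (I:f) \oplus \nf_G(R)$ from the Corollary following Proposition~\ref{Groebner}, applied to the left ideal $(I:f)$ and its Gr\"obner basis $G$). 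Then $hf = gf + \nf_G(h)f$; since $g\in(I:f)$ we have $gf \in I$ by definition of $(I:f)$, and $\nf_G(h)f \in \nf_G(R)f$. This gives $hf \in I + \nf_G(R)f$, as desired, and hence $I + (f) = I + \nf_G(R)f$.

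For directness, I must show $I \cap \nf_G(R)f = \{0\}$. Suppose $u \in I \cap \nf_G(R)f$, so $u = vf$ for some $v \in \nf_G(R)$ and $u \in I$. Then $vf = u \in I$, which means $v \in (I:f)$ by definition. But $v \in \nf_G(R)$ as well, and since $R = (I:f) \oplus \nf_G(R)$ is a \emph{direct} sum (again by the Corollary to Proposition~\ref{Groebner}, noting $\nf_G(R) = \nf_G(R)$ for the Gr\"obner basis $G$ of $(I:f)$ and that $v$ being a $G$-normal form forces $v \in \nf_G(R)$), the intersection $(I:f) \cap \nf_G(R)$ is trivial, so $v = 0$ and hence $u = 0$. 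This establishes $I \cap \nf_G(R)f = \{0\}$, completing the proof.

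The only mild subtlety — and the place to be careful rather than a genuine obstacle — is keeping track of \emph{which} ideal the normal-form operator refers to: throughout, $\nf_G$ and $\nf_G(R)$ are taken relative to the Gr\"obner basis $G$ of the left ideal $(I:f)$, not of $I$ itself. One also uses that multiplication on the right by the fixed element $f$ is $K$-linear (so that $\nf_G(R)f$ is a $K$-linear subspace) and that $R$ is a domain is \emph{not} actually needed here, only the decomposition $R = (I:f)\oplus\nf_G(R)$. Once this bookkeeping is in place, both parts are short direct-sum manipulations with no hard estimates.
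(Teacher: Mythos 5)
Your proposal is correct and follows essentially the same approach as the paper: both decompose the coefficient of $f$ via $\nf_G$ relative to the ideal $(I:f)$ (the paper writes $b=(b-c)+c$ with $c=\nf_G(b)$, you write $h=g+\nf_G(h)$), and both conclude directness from the definition of $(I:f)$ together with $(I:f)\cap\nf_G(R)=\{0\}$. Your side remark that the domain property of $R$ is not needed here is a valid observation, and you spell out the directness step that the paper dismisses as ``clearly,'' but the underlying argument is the same.
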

\begin{proof}
Let $h \in I+(f)$. Then we can write  $h = a + bf$ with $a \in I$ and $b \in R$. Let  $c := \nf_G(b)$; then $b - c\in (I:f)$ and
$h = \big(a+(b-c)f\big) + cf$,
where the first summand is in $I$ and the second in $\nf_G(R)f$. This shows
$I+(f) = I + \nf_G(R)f$; moreover, clearly $I\cap \nf_G(R)f=\{0\}$ by construction.
\end{proof}

The previous lemma leads to a decomposition of $I$ into $K$-linear subspaces of the form $S=\nf_G(R)f$ for certain $f\in R$ and Gr\"obner bases $G$ as follows: Take $f_1,\dots,f_n\in R$, $n>0$, such that
$I=(f_1,\dots,f_n)$, and for $i=2,\dots,n$ let $G_i$ be
a Gr\"obner basis of $\big((f_1,\dots,f_{i-1}):f_i\big)$; then
$$I = (f_1)\oplus \nf_{G_2}(R)f_2\oplus\cdots\oplus \nf_{G_n}(R)f_n.$$

\begin{example}
Suppose $R=A_1(K)$ is the first Weyl algebra, so $R=K\langle x,\partial\rangle$ with the relation $\partial x - x\partial = 1$, and let $I=(f_1,f_2)$ where $f_1 = \partial$ and $f_2 = x$.
Then in fact $I=R$, and the above decomposition procedure yields
$$R = (f_1) \oplus \nf_{G_2}(R)f_2 = (\partial) \oplus K\partial\cdot x \oplus K[x]\cdot x. $$
Indeed, it is not hard to check that $G_2=\{\partial^2, x\partial-1\}$ is the reduced Gr\"obner basis of the left ideal
$(f_1:f_2)$ of $R$, with
$\nf_{G_2}(R) = K\partial \oplus K[x]$. In particular $\partial\notin (f_1:f_2)$; this is slightly
counterintuitive, since it is always true that $(I:f) \supseteq I$ in the commutative world.
\end{example}


\section{Cones and Cone Decompositions}

We first summarize the algorithmic core of Dub\'e's approach dealing with cone decompositions of monomial ideals. Afterwards, we show how to define and construct cone decompositions of homogeneous left ideals. Here, we have to adapt Dub\'e's ideas to deal with non-commutativity. We only give proofs selectively, and refer to \cite{Dube} for details.

\subsection{Monomial cone decompositions}
In this subsection we let $R$ be a $K$-linear space and $\{x^\alpha\}_\alpha$ be a monomial basis of $R$. Let $M$ be a $K$-linear subspace of $R$ spanned by monomials, and let $\cal D$
be a finite set of pairs $(w,y)$ where $w$ is a monomial in $x^\diamond$ and $y$ is a subset of $x$. We define the {\bf degree of $\cal D$} as
$$\deg\cal D := \max\big\{ \deg w  : (w,y)\in\cal D\big\}\in\N\cup\{-\infty\},$$ where $\max\emptyset=-\infty$ by convention. We also set
$$\cal D^+:=\big\{(w,y)\in\cal D : y\neq\emptyset\big\}.$$
We call $\cal D$ a {\bf cone decomposition of $M$} if
$C(w,y) \subseteq M$ for every $(w,y)\in\cal D$ and
$$M=\bigoplus_{(w,y)\in\cal D} C(w,y),$$
and $\cal D$ is a {\bf monomial cone decomposition} if $\cal D$ is a cone decomposition of some $K$-linear subspace of $R$. In the literature, ``monomial cone decompositions'' of finitely generated commutative graded $K$-algebras
are also known as ``Stanley decompositions'' (since they were first introduced   by Stanley in \cite{Stanley}). In this paper we stay with the perhaps more descriptive terminology introduced by Dub\'e in \cite{Dube}.

\begin{lemma} \label{lemma: minimal monomial}
Suppose $\cal D$ is a monomial cone decomposition of a monomial ideal $I$.  Then for each element $w$ of the minimal set of generators of $I$ there is some $y$ with $(w,y)\in\cal D$.
\end{lemma}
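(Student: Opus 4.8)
The plan is to argue by contradiction: suppose $w$ belongs to the minimal set of generators of $I$ but there is no $y\subseteq x$ with $(w,y)\in\cal D$. Since $\cal D$ is a cone decomposition of $I$, we have $w\in I=\bigoplus_{(v,z)\in\cal D}C(v,z)$, so $w$ lies in $\sum_{(v,z)\in\cal D}C(v,z)$ as a $K$-linear combination of elements of the spanning sets $v\ast z^\diamond$. Because the $x^\alpha$ form a basis of $R$, each monomial appearing in this combination — in particular $w$ itself — must come from exactly one of the $C(v,z)$; thus there is some $(v,z)\in\cal D$ with $w\in v\ast z^\diamond$, i.e.\ $w=v\ast u$ for some $u\in z^\diamond$. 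By our assumption $v\neq w$ (else $(w,z)\in\cal D$), so $u\neq 1$, meaning $v$ properly divides $w$.

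Now I would use that $v\in I$: indeed $C(v,z)\subseteq I$ by the definition of a cone decomposition, and $v=v\ast 1\in v\ast z^\diamond$ since $1\in z^\diamond$, so $v\in C(v,z)\subseteq I$. Thus $v$ is an element of the monomial ideal $I$ that properly divides $w$. But then $w=v\ast u\in C(v,x)\subseteq I$, and since $v$ is a proper divisor of $w$ with $v\in I$, the monomial $w$ is redundant as a generator: every monomial of $I$ divisible by $w$ is also divisible by $v$. This contradicts the minimality of the generating set containing $w$ (recall from Section~\ref{Monomials and Monomial Ideals} that a monomial ideal has a \emph{unique} minimal set of generators, characterized by no element dividing another). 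Hence some $(w,y)\in\cal D$ after all.

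The one point that needs a little care — and is really the crux — is the step extracting a single cone $(v,z)$ with $w\in v\ast z^\diamond$ from the direct-sum decomposition $I=\bigoplus_{(v,z)\in\cal D}C(v,z)$. This is where the fact that $\{x^\alpha\}_\alpha$ is a \emph{basis} (not merely a spanning set) of $R$ enters: writing $w=\sum_{(v,z)}g_{(v,z)}$ with $g_{(v,z)}\in C(v,z)$, uniqueness of coordinates forces all but one $g_{(v,z)}$ to vanish on the coordinate corresponding to the monomial $w$, and since each $g_{(v,z)}$ is a $K$-combination of the monomials in $v\ast z^\diamond$, the monomial $w$ must itself be one of those monomials. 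Everything else — that proper divisibility within a monomial ideal destroys minimality — is immediate from the definitions recalled earlier in the paper, so I do not expect genuine obstacles beyond bookkeeping.
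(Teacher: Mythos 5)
Your proof is correct and follows essentially the same path as the paper's: locate the unique cone $C(v,z)$ containing the monomial $w$ (which gives $v\mid w$ with $v\in I$), then invoke minimality of $w$ as a generator of the monomial ideal to force $v=w$. The only cosmetic difference is that the paper concludes directly by writing $v=w''\ast b$ with $w''$ a minimal generator and deducing $b\ast a=1$, whereas you argue by contradiction using the characterization of the minimal generating set; both are valid and equivalent.
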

\begin{proof}
Since $\cal D$ is a monomial cone decomposition of $I$, there is
some $(w',y)\in\cal D$ with $w\in C(w',y)$, so $w=w'\ast a$ for some $a\in y^\diamond$.
Since $w'\in I$, we can also write $w'=w''\ast b$ for some  $w''\in F$
and $b\in x^\diamond$. So $w=w'\ast a=w''\ast b\ast a$, hence $b\ast a=1$ due to minimality of $w$, and $w=w'=w''$.
\end{proof}

In \cite{SW, MS}, algorithms are given which, upon input of a finite list of generators of a monomial ideal $I$ of $R$, produce a monomial cone decomposition for the natural complement $\nf_I(R)$ of $I$ in $R$. In fact, Dub\'e specified an algorithm which does much more, as we describe next. As before, $M$ is a $K$-linear subspace of $R$ generated by monomials, and $I$ is a monomial ideal of $R$.

\begin{definition}
We say that a pair of monomial cone decompositions $(\cal P,\cal Q)$ {\bf splits $M$ relative to $I$} if
\begin{enumerate}
\item $\cal P\cup\cal Q$ is a
cone decomposition of $M$,
\item $C(w,y)\subseteq I$ for all $(w,y)\in\cal P$,
\item $C(w,y)\cap I = \{0\}$ for all $(w,y)\in\cal Q$.
\end{enumerate}
\end{definition}

It is easy to see that if $(\cal P,\cal Q)$ is a pair of monomial cone decompositions which splits $M$ relative to $I$, then $\cal P$ is a monomial cone decomposition of $M\cap I$ and $\cal Q$ is a monomial cone decomposition of $\nf_I(M)$.

Algorithm~\ref{Splitting} accomplishes a basic task: it gives a procedure
for splitting a monomial cone relative to $I$.
The computation of a generating set $F_1$ for the monomial ideal $$(I:w\ast x_i) = ((I:w):x_i)$$ in this algorithm is carried out by Algorithm~\ref{Quotient}: if the monomial ideal $I$ is generated by $v_1,\dots,v_n\in x^\diamond$, then $(I:x_i)$ is generated by $w_1,\dots,w_n$ where
$$w_j=
\begin{cases}
v_j & \text{if $x_i$ does not divide $v_j$,} \\
 w_j=v_j/x_i & \text{otherwise,}
 \end{cases}$$
where $v_j/x_i$ denotes the monomial in $x^\diamond$ satisfying $v_j=(v_j/x_i)\ast x_i$.

\begin{algorithm}
\KwIn{$w\in x^\diamond$,  $y\subseteq x$, and a finite set $F$
of generators for $(I:w)$\;}
\KwOut{${\tt SPLIT}(w,y,F)=(\cal P,\cal Q)$, where $(\cal P,\cal Q)$ splits the monomial cone $C(w,y)$ relative to the monomial ideal $I$ of $R$\;}
\BlankLine
\lIf{$1\in F$}{\Return $\big(\{(w,y)\},\emptyset\big)$}\;
\lIf{$F\cap y^\diamond=\emptyset$}{\Return $\big(\emptyset,\{(w,y)\}\big)$}\;
\Else{choose $z\subseteq y$ maximal such that $F\cap z^\diamond=\emptyset$\;
choose $i\in\{1,\dots,N\}$ such that $x_i\in y\setminus z$\;
\BlankLine
$(\cal P_0,\cal Q_0) := {\tt SPLIT}(w,y\setminus\{x_i\},F)$;\hfill(*)\\
\BlankLine
$F_1:={\tt QUOTIENT}(F,x_i)$\;
$(\cal P_1,\cal Q_1) := {\tt SPLIT}(w\ast x_i,y,F_1)$;\hfill(**)\\
\BlankLine
\Return $(\cal P_0\cup\cal P_1,\cal Q_0\cup\cal Q_1)$\;
}
\BlankLine
\caption{Splitting a monomial cone relative to $I$.}\label{Splitting}
\end{algorithm}

\begin{algorithm}
\KwIn{a finite set $F$
of generators for a monomial ideal $I$ of $R$, and $i\in\{1,\dots,N\}$\;}
\KwOut{${\tt QUOTIENT}(F,x_i)=F'$, where $F'$ is a finite set of generators of the monomial ideal $(I:x_i)$ of $R$\;}
\BlankLine

$F':=\emptyset$\;
\While{$F\neq\emptyset$}{
choose $v\in F$\;
\lIf{$x_i|v$}{$F':=F'\cup \{v/x_i\}$\;}\Else{$F':=F'\cup\{v\}$\;}
$F:=F\setminus\{v\}$\;
}
\caption{Computing a a set of generators for $(I:x_i)$.}\label{Quotient}

\end{algorithm}

Let $w\in x^\diamond$,  $y\subseteq x$, and $F$ be a set of generators for $(I:w)$. One checks:

\begin{lemma} \

\begin{enumerate}
\item $C(w,y)\subseteq I \Longleftrightarrow 1\in F$;
\item $C(w,y)\cap I=\{0\} \Longleftrightarrow F\cap y^\diamond=\emptyset$.
\end{enumerate}
\end{lemma}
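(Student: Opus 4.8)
The plan is to translate both equivalences into elementary statements about divisibility of monomials, relying on two facts recorded earlier: since $I$ is a monomial ideal, an element $h\in R$ lies in $I$ precisely when every monomial in $\supp h$ does; and, since $F$ generates the monomial ideal $(I:w)$, a monomial $v$ lies in $(I:w)$ — equivalently $w\ast v\in I$ — precisely when $v$ is divisible by some element of $F$. I would also note at the outset that the monomials occurring in $C(w,y)$ are exactly those of the shape $w\ast v$ with $v\in y^\diamond$, and that $1\in y^\diamond$ for every $y$.

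For part (1) I would argue the forward implication by taking $v=1$: if $C(w,y)\subseteq I$ then $w=w\ast 1\in I$, so $1\in(I:w)$; as the only divisor of the monomial $1$ is $1$ itself, this forces $1\in F$. For the converse, $1\in F$ gives $C(1,x)=R\subseteq(I:w)$, hence $w\in I$, and then $C(w,y)\subseteq C(w,x)\subseteq I$ because $I$ is a monomial ideal. For part (2) I would prove the contrapositive of one direction and argue the other directly: if some $v\in F\cap y^\diamond$, then $w\ast v$ is a nonzero monomial lying in both $C(w,y)$ (since $v\in y^\diamond$) and $I$ (since $v\in(I:w)$), so $C(w,y)\cap I\neq\{0\}$; conversely, assuming $F\cap y^\diamond=\emptyset$, any nonzero $h\in C(w,y)\cap I$ has some monomial $w\ast v_0$ with $v_0\in y^\diamond$ in its support, and since $h\in I$ this monomial lies in $I$, so $v_0\in(I:w)$ is divisible by some $v\in F$.

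The one place that needs a moment's care is the final step of part (2): from $v\mid v_0$ and $v_0\in y^\diamond$ one must conclude $v\in y^\diamond$, which is immediate by comparing exponent vectors componentwise, since $v_0$ has zero exponent outside $y$ and hence so does $v$. This contradicts $F\cap y^\diamond=\emptyset$ and closes the argument. I do not expect any genuine obstacle here; the lemma is a bookkeeping exercise in the dictionary between monomial ideals and monomial divisibility, mirroring the analogous commutative statement in \cite{Dube}.
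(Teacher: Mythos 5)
Your proof is correct; the paper leaves this lemma to the reader (``One checks:'') and gives no argument of its own, and what you have written is precisely the routine verification the authors have in mind, translating each side of each equivalence into divisibility of monomials via the facts that $I$ is spanned by monomials and that $F$ generates $(I:w)$. In particular, you correctly handle the only point that requires a word, namely that a monomial divisor of an element of $y^\diamond$ is again in $y^\diamond$.
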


Algorithm~\ref{Splitting} proceeds by recursively decomposing the cone $C(w,y)$ as
$$C(w,y) = C(w, y\setminus\{x_i\}) \oplus C(w\ast x_i,y) \qquad (x_i\in y).$$
The lemma above shows that the base case is handled correctly.
We refer to \cite[Lemmas~4.3 and 4.4]{Dube}  for a detailed proof of the termination and correctness of Algorithm~\ref{Splitting}. The output of Algorithm~\ref{Splitting} has a convenient property:

\begin{definition} \label{def: d-standard, monomial}
We say that a monomial cone decomposition $\cal D$ is {\bf $d$-standard} if
\begin{enumerate}
\item $\deg(w) \geq d$ for all $(w,y)\in\cal D^+$;
\item for every $(w,y)\in\cal D^+$ and $d'$ with $d\leq d'\leq\deg(w)$
there is some $(w',y')\in\cal D^+$ with $\deg(w')=d'$ and $\#{y'}\geq\#{y}$.
\end{enumerate}
\end{definition}

\begin{prop} \label{standard-prop}
Let $(\cal P,\cal Q)={\tt SPLIT}(w,y,F)$. Then $\cal Q$ is $\deg(w)$-standard.
\end{prop}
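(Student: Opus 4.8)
The argument proceeds by induction following the recursive structure of Algorithm~\ref{Splitting}, whose recursion tree is finite by the termination statement for \texttt{SPLIT} (cf.\ \cite[Lemmas~4.3 and 4.4]{Dube}). Write $d:=\deg(w)$. If $1\in F$ then $\mathcal{Q}=\emptyset$, which is vacuously $d$-standard; if $F\cap y^\diamond=\emptyset$ then $\mathcal{Q}=\{(w,y)\}$, and both conditions of Definition~\ref{def: d-standard, monomial} hold trivially, the pair $(w,y)$ itself serving as the witness needed in condition~(2).

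In the recursive case $\mathcal{Q}=\mathcal{Q}_0\cup\mathcal{Q}_1$, where $(\mathcal{P}_0,\mathcal{Q}_0)=\texttt{SPLIT}(w,y\setminus\{x_i\},F)$ and $(\mathcal{P}_1,\mathcal{Q}_1)=\texttt{SPLIT}(w\ast x_i,y,F_1)$ with $F_1=\texttt{QUOTIENT}(F,x_i)$ a set of generators for $(I:w\ast x_i)$; by the inductive hypothesis $\mathcal{Q}_0$ is $d$-standard and $\mathcal{Q}_1$ is $(d+1)$-standard (since $\deg(w\ast x_i)=d+1$). Inspecting the algorithm, every first component of a pair produced by $\texttt{SPLIT}(w,y,F)$ is a multiple of $w$ in $(x^\diamond,\ast)$, hence has degree $\geq d$, with equality exactly when it equals $w$. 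This at once gives condition~(1) for $\mathcal{Q}$, and identifies the degree-$d$ pairs of $\mathcal{Q}_0$ as those of the form $(w,v_0)$ and the degree-$(d+1)$ pairs of $\mathcal{Q}_1$ as those of the form $(w\ast x_i,v_1)$.

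For condition~(2), fix $(u,v)\in\mathcal{Q}^+$ and $d'$ with $d\leq d'\leq\deg(u)$. If $(u,v)\in\mathcal{Q}_0^+$, the required witness is supplied by $d$-standardness of $\mathcal{Q}_0$ and lies in $\mathcal{Q}_0^+\subseteq\mathcal{Q}^+$; if $(u,v)\in\mathcal{Q}_1^+$ and $d'\geq d+1$, then $d+1\leq d'\leq\deg(u)$ and the witness is supplied by $(d+1)$-standardness of $\mathcal{Q}_1$, lying in $\mathcal{Q}_1^+\subseteq\mathcal{Q}^+$. The only genuinely new case is $(u,v)\in\mathcal{Q}_1^+$ with $d'=d$. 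Since $\deg(u)\geq d+1$ by condition~(1) for $\mathcal{Q}_1$, applying $(d+1)$-standardness of $\mathcal{Q}_1$ to $(u,v)$ with the value $d+1$ replaces $(u,v)$, without decreasing $\#v$, by a pair of degree exactly $d+1$ — necessarily of the form $(w\ast x_i,v)$ with $v\neq\emptyset$. Hence it suffices to show: \emph{if $(w\ast x_i,v)\in\mathcal{Q}_1$, then $\mathcal{Q}_0$ contains a pair $(w,v_0)$ with $\#v_0\geq\#v$} — for then $(w,v_0)\in\mathcal{Q}_0^+\subseteq\mathcal{Q}^+$ has degree $d=d'$ and the required size.

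To prove this I would strengthen the inductive hypothesis so as to also control the unique pair of $\mathcal{Q}$ whose first component is the input monomial. Tracing the ``left spine'' of the recursion — the path obtained by always taking the first recursive call, along which $F$ stays fixed and the second argument only sheds the successively chosen variables $x_i\in y\setminus z$ — one sees that this distinguished pair is $(w,L)$ for a suitable $L\subseteq y$ disjoint from $F$, whose size dominates that of the sets $z$ encountered along the spine. Comparing the spines of the two subcalls, one then exploits that $\texttt{QUOTIENT}$ only divides generators, so that $(I:w)\subseteq(I:w\ast x_i)$ and consequently $F_1\cap z^\diamond=\emptyset\Rightarrow F\cap z^\diamond=\emptyset$ for $z\subseteq y$, together with the choice $x_i\in y\setminus z$, in order to transfer a large free variable set from the $\mathcal{Q}_1$-side back to the $\mathcal{Q}_0$-side. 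Making this monotonicity comparison precise — essentially the content of the corresponding lemma in \cite{Dube}, which I would follow — is, I expect, the main obstacle; once it is in place, condition~(1), the two easy cases of condition~(2), and the base cases are entirely routine.
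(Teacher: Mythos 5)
Your proof follows the paper's approach: induct on the recursion tree of Algorithm~\ref{Splitting}, handle the base cases and the easy parts of condition~(2), and reduce the hard case — target degree $d'=\deg(w)$ with the given pair lying in $\mathcal{Q}_1$ — to a structural fact about the pair of $\mathcal{Q}$ whose first component is the input monomial $w$. The paper extracts that fact as Lemma~\ref{Q-lemma}, proved by two short inductions (on recursion depth for part~(1), on $\#y-\#y'$ for part~(2)); your ``left spine'' discussion is an equivalent way to obtain part~(2). Your detour through the degree-$(d+1)$ pair of $\mathcal{Q}_1$ is harmless but unnecessary: the paper applies Lemma~\ref{Q-lemma} directly to the outer call ${\tt SPLIT}(w,y,F)$ with the given $(v',y')\in\mathcal{Q}_1^+$.

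The one place your sketch does not close up is precisely the spot you yourself flag as ``the main obstacle''. Your intermediate claim — if $(w\ast x_i,v)\in\mathcal{Q}_1$ then $\mathcal{Q}_0$ contains $(w,v_0)$ with $\#v_0\geq\#v$ — is true, but not for a reason visible in your sketch. The set $v$ need not lie in $y\setminus\{x_i\}$ (it can contain $x_i$), so you cannot simply feed $v$ into the spine argument for the $\mathcal{Q}_0$-call. What rescues the claim is that the outer call chose $z$ \emph{maximal} among subsets of $y$ with $F\cap z^\diamond=\emptyset$; since $v\subseteq y$ and $F\cap v^\diamond=\emptyset$ (the latter by your correct observation that ${\tt QUOTIENT}$ only divides generators), maximality gives $\#z\geq\#v$, and $z\subseteq y\setminus\{x_i\}$ is the set you push through the $\mathcal{Q}_0$-spine. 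This maximality-of-$z$ step is exactly the engine of the paper's proof of Lemma~\ref{Q-lemma}(2); you mention ``the choice $x_i\in y\setminus z$'' but never actually invoke maximality, and you explicitly defer the whole comparison to Dub\'e. So as written the proposal has a genuine gap right at the crux, though the overall structure is sound and matches the paper.
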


In the proof of this proposition we use the following lemma:

\begin{lemma} \label{Q-lemma}
Let $(\cal P,\cal Q)={\tt SPLIT}(w,y,F)$.
\begin{enumerate}
\item For every $(v',y')\in\cal Q$ we have $F\cap (y')^\diamond=\emptyset$ and $y'\subseteq y$.
\item
For every $y'\subseteq y$ with $F\cap (y')^\diamond=\emptyset$ there exists $y''\subseteq y$ with $(w,y'')\in\cal Q$ and $\#{y''}\geq\#{y'}$.
\end{enumerate}
\end{lemma}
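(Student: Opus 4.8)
The plan is to prove (1) and (2) simultaneously by induction along the recursion tree of Algorithm~\ref{Splitting}, which is well-founded because the algorithm terminates on every input \cite[Lemmas~4.3 and 4.4]{Dube}. The base cases need no work: if $1\in F$ then $\cal Q=\emptyset$, and moreover no $y'\subseteq y$ satisfies $F\cap(y')^\diamond=\emptyset$ (since $1\in(y')^\diamond$ for every $y'$), so (1) holds vacuously and (2) holds for lack of any admissible $y'$; if instead $1\notin F$ and $F\cap y^\diamond=\emptyset$, then $\cal Q=\{(w,y)\}$ and both assertions are immediate, part (2) with $y'':=y$ (note $\#y\geq\#y'$ whenever $y'\subseteq y$).

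For the recursion step, write $(\cal P_0,\cal Q_0)={\tt SPLIT}(w,y\setminus\{x_i\},F)$ and $(\cal P_1,\cal Q_1)={\tt SPLIT}(w\ast x_i,y,F_1)$ with $F_1={\tt QUOTIENT}(F,x_i)$, so that $\cal Q=\cal Q_0\cup\cal Q_1$. The one elementary fact I would isolate first is that, for a monomial $v$, one has $v\in(y')^\diamond$ if and only if every $x_j$ dividing $v$ lies in $y'$. Reading off {\tt QUOTIENT} (Algorithm~\ref{Quotient}) then gives: if $y'\subseteq y$ and $v\in F\cap(y')^\diamond$, then either $x_i\nmid v$, in which case $v\in F_1\cap(y')^\diamond$, or $x_i\mid v$ and hence $x_i\in y'$, in which case $v/x_i\in F_1$ and still $v/x_i\in(y')^\diamond$; either way $F_1\cap(y')^\diamond\neq\emptyset$. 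Contrapositively, for $y'\subseteq y$ we get $F_1\cap(y')^\diamond=\emptyset\Rightarrow F\cap(y')^\diamond=\emptyset$. Part (1) now follows from the inductive hypothesis applied to the two recursive calls: for $(v',y')\in\cal Q_0$ one gets $y'\subseteq y\setminus\{x_i\}\subseteq y$ and $F\cap(y')^\diamond=\emptyset$ directly, while for $(v',y')\in\cal Q_1$ one gets $y'\subseteq y$ and $F_1\cap(y')^\diamond=\emptyset$, which upgrades to $F\cap(y')^\diamond=\emptyset$ by the implication just noted.

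For part (2), let $y'\subseteq y$ with $F\cap(y')^\diamond=\emptyset$. The decisive observation is that $z$ is chosen to have maximum cardinality among the $z\subseteq y$ with $F\cap z^\diamond=\emptyset$ (this is the reading of ``maximal'' in Algorithm~\ref{Splitting} that the analysis requires), so $\#z\geq\#y'$; also $F\cap z^\diamond=\emptyset$, and since $x_i\in y\setminus z$ we have $z\subseteq y\setminus\{x_i\}$. Feeding the set $z$ into the inductive hypothesis for part~(2) of the call ${\tt SPLIT}(w,y\setminus\{x_i\},F)$ produces $y''\subseteq y\setminus\{x_i\}\subseteq y$ with $(w,y'')\in\cal Q_0\subseteq\cal Q$ and $\#y''\geq\#z\geq\#y'$, as desired.

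I expect the only genuine difficulty to lie in part (2) when $x_i\in y'$: the naive move of applying the inductive hypothesis to $y'\setminus\{x_i\}\subseteq y\setminus\{x_i\}$ loses a variable and therefore fails to deliver the required bound $\#y''\geq\#y'$. The remedy above is to forget $y'$ altogether and run the induction on $z$ instead — the set $z$ already satisfies $\#z\geq\#y'$ and, by construction, avoids $x_i$. (A small sanity check worth making explicit is the base case $1\in F$, where $\cal Q$ is empty: this is consistent with (2) precisely because no $y'$ there satisfies $F\cap(y')^\diamond=\emptyset$.)
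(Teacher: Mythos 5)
Your proof is correct and takes essentially the same approach as the paper: part (1) is unwound along the recursion with the same observation relating $F$ and $F_1={\tt QUOTIENT}(F,x_i)$, and part (2) hinges on the identical key move of replacing $y'$ by the maximal set $z$ and feeding $z$ into the recursive call on $y\setminus\{x_i\}$. The only cosmetic difference is that the paper runs the induction for part (2) on the quantity $\#y-\#y'$ rather than along the recursion tree, but the substance is the same.
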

\begin{proof}
We prove part (1) by induction on the number of recursive calls in Algorithm~\ref{Splitting} needed to compute $(\cal P,\cal Q)$. The base case (no recursive calls) is obvious. If $(v',y')\in\cal Q_0$, then $F\cap (y')^\diamond=\emptyset$ and $y'\subseteq y\setminus\{x_i\}\subseteq y$ follows by inductive hypothesis.
Suppose $(v',y')\in\cal Q_1$; then by inductive hypothesis we obtain $F_1\cap (y')^\diamond=\emptyset$ and $y'\subseteq y$. By the way that $F_1$ is computed from $F$ in Algorithm~\ref{Quotient}, every element of $F$ is divisible by some element of $F_1$; hence $F\cap (y')^\diamond=\emptyset$.

We show part (2) by induction on $\#y-\#y'$. If $y'=y$, then the algorithm returns $\cal Q=\{(w,y)\}$, satisfying the condition in (2). Otherwise, we have $\#z\geq \#y'$ by maximality of $z$. Hence by inductive hypothesis applied to
$(\cal P_0,\cal Q_0) = {\tt SPLIT}(w,y\setminus\{x_i\},F)$, there exists $y''\subseteq y\setminus\{x_i\}$ such that $(w,y')\in\cal Q_0$ and $\#{y''}\geq\#z$.
\end{proof}

\begin{proof}[Proof of Proposition~\ref{standard-prop}] We proceed  by the number of recursions in Algorithm~\ref{Splitting} needed to compute $(\cal P,\cal Q)$. If $\cal Q$ is empty or a singleton, then the conclusion of the proposition holds trivially.
Inductively, assume that $\cal Q_0$ is $\deg(w)$-standard and $\cal Q_1$ is $(\deg(w)+1)$-standard. Let $(v',y')\in\cal Q^+$ and $d$ with $\deg(w)\leq d\leq \deg(v')$ be given; we need to show that there exists a pair $(v'',y'')\in\cal Q$ with $\deg(v'')=d$ and $\# y''\geq \# y'$. This is clear by inductive hypothesis if $(v',y')\in\cal Q_0$ or if $d\geq \deg(w)+1$.
By Lemma~\ref{Q-lemma} there exists $y''\subseteq y$ with $(w,y'')\in\cal Q$ and $\#{y''}\geq\#{y'}$, covering the case that $d=\deg(w)$.
\end{proof}

Applied to $w=1$, $y=x$, and $F=$ a set of generators for $I$, Algorithm~\ref{Splitting} produces a pair $(\cal P,\cal Q)$ consisting of a monomial cone decomposition $\cal P$ of $I$ and a monomial cone decomposition $\cal Q$ of $\nf_I(R)$.
We now analyze this situation in more detail. So suppose $I\neq R$, let $F$ be a set of generators of $I$,  and let $(\cal P,\cal Q)={\tt SPLIT}(1,x,F)$. Let also $F_{\min}\subseteq F$ be the minimal set of generators for $I$. Then:

\begin{lemma}
For every $v\in F_{\min}$ there is $(v',y')\in\cal Q$ with $\deg(v')=\deg(v)-1$.
\end{lemma}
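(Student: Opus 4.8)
The plan is to follow, inside the run ${\tt SPLIT}(1,x,F)$, the recursive call that deposits the cone with apex $v$ into $\mathcal P$, and then to show that the node one step above it (on the ``split off a variable'' side) contributes to $\mathcal Q$ a cone whose apex has degree exactly $\deg(v)-1$, using Lemma~\ref{Q-lemma}. First I would observe that $\deg v\geq 1$: otherwise $v=1\in I$, forcing $I=R$, contrary to assumption. Since $\mathcal P$ is a monomial cone decomposition of $I$ and $v$ lies in the minimal set of generators $F_{\min}$ of $I$, Lemma~\ref{lemma: minimal monomial} supplies some $y$ with $(v,y)\in\mathcal P$. Inspecting Algorithm~\ref{Splitting}, a pair is placed into the first component of the output only through the base case ``$1\in F$''; hence during the computation there occurs a recursive call ${\tt SPLIT}(v,y,F_v)$, and I would fix the chain of recursive calls running from the root ${\tt SPLIT}(1,x,F)$ down to this call, in which each successor is one of the two recursive calls (at the lines marked $(*)$ and $(**)$) made inside its predecessor.

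Next I would track the first argument along this chain. The call at line $(*)$ leaves it unchanged, and the call at line $(**)$ multiplies it by a single variable, so each first argument divides the next, and the first arguments run from $1$ to $v$. Since $v\neq 1$, there is a last node ${\tt SPLIT}(w,\tilde y,\tilde F)$ in the chain whose first argument $w$ is a proper divisor of $v$; the step leaving this node cannot be its line-$(*)$ call, hence it is its line-$(**)$ call, so $w\ast x_i=v$ for the index $i$ chosen there. Consequently $\deg w=\deg v-1$, and this node reaches the \textbf{else} branch of the algorithm, where a set $z\subseteq\tilde y$ with $\tilde F\cap z^\diamond=\emptyset$ is chosen together with that same $i$ satisfying $x_i\in\tilde y\setminus z$; in particular $z\subseteq\tilde y\setminus\{x_i\}$.

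To finish, I would apply Lemma~\ref{Q-lemma}\,(2) to the line-$(*)$ call $(\mathcal P_0,\mathcal Q_0)={\tt SPLIT}(w,\tilde y\setminus\{x_i\},\tilde F)$ made inside this node, taking the subset in the lemma to be $z$ (which satisfies $z\subseteq\tilde y\setminus\{x_i\}$ and $\tilde F\cap z^\diamond=\emptyset$). This yields some $y'\subseteq\tilde y\setminus\{x_i\}$ with $(w,y')\in\mathcal Q_0$. Since the $\mathcal Q$-component returned by each invocation of Algorithm~\ref{Splitting} contains the $\mathcal Q$-components of both of its recursive calls, $\mathcal Q_0$ propagates up the chain into the final $\mathcal Q$; thus $(w,y')\in\mathcal Q$ with $\deg w=\deg v-1$, which is exactly what is required.

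The main obstacle is the bookkeeping in this chain argument rather than any genuine difficulty: one must be sure that the index $i$ the algorithm picks at the node ${\tt SPLIT}(w,\tilde y,\tilde F)$ is forced to be the one with $w\ast x_i=v$ (it is, since that node's only route to $v$ in the chain is its line-$(**)$ call), and that the $\mathcal Q$-parts of sub-calls really do accumulate, without loss, into the top-level $\mathcal Q$. I expect these to be the spots where a careless version of the argument would slip.
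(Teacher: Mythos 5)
Your proposal is correct and follows essentially the same route as the paper's proof: identify the call that deposits $(v,y)$ in $\mathcal P$, show its parent must be reached via a $(\ast\ast)$ step, and apply Lemma~\ref{Q-lemma}\,(2) at that parent. The only cosmetic differences are that the paper pins down the $(\ast\ast)$ step directly from $1\in F'$ (rather than by tracking the divisibility chain of first arguments) and invokes Lemma~\ref{Q-lemma}\,(2) at the parent node ${\tt SPLIT}(w,\tilde y,\tilde F)$ itself instead of at its $(\ast)$ sub-call.
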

\begin{proof}
Let $v\in F_{\min}$. By Lemma~\ref{lemma: minimal monomial} we have $(v,y) \in \cal P$ for some $y\subseteq x$. Since $1\notin F$, the pair $(v,y)$ arrived in $\cal P$ during the computation of ${\tt SPLIT}(1,x,F)$ by means of a recursive call of the form ${\tt SPLIT}(v,y,F')$ where $F'$ is a set of generators for $(I:v)$. We have $v\in I$, and thus $1\in F'$.
This shows that the recursive call must have been made in (**), because the parameter $F$ is passed on unchanged by the recursive call in (*). The call (**) occurred during the computation of some ${\tt SPLIT}(v',y,F'')$
where
$v'$ satisfies $v=v'\ast x_i$ for some $i$, and $F''$ is a
finite set of generators for $(I:v')$. Part (2) of Lemma~\ref{Q-lemma} now yields the existence of $y'\subseteq y$ such that
$(v',y')\in\cal Q$.
\end{proof}


\begin{cor}\label{Generators with bounded degree}
The set
of all $w\in F$ with $\deg(w)\leq 1+\deg(\cal Q)$ generates $I$.
\end{cor}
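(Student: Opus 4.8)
The statement to prove is that the subset $F_0 := \{w \in F : \deg(w) \leq 1 + \deg(\cal Q)\}$ already generates the monomial ideal $I$. Since every element of $F_0$ lies in $I$, we certainly have $(F_0) \subseteq I$, so the real task is the reverse inclusion, and it suffices to show that $F_0$ contains the minimal set of generators $F_{\min}$ of $I$. The plan is to invoke the preceding lemma: for each $v \in F_{\min}$ there is a pair $(v',y') \in \cal Q$ with $\deg(v') = \deg(v) - 1$. From the definition $\deg(\cal Q) = \max\{\deg(w) : (w,y) \in \cal Q\}$ we get $\deg(v) - 1 = \deg(v') \leq \deg(\cal Q)$, hence $\deg(v) \leq 1 + \deg(\cal Q)$.

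The one point that needs a little care is that the preceding lemma is stated for the minimal generating set $F_{\min} \subseteq F$, whereas the corollary talks about all of $F$; and it also presumes $I \neq R$, whereas the corollary is stated without that hypothesis. The first of these is harmless: every minimal generator $v$ of $I$ lies in $F$ (since $F$ generates $I$, each $w \in F_{\min}$ must be divisible by some element of $F$, and by minimality of $w$ it must equal that element), so $F_{\min} \subseteq F$, and by the degree estimate above $F_{\min} \subseteq F_0$; consequently $(F_0) \supseteq (F_{\min}) = I$. The case $I = R$ is the trivial special case, but note $I=R$ forces $1 \in F_{\min}$ — no, rather $1 \in I$, and if $R$ is the polynomial identification then $F_{\min} = \{1\}$; here $1 \in F$ (since $1 \in F$ is exactly the base case of the algorithm), and $\deg(1) = 0 \leq 1 + \deg(\cal Q)$ regardless, so $1 \in F_0$ and $(F_0) = R = I$. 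Alternatively one simply notes that if $1 \in F$ the algorithm returns $\cal P = \{(1,x)\}$, $\cal Q = \emptyset$, and $1 \in F_0$.

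I do not expect any serious obstacle here: the corollary is an immediate bookkeeping consequence of the preceding lemma together with the remark that a minimal generating set of a monomial ideal is contained in any generating set consisting of monomials. The only thing to be vigilant about is getting the ``$+1$'' right — it comes precisely from the shift $\deg(v') = \deg(v) - 1$ in the lemma, which in turn reflects the recursive call pattern $\texttt{SPLIT}(v',y,F'')$ with $v = v' \ast x_i$ — and from making the (trivial) case $I = R$ explicit so that $\deg(\cal Q) = -\infty$ does not cause confusion.
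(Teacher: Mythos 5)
Your main argument is correct and is exactly the one the paper intends: since $F_{\min}\subseteq F$, it suffices to show each $v\in F_{\min}$ satisfies $\deg(v)\leq 1+\deg(\cal Q)$, and this is immediate from the preceding lemma, which produces $(v',y')\in\cal Q$ with $\deg(v')=\deg(v)-1\leq\deg(\cal Q)$.

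One small caveat about your digression on the case $I=R$: this case is already excluded by the standing hypothesis ``suppose $I\neq R$'' in the paragraph preceding the lemma and the corollary, so you did not need to address it -- and as it happens, your treatment of it is not quite right. If $I=R$ then the algorithm returns $\cal Q=\emptyset$, so by the paper's convention $\deg(\cal Q)=-\infty$, hence $1+\deg(\cal Q)=-\infty$ and the set $F_0=\{w\in F:\deg(w)\leq 1+\deg(\cal Q)\}$ is \emph{empty}; in particular $1\notin F_0$, contrary to what you assert, and $F_0$ does not generate $R$. So the corollary genuinely requires $I\neq R$ (equivalently $\cal Q\neq\emptyset$), and the correct move is simply to observe that this hypothesis is in force throughout, not to try to rescue the degenerate case. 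Everything else in your proof is fine.
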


\begin{remark}
In \cite{MS} one finds an algorithm which, given a finite list $F$ of generators for a monomial ideal $I$ of $R$, computes a {\it Stanley filtration,}\/  that is, a list of pairs $$\big((w(1),y(1)),\dots,(w(m),y(m))\big),$$ each consisting of a monomial $w(j)$ and a subset $y(j)$ of $x$, such that for each $j$ the set $$\big\{ (w(1),y(1)), \dots, (w(j),y(j))\big\}$$ is a cone decomposition of $\nf_{I(j)}(R)$ where $$I(j):=I+ C\big(w(j+1),x\big)+ \cdots + C\big(w(m),x\big).$$
It is easy to see (since Algorithm~\ref{Splitting} and Algorithm~3.4 in \cite{MS} pursue similar ``divide and conquer'' strategies) that, for $(\cal P,\cal Q)={\tt SPLIT}(1,x,F)$, the pairs in $\cal Q$ can be ordered to form a Stanley filtration.
\end{remark}

\subsection{Cone decompositions of homogeneous ideals}
In the rest of this section, we let $R$ be a $K$-algebra of solvable type with respect to $x=(x_1,\dots,x_N)\in R^N$ and a fixed monomial ordering $\leq$ of $\N^N$. Note that in general (unless $R$ is commutative), a monomial ideal of $R$ is {\it not}\/ a left ideal of the algebra $R$.
Let $I$ be a proper left ideal of $R$; then the $K$-linear subspace $M$ of $R$ generated by $\lm(I)$ is a monomial ideal of $R$. Moreover, let  $G$ be a Gr\"obner basis of $I$; then $\lm(I)$ is
generated by $\lm(G)$, and $\nf_M(R)=\nf_G(R)$.
The central outcome of the discussion in the previous subsection is:

\begin{theorem}\label{Splitting Theorem}
The homogeneous $K$-linear subspace $\nf_G(R)$ of $R$
has a standard monomial cone decomposition. More precisely, let
$(\cal P,\cal Q)={\tt SPLIT}(1,x,F)$ where $F=\lm(G)$. Then $\cal Q$ is
a standard monomial cone decomposition of $\nf_G(R)$. Moreover,
the set of all $g\in G$ with $\deg(g)\leq 1+\deg\cal Q$
is still a Gr\"obner basis of $I=(G)$.
\end{theorem}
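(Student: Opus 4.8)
The plan is to read all the assertions off the correctness properties of ${\tt SPLIT}$ established above. First I would check that the call ${\tt SPLIT}(1,x,F)$ with $F=\lm(G)$ makes sense, i.e.\ that $F$ is a finite set of generators for the monomial ideal $(M:1)$. It is finite since a Gr\"obner basis is by definition finite. By condition~(4) of Proposition~\ref{Groebner}, every monomial in $\lm(I)$ is divisible by some $\lm(g)$ with $g\in G$, so $\lm(G)$ generates the monoid ideal $\lm(I)$; since $M$ is the $K$-span of $\lm(I)$, this means $F=\lm(G)$ generates $M$, and $M=(M:1)$ by the definition of the colon operation on monomial ideals.

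Next, for the assertion about $\cal Q$: by the remark following Proposition~\ref{standard-prop} (namely ${\tt SPLIT}$ applied with $w=1$, $y=x$, and a generating set $F$ of the ambient monomial ideal $M$), the output pair consists of a monomial cone decomposition $\cal P$ of $M$ together with a monomial cone decomposition $\cal Q$ of $\nf_M(R)$; and $\nf_M(R)=\nf_G(R)$ as recorded just before the theorem, a $K$-linear space that is homogeneous because it is spanned by monomials of $R$. Proposition~\ref{standard-prop}, applied with $w=1$ so that $\deg(w)=0$, shows that $\cal Q$ is $0$-standard, that is, standard. This settles the first part.

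For the final assertion, set $G_0:=\{g\in G:\deg(g)\le 1+\deg\cal Q\}$. Corollary~\ref{Generators with bounded degree} says that the monomials $w\in F=\lm(G)$ with $\deg(w)\le 1+\deg\cal Q$ already generate $M$; using that $\deg(\lm(g))=\deg(g)$ for $g\in G$ (immediate in the homogeneous case, which is the one relevant here, and valid more generally when $\le$ is degree-compatible), this says exactly that the leading monomials of the elements of $G_0$ generate the monoid ideal $\lm(I)$. Since $G_0\subseteq I$, for every nonzero $f\in(G_0)$ we have $f\in I$, hence $\lm(f)\in\lm(I)$ is divisible by $\lm(g)$ for some $g\in G_0$, and so $G_0$ is a Gr\"obner basis by Proposition~\ref{Groebner} --- a priori only of $(G_0)$. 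That $(G_0)=I$ follows by the usual normal-form argument: if there were a nonzero $f\in I\setminus(G_0)$, reducing it to a $G_0$-normal form $f_0$ would give $f_0\in I\setminus(G_0)$, so $f_0\ne 0$, yet $f_0$ is irreducible by $G_0$ while $\lm(f_0)\in\lm(I)$ is divisible by some $\lm(g)$ with $g\in G_0$, a contradiction. Hence $G_0$ is a Gr\"obner basis of $I$.

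I expect the one step that is more than bookkeeping to be this passage from Corollary~\ref{Generators with bounded degree} to the final claim: the corollary bounds the degrees of the \emph{leading monomials} $\lm(g)$, whereas the theorem bounds the degrees of the elements $g$ themselves, so one has to make sure these degrees match up before concluding. All the remaining ingredients are the correctness statements for ${\tt SPLIT}$ (Proposition~\ref{standard-prop}, Lemma~\ref{Q-lemma}, Corollary~\ref{Generators with bounded degree}) together with the characterizations of Gr\"obner bases in Proposition~\ref{Groebner}.
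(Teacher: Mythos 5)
Your proof is correct and reconstructs the argument the paper intends but never actually writes out: the theorem is announced as ``the central outcome of the discussion in the previous subsection'' with no explicit proof, and what you have done --- feed $F=\lm(G)$ into ${\tt SPLIT}$, invoke Proposition~\ref{standard-prop} with $\deg(1)=0$ for standardness, invoke Corollary~\ref{Generators with bounded degree} for the degree bound on generators, and close with the Gr\"obner-basis characterizations of Proposition~\ref{Groebner} and the normal-form argument for $(G_0)=I$ --- is exactly the assembly the text is gesturing at. There is no alternative route to compare against.

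The subtlety you single out is real and worth dwelling on, because as literally stated the theorem's final claim can fail. Corollary~\ref{Generators with bounded degree} controls $\deg(\lm(g))$ for $g\in G$, whereas the theorem bounds $\deg(g)$; in a filtered but inhomogeneous setting these need not agree. Concretely, take $R=K[x_1,x_2]$ with the lexicographic ordering and $G=\{x_1+x_2^{100}\}$, a Gr\"obner basis of $I=(x_1+x_2^{100})$. Then $F=\lm(G)=\{x_1\}$, a short computation gives $\cal Q=\big\{(1,\{x_2\})\big\}$ with $\deg\cal Q=0$, yet no element of $G$ has degree $\leq 1$, so the purported Gr\"obner basis of $I$ would be empty. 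Your parenthetical is the correct repair: $\deg(g)=\deg(\lm(g))$ when $g$ is homogeneous (the case actually used in Corollary~\ref{Splitting Corollary} and, via Corollary~\ref{cor: bound by b_0}, in Proposition~\ref{Main Prop}), and also when $\leq$ is degree-compatible together with $R$ quadric (so that Lemma~\ref{R_leq d} identifies $\deg$ with total degree in the monomial basis). The paper places the theorem just \emph{before} the running declaration ``$R$ is homogeneous,'' leaving this hypothesis implicit; your instinct to make it explicit is the right one, and nothing downstream in the paper is affected since every invocation of the theorem occurs under one of these hypotheses.
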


In this subsection we establish an analogous decomposition result (Corollary~\ref{Splitting Corollary} below) for $I$ in place of $\nf_G(R)$, provided $R$ and $I$ are homogeneous; thus: {\it until the end of this section we assume that $R$ is homogeneous.}\/ We first need to define the type of cones used in our decompositions: A {\bf cone} of $R$ is
defined by a triple $(w,y,h)$, where $w\in
x^\diamond$, $y\subseteq x$, and $h\in R$ is homogeneous:
$$C(w,y,h):=C(w,y)h = \big\{ gh : g\in C(w,y) \} \subseteq R. $$
Both monomial and general cones are homogeneous $K$-linear subspaces of $R$, and a monomial cone is a special case of a cone: $C(w,y) = C(w,y,1)$. Note, however, that $C(1,y,w) \neq C(w,y)$ in general. We introduced this definition of cone in order to be able to speak about cone decompositions of (not necessarily monomial) ideals in the non-commutative setting.

Let $M$ be a homogeneous $K$-linear subspace of $R$, and let $\cal D$
be a finite set of triples $(w,y,h)$ where $w$ a monomial in $x^\diamond$, $y$ is a subset of $x$, and $h$ is a non-zero homogeneous element of $R$. We define the {\bf degree of $\cal D$} as
$$\deg\cal D := \max\big\{ \deg(w) + \deg(h) : (w,y,h)\in\cal D\big\}\in\N\cup\{-\infty\},$$ where $\max\emptyset=-\infty$
by convention. We also set
$$\cal D^+:=\big\{(w,y,h)\in\cal D : y\neq\emptyset\big\}.$$
We call $\cal D$ a {\bf cone decomposition of $M$} if
$C(w,y,h) \subseteq M$ for every $(w,y,h)\in\cal D$ and
$$M=\bigoplus_{(w,y,h)\in\cal D} C(w,y,h).$$
and $\cal D$ is simply a {\bf cone decomposition} if $\cal D$ is a cone decomposition of some homogeneous $K$-linear subspace of $R$. By abuse of language we will also say that a cone decomposition $\cal D$ is {\bf monomial} if $h=1$ for all $(w,y,h)\in\cal D$.

\begin{lemma} \label{Hilbert polynomials}
Let $M$ be a homogeneous $K$-linear subspace $M$ of $R$ which admits a cone decomposition $\cal D$. Then the Hilbert polynomial $P_M$ of $M$ exists. In fact, for $d>\deg(\cal D)$:
$$H_M(d) = \sum_{(w,y,h)\in\cal D^+} \binom{d-\deg(w)-\deg(h)+\#y-1}{\#y-1} = P_M(d).$$
\end{lemma}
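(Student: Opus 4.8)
The plan is to compute the Hilbert function of a single cone $C(w,y,h)$ and then sum over the decomposition, using additivity of the Hilbert function over direct sums of homogeneous subspaces (already recorded in the excerpt). First I would observe that for a cone $C(w,y,h)$ with $h$ homogeneous and nonzero, multiplication on the right by $h$, i.e.\ $g\mapsto gh$, is an injective $K$-linear map from $C(w,y)=C(w,y,1)$ onto $C(w,y,h)$ — injectivity because $R$ is of solvable type, hence an integral domain, by \eqref{lm multiplication}. Moreover this map raises degree by exactly $\deg(h)$, since $R$ is homogeneous: it sends the degree-$e$ component of $C(w,y)$ isomorphically onto the degree-$(e+\deg(h))$ component of $C(w,y,h)$. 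Therefore $H_{C(w,y,h)}(d) = H_{C(w,y)}(d-\deg(h))$ for all $d$.

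Next I would compute $H_{C(w,y)}(d)$. Identifying $R$ with $K[x]$ via the fixed monomial basis (as in Section~\ref{Monomials and Monomial Ideals}), $C(w,y)$ is the $K$-span of $\{w\ast v : v\in y^\diamond\}$, and since the map $v\mapsto w\ast v$ is injective and shifts degree by $\deg(w)$, we get $H_{C(w,y)}(d)=\#\{v\in y^\diamond : \deg(v)=d-\deg(w)\}$. If $y=\emptyset$ then $y^\diamond=\{1\}$ and this is $1$ when $d=\deg(w)$ and $0$ otherwise; if $\#y=k\geq 1$, then the number of monomials of degree $e$ in $k$ variables is $\binom{e+k-1}{k-1}$, so $H_{C(w,y)}(d)=\binom{d-\deg(w)+\#y-1}{\#y-1}$ for $d\geq\deg(w)$ and $0$ for $d<\deg(w)$. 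Combining with the shift by $\deg(h)$, for each $(w,y,h)\in\cal D$ with $y\neq\emptyset$ we obtain $H_{C(w,y,h)}(d)=\binom{d-\deg(w)-\deg(h)+\#y-1}{\#y-1}$, valid for $d\geq\deg(w)+\deg(h)$ and in particular for all $d>\deg(\cal D)$; while each $(w,y,h)$ with $y=\emptyset$ contributes a function supported on the single degree $\deg(w)+\deg(h)\leq\deg(\cal D)$, hence contributes $0$ once $d>\deg(\cal D)$.

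Finally, since $\cal D$ is a cone decomposition, $M=\bigoplus_{(w,y,h)\in\cal D}C(w,y,h)$ is a direct sum of homogeneous subspaces, so $H_M(d)=\sum_{(w,y,h)\in\cal D}H_{C(w,y,h)}(d)$. For $d>\deg(\cal D)$ the $y=\emptyset$ terms drop out and we are left with $H_M(d)=\sum_{(w,y,h)\in\cal D^+}\binom{d-\deg(w)-\deg(h)+\#y-1}{\#y-1}$, exactly the claimed formula. Each summand $\binom{d-a+k-1}{k-1}$ with $k=\#y\geq 1$ is a polynomial in $d$ of degree $k-1$, so the right-hand side is a polynomial function of $d$ for $d>\deg(\cal D)$; this polynomial is $P_M$ by definition of the Hilbert polynomial, and its existence is thereby established. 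The only mild subtlety — the step I expect to need the most care — is the claim that right multiplication by $h$ is injective and degree-shifting on $C(w,y)$; this is where homogeneity of $R$ and of $h$, together with the domain property of algebras of solvable type, are genuinely used, and it is worth stating explicitly rather than treating as routine.
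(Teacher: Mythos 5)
Your proof is correct and takes essentially the same route as the paper: compute the Hilbert function of each cone $C(w,y,h)$ (the paper simply records these case formulas, while you derive them from the degree-shifting injectivity of right-multiplication by $h$, which uses that $R$ is a graded domain), then sum over $\cal D$ and observe that the $y=\emptyset$ terms vanish once $d>\deg(\cal D)$. The extra justification of the shift $H_{C(w,y,h)}(d)=H_{C(w,y)}(d-\deg h)$ is a welcome detail the paper leaves implicit, but the argument is the same.
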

\begin{proof}
Let $h\in R$ be non-zero and homogeneous, and $w\in x^\diamond$. Then
$$H_{C(w,\emptyset,h)}(d) = \begin{cases}
0 & \text{if $d\neq\deg(w)+\deg(h)$,} \\
1 & \text{if $d=\deg(w)+\deg(h)$,}
\end{cases}$$
and for non-empty $y\subseteq x$:
$$H_{C(w,y,h)}(d) = \begin{cases}
0 & \text{if $d<\deg(w)+\deg(h)$,} \\
\binom{d-\deg(w)-\deg(h)+\#y-1}{\#y-1} & \text{if $d\geq\deg(w)+\deg(h)$.}
\end{cases}$$
Moreover, for every $d$ we have
$$H_M(d) = \sum_{(w,y,h)\in\cal D} H_{C(w,y,h)}(d).$$
The lemma now follows.
\end{proof}

In particular, if $\cal D$ is a cone decomposition of a homogeneous $K$-linear subspace $M$ of $R$, then the regularity $\sigma(M)$ of the Hilbert function of $M$ (as defined in Section~\ref{Homogenization}) is bounded by $\deg(\cal D)+1$, and for $d\geq\deg(\cal D^+)$ we have
$$H_M(d)=P_M(d) + \#\big\{ (w,y,h)\in\cal D\setminus\cal D^+ : \deg(w)+\deg(h)=d \big\}.$$
The following is an adaptation of Definition~\ref{def: d-standard, monomial}:

\begin{definition} \label{def: d-standard}
We say that a cone decomposition $\cal D$ is {\bf $d$-standard} if
\begin{enumerate}
\item $\deg(w)+\deg(h) \geq d$ for all $(w,y,h)\in\cal D^+$;
\item for every $(w,y,h)\in\cal D^+$ and $d'$ with $d\leq d'\leq\deg(w)+\deg(h)$
there is some $(w',y',h')\in\cal D^+$ with $\deg(w')+\deg(h')=d'$ and $\#{y'}\geq\#{y}$.
\end{enumerate}
We also say that $\cal D$ is {\bf standard} if $\cal D$ is $0$-standard.
\end{definition}

If $\cal D^+=\emptyset$ then  $\cal D$ is $d$-standard for every $d$,
whereas if $\cal D^+\neq\emptyset$ and $\cal D$ is $d$-standard, then
necessarily
$$d=\min\big\{\deg(w)+\deg(h): \text{$(w,y,h)\in\cal D^+$ for some $y\subseteq x$}\big\}.$$
If $\cal D$ is $d$-standard for some $d$, then we let $d_{\cal D}$ denote the smallest $d$ such that $\cal D$ is $d$-standard (so $d_{\cal D}=0$ if $\cal D^+=\emptyset$).

\begin{examples}\label{Examples for standard cone decompositions}
The empty set is a standard cone decomposition of the trivial $K$-linear subspace $\{0\}$ of $R$.
If $h\in R$ is non-zero and homogeneous, and $y\subseteq x$, then $\{(1,y,h)\}$ is
a $\deg(h)$-standard cone decomposition of $C(1,y,h)$. In particular,
$\{(1,x,1)\}$ is a standard cone decomposition of $R=C(1,x)$.
\end{examples}

The following properties are straightforward:

\begin{lemma} \label{Properties of standard cone decompositions}
\mbox{}
\begin{enumerate}
\item Suppose $M_1$ and $M_2$ are homogeneous
$K$-linear subspaces of $M$ with $M=M_1\oplus M_2$, and let $\cal
E_1$, $\cal E_2$ be cone decompositions of $M_1$ respectively $M_2$.
Then $\cal E=\cal E_1\cup\cal E_2$ is a cone
decomposition of $M$. If $\cal E_1$ and $\cal E_2$ are $d$-standard,
then so is $\cal E$.
\item Suppose $\cal D$ is a $d$-standard cone
decomposition of $M$, and let $f\in R$ be non-zero homogeneous. Then
$\cal Df := \big\{(w,y,hf):(w,y,h)\in\cal D\big\}$ is a $(d+\deg f)$-standard cone decomposition of $Mf$.
\end{enumerate}
\end{lemma}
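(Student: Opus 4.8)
The plan is to handle the two parts in turn, in each case establishing first that the proposed family is a genuine cone decomposition of the indicated subspace and then verifying the two clauses of Definition~\ref{def: d-standard}.

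For~(1): I would begin by observing that the sets of triples $\cal E_1$ and $\cal E_2$ are disjoint. Indeed, a triple $(w,y,h)$ in a cone decomposition has $h\neq 0$, so $h\in C(w,y,h)\neq\{0\}$, and hence this triple cannot belong to both $\cal E_i$, whose cones lie inside $M_i$, because $M_1\cap M_2=\{0\}$; for the same reason, a cone coming from $\cal E_1$ and a cone coming from $\cal E_2$ are never the same subspace. Therefore the relations $M_i=\bigoplus_{(w,y,h)\in\cal E_i}C(w,y,h)$ combine with $M=M_1\oplus M_2$ to give $M=\bigoplus_{(w,y,h)\in\cal E}C(w,y,h)$, and every cone of $\cal E$ is contained in $M$; since $\cal E$ is finite, it is a cone decomposition of $M$. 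For the standardness claim, note $\cal E^+=\cal E_1^+\cup\cal E_2^+$: clause~(1) of Definition~\ref{def: d-standard} is inherited immediately, and clause~(2), for a triple lying say in $\cal E_1^+$, is already witnessed by a pair inside $\cal E_1^+\subseteq\cal E^+$.

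For~(2): the structural input I would use is that $R$, being of solvable type, is an integral domain, so right multiplication by the non-zero homogeneous element $f$ is an injective --- hence bijective --- $K$-linear map $M\to Mf$, and $Mf$ is again a homogeneous $K$-linear subspace of $R$. First I would record that $C(w,y,hf)=C(w,y)(hf)=\bigl(C(w,y)h\bigr)f=C(w,y,h)f$ by associativity of the product in $R$, and that $hf$ is a non-zero homogeneous element. Since a $K$-linear isomorphism carries an internal direct sum decomposition to an internal direct sum decomposition, applying $(\,\cdot\,)f$ to $M=\bigoplus_{(w,y,h)\in\cal D}C(w,y,h)$ yields $Mf=\bigoplus_{(w,y,h)\in\cal D}C(w,y,hf)$; the same injectivity shows $(w,y,h)\mapsto(w,y,hf)$ is a bijection $\cal D\to\cal D f$, so no cones get merged and $\cal D f$ is a cone decomposition of $Mf$. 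Finally $(\cal D f)^+$ is exactly the image of $\cal D^+$, and $\deg(w)+\deg(hf)=\bigl(\deg(w)+\deg(h)\bigr)+\deg f$, so clause~(1) becomes the bound $\deg(w)+\deg(h)\geq d$ shifted by $\deg f$; for clause~(2), given $d'$ with $d+\deg f\leq d'\leq\deg(w)+\deg(hf)$, I would apply the $d$-standardness of $\cal D$ to $d'':=d'-\deg f$ and transport the resulting witness back by multiplying its third coordinate by $f$.

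Neither part is genuinely difficult; the only points requiring a little care are the disjointness remark in~(1) --- without it a set-theoretic union of families of triples need not yield a direct sum --- and, in~(2), the two uses of the domain property of $R$: to guarantee $hf\neq 0$ so that the triples of $\cal D f$ are admissible, and to guarantee that right multiplication by $f$ preserves the directness of the sum of cones. These are the steps I would be most careful to spell out.
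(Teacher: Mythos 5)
Your proof is correct; the paper declares this lemma ``straightforward'' and omits a proof entirely, so you are simply supplying the routine verification the authors leave to the reader. The two points you flag as needing care --- that the union in~(1) is a disjoint union of families of triples (via $M_1\cap M_2=\{0\}$ and $C(w,y,h)\neq\{0\}$), and in~(2) that $R$ being an integral domain makes right multiplication by $f$ injective and keeps the triples of $\cal Df$ admissible --- are indeed exactly the only non-trivial points, and you handle them properly.
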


The lemma below shows how the degrees of cone decompositions of $K$-linear subspaces decomposing the $K$-linear space $R$ are linked:

\begin{lemma} \label{max-lemma}
Let $M_1$, $M_2$ be $K$-linear subspaces of $R$ with $R=M_1\oplus M_2$. For $i=1,2$, let $\cal D_i$ be a cone decomposition of $M_i$, which is $d_i$-standard for some $d_i$. Then $$\max\{\deg\cal D_1,\deg\cal D_2\}=\max\{\deg\cal D_1^+,\deg\cal D_2^+\}.$$
\end{lemma}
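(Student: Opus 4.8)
The plan is to argue via Hilbert functions, the key input being the remark recorded just after Lemma~\ref{Hilbert polynomials}: if $\cal D$ is a cone decomposition of a homogeneous subspace $M$, then
$$H_M(d)=P_M(d)+\#\big\{(w,y,h)\in\cal D\setminus\cal D^+:\deg(w)+\deg(h)=d\big\}\qquad\text{for all $d\geq\deg(\cal D^+)$,}$$
that is, past the degree of the last ``positive'' cone the Hilbert function deviates from the Hilbert polynomial only through the one-dimensional cones $C(w,\emptyset,h)=Kwh$. The inequality $\max\{\deg\cal D_1,\deg\cal D_2\}\geq\max\{\deg\cal D_1^+,\deg\cal D_2^+\}$ is trivial from $\cal D_i^+\subseteq\cal D_i$, so it remains to prove $\deg\cal D_i\leq e$ for $i=1,2$, where $e:=\max\{\deg\cal D_1^+,\deg\cal D_2^+\}$.

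First I would apply the quoted remark to both $\cal D_1$ and $\cal D_2$: since $e\geq\deg(\cal D_i^+)$, for every $d\geq e$ we get $H_{M_i}(d)=P_{M_i}(d)+n_i(d)$ with $n_i(d):=\#\{(w,\emptyset,h)\in\cal D_i\setminus\cal D_i^+:\deg(w)+\deg(h)=d\}\geq 0$. Next I would use $R=M_1\oplus M_2$, which gives $H_R=H_{M_1}+H_{M_2}$ and hence, on comparing both sides for large $d$, $P_R=P_{M_1}+P_{M_2}$; moreover $H_R(d)=P_R(d)$ for all $d$ (this follows from \eqref{Basic Hilbert function}, or: apply the same remark to the standard cone decomposition $\{(1,x,1)\}$ of $R$ from Examples~\ref{Examples for standard cone decompositions}, whose non-positive part is empty and whose positive part has degree $0$). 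Combining, for every $d\geq e$:
$$P_R(d)=H_R(d)=H_{M_1}(d)+H_{M_2}(d)=P_{M_1}(d)+P_{M_2}(d)+n_1(d)+n_2(d)=P_R(d)+n_1(d)+n_2(d),$$
so $n_1(d)=n_2(d)=0$ for all $d\geq e$. This says that no cone in $\cal D_i\setminus\cal D_i^+$ has degree $\geq e$, whereas every cone in $\cal D_i^+$ has degree at most $\deg\cal D_i^+\leq e$; hence $\deg\cal D_i\leq e$, which is the assertion.

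There is no serious obstacle here, and the argument uses only that the $\cal D_i$ are cone decompositions, not that they are $d_i$-standard. The one point I would flag for care is the degenerate case $\cal D_1^+=\cal D_2^+=\emptyset$, in which $e=-\infty$: then $P_{M_1}=P_{M_2}=0$, the displayed identity would force $P_R=0$, and this contradicts $H_R(d)=\binom{N+d-1}{d}$; so that case cannot occur, $e\in\N$, and ``$d\geq e$'' is meaningful throughout.
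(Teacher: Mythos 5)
Your proof is correct and follows essentially the same route as the paper's: compare $H_{M_1}+H_{M_2}=H_R$ and $P_{M_1}+P_{M_2}=P_R$ and use the post-Lemma~\ref{Hilbert polynomials} formula to conclude that neither $\cal D_i$ can contain a zero-dimensional cone of degree $\geq\max\{\deg\cal D_1^+,\deg\cal D_2^+\}$. Your two side remarks --- that $d$-standardness is not actually used, and that the degenerate case $\cal D_1^+=\cal D_2^+=\emptyset$ is excluded because it would force $P_R=0$ --- are accurate and make explicit points the paper leaves implicit.
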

\begin{proof}
We have
\begin{equation}\label{H equation}
H_{M_1}(d)+H_{M_2}(d)=H_R(d)=\binom{d+N-1}{N-1}\qquad\text{for every $d$}
\end{equation}
and thus
\begin{equation}\label{P equation}
P_{M_1}+P_{M_2} = \binom{T+N-1}{N-1}.
\end{equation}
For $d\geq\max\{\deg\cal D_1^+,\deg\cal D_2^+\}$ and $i=1,2$, we have
$$H_{M_i}(d) = P_{M_i}(d) + \#\big\{ (w,y,h)\in\cal D_i\setminus \cal D_i^+ : \deg(w)+\deg(h)=d \big\}.$$
Hence, by \eqref{H equation} and \eqref{P equation}, neither $\cal D_1$ nor $\cal D_2$ contains a triple $(w,y,h)$ with $y=\emptyset$ and $\deg(w)+\deg(h)\geq \max\big\{\deg(\cal D_1^+),\deg(\cal D_2^+)\big\}$. It follows that for $i=1,2$ we have $$\deg(\cal D_i)\leq\max\big\{\deg(\cal D_i\setminus\cal D_i^+),\deg(\cal D_i^+)\big\}\leq \max\big\{ \deg(\cal D_1^+),\deg(\cal D_2^+) \big\}$$
as required.
\end{proof}

Given $w\in x^\diamond$ as well as $y\subseteq x$ and a non-zero homogeneous $h\in R$, define
$$\cal C(w,y,h) := \big\{(w,\emptyset,h)\big\}\cup \left\{ \big(w\ast x_i, y\cap
\{x_j : j\geq i\},h\big) : x_i\in y\right\}.$$
It is easy to see that $\cal C(w,y,h)$ is a $(1+\deg h)$-standard cone decomposition of  $C(w,y,h)$.

\begin{lemma}
If $M$ has a $d$-standard cone decomposition, then $M$ has
a $d'$-standard cone decomposition for every $d'\geq d$.
\end{lemma}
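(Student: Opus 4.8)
The plan is to reduce the statement to a single ``one step'' assertion: it suffices to show that if $M$ admits an $e$-standard cone decomposition then it admits an $(e+1)$-standard one, since the lemma then follows by induction on $d'-d$ (the base case $d'=d$ being trivial).

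So fix an $e$-standard cone decomposition $\cal D$ of $M$. If $\cal D^+=\emptyset$ there is nothing to prove, as then $\cal D$ is itself $d$-standard for \emph{every} $d$. Otherwise $e=\min\{\deg(w)+\deg(h):(w,y,h)\in\cal D^+\}$, and I put $\cal D_0:=\{(w,y,h)\in\cal D^+:\deg(w)+\deg(h)=e\}$, a non-empty subset of $\cal D$. The idea is to refine $\cal D$ by peeling off one more layer from each cone in $\cal D_0$ via the cone decompositions $\cal C(w,y,h)$ of $C(w,y,h)$ introduced just above, i.e., to set
$$\cal D' := (\cal D\setminus\cal D_0)\ \cup\ \bigcup_{(w,y,h)\in\cal D_0}\cal C(w,y,h).$$
Since each $\cal C(w,y,h)$ is a cone decomposition of $C(w,y,h)$, replacing each summand $C(w,y,h)$ with $(w,y,h)\in\cal D_0$ in the internal direct sum $M=\bigoplus_{(w,y,h)\in\cal D}C(w,y,h)$ by $\bigoplus_{(w',y',h')\in\cal C(w,y,h)}C(w',y',h')$ shows immediately that $\cal D'$ is again a cone decomposition of $M$.

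The substance of the argument is checking that $\cal D'$ is $(e+1)$-standard. Reading off the construction, the positive part of $\cal C(w,y,h)$ consists of the triples $(w\ast x_i,\,y\cap\{x_j:j\geq i\},\,h)$ with $x_i\in y$, each of degree $\deg(w)+\deg(h)+1$; for $(w,y,h)\in\cal D_0$ this degree equals $e+1$. Hence $(\cal D')^+$ is the union of the triples of $\cal D^+$ of degree $>e$ (so of degree $\geq e+1$) together with triples of degree exactly $e+1$, which is condition~(1). For condition~(2), I would take $(w',y',h')\in(\cal D')^+$ and $d''$ with $e+1\leq d''\leq\deg(w')+\deg(h')$, and split into two cases. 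If $(w',y',h')\in\cal D^+$, then $e\leq d''\leq\deg(w')+\deg(h')$, so condition~(2) for $\cal D$ supplies $(w'',y'',h'')\in\cal D^+$ of degree $d''$ with $\#y''\geq\#y'$; since $d''>e$ this triple does not lie in $\cal D_0$, and so is still in $(\cal D')^+$. If instead $(w',y',h')\in\cal C(w,y,h)$ for some $(w,y,h)\in\cal D_0$, then $d''=e+1$ is forced, and for $i_0:=\min\{j:x_j\in y\}$ the triple $(w\ast x_{i_0},y,h)$ lies in $\cal C(w,y,h)^+\subseteq(\cal D')^+$, has degree $e+1$, and carries the full variable set $y$, which contains the set $y'$ (the latter being of the form $y\cap\{x_j:j\geq i\}$), so $\#y\geq\#y'$.

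I expect the argument to be otherwise routine; the only points needing a moment's care are precisely the two just indicated: in the first case of condition~(2) one must note that the cone provided by $e$-standardness of $\cal D$ has degree strictly greater than $e$ and hence is not destroyed by the refinement, and in the second case one uses that the smallest-index cone $(w\ast x_{i_0},y,h)$ of each $\cal C(w,y,h)$ retains all of $y$.
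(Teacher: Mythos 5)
Your proof is correct and follows essentially the same route as the paper's: reduce to the single step $d\to d+1$, peel off one layer of the minimum-degree cones via the decomposition $\cal C(w,y,h)$, and check the result is $(d+1)$-standard. The only cosmetic differences are that you replace only the positive triples of minimum degree (whereas the paper replaces all triples of that degree, which is equivalent since $\cal C(w,\emptyset,h)=\{(w,\emptyset,h)\}$), and that you verify the standardness conditions directly rather than citing the auxiliary Lemma~\ref{Properties of standard cone decompositions}.
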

\begin{proof}
If $\cal D$ is a $d$-standard cone decomposition of $M$
with $\cal D^+=\emptyset$, then $\cal D$ is $d'$-standard for all $d'$.
Therefore, suppose $\cal D$ is a $d$-standard cone decomposition
of $M$ with $\cal D^+\neq\emptyset$; it is enough to show that then
$M$ has a $(d+1)$-standard cone decomposition. Now put
$$\cal E := \big\{ (w,y,h)\in\cal D: \deg(w)+\deg(h)=d \big\}.$$
Then trivially $\cal E$ is $d$-standard and, since $\cal D$ is $d$-standard,
$\cal D\setminus\cal E$ is $(d+1)$-standard. Put
$$\cal E' := \bigcup_{(w,y,h)\in\cal E} \cal C(w,y,h).$$
Then $\cal E'$ is a $(d+1)$-standard cone decomposition of 
$\bigoplus_{(w,y,h)\in\cal E} C(w,y,h)\subseteq M$. Hence
$\cal E'\cup (\cal D\setminus\cal E)$ is a $(d+1)$-standard cone decomposition of $M$.
\end{proof}

\begin{cor} \label{Splitting standard cone decompositions}
Let $M_1,\dots,M_r\subseteq M$ be homogeneous $K$-linear subspaces of $R$ with
$M=M_1\oplus\cdots\oplus M_r$. If each $M_i$ has a $d_i$-standard cone
decomposition, then $M$ has a $d$-standard cone decomposition where
$d=\max\{d_1,\dots,d_r\}$.
\end{cor}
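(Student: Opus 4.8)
The plan is to reduce everything to the two results immediately preceding the corollary: the degree-raising lemma (if $M$ has a $d$-standard cone decomposition, then it has a $d'$-standard one for every $d'\ge d$) and part~(1) of Lemma~\ref{Properties of standard cone decompositions} (the union of two $d$-standard cone decompositions of complementary homogeneous summands is a $d$-standard cone decomposition of the whole). No new idea is needed; it is a bookkeeping argument.

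First I would set $d:=\max\{d_1,\dots,d_r\}$ and, for each $i$, apply the degree-raising lemma to the given $d_i$-standard cone decomposition of $M_i$ to obtain a $d$-standard cone decomposition $\cal D_i$ of $M_i$; this is legitimate since $d\ge d_i$. (The degenerate case where some $\cal D_i^+=\emptyset$ causes no trouble, since such a $\cal D_i$ is $d'$-standard for every $d'$, in particular for $d$.)

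Next I would argue by induction on $r$ that $\cal D_1\cup\cdots\cup\cal D_r$ is a $d$-standard cone decomposition of $M$. The case $r=1$ is immediate. For the inductive step, write $M=(M_1\oplus\cdots\oplus M_{r-1})\oplus M_r$; since $M=\bigoplus_i M_i$, this is again an internal direct sum of homogeneous $K$-linear subspaces, so the inductive hypothesis provides a $d$-standard cone decomposition $\cal E$ of $M_1\oplus\cdots\oplus M_{r-1}$, and part~(1) of Lemma~\ref{Properties of standard cone decompositions} applied to $M=(M_1\oplus\cdots\oplus M_{r-1})\oplus M_r$ with the $d$-standard decompositions $\cal E$ and $\cal D_r$ yields that $\cal E\cup\cal D_r$ is a $d$-standard cone decomposition of $M$. (One can equally well skip the induction and simply cite part~(1) of Lemma~\ref{Properties of standard cone decompositions} $r-1$ times, regrouping the summands as $M_1\oplus(M_2\oplus\cdots\oplus M_r)$ at each stage.)

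I do not expect a genuine obstacle here; the only point requiring a moment's care is the verification that each partial sum $M_1\oplus\cdots\oplus M_j$ is indeed a direct sum of homogeneous subspaces, so that the hypotheses of part~(1) of Lemma~\ref{Properties of standard cone decompositions} are met at every step — and this is immediate from $M=\bigoplus_{i=1}^{r}M_i$.
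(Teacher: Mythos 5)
Your proposal is correct and is precisely the intended argument: the paper leaves this corollary without proof because it follows immediately from the degree-raising lemma (raise each $\cal D_i$ to be $d$-standard with $d=\max_i d_i$) together with repeated application of part~(1) of Lemma~\ref{Properties of standard cone decompositions}, exactly as you describe.
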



Combining Theorem~\ref{Splitting Theorem} with Corollary~\ref{Splitting standard cone decompositions} we obtain:

\begin{cor} \label{Splitting Corollary}
Let $I=(f_1,\dots,f_n)$ be a left ideal of $R$ where $f_1,\dots,f_n\in R$ are non-zero and homogeneous, and suppose $n>0$. Let $d_i = \deg (f_i)$ for $i=1,\dots,n$, and $d=\max \{ d_1, \dots,d_n \}$.
Then there is a $K$-linear subspace $M$ of $I$ with $I=(f_1)\oplus M$, which admits a
$d$-standard cone decomposition $\cal D$. \textup{(}Hence $\{(1,x,f_1)\}\cup\cal D$ is
a $d$-standard cone decomposition of $I$.\textup{)}
\end{cor}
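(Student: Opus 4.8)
The plan is to assemble the required cone decomposition of $I$ from the ``staircase'' decomposition of left ideals recorded right after Lemma~\ref{Ideal decomposition}, using the Splitting Theorem on each slice and then the bookkeeping lemmas on standard cone decompositions. First I would fix, for $i=2,\dots,n$, the reduced Gr\"obner basis $G_i$ of the left ideal $\big((f_1,\dots,f_{i-1}):f_i\big)$. Since $R$ is homogeneous and $f_1,\dots,f_n$ are homogeneous, each ideal $(f_1,\dots,f_{i-1})$ is homogeneous, hence so is $\big((f_1,\dots,f_{i-1}):f_i\big)$ (as noted in Section~\ref{Grobner bases in Algebras of Solvable Type}), and therefore $G_i$ consists of homogeneous elements and $\nf_{G_i}(R)$ is a homogeneous $K$-subspace of $R$ spanned by monomials. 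Iterating Lemma~\ref{Ideal decomposition} then gives
$$I=(f_1)\oplus\underbrace{\nf_{G_2}(R)f_2\oplus\cdots\oplus\nf_{G_n}(R)f_n}_{=:M},$$
so it remains to exhibit a $d$-standard cone decomposition of $M$ (and, for the parenthetical, of $(f_1)$).

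Next I would decompose each summand $\nf_{G_i}(R)f_i$ separately. Applying Theorem~\ref{Splitting Theorem} to the left ideal $(G_i)$ (when $(G_i)=R$ the summand is $\{0\}$ and one takes the empty decomposition), the $K$-subspace $\nf_{G_i}(R)$ carries a \emph{standard}, i.e. $0$-standard, monomial cone decomposition $\cal Q_i$. Since $f_i$ is non-zero and homogeneous, Lemma~\ref{Properties of standard cone decompositions}(2) shows that $\cal Q_if_i$ is a $d_i$-standard cone decomposition of the homogeneous subspace $\nf_{G_i}(R)f_i=\bigoplus_{(w,y)\in\cal Q_i}C(w,y,f_i)$, where $d_i=\deg(f_i)$.

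To finish the first assertion I would combine the slices: $M=\bigoplus_{i=2}^{n}\nf_{G_i}(R)f_i$ is a direct sum of homogeneous subspaces of $R$, each with a $d_i$-standard cone decomposition, so Corollary~\ref{Splitting standard cone decompositions} produces a $\max\{d_2,\dots,d_n\}$-standard cone decomposition of $M$, and the lemma immediately preceding that corollary (any $d''$-standard cone decomposition can be promoted to a $d'$-standard one for each $d'\ge d''$) upgrades it to a $d$-standard cone decomposition $\cal D$ of $M$, since $d=\max\{d_1,\dots,d_n\}\ge\max\{d_2,\dots,d_n\}$ (for $n=1$ take $\cal D=\emptyset$). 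For the parenthetical, note that $C(1,x,f_1)=C(1,x)f_1=(f_1)$ and, by Examples~\ref{Examples for standard cone decompositions}, $\{(1,x,f_1)\}$ is a $d_1$-standard cone decomposition of $(f_1)$, which by the same promotion lemma refines to a $d$-standard one; adjoining it to $\cal D$ and invoking Lemma~\ref{Properties of standard cone decompositions}(1) yields a $d$-standard cone decomposition of $I=(f_1)\oplus M$.

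There is no genuinely new idea here; the work is the chaining of Lemma~\ref{Ideal decomposition}, Theorem~\ref{Splitting Theorem}, Lemma~\ref{Properties of standard cone decompositions}, and Corollary~\ref{Splitting standard cone decompositions}. The two places that demand care are: (i) verifying that homogeneity of $R$, $I$ and the $f_i$ really propagates to each quotient ideal $\big((f_1,\dots,f_{i-1}):f_i\big)$ and to its Gr\"obner basis, so that the monomial cone-decomposition machinery of Section~4 (in particular the Splitting Theorem) is legitimately applicable to $\nf_{G_i}(R)$; and (ii) tracking the ``standard index'' through the construction --- $0$-standard for $\nf_{G_i}(R)$, then $d_i$-standard after multiplying by $f_i$, then $d$-standard after the promotion step --- which is precisely where the hypothesis $d=\max_i d_i$ is used. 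I expect (ii) to be the only real obstacle, and it is routine.
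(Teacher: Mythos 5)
Your proof follows the paper's argument essentially verbatim: set up $I=(f_1)\oplus M$ with $M=\nf_{G_2}(R)f_2\oplus\cdots\oplus\nf_{G_n}(R)f_n$ via Lemma~\ref{Ideal decomposition}, apply Theorem~\ref{Splitting Theorem} to obtain standard monomial cone decompositions of the $\nf_{G_i}(R)$, shift them by $f_i$ via Lemma~\ref{Properties of standard cone decompositions}(2) to get $d_i$-standard decompositions, and then combine via Corollary~\ref{Splitting standard cone decompositions}. The extra care you take --- verifying homogeneity of the quotient ideals $\big((f_1,\dots,f_{i-1}):f_i\big)$, handling $(G_i)=R$ and $n=1$ as degenerate cases, and explicitly invoking the promotion lemma to pass from $\max\{d_2,\dots,d_n\}$-standard to $d$-standard --- is exactly the bookkeeping the paper compresses into ``the claim now follows from Corollary~\ref{Splitting standard cone decompositions}.'' You also correctly noticed (in effect) that the parenthetical assertion as literally written requires $d_1=d$: if $d_1<d$ then $\{(1,x,f_1)\}$ is $d_1$-standard but not $d$-standard (condition (1) of Definition~\ref{def: d-standard} fails), so one must first promote it, at which point the resulting decomposition of $I$ is no longer literally $\{(1,x,f_1)\}\cup\cal D$. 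This is harmless in the paper because in the one place the corollary is invoked (Proposition~\ref{Main Prop}) the $f_i$ are reordered so that $\deg f_1=d$, but you were right to flag it.
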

\begin{proof}
For $i=2,\dots,n$ let $G_i$ be a Gr\"obner basis of
$((f_1,\dots,f_{i-1}):f_i)$. Then $$I=(f_1)\oplus M \qquad\text{for $M:=\nf_{G_2}(R)f_2\oplus\cdots\oplus \nf_{G_n}(R)f_n$,}$$
as in the remark after Lemma~\ref{Ideal decomposition}.
The principal left ideal $(f_1)$ has a
$d_1$-standard cone decomposition $\{(1,x,f_1)\}$
(Example ~\ref{Examples for standard cone decompositions}). For each
$i=2,\dots,n$ let $\cal D_i$ be a standard monomial cone decomposition of
$\nf_{G_i}(R)$ guaranteed by Theorem~\ref{Splitting Theorem};
then $${\cal D}_i f_i = \big\{(w,y,f_i) : (w,y)\in {\cal D}_i\big\}$$ is a
$d_i$-standard cone decomposition of $\nf_{G_i}(R) f_i$
by Lemma~\ref{Properties of standard cone decompositions},~(2).
The claim now follows from Corollary~\ref{Splitting standard cone
decompositions}.
\end{proof}

\subsection{Macaulay constants and exact cone decompositions}

What is stated in this subsection generalizes the corresponding concepts in Section~6 of \cite{Dube}.
Let $\cal D$ be a cone decomposition which is $d$-standard for some $d$. For every $i$ we define the cone decomposition
$$\cal D_i := \big\{ (w,y,h) \in\cal D: \#y\geq i\big\}.$$
Then we have
$$\cal D=\cal D_0\supseteq\cal D^+=\cal D_1\supseteq\cdots\supseteq \cal D_N\supseteq \cal D_{N+1}=\emptyset.$$
We define the {\bf Macaulay constants} $b_0,\dots,b_{N+1}$ of $\cal D$ as follows:
$$ b_i := \max\big\{ d_{\cal D}, 1+\deg \cal D_i \big\} = \begin{cases}
d_{\cal D} & \text{if $\cal D_i=\emptyset$} \\
1+\deg\cal D_i & \text{otherwise.}
\end{cases}$$
From the definition it follows that $b_0\geq\dots\geq b_{N+1}=d_{\cal D}$. The integer $b_0$ is an upper bound for the regularity $\sigma(M)$ of $H_M$.
The name of the constants is due to the fact that Macaulay proved that if $R$ is commutative and $I$ a homogeneous ideal of $R$, then there are integers $b_0\geq\cdots\geq b_{N+1}\geq 0$ such that
$$H_{R/I}(d) = \binom{d-b_{N+1}+N}{N}-1-\sum_{i=1}^{N}\binom{d-b_i+i-1}{i}\qquad\text{for $d\geq b_0$.}$$
The $b_i$ turn out to
be the Macaulay constants of a special type of monomial cone decomposition of $\nf_G(R)$ (for an arbitrary Gr\"obner basis $G$ of $I$), which we now define in general:

\begin{definition}
A cone decomposition $\cal D$ is called {\bf exact} if $\cal D$ is $d$-standard for some $d$ and for every degree $d'$, $\cal D^+$ contains at most one triple $(w,y,h)$ with $\deg(w) + \deg(h) = d'$.
\end{definition}

Exact cone decompositions have a strong rigidity property:

\begin{lemma}\label{ECDs are rigid}
Let $\cal D$ be an exact cone decomposition with Macaulay constants $b_i$. Then for each $i=1,\dots,N$ and each $d$ with $b_{i+1}\leq d<b_i$ there is exactly one $(w,y,h)\in\cal D^+$ such that $\deg(w)+\deg(h)=d$, and for this triple we have $\#y=i$.
\end{lemma}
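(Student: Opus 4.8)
The plan is to unwind the definitions of the Macaulay constants together with the $d$-standard and exactness conditions. Fix $i\in\{1,\dots,N\}$ and $d$ with $b_{i+1}\le d<b_i$. Recall that $b_i=\max\{d_{\cal D},1+\deg\cal D_i\}$ and $b_{i+1}=\max\{d_{\cal D},1+\deg\cal D_{i+1}\}$, where $\cal D_i=\{(w,y,h)\in\cal D:\#y\ge i\}$. Since $b_{i+1}\le d<b_i$, in particular $b_i>d_{\cal D}$, so $\cal D_i\ne\emptyset$ and $b_i=1+\deg\cal D_i$; hence $d<1+\deg\cal D_i$, i.e. $d\le\deg\cal D_i$. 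This means there is a triple in $\cal D_i$ (so with $\#y\ge i$) of degree $\ge d$. Because $\cal D_i\subseteq\cal D^+$ and $\cal D$ is $d_{\cal D}$-standard with $d\le\deg\cal D_i$ and $d\ge d_{\cal D}$ (as $d\ge b_{i+1}\ge d_{\cal D}$), clause (2) of $d$-standardness (Definition~\ref{def: d-standard}) applied to that triple and the value $d'=d$ produces some $(w',y',h')\in\cal D^+$ with $\deg(w')+\deg(h')=d$ and $\#y'\ge i$. By exactness this triple is the unique member of $\cal D^+$ of degree $d$; call it $(w,y,h)$, so $\#y\ge i$.

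**Next I would** establish the reverse inequality $\#y\le i$ for that same triple, which is where the bound $d<b_i$ is used the other way — no wait, it is $b_{i+1}\le d$ that does the work here. Suppose toward a contradiction that $\#y\ge i+1$. Then $(w,y,h)\in\cal D_{i+1}$, so $\deg\cal D_{i+1}\ge\deg(w)+\deg(h)=d$, hence $b_{i+1}=\max\{d_{\cal D},1+\deg\cal D_{i+1}\}\ge 1+\deg\cal D_{i+1}\ge d+1>d$, contradicting $b_{i+1}\le d$. Therefore $\#y=i$, as claimed. Combining the two halves gives both existence and the dimension statement; uniqueness is immediate from exactness as already noted.

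**The main obstacle** I anticipate is bookkeeping around the degenerate cases: one must be careful that when $b_{i+1}\le d<b_i$ holds, the interval is genuinely nonempty and $\cal D_i\ne\emptyset$, so that $b_i$ is given by the ``otherwise'' branch $1+\deg\cal D_i$ rather than by $d_{\cal D}$; this is exactly what lets us invoke the $d$-standard property with a witness of degree $\ge d$ in $\cal D_i$. A secondary subtlety is verifying that the index $d=d'$ falls in the admissible range $d_{\cal D}\le d'\le\deg(w')+\deg(h')$ of clause~(2) of Definition~\ref{def: d-standard} for the chosen witness — this follows since $d\ge b_{i+1}\ge d_{\cal D}$ and $d\le\deg\cal D_i=$ the degree of the witness. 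Once these range checks are in place, the argument is a short direct deduction and no real computation is needed.
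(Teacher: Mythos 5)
Your proof is correct and follows essentially the same route as the paper: pick a witness in $\cal D_i$ achieving the maximal degree $b_i-1$, invoke the $d_{\cal D}$-standardness clause to descend to a triple of degree exactly $d$ with $\#y\geq i$, then rule out $\#y\geq i+1$ via the definition of $b_{i+1}$, and finish with exactness for uniqueness. Your extra care in verifying that $\cal D_i\neq\emptyset$ and that $d$ lies in the admissible range for clause~(2) of Definition~\ref{def: d-standard} is sound bookkeeping that the paper leaves implicit.
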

\begin{proof}
Suppose $d$ satisfies $b_{i+1}\leq d <b_i$. Let $(w',y',h')\in\cal D$ be such that $\# y'\geq i$ and $\deg(w')+\deg(h')=b_i-1$. Then, since $\cal D$ is $d_{\cal D}$-standard, there exists $(w,y,h)\in\cal D$ with $\deg(w)+\deg(h)=d$ and $\# y\geq \# y'\geq i$. We have $\# y=i$, since otherwise $(w,y,h)\in\cal D_{i+1}$ with $\deg(w)+\deg(h)=d\geq b_{i+1}>\deg\cal D_{i+1}$, contradicting the definition of $b_{i+1}$. By exactness of $\cal D$, $(w,y,h)$ is the only triple in $\cal D^+$ with $\deg(w)+\deg(h)=d$.
\end{proof}

The next lemma allows one to split triples in cone decompositions to achieve exactness:

\begin{lemma} \label{Exactness-split}
Let $\cal D$ be a $d$-standard cone decomposition of the $K$-linear subspace $M$ of $R$, and let $(w,y,h), (v,z,g)\in\cal D$ such that
$$\deg(w)+\deg(h)=\deg(v)+\deg(g), \qquad \# z\geq \# y>0.$$
Let $x_i\in y$ be arbitrary. Then
$$\cal D' := \big(\cal D\setminus \big\{ (w,y,h) \big\}\big) \cup \big\{ (w,y\setminus\{x_i\}, h), (w\ast x_i, y, h)\big\}$$
is also a $d$-standard cone decomposition of $M$.
\end{lemma}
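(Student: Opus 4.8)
The plan is to check that $\cal D'$ is a cone decomposition of $M$, and then to verify the two clauses of Definition~\ref{def: d-standard} for it. For the first point, note that since $x_i\in y$ we have the monomial-cone identity $C(w,y)=C(w,y\setminus\{x_i\})\oplus C(w\ast x_i,y)$ underlying the recursion in Algorithm~\ref{Splitting}. Right multiplication by the non-zero element $h$ is an injective $K$-linear self-map of $R$, because $R$ is an integral domain by \eqref{lm multiplication}; applying it to that identity gives
$$C(w,y,h)=C(w,y\setminus\{x_i\},h)\oplus C(w\ast x_i,y,h),$$
with both summands contained in $C(w,y,h)\subseteq M$. Replacing the summand $C(w,y,h)$ in $M=\bigoplus_{(w',y',h')\in\cal D}C(w',y',h')$ by these two summands shows that $\cal D'$ is a cone decomposition of $M$; the two new triples are admissible, since $h$ is non-zero homogeneous, $w\ast x_i$ is a monomial, and $y\setminus\{x_i\}$, $y$ are subsets of $x$.

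For $d$-standardness, write $e:=\deg(w)+\deg(h)=\deg(v)+\deg(g)$; since $(w,y,h)\in\cal D^+$ we have $e\geq d$, and since $\#z\geq\#y>0$ we have $(v,z,g)\in\cal D^+$. Now
$${\cal D'}^+=\big(\cal D^+\setminus\{(w,y,h)\}\big)\ \cup\ \{(w\ast x_i,y,h)\}\ \cup\ \big(\{(w,y\setminus\{x_i\},h)\}\ \text{if}\ \#y\geq 2\big),$$
where the first new triple has degree $e+1$ and the second (if present) has degree $e$; as $e\geq d$ and all surviving members of $\cal D^+$ have degree $\geq d$, clause~(1) of Definition~\ref{def: d-standard} holds for $\cal D'$. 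For clause~(2), fix $(w',y',h')\in{\cal D'}^+$ and $d'$ with $d\leq d'\leq\deg(w')+\deg(h')$. If $(w',y',h')$ is one of the two new triples and $d'$ equals its degree, that triple is its own witness. If $(w',y',h')=(w,y\setminus\{x_i\},h)$ and $d'<e$, then clause~(2) for $\cal D$ applied to $(w,y,h)$ gives a triple of $\cal D^+$ of degree $d'$ with variable set of size $\geq\#y>\#y'$, which — having degree $d'<e$ — is not $(w,y,h)$ and hence survives in $\cal D'$. In every remaining case (namely $(w',y',h')=(w\ast x_i,y,h)$ with $d'\leq e$, or $(w',y',h')\in\cal D^+\setminus\{(w,y,h)\}$), clause~(2) for $\cal D$ — applied to $(w,y,h)$ respectively to $(w',y',h')$ — produces a triple $(w'',y'',h'')\in\cal D^+$ of degree $d'$ with $\#y''\geq\#y'$: if $(w'',y'',h'')\neq(w,y,h)$ it survives in $\cal D'$ and we are done, while if $(w'',y'',h'')=(w,y,h)$ then $d'=e$ and $\#y\geq\#y'$, so $(v,z,g)\in{\cal D'}^+$, of degree $e=d'$ with $\#z\geq\#y\geq\#y'$, is the required witness.

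I expect the one genuinely load-bearing step to be this last substitution: deleting $(w,y,h)$ can knock out a clause-(2) witness at the degree $e$, and the hypothesis is there precisely to supply a replacement, namely $(v,z,g)$, a triple of $\cal D$ of the same degree with at least as many variables. One uses here that $(v,z,g)$ is distinct from $(w,y,h)$, so that it persists into $\cal D'$; this is the situation relevant to the intended application, where the lemma is invoked to split apart two distinct triples of $\cal D^+$ sharing a degree in order to force exactness.
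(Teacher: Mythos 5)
Your proof is correct and amounts to a careful writing-out of what the paper compresses into ``it is easy to see that $\cal D'$ is $d$-standard,'' after the decomposition identity $C(w,y,h)=C(w,y\setminus\{x_i\},h)\oplus C(w\ast x_i,y,h)$, which the paper also records (your detour via right multiplication by $h$ and integrality of $R$ is one legitimate way to justify it). Your closing observation deserves emphasis: the clause-(2) check in Definition~\ref{def: d-standard} really does need $(v,z,g)$ to be distinct from $(w,y,h)$, and the lemma as literally worded does not say so. If the two triples coincided the conclusion could fail: taking $\cal D=\{(1,\{x_1,x_2\},1)\}$ in $K[x_1,x_2]$ with $(v,z,g)=(w,y,h)$ and $x_i=x_1$ yields $\cal D'=\{(1,\{x_2\},1),\ (x_1,\{x_1,x_2\},1)\}$, which is $1$-standard but not $0$-standard, since clause~(2) has no degree-$0$ witness of size $\geq 2$ for the new triple $(x_1,\{x_1,x_2\},1)$. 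The distinctness is automatic in the only place the lemma is invoked (the adaptation of Dub\'e's {\tt SHIFT}/{\tt EXACT} routines, which split two \emph{distinct} same-degree triples to force exactness), so there is no problem with the paper's Theorem~\ref{Theorem-Exact}; you have correctly located the one genuinely load-bearing step that the paper's terse proof omits.
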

\begin{proof}
We have
$$C(w,y,h)=C(w,y\setminus\{x_i\},h)\oplus C(w\ast x_i,y,h).$$
So $\cal D'$ remains a cone decomposition of $M$, and it is easy to see that $\cal D'$ is $d$-standard.
\end{proof}

By a straightforward adaptation of Algorithms~{\tt SHIFT} and {\tt EXACT} in \cite{Dube}, and using Lemma~\ref{Exactness-split} instead of Lemma~6.2 of \cite{Dube} in verifying their correctness, one obtains:

\begin{theorem}\label{Theorem-Exact}
There exists an algorithm that, given a $d$-standard cone decomposition $\cal D$ of a $K$-linear subspace $M$ of $R$, produces an exact $d$-standard decomposition $\cal D'$ of $M$,
whose Macaulay constant $b_0$ satisfies $b_0\geq 1+\deg(\cal D)$.
\end{theorem}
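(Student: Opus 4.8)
The plan is to mimic, \emph{mutatis mutandis}, the algorithms \texttt{SHIFT} and \texttt{EXACT} of \cite[Section~6]{Dube}, the only genuinely new ingredient being that Lemma~\ref{Exactness-split} takes over the role played in \cite{Dube} by the splitting identity $C(w,y,h)=C(w,y\setminus\{x_i\},h)\oplus C(w\ast x_i,y,h)$. Concretely, I would spell out \texttt{EXACT} as follows: given the $d$-standard cone decomposition $\cal D$ of $M$, repeat the following \texttt{SHIFT} step as long as $\cal D$ is not exact. Let $d'$ be the least degree at which $\cal D^+$ contains two or more triples $(w,y,h)$ with $\deg(w)+\deg(h)=d'$; among these, choose one, say $(w,y,h)$, with $\#y$ as small as possible; since there is at least one more such triple, there is $(v,z,g)\in\cal D^+$ with $\deg(v)+\deg(g)=d'$ and $\#z\geq\#y>0$; pick any $x_i\in y$ and replace $\cal D$ by the decomposition $\cal D'$ produced from $(w,y,h)$, $(v,z,g)$ and $x_i$ by Lemma~\ref{Exactness-split}. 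Output the decomposition at which the loop halts.

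Granting termination, correctness is then short. By Lemma~\ref{Exactness-split} (whose hypothesis is exactly supplied by the auxiliary triple $(v,z,g)$), each \texttt{SHIFT} step turns a $d$-standard cone decomposition of $M$ into another $d$-standard cone decomposition of $M$; thus the invariant ``$\cal D$ is a $d$-standard cone decomposition of $M$'' is preserved, and upon exit $\cal D$ is moreover exact, by construction. For the lower bound on the Macaulay constant, recall $b_0=\max\{d_{\cal D'},\,1+\deg\cal D'_0\}$ and $\cal D'_0=\cal D'$, so $b_0\geq 1+\deg\cal D'$; and since each \texttt{SHIFT} step replaces a triple of degree $\delta:=\deg(w)+\deg(h)$ by the triples $(w,y\setminus\{x_i\},h)$ of degree $\delta$ and $(w\ast x_i,y,h)$ of degree $\delta+1$ while leaving all other triples untouched, the number $\deg\cal D$ never decreases along the run. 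Hence $\deg\cal D'\geq\deg(\cal D)$ and $b_0\geq 1+\deg(\cal D)$.

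The main obstacle is termination, and this is precisely where Dub\'e's analysis must be imported. Two local observations organize the bookkeeping: (i) a \texttt{SHIFT} step changes $\cal D$ only in degrees $\geq d'$ and does not disturb the ``at most one $\cal D^+$-triple per degree'' property in degrees $<d'$, so the least bad degree is nondecreasing along the run; and (ii) with $d'$ fixed, the pair consisting of $\#\{(w,y,h)\in\cal D^+:\deg(w)+\deg(h)=d'\}$ and $\min\{\#y:(w,y,h)\in\cal D^+,\ \deg(w)+\deg(h)=d'\}$ strictly decreases in the lexicographic order of $\N^2$ at each step — a step with minimal $\#y\geq 2$ keeps the first coordinate and lowers the second, a step with minimal $\#y=1$ lowers the first — so after finitely many steps degree $d'$ ceases to be bad. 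What remains, and what I expect to be the delicate point, is to rule out the least bad degree marching off to infinity. Here one uses that every $\cal D$ arising is a cone decomposition of the \emph{fixed} space $M$, so that $H_M$ governs its size (in particular, by Lemma~\ref{Hilbert polynomials} and the remarks following it, the number of $\cal D^+$-triples of degree $\leq e$ is bounded by $H_M(e)$), while each \texttt{SHIFT} step promotes only one triple by a single degree. Feeding this into the termination argument for \texttt{EXACT} in \cite[Section~6]{Dube} — which is insensitive to non-commutativity, since the only structural fact it invokes is the splitting identity now furnished by Lemma~\ref{Exactness-split} — completes the proof. I would also remark that the case $\cal D^+=\emptyset$ is trivial: then $\cal D$ is already exact, and $b_0=\max\{d_{\cal D},1+\deg\cal D\}\geq 1+\deg(\cal D)$.
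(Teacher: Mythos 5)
Your proposal is correct and follows the same route as the paper, which proves Theorem~\ref{Theorem-Exact} by citing Dub\'e's \texttt{SHIFT} and \texttt{EXACT} with Lemma~\ref{Exactness-split} substituted for Dub\'e's Lemma~6.2; you have simply unpacked that reference, correctly identifying that termination is the one point that must be imported from Dub\'e and that it rests on the fixed Hilbert function of $M$ bounding the number of $\cal D^+$-triples of each degree.
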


Let now $\cal D$ be an exact cone decomposition of a $K$-linear subspace $M$ of $R$. Then
$$P_M(T)  = \sum_{i=1}^N \sum_{j=b_{i+1}}^{b_i-1} \binom{ T-j+i-1 }{ i-1 }$$
by  Lemmas~\ref{Hilbert polynomials} and \ref{ECDs are rigid}.
One may show that this sum can be converted to
$$P_M(T) =  \binom{T-b_{N+1}+N}{N}-1-\sum_{i=1}^{N}\binom{T-b_i+i-1}{i},$$
and once $b_{N+1}=d_{\cal D}$ has been fixed, the coefficients $b_1,\dots,b_N$ uniquely determine the polynomial $P_M$; see \cite[p.~768--769]{Dube};
also, $b_0$ is the smallest $r\geq b_1$ such that $H_M(d)=P_M(d)$ for all $d\geq r$.
In particular, the Macaulay constants $b_0\geq b_1\geq\cdots\geq b_{N+1}=0$ of an exact {\it standard}\/ cone decomposition $\cal D$ of $M$ do not depend on our choice of $\cal D$, and the Hilbert function of $M$ is uniquely determined by $b_0,\dots,b_N$. Since every $K$-linear subspace $M$ which admits a standard cone decomposition also has an exact standard cone decomposition (by the previous theorem), we may, in this case, simply talk of the {\bf Macaulay constants $b_0,\dots,b_N$ of $M$}.
All this applies to $M=\nf_G(R)$ where $G$ is a Gr\"obner basis of a left ideal of $R$; hence, by Theorems~\ref{Splitting Theorem} and \ref{Theorem-Exact} we obtain:

\begin{cor} \label{cor: bound by b_0}
Let $G$ be the reduced Gr\"obner basis of a left ideal of $R$, and  let $b_0,\dots,b_N$ be the Macaulay constants of $\nf_G(R)$. Then $\deg(g) \leq b_0$ for every $g \in G$.
\end{cor}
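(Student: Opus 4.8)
The plan is to unwind the Macaulay constant $b_0$ of $M:=\nf_G(R)$ through the explicit monomial cone decomposition produced by Algorithm~\ref{Splitting}, and then to read off the bound on $\deg(g)$ from the way that algorithm treats the minimal generators of the monomial ideal $\lm(I)$, where $I:=(G)$.

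First I would observe that, because $G$ is reduced, no $\lm(g)$ with $g\in G$ divides $\lm(g')$ for $g'\in G\setminus\{g\}$; hence $F:=\lm(G)$ is exactly the minimal set $F_{\min}$ of generators of the monomial ideal of $R$ generated by $\lm(I)$. Form $(\cal P,\cal Q):={\tt SPLIT}(1,x,F)$. By Theorem~\ref{Splitting Theorem}, $\cal Q$ is a standard monomial cone decomposition of $M$, and the lemma preceding Corollary~\ref{Generators with bounded degree} supplies, for every $v\in F$, a pair $(v',y')\in\cal Q$ with $\deg(v')=\deg(v)-1$, so that $\deg(v)\leq 1+\deg(\cal Q)$ for every $v\in F=\lm(G)$. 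Since $R$ is homogeneous the elements of the reduced Gr\"obner basis $G$ are homogeneous, and therefore $\deg(g)=\deg(\lm g)$; hence $\deg(g)\leq 1+\deg(\cal Q)$ for every $g\in G$.

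It then remains to pass from $1+\deg(\cal Q)$ to $b_0$. Applying the algorithm of Theorem~\ref{Theorem-Exact} to the standard cone decomposition $\cal Q$ of $M$ yields an exact standard cone decomposition $\cal D'$ of $M$ whose Macaulay constant $b_0$ satisfies $b_0\geq 1+\deg(\cal Q)$. As recorded after Theorem~\ref{Theorem-Exact}, the Macaulay constants $b_0\geq\cdots\geq b_{N+1}=0$ of an exact standard cone decomposition of $M$ do not depend on the decomposition chosen, so the $b_0$ attached to $\cal D'$ is the $b_0$ of the statement. Chaining the inequalities gives $\deg(g)\leq 1+\deg(\cal Q)\leq b_0$ for all $g\in G$, as desired.

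I do not expect a genuine obstacle here: the proof is essentially the concatenation of Theorem~\ref{Splitting Theorem}, the degree estimate on minimal generators extracted from the recursion in Algorithm~\ref{Splitting}, and Theorem~\ref{Theorem-Exact}. The one point that requires a little care is the identification of the abstractly defined constant $b_0$ of $\nf_G(R)$ with the constant $b_0$ produced by running the exactness procedure on the particular decomposition $\cal Q$ --- which is precisely what the independence of the Macaulay constants of an exact standard cone decomposition from the choice of such a decomposition provides.
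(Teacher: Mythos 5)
Your argument is correct and follows the same chain of reasoning that the paper compresses into the single phrase ``hence, by Theorems~\ref{Splitting Theorem} and \ref{Theorem-Exact} we obtain.'' Identifying $\lm(G)$ with the minimal generating set $F_{\min}$ of the monomial ideal generated by $\lm(I)$ (using that $G$ is reduced), applying the lemma preceding Corollary~\ref{Generators with bounded degree} to obtain $\deg(v)\leq 1+\deg\cal Q$ for $v\in F_{\min}$, and passing to $b_0$ via Theorem~\ref{Theorem-Exact} together with the decomposition-independence of the Macaulay constants is precisely the intended proof.

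One imprecision deserves a flag. To pass from a bound on $\deg(\lm g)$ to a bound on $\deg(g)$ you assert that the elements of $G$ are homogeneous ``since $R$ is homogeneous.'' That is not quite the right reason: the remark at the start of Subsection~\ref{Groebner homogeneous} gives homogeneity of the reduced Gr\"obner basis only for \emph{homogeneous} left ideals $I$ (in homogeneous $R$). If $I$ is not homogeneous and $\leq$ is not degree-compatible, $\deg(g)$ can strictly exceed $\deg(\lm g)$ and the conclusion of the corollary can actually fail: for $R=K[x,y]$ with the lexicographic ordering and $I=(x-y^2)$, the reduced Gr\"obner basis is $G=\{x-y^2\}$ with $\deg(g)=2$, while a direct computation gives $b_0=1$ for $\nf_G(R)=K[y]$. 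So the corollary (and, for the same reason, the ``moreover'' clause of Theorem~\ref{Splitting Theorem}) are implicitly restricted to homogeneous $I$, which is the only setting in which they are used (Proposition~\ref{Main Prop}); within that setting your proof is complete. An alternative route that sidesteps the explicit homogeneity step is to cite the ``moreover'' clause of Theorem~\ref{Splitting Theorem} and observe that a subset of a reduced Gr\"obner basis which is itself a Gr\"obner basis must be the whole basis; but this only repackages the same argument, since the ``moreover'' clause tacitly relies on the identical homogeneity consideration.
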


\section{Proof of Theorem~\ref{Main Theorem} and its Corollaries}

Let $R$ be a $K$-algebra of solvable type with respect to $x=(x_1,\dots,x_N)$ and a monomial ordering $\leq$ of $\N^N$, where $N>0$.

\subsection{Degree bounds for Gr\"obner bases}
Let $I$ be a left ideal of $R$ generated by non-zero elements $f_1,\dots,f_n\in R$, where $n>0$, and let $d$ be the maximum of the degrees of $f_1,\dots,f_n$. The central result of this section is:

\begin{prop}\label{Main Prop}
Suppose the algebra $R$ and the generators $f_i$ of $I$ are homogeneous, and $N>1$. Then the elements of the reduced Gr\"obner basis of $I$ have degree at most $$D(N-1,d)=2\left(\frac{d^2}{2}+d\right)^{2^{N-2}}.$$
\end{prop}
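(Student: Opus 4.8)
The plan is to adapt Dub\'e's recursive argument \cite{Dube} to the present non-commutative situation, the combinatorial infrastructure it needs (cone decompositions of left ideals, their exact refinements, Macaulay constants) having been assembled in Section~4. We argue by induction on $N$. The base case $N=2$, where the claimed bound is $D(1,d)=d^2+2d$, is handled by a short direct estimate. For the inductive step let $G$ be the reduced Gr\"obner basis of $I$; by Corollary~\ref{cor: bound by b_0} it suffices to bound the first Macaulay constant $b_0$ of $\nf_G(R)$. Theorem~\ref{Splitting Theorem} and then Theorem~\ref{Theorem-Exact} give an exact standard \emph{monomial} cone decomposition $\cal Q$ of $\nf_G(R)$, with Macaulay constants $b_0\geq b_1\geq\cdots\geq b_N\geq b_{N+1}=0$ and $b_0=1+\deg\cal Q$. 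Corollary~\ref{Splitting Corollary} and Theorem~\ref{Theorem-Exact} give an exact $d$-standard cone decomposition $\cal E$ of $I$, assembled from the piece $(1,x,f_1)$ and from pieces $\nf_{G_i}(R)f_i$ with $G_i$ a Gr\"obner basis of the colon ideal $\big((f_1,\dots,f_{i-1}):f_i\big)$.

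The heart of the proof is then a chain of recursive estimates linking these data. Since $R=I\oplus\nf_G(R)$, Lemma~\ref{max-lemma}, applied both to this splitting and to the truncations $\cal E_j$, $\cal Q_j$, ties the two sequences of Macaulay constants together degree-by-degree; together with the identity $P_{\nf_G(R)}+P_I=\binom{T+N-1}{N-1}$ and the rigidity of exact cone decompositions (Lemma~\ref{ECDs are rigid}) this bounds each step $b_i-b_{i+1}$, and the degree $\deg\cal E^+$ of the positive part of $\cal E$ (hence the Gr\"obner basis degrees of the colon ideals that built $\cal E$), in terms of the corresponding data one level down. The estimates have the same shape as Dub\'e's and express the $N$-variable bound through the $(N-1)$-variable one with a single squaring, the reduction to $N-1$ variables being carried out at the level of cone decompositions and Hilbert functions — objects governed by the \emph{commutative} monoid $x^\diamond$ of monomials — rather than by any hyperplane section inside the non-commutative algebra $R$; this is exactly what lets the argument survive non-commutativity. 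Solving the resulting recursion, which bounds the $N$-variable quantity by one half the square of its $(N-1)$-variable counterpart, with base value $D(1,d)=d^2+2d$, yields $b_0\leq D(N-1,d)=2\big(\tfrac{d^2}{2}+d\big)^{2^{N-2}}$.

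The main obstacle is not the recursion itself but making sure that each of its steps still goes through once the cones $C(w,y,h)=C(w,y)h$ carry a non-monomial homogeneous factor $h\in R$ instead of Dub\'e's plain monomials. The facts needed are that $C(w,y,h)$ is $K$-linearly isomorphic to the free $K[y]$-module on the single generator $wh$ and has degree $\deg(w)+\deg(h)$ — so that its Hilbert function is the one recorded in Lemma~\ref{Hilbert polynomials} — and that the multiplicativity \eqref{lm multiplication} of leading monomials, $\lm(fg)=\lm(f)\ast\lm(g)$, makes right multiplication by $h$ compatible with the leading-term combinatorics underlying the splitting and comparison of cone decompositions (Lemmas~\ref{Exactness-split} and \ref{max-lemma}). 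With this bookkeeping in hand the recursion closes exactly as in the commutative case.
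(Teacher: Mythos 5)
Your proposal correctly identifies nearly all of the machinery the paper uses: the decomposition $R=I\oplus\nf_G(R)$, the standard monomial cone decomposition $\cal Q$ of $\nf_G(R)$ from Theorem~\ref{Splitting Theorem}, the $d$-standard cone decomposition of $I$ from Corollary~\ref{Splitting Corollary} built from colon ideals, the passage to exact decompositions via Theorem~\ref{Theorem-Exact}, the Macaulay constants $a_j$ and $b_j$, the identity $P_{\nf_G(R)}+P_I=\binom{T+N-1}{N-1}$, Lemma~\ref{max-lemma} to pass from $\max\{a_1,b_1\}$ to $\max\{a_0,b_0\}$, and Corollary~\ref{cor: bound by b_0} to conclude. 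You also correctly flag the key non-commutative adaptation — cones $C(w,y,h)=C(w,y)h$ with a homogeneous right factor $h$, made workable by the integrality of $R$ and the multiplicativity \eqref{lm multiplication} of leading monomials.

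However, the logical skeleton you describe — an induction on $N$ with a base case $N=2$, an inductive step ``expressing the $N$-variable bound through the $(N-1)$-variable one'', and a ``reduction to $N-1$ variables carried out at the level of cone decompositions'' — is not the paper's argument, and as stated it has a gap. The paper proves Proposition~\ref{Main Prop} for a fixed $N>1$ in one shot by invoking the unnumbered combinatorial Lemma that precedes it (taken from Section~8 of Dub\'e): given the polynomial identity $\binom{T+N-1}{N-1}=P(T)+Q(T)$ with $P$, $Q$ of the shape \eqref{Polynomials}, one concludes $a_j+b_j\leq D(N-j,d)$ for $j=1,\dots,N-1$. The doubly-exponential recursion lives entirely inside that numerical lemma: it is a recursion on the index $j$ of the Macaulay constants within the fixed $N$-variable setting, not a reduction from an $N$-variable algebra (or its cone decompositions) to an $(N-1)$-variable one, and no such reduction mechanism appears anywhere in the paper. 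Relatedly, you propose to apply Lemma~\ref{max-lemma} to the truncations $\cal E_j$, $\cal Q_j$, but that lemma requires $M_1\oplus M_2=R$; the truncated decompositions do not jointly decompose $R$, so this step would fail. What the paper actually needs from Lemma~\ref{max-lemma} is only the single equality $\max\{a_0,b_0\}=\max\{a_1,b_1\}$ for the full decompositions, after which $a_1+b_1\leq D(N-1,d)$ from the numerical lemma finishes the proof.
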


Before we give the proof we state an estimate proved in  \cite[Section~8]{Dube}:

\begin{lemma}
Let $a_1\geq\cdots\geq a_{N}\geq d$ and $b_1\geq\cdots\geq b_{N}\geq 0$ be integers, and suppose that we have an equality of polynomials
$\binom{T+N-1}{N-1} = P(T)+Q(T)$
where
\begin{align}
P(T) &=  \binom{T-d+N}{N} + \binom{T-d+N-1}{N-1} -1
- \sum_{i=1}^N \binom{T-a_i+i-1}{i}  \label{Polynomials} \\
Q(T) &= \binom{T+N}{N} -1 -\sum_{i=1}^N \binom{T-b_i+i-1}{i}. \notag
\end{align}
Then $a_j+b_j \leq D(N-j,d)$ for $j=1,\dots,N-1$.
\end{lemma}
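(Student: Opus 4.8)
\emph{Plan.} The polynomial identity $\binom{T+N-1}{N-1}=P(T)+Q(T)$ is nothing but $N$ scalar equations, obtained by comparing the coefficients of $T^{0},\dots,T^{N-1}$ (the coefficients of $T^{N}$ in $P$ and in $Q$ vanish automatically, since $\binom{T-d+N}{N}$ cancels $\binom{T-a_N+N-1}{N}$ and $\binom{T+N}{N}$ cancels $\binom{T-b_N+N-1}{N}$). First I would extract the two highest relations: equating the coefficients of $T^{N-1}$ gives $a_N+b_N=d$, whence $a_N=d$ and $b_N=0$ because $a_N\geq d$ and $b_N\geq 0$. The rest is an induction on $N$, the cases $N\le 2$ being immediate (for $N=2$ one checks directly that $a_1+b_1=2d\leq d^{2}+2d=D(1,d)$).

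For the inductive step, the cases $j=2,\dots,N-1$ are reached via the forward difference operator $\Delta f(T):=f(T)-f(T-1)$, for which $\Delta\binom{T-c+k}{k}=\binom{T-c+k-1}{k-1}$ by Pascal's rule. A direct check shows that $\Delta$ carries the identity to one of \emph{exactly} the same shape, but in dimension $N-1$, with the same parameter $d$, and with data $(a_2,\dots,a_N;b_2,\dots,b_N)$---the constants $a_1,b_1$ having dropped out and the remaining indices shifted down by one. The chains $a_2\geq\cdots\geq a_N\geq d$ and $b_2\geq\cdots\geq b_N\geq 0$ are inherited, so the inductive hypothesis gives $a_j+b_j\leq D\bigl((N-1)-(j-1),d\bigr)=D(N-j,d)$ for $j=2,\dots,N-1$.

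It remains to bound $a_1+b_1$, for which $\Delta$ is useless. Here I would evaluate the identity at $T=0$ (compare constant terms), use $\binom{k-1-m}{k}=(-1)^{k}\binom{m}{k}$ for $m\ge 0$ and the analogous fact for the ``$d$-binomials'', substitute the values $a_N=d$, $b_N=0$ found above, and apply Pascal's rule to merge $\binom{d}{N}$ with the $d$-binomials; this collapses the constant-term equation to
\[
a_1+b_1 \;=\; 2+(-1)^{N}\,2\binom{d-1}{N-1}+\sum_{i=2}^{N-1}(-1)^{i}\!\left(\binom{a_i}{i}+\binom{b_i}{i}\right).
\]
Discarding the summands with $i$ odd (which are $\leq 0$), replacing the $d$-term by the larger quantity $2\binom{d-1}{N-1}$, and invoking $\binom{a_i}{i}+\binom{b_i}{i}\leq\binom{a_i+b_i}{i}\leq\binom{D(N-i,d)}{i}$ (the last step by the bounds just proved in the second paragraph together with the monotonicity of $m\mapsto\binom{m}{i}$) yields
\[
a_1+b_1 \;\leq\; 2+2\binom{d-1}{N-1}+\sum_{\substack{2\le i\le N-1\\ i\text{ even}}}\binom{D(N-i,d)}{i}.
\]
At this point the particular shape of $D$ does its work: since $D(k,d)=D(k-1,d)^{2}/2$ for $k\geq 2$ and $D(1,d)=d^{2}+2d$, the $i=2$ term equals $\binom{D(N-2,d)}{2}=D(N-1,d)-\tfrac12 D(N-2,d)$, so it suffices to check that the slack $\tfrac12 D(N-2,d)$ absorbs $2+2\binom{d-1}{N-1}$ together with the summands for even $i\geq 4$; each such summand is at most $D(N-i,d)^{i}/i!$, which (writing $e:=d^{2}/2+d$) is a bounded multiple of a strictly lower power of $e$ than $D(N-2,d)$, hence eventually negligible.

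The hard part will be this last estimate. For small $N$ and small $d$ the asymptotic domination breaks down---e.g.\ the crude bound $a_2+b_2\leq D(N-2,d)$ is then much weaker than the true value ($a_2+b_2=2d$ when $N=3$)---so one must either sharpen the inductive hypothesis a little or dispatch finitely many small pairs $(N,d)$ by direct computation; one must also keep careful track of the sign-extended binomial coefficients $\binom{m}{k}$ with $m<k$, which occur exactly when $d$ is small relative to $N$. This bookkeeping is where essentially all the effort goes, and it is what forces the stated form of $D$ rather than some cleaner closed expression.
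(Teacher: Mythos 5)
Your framework is sensible, and the algebraic bookkeeping in the first two paragraphs is correct: the $T^N$ cancellations, the consequence $a_N+b_N=d$ (hence $a_N=d$, $b_N=0$) from the $T^{N-1}$ coefficient, the fact that $\Delta$ carries the identity to the same identity in dimension $N-1$ with data $(a_2,\dots,a_N;b_2,\dots,b_N)$ and the same $d$, and the constant-term formula $a_1+b_1 = 2+(-1)^N 2\binom{d-1}{N-1}+\sum_{i=2}^{N-1}(-1)^i\bigl(\binom{a_i}{i}+\binom{b_i}{i}\bigr)$ all check out. (The paper itself gives no proof of this lemma --- it simply cites Dub\'e's Section~8 --- so there is no in-paper argument to compare against; but your two moves, differencing plus constant-term evaluation, are the natural ones.)

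The gap is in the final estimate, and it is not merely ``bookkeeping'': it is the entire content of the lemma, and your heuristic for why it should go through is incorrect at the first nontrivial term. Writing $e:=d^2/2+d$ and $D_k:=D(k,d)=2e^{2^{k-1}}$, you claim that for even $i\geq 4$ the summand $\binom{D_{N-i}}{i}\leq D_{N-i}^i/i!$ involves ``a strictly lower power of $e$ than $D_{N-2}$''. For $i=4$ this is false: $D_{N-4}^4=16\,e^{4\cdot 2^{N-5}}=16\,e^{2^{N-3}}=8\,D_{N-2}$, the \emph{same} power of $e$ as $D_{N-2}$, so $\binom{D_{N-4}}{4}$ is bounded below $\tfrac13 D_{N-2}$ only by the constant factor $\tfrac{16}{24}$, not by a lower order of growth. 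It therefore consumes roughly two-thirds of the slack $\tfrac12 D_{N-2}$ that you must show absorbs everything, and the argument that the remaining third covers $2+2\binom{d-1}{N-1}$ plus the $i\geq 6$ terms has to be made explicitly, not dismissed as asymptotic. Moreover the difficulty with small $(N,d)$ that you flag is real and is not confined to a list of exceptional pairs that can be dispatched in isolation: already for $(N,d)=(3,1)$ the crude substitution $a_2+b_2\leq D(1,1)=3$ gives $a_1+b_1\leq 2+\binom{3}{2}=5>4.5=D(2,1)$, so one cannot just ``use the bounds just proved''; one must instead carry the sharper identity $a_{N-1}+b_{N-1}=2d$ (which your differencing machinery does furnish) through the induction, and check that the looser $D$-bounds only enter at indices where they are genuinely dominated. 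As it stands the proposal correctly sets the stage but does not prove the inequality.
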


\begin{proof}[Proof of Proposition~\ref{Main Prop}]
After reordering the $f_1,\dots,f_n$ we may assume that $\deg(f_1)=d$. Let $G$ be the reduced Gr\"obner basis of $I$, and
let $\cal D$ be a standard exact cone decomposition of $\nf_G(R)$, with Macaulay constants $b_0\geq\cdots\geq b_{N+1}=0$. Let $\cal E$ be a $d$-standard exact cone decomposition of a $K$-linear subspace $M$ of $I$ such that $I=(f_1)\oplus M$ (by Corollary~\ref{Splitting Corollary} and Theorem~\ref{Theorem-Exact}), with Macaulay coefficients $a_0\geq\cdots\geq a_{N+1}=d$.
Then $\cal E\cup\{(1,x,f_1)\}$ is a
$d$-standard (but not exact) cone decomposition  of $I$, with the same Macaulay constants $a_0,\dots,a_{N-1}$ as $\cal E$. The Hilbert polynomials of $I$ and $\nf_G(R)$ are given by the polynomials $P$ respectively $Q$ as in \eqref{Polynomials}. Hence
$a_1+b_1\leq D:=D(N-1,d)$, so  $\max\{a_0,b_0\}=\max\{a_1,b_1\}\leq D$ by Lemma~\ref{max-lemma}.
Now apply Corollary~\ref{cor: bound by b_0}. 
\end{proof}

\begin{remark}
Suppose the hypothesis of the previous proposition holds.
Implicit in the proof above, there is the uniform bound
$$\sigma(R/I) \leq D(N-1,d)$$
for the regularity of the Hilbert function of the left $R$-module $R/I$.
A similar doubly-exponential bound for $\sigma(R/I)$ was obtained (in the case of Weyl algebras) in \cite{Chistov-Grigoriev-2007}. In the case where $R$ is a commutative polynomial ring, the regularity of the Hilbert function $\sigma(M)$ of a finitely generated $R$-module $M$ is closely related to the Castelnuovo-Mumford regularity $\reg(M)$ of $M$. For example (see \cite[2.1]{C-MS}), in this case
$$\sigma(R/I) \leq \reg(R/I)=\reg(I)-1.$$
There does exist a doubly-exponential bound on $\reg(I)$ in terms of $N$ and $d$, valid independently of the characteristic of $K$ (see \cite{CS}):
$$\reg(I)\leq (2d)^{2^{N-2}}.$$
It would be interesting to see whether this bound can also be deduced using the methods of the present paper.
\end{remark}

We next address the inhomogeneous case:

\begin{cor} \label{Detailed bound}
Suppose $R$ is quadric.
Then there exists a Gr\"obner basis $G$ of $I$ with the following property: for every $g\in G$ we can write
$$g = y_{g,1} f_1 + \cdots + y_{g,n}f_n$$
where $y_{g,i}\in R$ with $$\deg(y_{g,i}f_i)\leq D(N,d)=2\left(\frac{d^2}{2}+d\right)^{2^{N-1}}\qquad\text{for $i=1,\dots,n$,}$$
and such that for $i=1,\dots,n$ each $f_i$ can be expressed as
$$f_i = \sum_{g\in G} z_{i,g} g$$
where $z_{i,g}\in R$, all but finitely many $z_{i,g}=0$, and
$\deg(z_{i,g} g) \leq d$ for every $g\in G$.
\end{cor}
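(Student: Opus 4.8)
The plan is to reduce to the homogeneous situation handled by Proposition~\ref{Main Prop} by homogenizing with respect to the standard filtration, i.e.\ by working in the Rees algebra. Since $R$ is quadric, Corollary~\ref{Rees} shows that $R^*$ is homogeneous of solvable type with respect to $\leq^*$ and the $(N+1)$-tuple $(x^*,t)$. First I would pass to the homogeneous left ideal $J:=(f_1^*,\dots,f_n^*)$ of $R^*$; its generators are non-zero homogeneous elements of degree $d_i:=\deg(f_i^*)=\deg(f_i)\leq d$. Since $N+1>1$, Proposition~\ref{Main Prop} applies with $N+1$ in place of $N$: the reduced Gr\"obner basis $H$ of $J$ with respect to $\leq^*$ consists of homogeneous elements (Section~\ref{Groebner homogeneous}), each of degree at most $D((N+1)-1,d)=D(N,d)$. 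By Corollary~\ref{Dehomogenization of GB}, $G:=H_*$ is a Gr\"obner basis of $J_*=(f_1,\dots,f_n)=I$ with respect to $\leq$, and since dehomogenizing a homogeneous element cannot raise its degree, $\deg(g)\leq D(N,d)$ for every $g\in G$. This $G$ will be the Gr\"obner basis claimed in the statement.

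Next I would produce the two families of representations by the standard device of extracting homogeneous components in $R^*$ and then dehomogenizing. For a fixed $g\in G$, pick $h\in H$ with $h_*=g$ and put $e:=\deg(h)\leq D(N,d)$. Since $h$ is a homogeneous element of degree $e$ of the homogeneous ideal $J$, taking the degree-$e$ components of an arbitrary representation of $h$ by the $f_i^*$ gives $h=\sum_{i=1}^n Y_i f_i^*$ with each $Y_i\in R^*$ homogeneous of degree $e-d_i$ (and $Y_i=0$ when $e<d_i$). Applying the $K$-algebra morphism $(\cdot)_*\colon R^*\to R$ and using $(f_i^*)_*=f_i$ yields $g=\sum_i y_{g,i}f_i$ with $y_{g,i}:=(Y_i)_*$; as $\deg(y_{g,i})\leq e-d_i$ and the degree is additive on products in the integral domain $R$ (by \eqref{lm multiplication} and Corollary~\ref{gr is homogeneous}), we get $\deg(y_{g,i}f_i)\leq (e-d_i)+d_i=e\leq D(N,d)$. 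Symmetrically, since $f_i^*$ is homogeneous of degree $d_i$ and lies in $J=(H)$, I would write $f_i^*=\sum_{h\in H}q_h\,h$ with $q_h\in R^*$ homogeneous of degree $d_i-\deg(h)$; dehomogenizing and collecting terms with the same $h_*$, this gives $f_i=\sum_{g\in G}z_{i,g}\,g$ with $z_{i,g}:=\sum_{h\in H,\,h_*=g}(q_h)_*$ (a finite sum, so all but finitely many $z_{i,g}$ vanish) and $\deg(z_{i,g}\,g)\leq d_i\leq d$, using once more additivity of degree on products together with $\deg(q_h)=d_i-\deg(h)$ and $\deg(h_*)\leq\deg(h)$.

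None of this involves a genuine difficulty beyond assembling Proposition~\ref{Main Prop} with Corollaries~\ref{Rees} and \ref{Dehomogenization of GB}; the one place to be careful is the degree bookkeeping after dehomogenization, which rests on three facts: $R$ and $R^*$ are integral domains (so that $\deg$ is additive on products), $(f^*)_*=f$ and $\deg(f^*)=\deg(f)$, and dehomogenization of a homogeneous element of degree $e$ yields an element of degree $\leq e$. I would also record explicitly the elementary lemma used twice above --- a homogeneous element of a homogeneous left ideal generated in degrees $\leq e$ admits a representation by those generators in which every summand has degree equal to the degree of the element --- since this is exactly what converts the degree bound on $H$ into the bounds $\deg(y_{g,i}f_i)\leq D(N,d)$ and $\deg(z_{i,g}g)\leq d$.
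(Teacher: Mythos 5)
Your proposal is correct and follows essentially the same route as the paper: homogenize via the Rees algebra (using Corollary~\ref{Rees}), apply Proposition~\ref{Main Prop} with $N+1$ variables to bound the homogeneous reduced Gr\"obner basis $H$ of $(f_1^*,\dots,f_n^*)$ by $D(N,d)$, dehomogenize via Corollary~\ref{Dehomogenization of GB} to obtain $G=H_*$, and extract the two families of representations by taking homogeneous components in $R^*$ and dehomogenizing. Your bookkeeping is in fact slightly more careful than the paper's (you correctly record $\deg(y_{g,i}f_i)\leq\deg(y_{h,i}f_i^*)$ rather than asserting equality, since dehomogenization can drop degree when $t$ divides a factor), and the treatment of the $z_{i,g}$ by summing over all $h\in H$ with $h_*=g$ is a harmless elaboration of the paper's ``similarly.''
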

\begin{proof}
By the proposition above,  the reduced Gr\"obner basis $H$ with respect to $\leq^*$ of the left ideal of $R^*$ generated by $f_1^*,\dots,f_n^*$ consists of homogeneous elements of degree at most $D(N,d)$. Hence for every $h\in H$ there are homogeneous $y_{h,1},\dots,y_{h,n}\in R^*$ with $$h=y_{h,1}f_1^*+\cdots+y_{h,n}f_n^*$$
and $$\deg( y_{h,i}f_i^* ) \leq \deg (h) \leq D(N,d)\qquad\text{for $i=1,\dots,n$.}$$
Corollary~\ref{Dehomogenization of GB} shows that $G:=H_*$ is a Gr\"obner basis of $I$ with respect to $\leq$, and for every $h\in H$ we have $$h_*=y_{h_*,1}f_1+\cdots+y_{h_*,n}f_n$$ with $y_{h_*,i}:=(y_{h,i})_*$ and
$$\deg(y_{h_*,i}f_i) = \deg(y_{h,i}f_i^*)\leq D(n,d) \qquad\text{for $i=1,\dots,n$,}$$
as required. Similarly, each $f_i^*$ can be expressed as $f_i^*=\sum_{h\in H} z_{i,h}h$ where $z_{i,h}\in R^*$ are homogeneous and $\deg(z_{i,h}h)\leq\deg(f_i^*)\leq d$ for every $i$ and $h\in H$, and this yields the requirement on the $f_i$.
\end{proof}

The previous corollary yields Theorem~\ref{Main Theorem}.
Before we are able to compute a degree bound for reduced Gr\"obner bases which is also valid in the inhomogeneous situation, we need to study the complexity of reduction sequences.

\subsection{Degree bounds for normal forms} \label{Degree bounds for normal forms}
Here we assume $d>0$; we also
let $\omega$ be a given multi-index with positive components, and write $\wt=\wt_{\omega}$.
For non-zero $f\in R$ we set
$$\wt(f) := \max_{\alpha\in\supp(f)} \wt(\alpha),$$
and we let $\wt(0):=0$. Then for all $f,g\in R$ we have
\begin{equation}\label{weight inequality, f}
\deg(f) \leq \wt(f) \leq ||\omega||\,\deg(f)
\end{equation}
by \eqref{weight inequality}. Also
$$\wt(f+g)\leq \max\big\{\wt(f),\wt(g)\big\}, \qquad \wt(cf)=\wt(f)\ \text{for non-zero $c\in K$.}$$
From Proposition~\ref{Approximate leq by wt function} we obtain:

\begin{lemma}\label{Choice of omega}
Given $d$, one can choose $\omega$ with
$||\omega|| \leq 2d(N+1)N^{N/2}$
such that
$$\wt_\omega(f)=\wt_\omega\big(\lm(f)\big)\qquad\text{for all $f\in R$ with $\deg(f)\leq d.$}$$
\end{lemma}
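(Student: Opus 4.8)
The plan is to derive the statement directly from Proposition~\ref{Approximate leq by wt function}, transcribing its conclusion from multi-indices to the supports of elements of $R$. First I would apply that proposition to the given $d$ to obtain $\omega\in\N^N$ with $||\omega||\leq 2d(N+1)N^{N/2}$ such that
$$\alpha\leq\beta\quad\Longleftrightarrow\quad\wt_\omega(\alpha)\leq\wt_\omega(\beta)\qquad\text{for all $\alpha,\beta$ with $\abs{\alpha},\abs{\beta}\leq d$.}$$
Since $d>0$, such an $\omega$ automatically has all components positive (otherwise $\wt_\omega(\varepsilon_i)=0=\wt_\omega(0)$ would force $\varepsilon_i\leq 0$), so \eqref{weight inequality} and hence \eqref{weight inequality, f} apply to it. The bound on $||\omega||$ is exactly the one asserted, so it only remains to verify the displayed equality $\wt_\omega(f)=\wt_\omega(\lm(f))$.

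For $f=0$ both sides vanish by the conventions $\lm(0)=0$ and $\wt_\omega(0)=0$, so I would fix a non-zero $f\in R$ with $\deg(f)\leq d$ and write $\lm(f)=x^\lambda$. Since $\deg(f)=\max_{\alpha\in\supp(f)}\abs{\alpha}$, every $\alpha\in\supp(f)$ (in particular $\lambda$) satisfies $\abs{\alpha}\leq d$; and by definition of the leading monomial, $\alpha\leq\lambda$ for every such $\alpha$. The choice of $\omega$ then gives $\wt_\omega(\alpha)\leq\wt_\omega(\lambda)$ for each $\alpha\in\supp(f)$, while $\lambda\in\supp(f)$ itself. Taking the maximum over $\supp(f)$ yields
$$\wt_\omega(f)=\max_{\alpha\in\supp(f)}\wt_\omega(\alpha)=\wt_\omega(\lambda)=\wt_\omega\big(\lm(f)\big),$$
as required.

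I do not expect any real obstacle: the lemma is a routine reformulation of Proposition~\ref{Approximate leq by wt function}. The only point needing a word of justification is the elementary fact that bounding $\deg(f)$ bounds the total degrees of all monomials occurring in $f$, which is implicit in the conventions of this subsection (and, if one wishes to argue from the standard filtration instead, is Lemma~\ref{R_leq d}).
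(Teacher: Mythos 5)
Your proof is correct and follows exactly the route the paper intends: in the source the lemma is preceded by the single line ``From Proposition~\ref{Approximate leq by wt function} we obtain:'', so the paper's own argument is precisely the unwinding you have written out. Your remark that positivity of the components of $\omega$ is automatic from the biconditional (since $d>0$, a component $\omega_i=0$ would force $\varepsilon_i\leq 0$, hence $\varepsilon_i=0$) is a useful detail the paper leaves implicit, and the rest is the straightforward observation that $\lambda$ is the $\leq$-maximum of $\supp(f)$ and hence the $\wt_\omega$-maximum once all exponents have total degree at most $d$.

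One small caveat on your final parenthetical: Lemma~\ref{R_leq d} is proved under the hypothesis that $R$ is quadric, which is \emph{not} assumed at the point in Section~5 where this lemma sits (quadricity is only reinstated in the subsection on reduced Gr\"obner bases). So you cannot freely invoke Lemma~\ref{R_leq d} to identify $\deg(f)$ with $\max_{\alpha\in\supp(f)}\abs{\alpha}$ in this generality; rather, the displayed inequality \eqref{weight inequality, f} in this subsection only makes sense if $\deg$ is already being read off the monomial basis, so that identification is simply the operative convention here. This does not affect the validity of your main argument, which never uses the filtration.
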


We will need a variant of \cite[Lemma~1.4]{KR-W}; the proof is analogous and left to the reader. Here we assume that the commutator relations between $x_i$ and $x_j$ in $R$ are expressed as in Definition~\ref{Solvable Type}.

\begin{lemma}
Suppose $\wt(p_{ij})<\wt(x_ix_j)$ for $1\leq i<j\leq N$. Then for all $\alpha$, $\beta$ we have
$$x^\alpha\cdot x^\beta=c x^{\alpha+\beta}+r \qquad\text{where $c\in K$, $c\neq 0$, and $\wt(r)<\wt(x^{\alpha+\beta})$,}$$
in particular $\wt(x^\alpha\cdot x^\beta)=\wt(x^\alpha)+\wt(x^\beta)$.
\end{lemma}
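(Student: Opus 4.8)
The statement asserts that if the "misordering'' remainders $p_{ij}$ all satisfy $\wt(p_{ij})<\wt(x_ix_j)$, then multiplication in $R$ respects the weight function $\wt$ up to lower-weight error terms, exactly as leading monomials behave under $\leq$. Since the paper explicitly says the proof is analogous to that of \cite[Lemma~1.4]{KR-W} and left to the reader, my proposal is to run the same induction that establishes \eqref{lm multiplication}, but with $\wt$ in place of the monomial ordering $\leq$.

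First I would reduce to the case of a product of two basis monomials $x^\alpha\cdot x^\beta$, which is what the statement is about. The key observation, as in \cite{KR-W}, is that $\wt$ is additive: $\wt(\gamma+\delta)=\wt(\gamma)+\wt(\delta)$, and that the relation $x_jx_i=c_{ij}x_ix_j+p_{ij}$ with $\wt(p_{ij})<\wt(x_ix_j)=\wt(\varepsilon_i+\varepsilon_j)$ lets one ``sort'' a word one adjacent transposition at a time, each time introducing an error term of strictly smaller weight. Concretely, I would argue by induction on a pair $(\wt(\alpha)+\wt(\beta),\text{misordering index})$, much as in the proof of Lemma~\ref{R_leq d}: write $x^\alpha\cdot x^\beta$ as a $K$-linear combination of products of the generators $x_i$ in some fixed order, expand using the commutation relations, and observe that every time we swap an out-of-order adjacent pair $x_jx_i$ ($i<j$) we replace it by $c_{ij}x_ix_j$ (same total exponent, lower misordering index, and same weight since $\wt$ is additive) plus a term $p_{ij}$ whose weight is strictly smaller than $\wt(\varepsilon_i+\varepsilon_j)$, and which when multiplied by the surrounding monomials still has weight strictly less than $\wt(\alpha+\beta)$ by additivity. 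Collecting: $x^\alpha\cdot x^\beta = c\,x^{\alpha+\beta}+r$ with $c=\prod c_{ij}^{(\ldots)}\neq 0$ a nonzero product of the structure constants and $\wt(r)<\wt(x^{\alpha+\beta})$.

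The identity $\wt(x^\alpha\cdot x^\beta)=\wt(x^\alpha)+\wt(x^\beta)$ then follows immediately: $\wt(c\,x^{\alpha+\beta})=\wt(\alpha+\beta)=\wt(\alpha)+\wt(\beta)$, and since $\wt(r)$ is strictly smaller, the maximum weight appearing in $x^\alpha\cdot x^\beta$ is $\wt(\alpha)+\wt(\beta)$, using $\wt(f+g)\le\max\{\wt(f),\wt(g)\}$ with equality when the two weights differ.

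The only mildly delicate point — the ``main obstacle,'' such as it is — is bookkeeping the error terms during the sorting: one must be sure that multiplying the intermediate expression $u\,p_{ij}\,v$ (surrounding words $u,v$) keeps the weight strictly below $\wt(\alpha+\beta)$. This is handled by additivity of $\wt$ together with the fact that at each sorting step the surrounding words $u$ and $v$ contribute a fixed weight and $\wt(p_{ij})<\wt(\varepsilon_i+\varepsilon_j)$, so the product has weight $<\wt(u)+\wt(\varepsilon_i+\varepsilon_j)+\wt(v)=\wt(\alpha+\beta)$; then one applies the induction hypothesis to each such lower-weight product to rewrite it again in the form $c'x^{\gamma}+r'$ with controlled weight. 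Apart from this, the argument is a routine transcription of the KR-W proof with $\leq$ replaced by $\wt$, which is presumably why the authors omitted it.
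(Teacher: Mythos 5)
Your proposal is correct and follows exactly the route the paper intends. The authors explicitly defer the proof to an adaptation of Lemma~1.4 of Kandri-Rody and Weispfenning, and your induction on weight and misordering index (as in the paper's own proof of the filtration lemma for quadric algebras), using additivity of $\wt$ to control the strictly lower-weight error terms introduced at each adjacent transposition, is precisely that standard adaptation.
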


We can now show:

\begin{lemma}\label{Reduction-Sequence}
Suppose $d$ satisfies $\deg(p_{ij})\leq d$ for $1\leq i<j\leq N$, and
let $G$ be a subset of $R$ each of whose elements has degree at most $d$.
If $f \overset{*}{\underset{G}{\longrightarrow}} h$, where $f,h\in R$, then
there are $g_1,\dots,g_m\in G$ and $p_1,\dots,p_m\in R$ with
$$f-h=p_1g_1+\cdots+p_mg_m$$
and
$$\deg(p_1g_1),\dots,\deg(p_m g_m),\deg(h) \leq \deg(f)\, 2d(N+1)N^{N/2}.$$
\end{lemma}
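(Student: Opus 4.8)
The plan is to control a reduction sequence not via the degree (which may grow) but via the weight function $\wt_\omega$ for a carefully chosen $\omega$, which, by contrast, does not increase under left reduction. First, given $d$, fix a multi-index $\omega$ with positive components as provided by Lemma~\ref{Choice of omega}: $||\omega||\leq 2d(N+1)N^{N/2}$, and $\wt_\omega(f)=\wt_\omega(\lm(f))$ whenever $\deg(f)\leq d$; moreover this $\omega$ strictly refines $\leq$ on monomials of degree $\leq d$ (Proposition~\ref{Approximate leq by wt function}). Write $\wt=\wt_\omega$. Since $\deg(p_{ij})\leq d$ and $\lm(p_{ij})<x_ix_j$ with $\deg(x_ix_j)=2$, it follows that $\wt(p_{ij})=\wt(\lm(p_{ij}))<\wt(x_ix_j)$ for $1\leq i<j\leq N$, so the variant of \cite[Lemma~1.4]{KR-W} recorded just above applies: $x^\alpha\cdot x^\beta=c\,x^{\alpha+\beta}+r$ with $0\neq c\in K$ and $\wt(r)<\wt(x^{\alpha+\beta})=\wt(\alpha)+\wt(\beta)$, for all $\alpha,\beta$. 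Expanding an arbitrary product term by term, every monomial occurring in $uv$ then has weight at most $\wt(u)+\wt(v)$, whence $\wt(uv)\leq\wt(u)+\wt(v)$ for all $u,v\in R$.

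Next I treat a single reduction step $f'\red{g}f''$ with $0\neq g\in G$. By the definition of $\red{g}$ there are $c\in K$ and multi-indices $\alpha,\beta$ with $\lm(x^\beta g)=x^\alpha\in\supp(f')$, $\lc(c\,x^\beta g)=f'_\alpha$, and $f''=f'-c\,x^\beta g$. By \eqref{lm multiplication}, $x^\alpha=\lm(x^\beta g)=x^\beta\ast\lm(g)$, so $\wt(\alpha)=\wt(\beta)+\wt(\lm(g))=\wt(\beta)+\wt(g)$, the last equality using $\deg(g)\leq d$. Since $x^\alpha\in\supp(f')$ we have $\wt(\alpha)\leq\wt(f')$, hence
$$\wt(c\,x^\beta g)\leq\wt(x^\beta g)\leq\wt(\beta)+\wt(g)=\wt(\alpha)\leq\wt(f'),$$
and therefore $\wt(f'')\leq\max\{\wt(f'),\wt(c\,x^\beta g)\}\leq\wt(f')$. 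Thus the weight does not increase along the step, and the term subtracted off has weight at most that of the polynomial it is subtracted from.

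Finally I iterate. Write the reduction $f\overset{*}{\red{G}}h$ as $f=f_0\red{g_1}f_1\red{g_2}\cdots\red{g_m}f_m=h$ with $g_1,\dots,g_m\in G$ and $f_k=f_{k-1}-p_kg_k$, each $p_k$ of the form $c_k\,x^{\beta_k}$. Then $f-h=p_1g_1+\cdots+p_mg_m$, and the previous step gives $\wt(f)=\wt(f_0)\geq\wt(f_1)\geq\cdots\geq\wt(f_m)=\wt(h)$ as well as $\wt(p_kg_k)\leq\wt(f_{k-1})\leq\wt(f)$ for every $k$. Passing from weights to degrees by \eqref{weight inequality, f}, for each $k$
$$\deg(p_kg_k)\leq\wt(p_kg_k)\leq\wt(f)\leq||\omega||\,\deg(f)\leq 2d(N+1)N^{N/2}\,\deg(f),$$
and likewise $\deg(h)\leq\wt(h)\leq\wt(f)\leq 2d(N+1)N^{N/2}\,\deg(f)$ (the case $f=h$, $m=0$, being trivial). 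This is the assertion.

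I do not anticipate a genuine obstacle: all the content lies in noticing that $\wt_\omega$, rather than the degree, is what behaves monotonically under reduction, after which the crude inequalities $\deg\leq\wt\leq||\omega||\deg$ convert this into the degree bound. The one point requiring care is the first paragraph — choosing $\omega$ tightly enough that both $\wt(p_{ij})<\wt(x_ix_j)$ (so that multiplicativity of $\wt$ is available) and $\wt(g)=\wt(\lm(g))$ for the elements of $G$ (so that $\wt(\beta)$ can be recovered from $\supp(f')$) hold simultaneously with $||\omega||\leq 2d(N+1)N^{N/2}$; this is exactly what Lemma~\ref{Choice of omega} (via Proposition~\ref{Approximate leq by wt function}) supplies, and the solvable-type identity \eqref{lm multiplication} is what lets these facts interlock.
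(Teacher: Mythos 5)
Your proposal is correct and takes essentially the same route as the paper: choose $\omega$ via Lemma~\ref{Choice of omega}, observe that $\wt_\omega$ is non-increasing along left-reduction steps (using the variant of \cite[Lemma~1.4]{KR-W}), track the $\wt$ of each subtracted term $p_k g_k$, and convert back to degrees with \eqref{weight inequality, f}. The paper packages the iteration as a Noetherian induction on $\red{G}$ while you run explicitly along the finite chain $f=f_0\red{}f_1\red{}\cdots\red{}f_m=h$, which is only a cosmetic difference. Your extra preliminary verifications — that $\wt(p_{ij})<\wt(x_ix_j)$ for this choice of $\omega$, and that $\wt$ is subadditive on products by expanding $uv$ term by term — are exactly the implicit steps the paper leaves to the reader.
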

\begin{proof}
Choose a weight vector $\omega$ with positive components according to Lemma~\ref{Choice of omega}, and write $\wt=\wt_\omega$. In the following
we also let $g$ range over $G$. Proceeding by Noetherian induction on the well-founded relation $\underset{G}{\longrightarrow}$, by the inequalities in \eqref{weight inequality, f} it suffices to show that if
$f \overset{*}{\underset{G}{\longrightarrow}} h$, then
there are $g_1,\dots,g_m\in G$ and $p_1,\dots,p_m\in R$ with
$$f-h=p_1g_1+\cdots+p_mg_m$$
and
$$\wt(p_1g_1),\dots,\wt(p_m g_m) \leq \wt(f).$$
Suppose $f \underset{g}{\longrightarrow} f' \overset{*}{\underset{G}{\longrightarrow}} h$.
Then there exists $c\in K$  and $\alpha$, $\beta$ such that
$$\lm(x^\beta g)=x^\alpha\in\supp f, \quad
\lc(cx^\beta g)=f_\alpha,\quad
f'=f-cx^\beta g.$$
Now by the previous lemma and  the choice of $\omega$, we have
$$\wt(c x^{\beta} g) = \wt(x^\beta) + \wt(g) = \wt(x^\beta) + \wt\big(\lm(g)\big) = \wt(x^\alpha) \leq \wt(f)$$
and thus $\wt(f')\leq \wt(f)$.
By inductive hypothesis, there are $g_i\in G$ and $p_i\in R$ with
$$f'-h=p_1 g_1+\cdots+p_ng_n\quad\text{and}\quad
\wt(p_i g_i) \leq \wt(f')\text{ for every $i$.}$$
Hence
$$f-h = (f-f') + (f'-h) = p_1 g_1 +\cdots +p_n g_n+p_{n+1}g_{n+1}$$
where $p_{n+1} := c x^\beta$, $g_{n+1}:=g$ satisfy $\wt(p_i g_i)\leq \wt(f)$ for every $i$, as required.
\end{proof}

If $\leq$ is degree-compatible, then the estimate in the lemma above can be improved, and the additional assumption on $d$ removed: Let $G$ be a subset of $R$, $f,h\in R$; if
$f \overset{*}{\underset{G}{\longrightarrow}} h$, then there are $g_1,\dots,g_m\in G$ and $p_1,\dots,p_m\in R$
such that
$$f-h=p_1g_1+\cdots+p_mg_m$$
and
$$\lm(p_1g_1),\dots,\lm(p_m g_m),\lm(h)\leq \lm(f).$$
Since our monomial ordering is degree-compatible, we have
$$\deg(p_1g_1),\dots,\deg(p_m g_m),\deg(h) \leq \deg(f).$$

\subsection{Degree bounds for reduced Gr\"obner bases}
{\it In the rest of this section we assume that $R$ is quadric.}\/ The results from the previous subsection allow us to show Corollary~\ref{Corollary 1 to Main Theorem}:

\begin{cor}
The reduced Gr\"obner basis of every left ideal of $R$ generated by elements of degree at most $d$ consists of elements of degree at most
$$2\,D(N+1,d)\,(N+1)\,N^{N/2}.$$
\end{cor}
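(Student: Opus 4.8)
The plan is to control the non-leading parts of the reduced Gr\"obner basis elements by means of the degree bound for reduction sequences, Lemma~\ref{Reduction-Sequence}; this is the only place where the quadric hypothesis on $R$ gets used. Fix a left ideal $I$ of $R$ generated by elements of degree at most $d$. By Theorem~\ref{Main Theorem} there is a Gr\"obner basis of $I$ with respect to $\leq$ all of whose elements have degree at most $D(N,d)$; discarding those elements whose leading monomial is a proper multiple of another leading monomial (this changes no degrees) we obtain a Gr\"obner basis $G$ of $I$ with $\deg(g)\leq D(N,d)$ for all $g\in G$ and with $\lm(g)\nmid\lm(g')$ for distinct $g,g'\in G$. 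Since $\lm(G)$ and the set of leading monomials of the reduced Gr\"obner basis $\bar G$ of $I$ are both the (unique, finite) minimal generating set of the monomial ideal $\lm(I)$ of $R$, there is a bijection $g\mapsto\bar g$ of $G$ onto $\bar G$ with $\lm(\bar g)=\lm(g)$; in particular every $w\in\lm(\bar G)$ satisfies $\deg(w)\leq D(N,d)$.

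Next I would verify that $\bar g=\nf_{G\setminus\{g\}}(g)$ for each $g\in G$. Because $\lm(g)$ is divisible by no $\lm(g')$ with $g'\in G\setminus\{g\}$, reducing $g$ modulo $G\setminus\{g\}$ never touches the leading term, so $h:=\nf_{G\setminus\{g\}}(g)$ has $\lm(h)=\lm(g)$; moreover $h\in g+I\subseteq I$, every monomial of $h$ other than $\lm(h)$ is divisible by no $\lm(g')$ with $g'\in G\setminus\{g\}$, and $\lm(h)=\lm(g)$, being a minimal generator of $\lm(I)$, is divisible by no other such generator. Hence $h$ is reduced with respect to $G\setminus\{g\}$, and since $\lm(G)=\lm(\bar G)$ it is reduced with respect to $\bar G\setminus\{\bar g\}$; by uniqueness of the reduced Gr\"obner basis, $h=\bar g$. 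It then suffices to bound $\deg\bigl(\nf_{G\setminus\{g\}}(g)\bigr)$. Computing this normal form through a reduction sequence $g\overset{*}{\red{G\setminus\{g\}}}\bar g$ and applying Lemma~\ref{Reduction-Sequence} — legitimate because $R$ is quadric, so $\deg(p_{ij})\leq 2\leq D(N,d)$, and the elements of $G\setminus\{g\}$ have degree at most $D(N,d)$, so one may take the parameter there to be $D(N,d)$ — gives $\deg(\bar g)\leq\deg(g)\cdot 2D(N,d)(N+1)N^{N/2}$. Combining this with $\deg(g)\leq D(N,d)$ and the elementary identity $D(N,d)^2=2\,D(N+1,d)$ yields the asserted bound $\deg(\bar g)\leq 2\,D(N+1,d)\,(N+1)\,N^{N/2}$.

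The substantive step, and the one I would expect to demand the most care, is the identification $\bar g=\nf_{G\setminus\{g\}}(g)$: that the tail of a reduced Gr\"obner basis element arises from a single minimal Gr\"obner basis element by one pass of tail reduction, so that the degree blow-up caused by the non-commutativity is exactly what Lemma~\ref{Reduction-Sequence} bounds. This is also the only place quadricity enters; in the degree-compatible case (Corollary~\ref{Reduced GB for deg-compatible}) one simply has $\deg(\bar g)=\deg(\lm(\bar g))\leq D(N,d)$ and no correction of the tail is needed, which is why the general bound is genuinely larger. The remaining ingredients — that the minimal generators of $\lm(I)$ occur among the leading monomials of any Gr\"obner basis, and the arithmetic manipulation of $D$ — are routine.
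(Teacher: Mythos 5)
Your proposal follows the same route as the paper's proof: both start from a Gr\"obner basis of degree at most $D(N,d)$ (pruned so that its leading monomials minimally generate $\lm(I)$), both identify the reduced Gr\"obner basis with the result of a single pass of tail reduction on each basis element, and both bound the degree of that normal form via Lemma~\ref{Reduction-Sequence} with parameter $D(N,d)$ together with the identity $D(N,d)^2 = 2D(N+1,d)$. The only cosmetic difference is that you phrase the tail reduction as $\bar g = \nf_{G\setminus\{g\}}(g)$ whereas the paper writes $g_i' = \lm(g_i) + \nf_G(g_i - \lm(g_i))$, and these agree since $\lm(g_i)$, being a minimal generator of $\lm(I)$, is untouched by reduction and never gets invoked on the strictly smaller monomials of the tail.
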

\begin{proof}
We may assume $d>0$; put $D:=D(N,d)$, so $D>2$.
Let $I$ be a left ideal of $R$ generated by elements of degree at most $d$. Choose a Gr\"obner basis $G=\{g_1,\dots,g_m\}$ of $I$ with $\deg(g_i)\leq D$ for $i=1,\dots,m$. (Corollary~\ref{Detailed bound}.) After pruning $G$ if necessary, we may assume that $\lm(G)$ is a minimal set of generators for the monomial ideal of $R$ generated by $\lm(I)$, and after normalizing each $g_i$, that $\lc(g_i)=1$ for every $i$. Set $h_i := g_i-\lm(g_i)$ for every $i$. Then by Lemma~\ref{Reduction-Sequence} we have
$$\deg \nf_G(h_i) \leq \deg(h_i)\, 2D\,(N+1)N^{N/2} \leq 2D^2\,(N+1)N^{N/2}.$$
Then $G':=\{g_1',\dots,g_m'\}$ where $g_i' := \lm(g_i)+\nf_G(h_i)$ for every $i$ is a reduced Gr\"obner basis of $I$ the degrees of whose elements $g_i'$ obey the stated bound.
\end{proof}

For degree-compatible monomial orderings one obtains in a similar way:

\begin{cor} \label{Reduced GB for deg-compatible}
Suppose that the monomial ordering $\leq$ is degree-compatible. Then the reduced Gr\"obner basis of every left ideal of $R$ generated by elements of degree at most $d$ consists of elements of degree at most $D(N,d)$.
\end{cor}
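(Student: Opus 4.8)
The plan is to mirror the proof of Corollary~\ref{Corollary 1 to Main Theorem} above, but to exploit that for a degree-compatible monomial ordering the reduction relation $\red{G}$ never increases degree. This eliminates the factor $2d(N+1)N^{N/2}$ that enters through Lemma~\ref{Reduction-Sequence} in the general case, so that the bound $D(N,d)$ already available for \emph{some} Gr\"obner basis survives the passage to the \emph{reduced} one.

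Concretely: given a left ideal $I$ of $R$ generated by elements of degree at most $d$, I would first invoke Theorem~\ref{Main Theorem} (equivalently Corollary~\ref{Detailed bound}) to fix a Gr\"obner basis $G=\{g_1,\dots,g_m\}$ of $I$ with $\deg(g_i)\leq D(N,d)$ for all $i$. Then I would tidy $G$ by two degree-nonincreasing steps: discard any $g_i$ whose leading monomial is divisible by $\lm(g_j)$ for some $j\neq i$, so that $\lm(G)$ is the minimal set of generators of the monomial ideal of $R$ generated by $\lm(I)$; and rescale to get $\lc(g_i)=1$ for each remaining $i$. Next, writing $h_i:=g_i-\lm(g_i)$, I would replace each $g_i$ by $g_i':=\lm(g_i)+\nf_G(h_i)$. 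Degree-compatibility enters here twice: every monomial in $\supp(h_i)$ has degree at most $\deg(g_i)$, and the refinement of Lemma~\ref{Reduction-Sequence} recorded right after its proof gives $\deg(\nf_G(h_i))\leq\deg(h_i)\leq\deg(g_i)\leq D(N,d)$ together with $\lm(\nf_G(h_i))\leq\lm(h_i)<\lm(g_i)$; hence $\lm(g_i')=\lm(g_i)$ and $\deg(g_i')\leq D(N,d)$.

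It then remains to check, exactly as in the cited corollary, that $G':=\{g_1',\dots,g_m'\}$ is \emph{the} reduced Gr\"obner basis of $I$: it lies in $I$ and satisfies $\lm(G')=\lm(G)$, so it is a Gr\"obner basis of $I$; it has $\lc(g_i')=1$; and no monomial of $\supp(g_i')$ is divisible by $\lm(g_j')=\lm(g_j)$ for $j\neq i$ --- for $\lm(g_i)$ this is minimality of $\lm(G)$, and for the monomials of $\nf_G(h_i)$ this is the $G$-irreducibility of a normal form. Uniqueness of the reduced Gr\"obner basis (Section~\ref{Grobner bases in Algebras of Solvable Type}) then finishes the argument. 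I do not expect a genuine obstacle; the only point needing care is to invoke degree-compatibility precisely where the factor $2d(N+1)N^{N/2}$ would otherwise appear, and to dispose of the trivial case $d=0$ (where $I\in\{0,R\}$ by integrality of $R$, so the reduced Gr\"obner basis is $\emptyset$ or $\{1\}$, of degree $0\leq D(N,0)$).
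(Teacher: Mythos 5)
Your proposal is correct and follows exactly the route the paper intends: the paper proves Corollary~\ref{Reduced GB for deg-compatible} ``in a similar way'' to Corollary~\ref{Corollary 1 to Main Theorem}, and the only change needed is to replace the estimate of Lemma~\ref{Reduction-Sequence} with the degree-compatible refinement recorded immediately after its proof (reduction does not increase degree), which is precisely what you do. Your tidying steps (prune so $\lm(G)$ is the minimal generating set of $\lm(I)$, normalize, replace $g_i$ by $\lm(g_i)+\nf_G(g_i-\lm(g_i))$) and your verification that the resulting set is the reduced Gr\"obner basis match the paper's argument; the explicit treatment of $d=0$ is a harmless extra.
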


\subsection{Ideal membership}
Now we turn to
degree bounds for solutions to linear equations. In particular, we'll show Corollary~\ref{Corollary 2 to Main Theorem}.

\begin{prop} \label{Ideal membership}
If $f\in I=(f_1,\dots,f_n)$
where $f_1,\dots,f_n\in R$ are of degree at most $d$, then there there are $y_1,\dots,y_n\in R$ of degree at most
$$D(N,d)\cdot\left(2\,\deg(f)\,(N+1)\,N^{N/2}+1\right)$$
with $$f=y_1f_1+\cdots+y_nf_n.$$
\end{prop}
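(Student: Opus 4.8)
The plan is to combine the ``explicit'' Gr\"obner basis furnished by Corollary~\ref{Detailed bound} with the degree estimate for reduction sequences in Lemma~\ref{Reduction-Sequence}. Set $D:=D(N,d)$. We may assume $d>0$, so that $D\geq 2$; since $R$ is quadric, $\deg(p_{ij})\leq 2\leq D$ for $1\leq i<j\leq N$.

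First I would apply Corollary~\ref{Detailed bound} to obtain a Gr\"obner basis $G$ of $I$ with $\deg(g)\leq D$ for every $g\in G$, together with, for each $g\in G$, elements $y_{g,1},\dots,y_{g,n}\in R$ such that $g=\sum_{i=1}^n y_{g,i}f_i$ and $\deg(y_{g,i}f_i)\leq D$ for all $i$. Since $f\in I=(G)$ and $G$ is a Gr\"obner basis, $f\overset{*}{\red{G}}0$. Applying Lemma~\ref{Reduction-Sequence} with $D$ playing the role of its parameter ``$d$'' (legitimate, since $D$ dominates both the degrees of the $p_{ij}$ and the degrees of the elements of $G$) yields $g_1,\dots,g_m\in G$ and $p_1,\dots,p_m\in R$ with
$$f=p_1g_1+\cdots+p_mg_m,\qquad \deg(p_jg_j)\leq 2\deg(f)\,D\,(N+1)N^{N/2}\quad(1\leq j\leq m).$$

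Next I would substitute the expressions $g_j=\sum_{i=1}^n y_{g_j,i}f_i$ and regroup, setting $y_i:=\sum_{j=1}^m p_j\,y_{g_j,i}$, so that $f=y_1f_1+\cdots+y_nf_n$. To bound $\deg(y_i)$ I use that $\deg(ab)=\deg(a)+\deg(b)$ for non-zero $a,b\in R$: by Corollary~\ref{gr is homogeneous} the associated graded ring $\gr R$ for the standard filtration is of solvable type, hence an integral domain, so $\gr a\cdot\gr b\neq 0$, which forces $ab\notin R_{(<\deg a+\deg b)}$. Thus $\deg(p_j)=\deg(p_jg_j)-\deg(g_j)\leq\deg(p_jg_j)$, whence $\deg(p_j\,y_{g_j,i}f_i)=\deg(p_j)+\deg(y_{g_j,i}f_i)\leq 2\deg(f)\,D\,(N+1)N^{N/2}+D$. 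Since $\deg(y_if_i)\leq\max_j\deg(p_j\,y_{g_j,i}f_i)$ and $\deg(y_i)\leq\deg(y_if_i)$, this gives $\deg(y_i)\leq D\bigl(2\deg(f)(N+1)N^{N/2}+1\bigr)$, which is the asserted bound.

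The main obstacle is not conceptual but one of careful accounting. The two points requiring attention are: (a) invoking Lemma~\ref{Reduction-Sequence} with a single degree parameter that simultaneously dominates the commutator tails $p_{ij}$ and all elements of the chosen Gr\"obner basis; and (b) the additivity $\deg(ab)=\deg(a)+\deg(b)$ via integrality of $\gr R$, which is precisely what allows one to pass from the bound on $\deg(p_jg_j)$ back to a bound on $\deg(p_j)$, and hence on $\deg(y_i)$. Everything else is substitution together with the elementary fact that the degree of a sum is at most the maximum of the degrees of its summands.
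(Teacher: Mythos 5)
Your proof is correct and follows essentially the same route as the paper's: choose the Gr\"obner basis $G$ from Corollary~\ref{Detailed bound}, apply Lemma~\ref{Reduction-Sequence} with parameter $D=D(N,d)$ (which dominates both $\deg(p_{ij})\leq 2$ and the degrees in $G$), then substitute the expressions $g_j=\sum_i y_{g_j,i}f_i$ and regroup. The only difference is that you spell out the justification for $\deg(ab)=\deg(a)+\deg(b)$ (via Corollary~\ref{gr is homogeneous} and integrality of $\gr R$), which the paper leaves implicit; this is a harmless, correct elaboration.
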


\begin{proof}
We may assume $d>0$; put $D:=D(N,d)$. Let $f_1,\dots,f_n\in R$ have degree at most $d$, and $f\in I$. Choose a Gr\"obner basis $G$ of $I=(f_1,\dots,f_n)$ with the property stated in Corollary~\ref{Detailed bound}. Then by Lemma~\ref{Reduction-Sequence} there are $g_1,\dots,g_m\in G$ and $p_1,\dots,p_m\in R$
with
$$f=p_1g_1+\cdots+p_mg_m$$
and
$$\deg(p_1g_1),\dots,\deg(p_m g_m)\leq \deg(f)\,2\,D\, (N+1)\, N^{N/2}.$$
Write each $g_i$ as
$$g_i=y_{i,1} f_1 + \cdots + y_{i,n} f_n$$
where $y_{i,j}\in R$ satisfies $\deg(y_{i,j}f_j)\leq D$. Then
$$f=y_1f_1+\cdots+y_nf_n$$
where each
$y_j:=  \sum_{i} p_i y_{i,j}$ satisfies the claimed degree bound.
\end{proof}

{\it In the rest of this section, we restrict ourselves to the case that the monomial ordering $\leq$ is degree-compatible.}\/
In a similar way as above we then obtain:

\begin{prop} \label{Degree-compatible}
Let $f_1,\dots,f_n\in R$ be of degree at most $d$, and $f\in R$. If $$f=y_1f_1+\cdots+y_nf_n$$ for some $y_1,\dots,y_n\in R$, there are such $y_i$ of degree at most $\deg(f)+D(N,d)$.
\end{prop}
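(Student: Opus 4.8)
The plan is to follow the proof of Proposition~\ref{Ideal membership} almost line for line, the only change being that the reduction-sequence estimate of Lemma~\ref{Reduction-Sequence} gets replaced by its sharpened degree-compatible counterpart (the unnumbered remark immediately following that lemma), which removes the spurious factor $2d(N+1)N^{N/2}$. Before starting one disposes of trivialities: if $f=0$ take all $y_i=0$, and one may assume $d>0$ and each $f_i\neq 0$, discarding those $f_i$ that vanish. The one structural fact used throughout is that, since $\leq$ is degree-compatible and $R$ is of solvable type, $\deg(gh)=\deg(g)+\deg(h)$ for all nonzero $g,h\in R$: this follows from $\lm(gh)=\lm(g)\ast\lm(h)$ (equation~\eqref{lm multiplication}) together with the equality $\deg(g)=\abs{\lm(g)}$, which holds precisely because $\leq$ is degree-compatible and $R$ is quadric (so that $R_{(\leq e)}=\bigoplus_{\abs{\alpha}\leq e}Kx^\alpha$ by Lemma~\ref{R_leq d}).

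Set $I:=(f_1,\dots,f_n)$. Since $R$ is quadric, Corollary~\ref{Detailed bound} provides a Gr\"obner basis $G$ of $I$ such that every $g\in G$ admits an expression $g=\sum_i y_{g,i}f_i$ with $y_{g,i}\in R$ and $\deg(y_{g,i}f_i)\leq D(N,d)$ for all $i$. Because $f\in I=(G)$ and $G$ is a Gr\"obner basis, $f\overset{*}{\red{G}}0$; applying the degree-compatible reduction bound, we obtain $g_1,\dots,g_m\in G$ and $p_1,\dots,p_m\in R$ with
\[
f=p_1g_1+\cdots+p_mg_m,\qquad \deg(p_jg_j)\leq\deg(f)\ \text{for each }j,
\]
and hence $\deg(p_j)\leq\deg(p_jg_j)\leq\deg(f)$. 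Substituting the expressions for the $g_j$ and collecting terms gives $f=\sum_i y_if_i$ with $y_i:=\sum_j p_jy_{g_j,i}$. For each $i$ and each $j$ with $p_jy_{g_j,i}\neq 0$ one has $\deg(p_jy_{g_j,i}f_i)=\deg(p_j)+\deg(y_{g_j,i}f_i)\leq\deg(f)+D(N,d)$, so that $\deg(y_i)\leq\deg(y_if_i)\leq\deg(f)+D(N,d)$, which is the asserted bound.

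I do not expect a genuine obstacle here; the argument is a routine combination of Corollary~\ref{Detailed bound} with the degree-compatible form of the reduction lemma. The only point that needs a little care, and the only place degree-compatibility is genuinely used, is the additivity $\deg(gh)=\deg(g)+\deg(h)$: it is what allows passing from the bound on $\deg(p_jg_j)$ to a bound on $\deg(p_j)$, and then from the bound on $\deg(y_if_i)$ back down to $\deg(y_i)$. Everything else is bookkeeping.
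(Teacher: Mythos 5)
Your proposal is correct and is precisely the route the paper has in mind: the paper's own Proposition~\ref{Degree-compatible} is stated with only the remark ``In a similar way as above we then obtain,'' meaning one reruns the proof of Proposition~\ref{Ideal membership} with the weight-function estimate of Lemma~\ref{Reduction-Sequence} replaced by its degree-compatible sharpening $\deg(p_jg_j)\leq\deg(f)$, exactly as you do. Your justification of $\deg(gh)=\deg(g)+\deg(h)$ via $\lm(gh)=\lm(g)\ast\lm(h)$ together with $\deg(g)=\abs{\lm(g)}$ (valid by Lemma~\ref{R_leq d} and degree-compatibility) is the right one; the only cosmetic point is that the final detour through $\deg(y_if_i)$ is unnecessary, since $\deg(y_i)\leq\max_j\big(\deg(p_j)+\deg(y_{g_j,i})\big)\leq\deg(f)+D(N,d)$ already follows directly.
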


\subsection{Generators for syzygy modules}

Below, the left $R$-module of left syzygies of a tuple $f=(f_1,\dots,f_n)\in R^n$ is denoted by $\Syz(f)$ (a submodule of the free left $R$-module $R^n$).

Suppose $G=\{g_1,\dots,g_m\}$ is a Gr\"obner basis in $R$.
For  $1\leq i<j\leq m$ let  $\alpha_{ij}$ and $\beta_{ij}$ be the unique multi-indices such that
$$x^{\alpha_{ij}}\ast\lm(g_i)  = x^{\beta_{ij}}\ast\lm(g_j) = \lcm\big( \lm(g_i),\lm(g_j) \big)$$
and
$$c_{ij}  := \lc(x^{\alpha_{ij}}g_i), \qquad d_{ij} := \lc(x^{\beta_{ij}}g_j).$$
Each $S$-polynomial
$$S(g_i,g_j) = d_{ij}\lc(g_j) x^{\alpha_{ij}}g_i - c_{ij}\lc(g_i) x^{\beta_{ij}} g_j$$
admits a representation of the form
$$S(g_i,g_j) = \sum_{k=1}^m p_{ijk} g_k, \quad
\lm(p_{ijk}g_k) \leq \lm S(g_i,g_j)  \qquad (p_{ijk}\in R).$$
Now consider the vectors
$$s_{ij}:=  d_{ij}\lc(g_j)x^{\alpha_{ij}}e_i - c_{ij}\lc(g_i)x^{\beta_{ij}}e_j -\sum_k p_{ijk} e_k \qquad (1\leq i<j\leq m)$$
in $R^m$. Here $e_1,\dots,e_m$ denotes the standard basis of the free left $R$-module $R^m$. Obviously, each $s_{ij}$ is a left syzygy of $(g_1,\dots,g_m)$; in fact (see \cite[Theorem~3.15]{KR-W}), the syzygies $s_{ij}$ generate the left $R$-module $\Syz(g_1,\dots,g_m)$.
We denote the set of $m\times n$-matrices with entries in $R$ by $R^{m\times n}$. The $n\times n$-identity matrix is denoted by $I_n$. The following transformation rule for syzygies is easy to verify:

\begin{lemma}
Let $f=(f_1,\dots,f_n)^\tr\in R^n$ and $g=(g_1,\dots,g_m)^\tr\in R^m$, and suppose $A\in R^{m\times n}$, $B\in R^{n\times m}$ such that $g=Af$ and $f=Bg$. Let $M$ be a matrix whose rows generate $\Syz(g)$. Then $\Syz(f)$ is generated by the rows of the matrix $$\left[\ \  \begin{matrix} MA \\ \hline  I_n-BA\end{matrix}\ \  \right].$$
\end{lemma}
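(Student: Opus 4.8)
The plan is to verify directly the two defining properties of a generating set: that every row of the displayed matrix lies in $\Syz(f)$, and that every element of $\Syz(f)$ is a left $R$-linear combination of these rows. Throughout I view syzygies as \emph{row} vectors, so that for a $1\times n$ row vector $y$ over $R$ the condition $y\in\Syz(f)$ reads $yf=0$ (the matrix product of the $1\times n$ row $y$ with the $n\times 1$ column $f$ equals $0$), and the hypothesis ``the rows of $M$ generate $\Syz(g)$'' says in particular that $Mg=0$. I will use repeatedly that matrix multiplication over the (possibly non-commutative) ring $R$ is associative.

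First I would check containment. For the rows of $MA$: since $Mg=0$ and $g=Af$, we get $(MA)f=M(Af)=Mg=0$, so each row of $MA$ lies in $\Syz(f)$. For the rows of $I_n-BA$: using $f=Bg$ and $g=Af$ we get $(I_n-BA)f=f-B(Af)=f-Bg=f-f=0$, so each row of $I_n-BA$ lies in $\Syz(f)$ as well. Hence the left $R$-submodule of $R^n$ generated by the rows of the displayed matrix is contained in $\Syz(f)$.

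Next I would show these rows in fact generate all of $\Syz(f)$. Let $y$ be a $1\times n$ row vector over $R$ with $yf=0$, and consider the $1\times m$ row $yB$. Then $(yB)g=y(Bg)=yf=0$, so $yB\in\Syz(g)$; since the rows of $M$ generate $\Syz(g)$, there is a row vector $z$ over $R$ with $yB=zM$. Now
$$y = yI_n = y(I_n-BA) + (yB)A = y\,(I_n-BA) + z\,(MA),$$
which exhibits $y$ as a left $R$-linear combination of the rows of $I_n-BA$ (with coefficient row $y$ itself) and of the rows of $MA$ (with coefficient row $z$). Thus $\Syz(f)$ is generated by the rows of $\left[\ \begin{matrix} MA \\ \hline I_n-BA\end{matrix}\ \right]$, as claimed.

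The argument is purely formal, and no Gr\"obner-theoretic or finiteness input is needed beyond the mere existence of some matrix $M$ with the stated property (which is why the lemma can be stated at this level of generality). The only point requiring care — and the step I expect to be the main, albeit minor, obstacle — is the bookkeeping of left/right conventions: that $\Syz$ refers to \emph{left} syzygies written as rows, that ``generated by the rows'' means closure under left multiplication by row vectors of scalars, and that the crucial factorization $yB=zM$ is legitimate precisely because the rows of $M$ generate $\Syz(g)$ on the correct side.
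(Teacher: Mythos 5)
Your argument is correct and is the standard, direct verification: $MAf=Mg=0$ and $(I_n-BA)f=f-Bg=0$ give one containment, and writing $y=y(I_n-BA)+(yB)A$ with $yB\in\Syz(g)$ factoring through $M$ gives the other. The paper states this lemma without proof (``easy to verify''), so there is nothing in the source to compare against; your proof supplies exactly the computation the authors had in mind, with the left/right and row/column conventions handled correctly for the non-commutative setting.
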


We now use these facts in the proof of:

\begin{prop}
Let $f=(f_1,\dots,f_n)^\tr\in R^n$ be of degree at most $d$. Then $\Syz(f)$ can be generated by elements of degree at most $3D(N,d)$.
\end{prop}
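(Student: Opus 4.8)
The plan is to bound the degrees of a natural set of generators of $\Syz(f)$ assembled from the explicit syzygies $s_{ij}$ constructed above and the transformation rule of the preceding lemma. We may assume $d>0$ and put $D:=D(N,d)$, so $d\leq D$. First I would apply Corollary~\ref{Detailed bound} to $I=(f_1,\dots,f_n)$ to obtain a Gr\"obner basis $G=\{g_1,\dots,g_m\}$ of $I$ together with matrices $A=(a_{kl})\in R^{m\times n}$ and $B=(b_{jk})\in R^{n\times m}$ such that $g=Af$ and $f=Bg$ (with $g=(g_1,\dots,g_m)^\tr$), where $\deg(a_{kl}f_l)\leq D$ and $\deg(b_{jk}g_k)\leq d$ for all indices; in particular $\deg(g_k)\leq D$ for each $k$. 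Throughout the argument we use that, since $\leq$ is degree-compatible and $R$ is of solvable type, \eqref{lm multiplication} yields $\deg(uv)=\deg(u)+\deg(v)$ for non-zero $u,v\in R$, hence $\deg(u)=\deg(uv)-\deg(v)$, together with $\deg(u_1+\cdots+u_r)\leq\max_i\deg(u_i)$.

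Next I would bound, for $1\leq i<j\leq m$, the degree of each component of $s_{ij}$. Since $\lcm(\lm g_i,\lm g_j)$ is the component-wise maximum of $\lm g_i$ and $\lm g_j$, its degree is at most $\deg(g_i)+\deg(g_j)\leq 2D$; hence $|\alpha_{ij}|=\deg\lcm(\lm g_i,\lm g_j)-\deg(g_i)\leq 2D$, similarly $|\beta_{ij}|\leq 2D$, and $\deg S(g_i,g_j)\leq 2D$. From $\lm(p_{ijk}g_k)\leq\lm S(g_i,g_j)$ and degree-compatibility we get $\deg(p_{ijk}g_k)\leq 2D$, so $\deg(p_{ijk})\leq 2D-\deg(g_k)$ whenever $p_{ijk}g_k\neq 0$. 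Reading off the definition of $s_{ij}$, its $k$-th component therefore has degree at most $2D-\deg(g_k)$ for every $k$.

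Finally, the transformation lemma says $\Syz(f)$ is generated by the rows of the block matrix with blocks $MA$ and $I_n-BA$, where the rows of $M$ are the $s_{ij}$. For a row $s_{ij}A$, its $l$-th entry is $\sum_k (s_{ij})_k a_{kl}$, and $\deg((s_{ij})_k a_{kl})=\deg((s_{ij})_k)+\deg(a_{kl})\leq(2D-\deg g_k)+(D-\deg f_l)\leq 3D$, using $\deg(a_{kl})=\deg(a_{kl}f_l)-\deg(f_l)\leq D-\deg(f_l)$. For the entry $\delta_{jl}-\sum_k b_{jk}a_{kl}$ of $I_n-BA$ one has $\deg(b_{jk}a_{kl})=\deg(b_{jk})+\deg(a_{kl})\leq(d-\deg g_k)+(D-\deg f_l)\leq d+D\leq 2D$. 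Hence every generator of $\Syz(f)$ produced in this way has all of its components of degree at most $3D=3D(N,d)$, as required.

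I do not expect a deep obstacle here; the one point demanding care is the degree bookkeeping in the $k$-th component of $s_{ij}$, namely the shift by $-\deg(g_k)$, which is precisely the amount of degree gained back when the syzygy is pushed through $A$ (since $g_k$ is a combination $\sum_l a_{kl}f_l$ of the lower-degree $f_l$'s). Keeping track of this shift is what pins the bound at $3D$ rather than the cruder $4D$, and degree-compatibility of $\leq$ is essential, as it is what makes $\deg$ additive on products.
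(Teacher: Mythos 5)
Your proof is correct and follows essentially the same route as the paper: bound the components of the Buchberger syzygies $s_{ij}$, bound $A$ and $B$ via Corollary~\ref{Detailed bound}, and conclude from the transformation lemma. Your bookkeeping (tracking the shift $2D-\deg g_k$ in the $k$-th component of $s_{ij}$) is in fact a little more explicit than the paper's, which asserts $\deg M\leq D(N,d)$ where $2D(N,d)$ is the accurate bound, but the final estimate $3D(N,d)$ is unaffected.
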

\begin{proof}
Let $g=(g_1,\dots,g_m)^\tr\in R^m$ be such that $G=\{g_1,\dots,g_m\}$ is a Gr\"obner basis of the left ideal of $R$ generated by $f_1,\dots,f_n$ as in Corollary~\ref{Detailed bound}. Then there are  $A\in R^{m\times n}$ of degree at most $D(N,d)$ and $B\in R^{n\times m}$ of degree at most $d$ such that $g=Af$ and $f=Bg$. Each $S$-polynomial $S(g_i,g_j)$ has degree at most $2D(N,d)$; hence there exists a matrix $M$ of degree at most $D(N,d)$ whose rows generate $\Syz(g)$. Since
$\deg(MA)\leq 3D(N,d)$ and $\deg(AB)\leq D(N,d)+d\leq 3D(N,d)$, the claim follows from the previous lemma.
\end{proof}

\section{Two-sided Ideals}

In this section we deduce Corollary~\ref{Corollary 3 to Main Theorem} on degree bounds for two-sided ideals from the results of the previous two sections. Throughout let $R$ again be an algebra over a field $K$.

\subsection{Gr\"obner bases of two-sided ideals in $R$}
In this subsection, suppose that $R=K\<x\>$ is of solvable type with respect to $x=(x_1,\dots, x_N)$ and some monomial ordering $\leq$ of $\N^N$.
It is possible to define a notion of Gr\"obner basis for two-sided ideals of $R$:

\begin{prop} Let $G$ be a finite subset of $R$.
The following statements are equivalent:
\begin{enumerate}
\item $G$ is a Gr\"obner basis, and the two-sided ideal of $R$ generated by $G$ agrees with the left ideal $(G)$ of $R$ generated by $G$.
\item $G$ is a Gr\"obner basis, and $gx_i\in (G)$ for every $g\in G$ and $i=1,\dots,N$.
\item For every non-zero element $f$ of the two-sided ideal of $R$ generated by $G$ there exists a non-zero $g\in G$ with $\lm(g)|\lm(f)$.
\end{enumerate}
\end{prop}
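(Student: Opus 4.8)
First I would prove $(1) \Rightarrow (2)$. Assume $G$ is a Gr\"obner basis whose left ideal $(G)$ coincides with the two-sided ideal it generates. For $g \in G$ and $i \in \{1,\dots,N\}$, the element $gx_i$ lies in the two-sided ideal generated by $G$, hence in $(G)$. This is essentially immediate, so the real content is in the other two implications.

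Next, $(2) \Rightarrow (3)$ is the heart of the matter. The key observation is that condition (2) forces $(G)$ to be closed under right multiplication by each $x_i$: if $g \in G$, then $gx_i \in (G)$, and then for any $p \in R$ we get $(pg)x_i = p(gx_i) \in (G)$ since $(G)$ is a left ideal; by linearity and induction on the degree of a monomial $x^\alpha$ (using the PBW/affineness of $R$ to write $x^\alpha$ as a product $x_{i_1}\cdots x_{i_m}$ of the generators, which is where one invokes that $R = K\langle x\rangle$ is affine), one concludes $(G)\cdot R \subseteq (G)$. Hence $(G)$ is already a two-sided ideal, so it equals the two-sided ideal generated by $G$. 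Now let $f$ be a nonzero element of this two-sided ideal; then $f \in (G)$, and since $G$ is a Gr\"obner basis, the equivalence $(1)\Leftrightarrow(4)$ in Proposition~\ref{Groebner} yields a nonzero $g \in G$ with $\lm(g) \mid \lm(f)$, which is exactly (3). The one subtlety to be careful about is the right-closure induction: multiplication in $R$ is not commutative, so I need to argue that for any monomial $x^\alpha$, right multiplication by $x^\alpha$ maps $(G)$ into itself, which follows by writing $x^\alpha = x^{\alpha-\varepsilon_i}\ast x_i$ appropriately and iterating the single-variable case $(G)x_i \subseteq (G)$ established above; the leading-monomial multiplicativity \eqref{lm multiplication} is not even needed here, only the left-ideal property.

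Finally, for $(3) \Rightarrow (1)$: let $I$ denote the two-sided ideal generated by $G$, so $(G) \subseteq I$. Condition (3) says every nonzero $f \in I$ has its leading monomial divisible by $\lm(g)$ for some $g \in G$. Applying this in particular to every nonzero $f \in (G) \subseteq I$ gives condition (4) of Proposition~\ref{Groebner} for the left ideal $(G)$, so $G$ is a Gr\"obner basis of $(G)$. It remains to show $(G) = I$. Suppose not; pick $f \in I \setminus (G)$ nonzero with $\lm(f)$ minimal (possible since $\leq$ is a well-ordering). By (3) there is $g \in G$ with $\lm(g) \mid \lm(f)$, say $\lm(f) = x^\beta \ast \lm(g)$; then $f' := f - c\,x^\beta g$ for a suitable $c \in K$ satisfies $\lm(f') < \lm(f)$ (or $f' = 0$), and $f' \in I$ since $x^\beta g \in (G) \subseteq I$. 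If $f' = 0$ then $f = c\,x^\beta g \in (G)$, contradiction; otherwise $f' \in I \setminus (G)$ with smaller leading monomial, again a contradiction. Hence $(G) = I$, establishing (1).

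The main obstacle I anticipate is the right-closure argument in $(2)\Rightarrow(3)$: one must be genuinely careful that ``$(G)$ is a left ideal and $(G)x_i \subseteq (G)$ for all $i$'' does imply ``$(G)$ is a two-sided ideal'' in the non-commutative setting, which rests on the fact that the monomials $x^\alpha$ (products of the $x_i$ in the fixed order) $K$-linearly span $R$ together with the ability to move an $x_i$ past an $x^\alpha$ at the cost of lower-degree terms --- this is exactly the affineness of $R = K\langle x\rangle$, and it is worth spelling out the induction on $\deg(x^\alpha)$ explicitly rather than waving at it.
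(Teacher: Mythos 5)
Your proof is correct and complete. Note that the paper itself does not prove this proposition but cites it as \cite[Theorem~5.4]{KR-W}, so there is no in-text proof to compare against; your argument is the standard one and fills the gap soundly. All three implications check out: the right-closure induction in $(2)\Rightarrow(3)$ correctly reduces to the single-generator case using associativity and the left-ideal property, and the minimal-leading-monomial descent in $(3)\Rightarrow(1)$ is the usual division argument, with the needed equality $\lm(x^\beta g)=x^\beta\ast\lm(g)$ supplied by \eqref{lm multiplication}.

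One notational point worth tightening: when you write $x^\alpha = x^{\alpha-\varepsilon_i}\ast x_i$, the symbol $\ast$ in this paper denotes the \emph{combinatorial} monoid operation on the monomial basis, not the ring product, and the two do not agree in general. What the induction actually needs is the \emph{ring} identity $x^\alpha = x^{\alpha-\varepsilon_i}\cdot x_i$, which holds exactly when $i$ is taken to be the largest index with $\alpha_i>0$ (since $x^\alpha$ is by definition the ordered ring product $x_1^{\alpha_1}\cdots x_N^{\alpha_N}$). With that choice of $i$ the two operations coincide and your step goes through; it is worth saying so explicitly, since conflating $\ast$ with $\cdot$ elsewhere in the solvable-type setting would be a genuine error.
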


If a finite subset $G$ of $R$ satisfies one of the equivalent conditions in this proposition (proved in \cite[Theorem~5.4]{KR-W}), then $G$ is called a {\bf two-sided Gr\"obner basis} (with respect to $\leq$). If $I$ is a two-sided ideal of $R$, then a subset $G$ of $I$ is called a {\bf Gr\"obner basis} of $I$ (with respect to $\leq$) if $G$ is a two-sided Gr\"obner basis which also generates the two-sided ideal $I$. The main result of this section is the following:

\begin{prop}\label{Two-sided}
Suppose $R$ is quadric.
Every two-sided ideal of $R$ generated in degree at most $d$ has a two-sided Gr\"obner basis consisting of elements of degree at most $D(2N,d)$.
\end{prop}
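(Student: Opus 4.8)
The plan is to reduce the two-sided statement to the one-sided Theorem~\ref{Main Theorem} by passing to the enveloping algebra, which is the observation of \cite{BGV} alluded to in the introduction. Write $R^{\env}=R\otimes_K R^{\op}$, with generating tuple $(a_1,\dots,a_N,b_1,\dots,b_N)$ where $a_i=x_i\otimes 1$ and $b_i=1\otimes x_i$. An $R$-bimodule is the same as a left $R^{\env}$-module, with $a\otimes b$ acting by $m\mapsto amb$; thus $R$ is a cyclic left $R^{\env}$-module generated by $1$, and the multiplication map $\mu\colon R^{\env}\to R$, $r\otimes s\mapsto rs$, is a surjective morphism of left $R^{\env}$-modules. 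A standard computation (cf.~\cite{BGV}) shows $\ker\mu=\sum_{i=1}^N R^{\env}(a_i-b_i)$, since $R$ is generated by $x_1,\dots,x_N$. Consequently, for $f_1,\dots,f_n\in R$ the two-sided ideal $I:=\sum_i Rf_iR$ equals $\mu(\tilde I)$, where $\tilde I$ is the \emph{left} ideal of $R^{\env}$ generated by $f_1\otimes 1,\dots,f_n\otimes 1$ together with $a_1-b_1,\dots,a_N-b_N$; moreover $\mu^{-1}(I)=\tilde I$, and $\mu(R^{\env}\,\tilde g)=R\,\mu(\tilde g)\,R$ for each $\tilde g\in R^{\env}$ by $R^{\env}$-linearity of $\mu$.

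The first substantial task is to show that $R^{\env}$ is itself a \emph{quadric} $K$-algebra of solvable type, with respect to the tuple above and a suitable monomial ordering $\prec$ of $\N^{2N}$. I would take for $\prec$ the ordering on exponent pairs $(\alpha,\beta)$ (recording the $a$- and $b$-exponents) defined by $(\alpha,\beta)\prec(\alpha',\beta')$ iff $\beta<\beta'$, or $\beta=\beta'$ and $\alpha<\alpha'$; being a lexicographic product of two copies of $\leq$, this is a monomial ordering, typically \emph{not} degree-compatible, which is harmless. One then writes down the evident commutation system for $R^{\env}$: the relations among the $a_i$ are those defining $R$ (quadric by hypothesis), the relations $b_ja_i=a_ib_j$ are quadric with zero remainder, and $b_jb_i=c_{ij}^{-1}b_ib_j-c_{ij}^{-1}(1\otimes p_{ij})$ for $i<j$. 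The point requiring care is re-expressing $1\otimes p_{ij}$ over the monomial basis $\{a^\alpha b^\beta\}$ of $R^{\env}$: one has $b^\beta=1\otimes(x_N^{\beta_N}\cdots x_1^{\beta_1})$, and the reversed monomials $x_N^{\beta_N}\cdots x_1^{\beta_1}$ form a $K$-basis of $R$ with leading monomial $x^\beta$ by \eqref{lm multiplication}, so the span of those with $\abs\beta\leq 2$ is exactly $R_{(\leq 2)}$ (here quadricity of $R$ enters, via Lemma~\ref{R_leq d}). Hence $1\otimes p_{ij}=\sum_{\abs\beta\leq 2}c_\beta b^\beta$ with $\lm_\prec$ equal to $b^{\beta^*}$, where $x^{\beta^*}=\lm(p_{ij})$, and $b^{\beta^*}\prec b_ib_j$ by the solvable-type condition in $R$. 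Thus the commutation system is quadric and satisfies condition~(1) of Proposition~\ref{Solvable Type, Construction}; since $\{a^\alpha b^\beta\}$ is clearly a monomial basis of $R\otimes_K R^{\op}$, Remark~\ref{Solvable Type, Construction, Remark} then gives that $R^{\env}$ is quadric of solvable type with respect to $\prec$.

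The rest is bookkeeping. We may assume $d\geq 1$ (for $d=0$, $I$ is $0$ or $R$ and the claim is trivial). The left ideal $\tilde I\subseteq R^{\env}$ is generated by $n+N$ elements of degree at most $d$ (note $\deg_{R^{\env}}(f_j\otimes 1)=\deg_R(f_j)$), so Theorem~\ref{Main Theorem}, applied to the quadric solvable algebra $R^{\env}$ in its $2N$ generators, yields a left Gröbner basis $\tilde G$ of $\tilde I$ with respect to $\prec$ all of whose elements have degree at most $D(2N,d)$. Set $G:=\mu(\tilde G)\setminus\{0\}$. Since $\mu$ carries the standard filtration of $R^{\env}$ into that of $R$, each $g\in G$ has $\deg(g)\leq D(2N,d)$; and since $\mu(R^{\env}\,\tilde G)=\sum_{\tilde g}R\,\mu(\tilde g)\,R$, the set $G$ generates the two-sided ideal $I=\mu(\tilde I)$. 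Finally, $G$ is a two-sided Gröbner basis: by criterion~(3) in the characterization of two-sided Gröbner bases (\cite[Theorem~5.4]{KR-W}) it suffices to show that every non-zero $f\in I$ has $\lm(g)\mid\lm(f)$ for some $g\in G$. Writing $\lm(f)=x^\lambda$, the element $f\otimes 1$ lies in $\mu^{-1}(I)=\tilde I$, is non-zero, and has $\lm_\prec(f\otimes 1)=a^\lambda$; by condition~(4) of Proposition~\ref{Groebner} some $\tilde g\in\tilde G$ satisfies $\lm_\prec(\tilde g)\mid a^\lambda$, hence $\lm_\prec(\tilde g)=a^{\lambda'}$ with $\lambda'\leq\lambda$ coordinatewise and involving no $b_i$. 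Because $\prec$ orders by $b$-exponent first, every monomial $a^\alpha b^\beta\prec a^{\lambda'}$ occurring in $\tilde g$ has $\beta=0$ and $\alpha<\lambda'$, so $\mu(\tilde g)=\lc(\tilde g)\,x^{\lambda'}+\sum_{\alpha<\lambda'}(\cdots)x^\alpha$ has $\lm(\mu(\tilde g))=x^{\lambda'}\mid x^\lambda=\lm(f)$, and $\mu(\tilde g)\in G$. The main obstacle is the middle paragraph: arranging that $R^{\env}$ be quadric of solvable type under an ordering $\prec$ that \emph{simultaneously} keeps the remainders of the commutation relations $\prec$-small and keeps $\mu$ compatible with leading monomials; the $b$-first lexicographic product is precisely what serves both purposes.
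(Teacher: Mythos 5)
Your proof is correct, and at the top level it is the same reduction as the paper's: pass to the enveloping algebra $R^\env = R\otimes_K R^\op$, identify the two-sided ideal $I\subseteq R$ with the left ideal $\mu^{-1}(I)\subseteq R^\env$ (generated by $f_i\otimes 1$ and $a_i-b_i$, all of degree $\leq d$), invoke the one-sided degree bound in $2N$ variables, and push forward along $\mu$. Where you diverge is in the two ingredients the paper outsources. First, the paper cites Roman--Roman (stated as Proposition~\ref{Two-sided bases from one-sided}) for the fact that $\mu$ maps a left Gr\"obner basis of $\mu^{-1}(J)$ to a two-sided Gr\"obner basis of $J$; you prove this inline, via criterion~(3) of the two-sided Gr\"obner basis characterization, by tracking leading monomials through $\mu$. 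Second, and this is what makes the inline argument work smoothly, you pick a different monomial ordering on $\N^{2N}$: the ``$b$-first'' lexicographic product, rather than the paper's $\leq^\env$ (the lexicographic product of $\leq$ with $\leq^\op$, which has the $a$-block primary). Your choice guarantees both that $\lm_\prec(f\otimes 1)=a^{\lm(f)}$ and that any element of $R^\env$ whose $\prec$-leading monomial is purely in the $a$'s has \emph{all} its monomials purely in the $a$'s (since $0$ is $\leq$-minimal on the $b$-block), which makes $\lm(\mu(\tilde g))=x^{\lambda'}$ transparent. With the paper's $a$-first $\leq^\env$, lower terms of $\tilde g$ can carry nontrivial $b$-exponents, so the compatibility of $\mu$ with leading monomials is no longer a one-line observation --- this is presumably why the paper defers to the cited reference. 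You also re-verify by hand that $R^\env$ is quadric of solvable type under $\prec$ (using the reversed-product basis for $1\otimes p_{ij}$, Lemma~\ref{R_leq d}, and \eqref{lm multiplication}), which the paper obtains instead from its Proposition on tensor products. In short: same strategy, but your version is more self-contained, at the cost of the extra bookkeeping in the middle paragraph.
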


The proof of this proposition uses enveloping algebras, which we introduce next.

 \subsection{The enveloping algebra}\label{The opposite algebra and the enveloping algebra}
The {\bf opposite algebra} of $R$ is the $K$-algebra $R^\op$ whose underlying $K$-linear space is the same as that of $R$ and whose multiplication operation $\cdot^\op$ is given by $a\cdot^\op b=b\cdot a$ for $a,b\in R$.
The {\bf enveloping algebra} of $R$ is the $K$-algebra $R^\env:=R\otimes_K R^\op$. There is a natural one-to-one correspondence between $R$-bimodules and left $R^\env$-modules: every $R$-bimodule $M$ has a left $R^\env$-module structure given by
$$(a\otimes b)\cdot f = a f b\qquad
\text{for $a\in R$, $b\in R^\op$, and $f\in M$,}$$
and conversely, every left $R^\env$-module $M'$ also carries an $R$-bimodule structure with
$$af'b=(a\otimes b)f'\qquad \text{for $a\in R$, $b\in R^\op$, and $f'\in M'$.}$$
There is a surjective morphism $\mu\colon R^\env\to R$ of left $R^\env$-modules with $\mu(a\otimes b)=ab$ for $a\in R$, $b\in R^\op$.
For every $n$, acting component by component, $\mu$ induces a surjective morphism $(R^\env)^n\to R^n$ of left $R^\env$-modules, which we also denote by $\mu$.
Thus for every $R$-sub-bimodule $M$ of $R^n$ we obtain a left $R^\env$-submodule $\mu^{-1}(M)$ of $(R^\env)^n$ containing $\ker\mu$, and the image $\mu(M')$ of a left $R^\env$-submodule $M'$ of $(R^\env)^n$ with $\ker\mu\subseteq M'$ is an $R$-sub-bimodule of $R^n$.
The kernel of $\mu$ is generated by
$$(f_1\otimes 1,\dots,f_n\otimes 1)-(1\otimes f_1,\dots,1\otimes f_n)
\qquad (f_1,\dots,f_n\in R).$$

\subsection{The enveloping algebra of an algebra of solvable type}\label{The enveloping algebra of an algebra of solvable type}
{\it In the rest of this section, we assume that $R=K\<x\>$ is of solvable type with respect to $x=(x_1,\dots,x_N)$ and some monomial ordering $\leq$ of $\N^N$.
We let $\cal R=(R_{ij})$ be a commutation system defining $R$, with $R_{ij}$ as in \eqref{Rij}, and set $p_{ij}:=\pi(P_{ij})$, where $\pi\colon K\<X\>\to R$ is the natural surjection. }
The opposite $K$-algebra $R^\op$ of $R$ is again a $K$-algebra of solvable type in a natural way. To see this define the ``write oppositely automorphism'' of $K\<X\>$ by $$(X_{i_1}\cdots X_{i_r})^\op=X_{i_r}\cdots X_{i_1}\qquad\text{for all $i_1,\dots,i_r\in\N$.}$$ Also set $\alpha^\op:=(\alpha_N,\dots,\alpha_1)$ for every multi-index $\alpha=(\alpha_1,\dots,\alpha_N)$ and define the ``opposite ordering'' of $\N^N$ by
$$\alpha\leq^\op\beta \quad :\Longleftrightarrow\quad \alpha^\op\leq\beta^\op \qquad\text{for all multi-indices $\alpha$, $\beta$.}$$
Then $\cal R^\op:=(R_{ij}^\op)$ is a commutation system defining a $K$-algebra of solvable type with respect to $\leq^\op$ and $x^\op:=(x_N,\dots,x_1)$, which can be naturally identified with $R^\op$.

The class of $K$-algebras of solvable type is closed under tensor products. More precisely, let $\leq'$ be a monomial ordering of $\N^{N'}$ (where $N'\in\N$), and let  $\cal R'=(R'_{ij})$ be a commutation system in $K\<Y\>=K\<Y_1,\dots,Y_{N'}\>$, with
$$R'_{ij} = Y_jY_i - c'_{ij} Y_i Y_j - P'_{ij} \qquad (1\leq i<j\leq N')$$
where $0\neq c'_{ij}\in K$ and $P'_{ij}\in\bigoplus_{\alpha'} KY^{\alpha'}$.
(Here and below, $\alpha'$ ranges over $\N^{N'}$.) Let $R'=K\<Y\>/I(\cal R')$, with natural surjection $\pi'\colon K\<Y\>\to R'$, and let $y_j:=\pi'(Y_j)$ for  and $p_{ij}':=\pi'(P_{ij}')$. Suppose that $R'$ is of solvable type with respect to $\leq'$ and $y=(y_1,\dots,y_{N'})$. The $K$-algebra $S:=R\otimes_K R'$ is generated by the $(N+N')$-tuple
\begin{equation}\label{N+N'}
(x_1\otimes 1,\dots,x_N\otimes 1,1\otimes y_1,\dots,1\otimes y_{N'}).
\end{equation}
We have the following (see \cite[Proposition~1]{Roman-Roman}):

\begin{prop}
The $K$-algebra $S=R\otimes_K R'$ is of solvable type with respect to the lexicographic product of the orderings $\leq$ and $\leq'$, and the $(N+N')$-tuple of generators \eqref{N+N'}. The commutator relations of $S$ are
\begin{align*}
(x_j\otimes 1)(x_i\otimes 1) &= c_{ij} (x_i\otimes 1)(x_j\otimes 1) + p_{ij}\otimes 1 \qquad (1\leq i<j\leq N) \\
(x_i\otimes 1)(1\otimes y_j) &= (1\otimes y_j)(x_i\otimes 1) \qquad (1\leq i\leq N,\ 1\leq j\leq N')\\
(1\otimes y_j)(1\otimes y_i) &= c'_{ij} (1\otimes y_i)(1\otimes y_j) + 1\otimes p'_{ij} \qquad (1\leq i<j\leq N').
\end{align*}
Hence if $R$ and $R'$ are quadric, then so is $S$.
\end{prop}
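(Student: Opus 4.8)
The plan is to exhibit a commutation system in the canonical shape \eqref{Rij} that defines $S$ and to verify the hypotheses of Proposition~\ref{Solvable Type, Construction}, invoking Remark~\ref{Solvable Type, Construction, Remark} so as to avoid checking condition~(2) directly. Let $Z=(Z_1,\dots,Z_{N+N'})$ be a tuple of distinct indeterminates over $K$, with $Z_1,\dots,Z_N$ corresponding to the generators $x_i\otimes 1$ and $Z_{N+1},\dots,Z_{N+N'}$ to the generators $1\otimes y_j$ in \eqref{N+N'}, and let $\varpi\colon K\<Z\>\to S$ be the $K$-algebra surjection determined by this correspondence. Define a commutation system $\cal S=(S_{kl})_{1\le k<l\le N+N'}$ in $K\<Z\>$ by letting $S_{ij}$ (for $1\le i<j\le N$) be $R_{ij}$ with $X_1,\dots,X_N$ replaced by $Z_1,\dots,Z_N$; letting $S_{N+i,N+j}$ (for $1\le i<j\le N'$) be $R'_{ij}$ with $Y_1,\dots,Y_{N'}$ replaced by $Z_{N+1},\dots,Z_{N+N'}$; and setting $S_{i,N+j}:=Z_{N+j}Z_i-Z_iZ_{N+j}$ (for $1\le i\le N$, $1\le j\le N'$), so that $c_{i,N+j}=1$ and $P_{i,N+j}=0$. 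Each $S_{kl}$ has the form \eqref{Rij}, and the $P$-parts of $\cal S$ all lie in $\bigoplus_\gamma KZ^\gamma$.

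First I would check that $S$ is affine with respect to \eqref{N+N'}. Using that $(a\otimes b)(a'\otimes b')=aa'\otimes bb'$ in $R\otimes_K R'$, the monomial of $S$ indexed by $(\alpha,\beta)\in\N^N\times\N^{N'}\cong\N^{N+N'}$ equals $(x^\alpha\otimes 1)(1\otimes y^\beta)=x^\alpha\otimes y^\beta$; since $\{x^\alpha\}_\alpha$ and $\{y^\beta\}_\beta$ are $K$-bases of $R$ and $R'$, respectively, the family $\{x^\alpha\otimes y^\beta\}$ is a $K$-basis of $S=R\otimes_K R'$, so $S$ is affine. Next, $I(\cal S)\subseteq\ker\varpi$ because the identities $(x_j\otimes 1)(x_i\otimes 1)=c_{ij}(x_i\otimes 1)(x_j\otimes 1)+p_{ij}\otimes 1$, $(1\otimes y_j)(1\otimes y_i)=c'_{ij}(1\otimes y_i)(1\otimes y_j)+1\otimes p'_{ij}$, and $(1\otimes y_j)(x_i\otimes 1)=(x_i\otimes 1)(1\otimes y_j)$ all hold in $S$; these follow from the defining relations of $R$ and $R'$ together with the facts that $a\mapsto a\otimes 1$ and $b\mapsto 1\otimes b$ are $K$-algebra morphisms.

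It then remains to verify condition~(1) of Proposition~\ref{Solvable Type, Construction} for the lexicographic product $\preceq$ of $\le$ on the first $N$ coordinates and $\le'$ on the last $N'$. For the cross relations this is vacuous, since $P_{i,N+j}=0$. For $S_{ij}$ with $1\le i<j\le N$, the polynomial $P_{ij}$ involves only $Z_1,\dots,Z_N$, and since $R$ is of solvable type its defining system $\cal R$ satisfies condition~(1), i.e.\ $\lm(P_{ij})$ precedes $\varepsilon_i+\varepsilon_j$ in $\le$; as $\preceq$ extends $\le$ on $\N^N\times\{0\}$, condition~(1) holds for $S_{ij}$. Symmetrically, $\preceq$ restricts to $\le'$ on $\{0\}\times\N^{N'}$, so the relations $S_{N+i,N+j}$ also satisfy condition~(1). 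Remark~\ref{Solvable Type, Construction, Remark} now yields $I(\cal S)=\ker\varpi$ and that $S$ is of solvable type with respect to $\preceq$ and \eqref{N+N'}; by the uniqueness clause in Definition~\ref{Solvable Type}, $\cal S$ is \emph{the} commutation system defining $S$, and reading off its relations produces the displayed commutator relations. Finally, if $R$ and $R'$ are quadric, then $\deg P_{ij}\le 2$ and $\deg P'_{ij}\le 2$, while the cross-relation $P$-parts are zero; hence every $P$-part of $\cal S$ has degree $\le 2$, so $\cal S$, and therefore $S$, is quadric.

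The argument is mostly bookkeeping once the commutation system $\cal S$ is written down; the one point requiring genuine input is the affineness of $S$ with respect to \eqref{N+N'}, which rests on the fact that a tensor product of $K$-bases is a $K$-basis. The other thing to be careful about is to form the lexicographic product with $\le$ on the $R$-block \emph{before} $\le'$ on the $R'$-block, since that is exactly what makes condition~(1) hold simultaneously for the relations of $R$ and those of $R'$. (This is the content of \cite[Proposition~1]{Roman-Roman}.)
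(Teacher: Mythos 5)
Your proof is correct. The paper does not give its own argument here---it simply cites \cite[Proposition~1]{Roman-Roman}---so your proposal fills in a gap rather than duplicating a proof; the route you take (exhibit the commutation system $\cal S$ explicitly, check affineness via the tensor-product-of-bases fact, verify condition~(1) of Proposition~\ref{Solvable Type, Construction} block by block, and close with Remark~\ref{Solvable Type, Construction, Remark}) is exactly the argument the paper's own machinery is set up to support.

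One small side remark in your last paragraph is a bit off: you say the lexicographic product must be taken with $\leq$ on the $R$-block \emph{first}, as if the other order would fail. In fact the only property used is that the product ordering of $\N^{N+N'}$ restricts to $\leq$ on $\N^N\times\{0\}$ and to $\leq'$ on $\{0\}\times\N^{N'}$, and both lexicographic products (and indeed any product ordering extending $\leq$ and $\leq'$ in this sense) have this property; the particular order is fixed by the statement, not forced by the verification. This does not affect the validity of the argument, which correctly uses the ordering specified in the proposition.
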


In particular, $R^\env=R\otimes_K R^\op$ is an algebra of solvable type in a natural way, with respect to the monomial ordering $\leq^\env$ on $\N^{2N}=\N^N\times\N^N$ obtained by taking the lexicographic product of $\leq$ with $\leq^\op$. For every given $n$, the kernel of the left $R^\env$-morphism $\mu\colon (R^\env)^n\to R^n$ introduced in Section~\ref{The opposite algebra and the enveloping algebra} is generated by the elements
\begin{equation}\label{kernel of mu}
\big((x^{\varepsilon_i}\otimes 1)-(1\otimes x^{\varepsilon_i})\big)e_j \qquad (1\leq i\leq N,\ 1\leq j\leq n)
\end{equation}
of $(R^\env)^n$. Here $$\varepsilon_1=(1,0,\dots,0),\varepsilon_2=(0,1,0,\dots,0),\dots,\varepsilon_N=(0,\dots,0,1)\in\N^N,$$ and $e_1,\dots,e_n$ are the standard basis elements of the left $R^\env$-module $(R^\env)^n$.
Hence if $M$ is an $R$-sub-bimodule of $R^n$ generated by
$$f_i=(f_{i1},\dots,f_{in})\in R^n \qquad (i=1,\dots,m),$$
then the corresponding left $R^\env$-submodule $\mu^{-1}(M)$ of $(R^\env)^n$ is generated by the elements in \eqref{kernel of mu} and
$$(f_{11}\otimes 1,\dots,f_{1n}\otimes 1),\dots,(f_{m1}\otimes 1,\dots,f_{mn}\otimes 1).$$

\begin{cor}
Suppose $\leq$ is degree-compatible.
Let $f_1,\dots,f_n\in R$ be of degree at most $d$, and let $f\in R$. If there are a finite index set $J$ and $y_{ij},z_{ij}\in R$ \textup{(}$i=1,\dots, n$, $j\in J$\textup{)} such that
$$f = \sum_{j\in J} y_{1j} f_1 z_{1j} + \cdots + \sum_{j\in J} y_{nj} f_n z_{nj}$$
then there are such $J$ and $y_{ij}$, $z_{ij}$ with
$$\deg(y_{ij}),\deg(z_{ij})\leq \deg(f)+D(2N,d)\qquad\text{for $i=1,\dots,n$.}$$
\end{cor}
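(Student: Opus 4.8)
The plan is to reduce the two-sided membership problem to a one-sided one over the enveloping algebra $R^\env=R\otimes_K R^\op$ and then apply Proposition~\ref{Degree-compatible}. We may assume $d\geq 1$ (if $d=0$ every $f_i$ lies in $K$, so the two-sided ideal generated by $f_1,\dots,f_n$ is $\{0\}$ or $R$ and the statement is trivial). Since $\leq$ is degree-compatible, $R$ is quadric; so is $R^\op$ (its defining commutation system $\cal R^\op$ has the same remainder degrees as $\cal R$), hence so is $R^\env$ by the tensor-product proposition above. The one subtlety is that the monomial ordering $\leq^\env$ of $\N^{2N}$ for which $R^\env$ is of solvable type is a lexicographic product and need \emph{not} be degree-compatible, whereas Proposition~\ref{Degree-compatible} demands degree-compatibility. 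I would fix this by passing to the ordering of $\N^{2N}$ that refines $\leq^\env$ by total degree (first compare $\abs{\alpha}$, then break ties by $\leq^\env$), which is a degree-compatible monomial ordering, and checking that $R^\env$ remains of solvable type for it. This follows from quadricity: the remainder $P$ in each commutation relation of $R^\env$ has total degree $\leq 2$, equal to the degree of the associated product of two generators of $R^\env$; if $\deg P<2$ condition~(1) of Proposition~\ref{Solvable Type, Construction} holds in the refined ordering by degree-compatibility, and if $\deg P=2$ the refined-leading monomial of $P$ is the $\leq^\env$-largest degree-$2$ monomial of $P$, hence is $\leq^\env$-below the $\leq^\env$-leading monomial of $P$, hence refined-below that product of generators; condition~(2) does not involve the ordering. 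Henceforth $R^\env$ is regarded as a quadric algebra of solvable type with respect to this degree-compatible ordering, on the $2N$ generators $x_1\otimes 1,\dots,x_N\otimes 1,1\otimes x_1,\dots,1\otimes x_N$; here $\deg$ refers to the associated standard filtration, for which $\deg(a\otimes b)=\deg(a)+\deg(b)$ on monomials.

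Next let $I$ be the two-sided ideal of $R$ generated by $f_1,\dots,f_n$. Viewing $R$ as an $R$-bimodule, the correspondence of Section~\ref{The opposite algebra and the enveloping algebra} (the case $n=1$) identifies $I$ with the left ideal $\mu^{-1}(I)$ of $R^\env$, which by \eqref{kernel of mu} is generated by the elements $g_i:=(x^{\varepsilon_i}\otimes 1)-(1\otimes x^{\varepsilon_i})$ ($i=1,\dots,N$), of degree $1\leq d$, together with $f_1\otimes 1,\dots,f_n\otimes 1$, of degree $\leq d$. Since $f\in I$ and $\mu(f\otimes 1)=f$, we have $f\otimes 1\in\mu^{-1}(I)$ with $\deg(f\otimes 1)=\deg(f)$. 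Proposition~\ref{Degree-compatible}, applied over $R^\env$ (which has $2N$ generators) to these $N+n$ ideal generators, produces $h_1,\dots,h_N,h_1',\dots,h_n'\in R^\env$ of degree at most $\deg(f)+D(2N,d)$ with
$$f\otimes 1=\sum_{i=1}^N h_ig_i+\sum_{i=1}^n h_i'\,(f_i\otimes 1).$$
Applying the left $R^\env$-module morphism $\mu$ and using $\mu(g_i)=x_i-x_i=0$ (so $\mu(h_ig_i)=0$) and $\mu\big(h_i'(f_i\otimes 1)\big)=h_i'\cdot\mu(f_i\otimes 1)=h_i'\cdot f_i$, where $\cdot$ denotes the $R$-bimodule action, we obtain $f=\sum_{i=1}^n h_i'\cdot f_i$.

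Finally, I would expand each $h_i'=\sum_{k\in J_i}c_{ik}\,(v_{ik}\otimes w_{ik})$ in the monomial basis $\{x^\alpha\otimes y^{\alpha'}\}$ of $R^\env$, with $v_{ik}$ a monomial of $R$, $w_{ik}$ a monomial of $R^\op$ (hence an element of $R$), and $c_{ik}\in K$. Then $h_i'\cdot f_i=\sum_{k\in J_i}c_{ik}\,v_{ik}f_iw_{ik}$, and $\deg(v_{ik})+\deg(w_{ik})=\deg(v_{ik}\otimes w_{ik})\leq\deg(h_i')\leq\deg(f)+D(2N,d)$, so $\deg(v_{ik})$ and $\deg(w_{ik})$ are each at most $\deg(f)+D(2N,d)$. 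Taking $J:=\bigsqcup_i J_i$ and, for $j\in J$, setting $y_{ij}:=c_{ij}v_{ij}$, $z_{ij}:=w_{ij}$ when $j\in J_i$ and $y_{ij}:=z_{ij}:=0$ otherwise, yields $f=\sum_{j\in J}y_{1j}f_1z_{1j}+\cdots+\sum_{j\in J}y_{nj}f_nz_{nj}$ with all $\deg(y_{ij}),\deg(z_{ij})\leq\deg(f)+D(2N,d)$, as required. The main obstacle is the degree-compatibility point in the first paragraph (the ``natural'' ordering on the enveloping algebra is not degree-compatible, so one must manufacture one that is and re-verify the solvable-type axioms); everything after that is bookkeeping built on the enveloping-algebra dictionary of Section~\ref{The opposite algebra and the enveloping algebra} and the ideal-membership bound of Proposition~\ref{Degree-compatible}, with only a little care needed to use a single index set $J$ for all $i$.
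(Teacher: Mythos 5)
Your proof is correct and follows the same route as the paper: reduce to the one-sided case over the enveloping algebra $R^\env$ and apply Proposition~\ref{Degree-compatible}; the paper's proof is just the one-liner ``apply Proposition~\ref{Degree-compatible} to $R^\env$ and the $n+N$ listed generators.'' Where you add genuine value is in flagging and resolving a subtlety the paper elides: Proposition~\ref{Degree-compatible} is available only for a degree-compatible monomial ordering, yet the ordering $\leq^\env$ on $\N^{2N}$ for which $R^\env$ is exhibited as solvable-type is a lexicographic product and can fail to be degree-compatible even when $\leq$ is (already for $N=1$ and the usual ordering on $\N$ one has $(1,0)>^\env(0,5)$). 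Your repair --- replace $\leq^\env$ by its refinement by total degree, and use quadricity of $R^\env$ (which holds because $\leq$ is degree-compatible, hence $R$ and $R^\op$ are quadric) to check that condition~(1) of Proposition~\ref{Solvable Type, Construction} survives the change of ordering --- is correct and is actually needed to make the cited application legitimate. Since the standard filtration of $R^\env$, hence the meaning of $\deg$, is independent of which compatible monomial ordering one chooses, the remaining steps (pushing a representation of $f\otimes 1$ through $\mu$, then expanding each $h_i'$ in the monomial basis $\{x^\alpha\otimes b\}$ of $R^\env$ and reading off $y_{ij}$, $z_{ij}$ with the degree bound $\deg(f)+D(2N,d)$) are exactly the intended bookkeeping and are carried out correctly.
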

\begin{proof}
Apply Proposition~\ref{Degree-compatible}  to $R^\env$ and $$f_1\otimes 1,\dots,f_n\otimes 1, x^{\varepsilon_1}\otimes 1-1\otimes x^{\varepsilon_1},\dots,x^{\varepsilon_N}\otimes 1-1\otimes x^{\varepsilon_N}$$ in place of $R$ and $f_1,\dots,f_n$, respectively.
\end{proof}

The following observation (also from \cite{Roman-Roman}) allows one to compute  two-sided Gr\"obner bases in $R$ by computing one-sided Gr\"obner bases in the enveloping algebra of $R$:

\begin{prop} \label{Two-sided bases from one-sided}
Let $J$ be a two-sided ideal of $R$, and let $G$ be a Gr\"obner basis of the left ideal $\mu^{-1}(J)$ of $R^\env$. Then $\mu(G)$ is a Gr\"obner basis of $J$.
\end{prop}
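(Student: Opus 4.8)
The plan is to verify condition~(3) of the proposition characterizing two-sided Gr\"obner bases for the set $\mu(G)\subseteq R$, and to check separately that the two-sided ideal of $R$ generated by $\mu(G)$ is $J$. The latter is quick: $G$ generates the left ideal $\mu^{-1}(J)$ of $R^\env$ as a left $R^\env$-module, and since $\mu\colon R^\env\to R$ is a surjective morphism of left $R^\env$-modules, $\mu(G)$ generates $\mu(\mu^{-1}(J))=J$ as a left $R^\env$-module; because the $R^\env$-action on $R$ is $(a\otimes b)\cdot r = arb$, this submodule is precisely the two-sided ideal of $R$ generated by $\mu(G)$.

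The substantive part is the leading-monomial bookkeeping, which I would base on the structure of $\leq^\env$ --- the lexicographic product of $\leq$ with $\leq^\op$ --- relative to the generating tuple $(x_1\otimes 1,\dots,x_N\otimes 1,\,1\otimes x_N,\dots,1\otimes x_1)$ of $R^\env$. Two facts are needed. First, the monomials of $R^\env$ whose exponent has trivial first $\N^N$-component are exactly the $1\otimes w$ with $w$ a monomial of $R$, and $w\mapsto 1\otimes w$ is an isomorphism of ordered monoids onto this submonoid (this uses $\alpha\leq^\op\beta\Leftrightarrow\alpha^\op\leq\beta^\op$); consequently $\mu(1\otimes h)=h$ and $\lm_{\leq^\env}(1\otimes h)=1\otimes\lm_\leq(h)$ for all $h\in R$. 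Second, being a lexicographic product, $\leq^\env$ places every monomial with non-trivial first component strictly above every monomial with trivial first component.

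Now, given a non-zero $f\in J$, I would take the lift $\widetilde f:=1\otimes f$, which is a non-zero element of $\mu^{-1}(J)$ with $\lm(\widetilde f)=1\otimes\lm(f)$ of trivial first component. Since $G$ is a Gr\"obner basis of the left ideal $\mu^{-1}(J)$, the equivalence of~(1) and~(4) in Proposition~\ref{Groebner} produces a non-zero $\widetilde g\in G$ with $\lm(\widetilde g)\mid\lm(\widetilde f)$; then $\lm(\widetilde g)$ has trivial first component, so by the second fact above the whole support of $\widetilde g$ lies in the submonoid of the first fact, i.e.\ $\widetilde g=1\otimes h$ for a non-zero $h\in R$. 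Hence $g:=\mu(\widetilde g)=h$ is a non-zero element of $\mu(G)$, and transporting $\lm(\widetilde g)\mid\lm(\widetilde f)$ across the ordered-monoid isomorphism yields $\lm(g)\mid\lm(f)$. This is condition~(3), so $\mu(G)$ is a Gr\"obner basis of $J$.

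The one place that requires care --- the expected obstacle --- is the choice of lift: $f$ must be pushed into the \emph{second} tensor factor, the one matched with $\leq^\op$, rather than taken as $f\otimes 1$. Only for this lift does the priority structure of the lexicographic product $\leq^\env$ force the Gr\"obner basis element $\widetilde g$ selected above to lie in $1\otimes R$, where $\mu$ restricts to an order-preserving isomorphism; with the naive lift $f\otimes 1$ one may instead select a $\widetilde g$ with non-trivial first component, whose image $\mu(\widetilde g)$ can vanish or have an uncontrolled leading monomial. The remaining ingredients --- the explicit description of the monomials of $R^\env$ and the commutation of the two families of generators --- are immediate from the proposition on tensor products of algebras of solvable type.
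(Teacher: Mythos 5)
Your proof is correct. Note that the paper itself does not prove this proposition --- it only states it, attributing the result to Rom\'an--Rom\'an --- so there is no internal argument to compare against. Your proof is the natural one, and its key insight is precisely the subtle point you flag at the end: one must lift $f\in J$ to $1\otimes f$ (not $f\otimes 1$), so that $\lm(\widetilde f)$ lands in the submonoid of monomials with trivial $(x\otimes 1)$-component, on which $\mu$ restricts to an ordered-monoid isomorphism onto $(x^\diamond,\ast,\leq)$. Since $\leq^\env$ is the lexicographic product with the $x\otimes 1$-exponent dominant, any $\widetilde g\in G$ produced by Proposition~\ref{Groebner}(4) with $\lm(\widetilde g)\mid\lm(\widetilde f)$ is then forced to have its entire support in $1\otimes R$, so $\mu(\widetilde g)$ is a nonzero element of $\mu(G)$ with $\lm(\mu(\widetilde g))\mid\lm(f)$. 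Combined with the easy observation (via $R^\env$-linearity of $\mu$) that $\mu(G)$ generates $J$ as a two-sided ideal, this verifies condition~(3) of the characterization of two-sided Gr\"obner bases and completes the argument.
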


So finally we can show:

\begin{proof}[Proof of Proposition~\ref{Two-sided}]
We may assume that $d>0$.
Suppose $J$ is a two-sided ideal of $R$ generated by $f_1,\dots,f_n\in R$ of degree at most $d$.
Let $\mu\colon R^\env\to R$ be as in Section~\ref{The opposite algebra and the enveloping algebra}.
The left ideal $\mu^{-1}(J)$ of $R^\env$ is generated by the elements
$$f_1\otimes 1,\dots,f_n\otimes 1, x^{\varepsilon_1}\otimes 1-1\otimes x^{\varepsilon_1},\dots,x^{\varepsilon_N}\otimes 1-1\otimes x^{\varepsilon_N},$$
each of which has degree at most $d$. By Corollary~\ref{Detailed bound}, $\mu^{-1}(J)$ has a Gr\"obner basis $G$ (with respect to $\leq^\env$) consisting of elements  of degree at most $D(2N,d)$. By Proposition~\ref{Two-sided bases from one-sided}, $\mu(G)$ is a Gr\"obner basis of $J$ whose elements obey the same degree bound.
\end{proof}

\bibliographystyle{amsplain} 

\providecommand{\bysame}{\leavevmode\hbox to3em{\hrulefill}\thinspace}

\end{document}